\documentclass[final,3p]{elsarticle}
\usepackage{amsmath,amssymb,amsthm,mathtools}
\usepackage{enumitem}
\usepackage[hidelinks]{hyperref}
\biboptions{sort&compress}
\hypersetup{
  pdftitle={Fractional Volterra-type operators from Bergman spaces with two-sided doubling weights to Hardy spaces},
  pdfauthor={Xiaolin Zhu and Feng Guo},
  pdfsubject={Boundedness and compactness of fractional Volterra-type operators},
  pdfkeywords={fractional Volterra-type operator, weighted Bergman space, Hardy space, two-sided doubling weight, Carleson measure, tent space}
}
\allowdisplaybreaks

\journal{Journal of Mathematical Analysis and Applications}

\numberwithin{equation}{section}

\newtheorem{theorem}{Theorem}[section]
\newtheorem{proposition}[theorem]{Proposition}
\newtheorem{lemma}[theorem]{Lemma}
\newtheorem{corollary}[theorem]{Corollary}
\newtheorem{remark}[theorem]{Remark}

\newcommand{\D}{\mathbb D}
\newcommand{\T}{\mathbb T}
\newcommand{\C}{\mathbb C}
\newcommand{\N}{\mathbb N}
\newcommand{\Z}{\mathbb Z}
\newcommand{\calD}{\mathcal D}
\newcommand{\calP}{\mathcal P}
\newcommand{\dA}{\,dA}
\newcommand{\dxi}{\,|d\xi|}
\newcommand{\RL}{\mathrm{RL}}
\newcommand{\IRL}{I^{\RL}}
\newcommand{\Ihat}{\widehat I^{\RL}}
\DeclareMathOperator{\Id}{Id}

\begin{document}
\begin{frontmatter}

\title{Fractional Volterra-type operators from Bergman spaces with
two-sided doubling weights to Hardy spaces}

\author[aff1]{Xiaolin Zhu}
\ead{xiaolinzhu@tynu.edu.cn}

\author[aff2]{Feng Guo\corref{cor1}}
\ead{70207994@nuaa.edu.cn}
\cortext[cor1]{Corresponding author}

\affiliation[aff1]{
  organization={Taiyuan Normal University, Taiyuan, P. R. China}
  }

\affiliation[aff2]{
  organization={Nanjing University of Aeronautics and Astronautics, Nanjing, P. R. China}}

\begin{abstract}
For \(\omega\in\calD\), we give necessary and sufficient symbol conditions
for the boundedness and compactness of the Riemann--Liouville family
\(V^\varphi_{\alpha,\beta}:A^p_\omega\to H^q\) for every
\(0<p,q<\infty\) and admissible \(\alpha,\beta>0\): without any additional
weight assumption when \(\alpha\ge\beta\), and, when \(\alpha<\beta\), under
the condition
\(p(\beta-\alpha)<d_-(\omega)\), where \(d_-(\omega)\) is the critical
reverse-doubling exponent.  We prove that this inequality is exactly
equivalent to the naturally shifted source weight retaining the two-sided
doubling and tail geometry required by the reduction; an integrable
logarithmic example shows that the endpoint fails.  The extension from power
weights is not formal, because the reduction replaces \(\omega\) by
\(\omega(z)(1-|z|^2)^{p(\alpha-\beta)}\), and a negative shift may destroy
even integrability.  Combining moment-induced Littlewood--Paley theory for
doubling Bergman spaces with an exact coefficient-multiplier comparison, we
establish the required Riemann--Liouville transfer, including the
finite-dimensional exceptional modes at positive integral shift orders.
A finite-codimensional range decomposition and a fractional \(g\)-function
reduction then reduce both questions to a single weighted area-map problem.
Within the admissible parameter set, the effects of \(\alpha\) are absorbed
by the shifted source weight and bounded model corrections, so the final
symbol conditions depend on \(\beta\).  The thresholds \(p=2\) and \(p=q\)
yield pointwise, Carleson, tent-integral, and non-tangential maximal criteria.
In the tent-integral range, boundedness already implies compactness.
\end{abstract}

\begin{keyword}
fractional Volterra-type operator \sep weighted Bergman space \sep Hardy
space \sep two-sided doubling weight \sep Carleson measure \sep tent space

\MSC[2020] Primary 47B91 \sep Secondary 30H10, 30H20, 42B35, 26A33
\end{keyword}

\end{frontmatter}

\section{Introduction and main results}

For \(g\in H(\D)\), the classical Volterra-type integration operator and its
product identity are
\[
 J_gf(z)=\int_0^z f(\zeta)g'(\zeta)\,d\zeta,\qquad
 f(z)g(z)-f(0)g(0)=\int_0^z f'(\zeta)g(\zeta)\,d\zeta+J_gf(z).
\]
Thus \(J_g\) is an analytic paraproduct whose mapping properties reflect the
interaction between the boundary behavior of \(g\) and the underlying
function-space geometry \cite{AlemanCascanteFabregaPascuasPelaez2022}.
Pommerenke characterized its boundedness on \(H^2\) by \(g\in\mathrm{BMOA}\)
\cite{Pommerenke1977}; subsequent Hardy-space results include
\cite{AlemanSiskakis1995,AlemanCima2001}.  The weighted Bergman theory began
with \cite{AlemanSiskakis1997}; see also
\cite{Siskakis2006,PauPelaez2010,PelaezRattya2014}.

Mapping a Bergman space into a Hardy space sends an interior area norm to a
boundary norm, and the criteria change with the relative positions of \(p\),
\(q\), and the square-function exponent \(2\).  For power weights, boundedness
outside \(0<q<\min\{2,p\}\) was obtained in \cite{Wu2011}, and the remaining
range in \cite{MiihkinenPauPeralaWang2020}; compactness, essential norms, and
operator ideals were treated in \cite{ChenPauWang2021}.
The full boundedness and compactness classification for two-sided doubling
source weights is \cite[Theorems~1.1--1.2]{DuanWangWang2021}.

Fractional integration and differentiation have long been linked to weighted
Bergman norms \cite{BuckleyKoskelaVukotic1999}.  The two-parameter
Riemann--Liouville Volterra family used here was introduced in
\cite[Main Theorem~1.1 and formula~(1.18)]{FangGuoHouZhu2025Studia}; its
Hardy-to-Hardy boundedness for all \(0<p,q<\infty\), \(\alpha>0\), and
\(\alpha>\beta\) is
\cite[Main Theorem~1.6]{FangGuoHouZhu2025Studia}.  For the source weight
\((1-|z|^2)^\gamma\), \(\gamma>-1\), its Bergman-to-Hardy boundedness was
characterized for all \(0<p,q<\infty\) and admissible \(\alpha,\beta>0\)
under \(\gamma+p(\alpha-\beta)>-1\) in
\cite[Theorem~3]{FangGuoHouZhuBergmanHardy}.  A different moment-induced
fractional Hardy-space family appears in
\cite{BellavitaMorenoNikolaidisPelaez2026}.

Passing from power weights to \(\calD\) is not automatically compatible with
the Riemann--Liouville reduction, which forces the change of source weight
\[
        \omega\longmapsto
        \nu_{\alpha,\beta,p}
        =\omega(1-|z|^2)^{p(\alpha-\beta)}.                 \tag{1.1}
\]
For a power weight, this is again a power weight, so the change is hidden in
one exponent.  A general two-sided doubling weight has no prescribed
pointwise profile, and the stability of its tail geometry under (1.1) becomes
part of the operator problem.  This is precisely why \(\calD\) is the relevant
test class: it is the exact class of radial weights for the integer-order
Littlewood--Paley derivative norm equivalence
\cite[Theorem~5]{PelaezRattya2021}, while allowing weights far beyond the
standard power scale.  Littlewood--Paley equivalences for moment-induced
fractional derivatives were obtained in \cite{PelaezDeLaRosa2022}, and a
broader two-weight moment-ratio formulation was subsequently proved in
\cite{PeralaRattyaWang2025}.  The latter operators have nonvanishing positive
moment ratios.  They therefore do not reproduce the finite kernel of the
Riemann--Liouville coefficient operator at positive integral shift orders, nor
do they coincide with its finitely many low coefficients in general.  The
bridge required by the present operator problem is consequently an
exact-coefficient adaptation: a multiplier comparison on \(A^p_\rho\),
explicit treatment of the exceptional modes, and a finite-codimensional
decomposition that also transports compactness.

The stability issue has a genuine one-sided obstruction.  If
\(\alpha\ge\beta\), the twist in (1.1) has nonnegative power and preserves
\(\calD\), together with the required tail equivalence.  If
\(\alpha<\beta\), a power-weight example shows that the same twist can destroy
even integrability.  Stability under sufficiently small negative boundary
powers follows by specializing \cite[Lemma~2]{PelaezRattya2023BMO} to an
auxiliary constant weight, but that qualitative fact does not determine the
full range allowed by a given weight.
Consequently, the fractional problem on \(\calD\) is not a formal consequence
of either the classical weighted theory or the fractional power-weight
theory.  We identify the exact range in which the shifted-weight mechanism is
available: it is governed by a critical reverse-doubling exponent of
\(\omega\), with an integrable logarithmic example showing that the endpoint
must still be excluded.

We first characterize the negative weight shift by its critical
reverse-doubling exponent, thereby replacing the auxiliary shifted-weight
hypothesis by an explicit sharp threshold.  We then derive the exact
Riemann--Liouville transfer needed for the operator problem from the
moment-induced Littlewood--Paley theory, for every \(0<p<\infty\), retaining
the finite-dimensional exceptional modes at positive integral shift orders.
A finite-codimensional range decomposition and a fractional \(g\)-function
reduction reduce both boundedness and compactness to a single weighted
area-map problem.  Within the admissible parameter set, the factors involving
\(\alpha\) cancel from the four final symbol conditions.  The conditions
therefore depend on \(\beta\) and
\(G_\varphi=\IRL_{-\beta}\varphi\); the remaining effects of \(\alpha\) are
absorbed by the shifted source weight, bounded coefficient multipliers, and
finite-dimensional corrections.

The two thresholds in the resulting classification are forced by the reduced
area map.  The transition at \(p=2\) reflects its inner \(L^2\)-geometry, and
the transition at \(p=q\) separates the ranges \(q\ge p\) and \(q<p\).
Together they divide the exponent plane into four regimes governed by a
pointwise estimate, a Carleson measure, a finite-exponent tent integral, and a
non-tangential maximal condition.  We obtain these boundedness and compactness
criteria for all \(0<p,q<\infty\) and admissible \(\alpha,\beta>0\):
unconditionally when \(\alpha\ge\beta\), and under the explicit condition
\(p(\beta-\alpha)<d_-(\omega)\) when \(\alpha<\beta\).  In the
tent-integral range \(p>\max\{2,q\}\), finite tent integrability already forces
the boundary tails to vanish, so boundedness and compactness coincide.  The
remaining ranges require the corresponding pointwise, Carleson, or maximal
vanishing condition.

Let \(\D=\{z\in\C:|z|<1\}\), let \(\T=\partial\D\), and let
\(H(\D)\) denote the analytic functions on \(\D\).  We use
\(\N=\{1,2,\ldots\}\) and \(\Z=\{0,\pm1,\pm2,\ldots\}\).  We normalize area
measure by \(dA(z)=dx\,dy/\pi\), and \(|d\xi|\) denotes normalized arc length
on \(\T\).  If
\(f(z)=\sum_{n\ge0}\widehat f(n)z^n\), then \(\widehat f(n)\) is its
\(n\)-th Taylor coefficient.  We write \(X\lesssim Y\) when
\(X\le CY\) with a constant independent of the variables under consideration,
and \(X\asymp Y\) when both inequalities hold.
We also write
\[
        \delta(z)=1-|z|^2,
        \qquad z\in\D.
\]

A radial weight is the radial extension to \(\D\) of a nonnegative function
\(\omega\in L^1([0,1))\) whose tail
\[
        \widehat\omega(r)=\int_r^1\omega(s)\,ds,
        \qquad 0\le r<1,
\]
is positive for every \(r<1\).  We also write
\(\widehat\omega(z)=\widehat\omega(|z|)\).  The forward doubling class
\(\widehat{\calD}\) consists of those radial weights for which there is a
constant \(C\ge1\) such that
\[
        \widehat\omega(r)
        \le C\widehat\omega\!\left(\frac{1+r}{2}\right),
        \qquad 0\le r<1,
\]
and the reverse doubling class \(\check{\calD}\) consists of those for which
there are \(K>1\) and \(C>1\) such that
\[
        \widehat\omega(r)
        \ge C\widehat\omega\!\left(1-\frac{1-r}{K}\right),
        \qquad 0\le r<1.
\]
Throughout,
\[
        \calD=\widehat{\calD}\cap\check{\calD}
\]
is the class of two-sided doubling radial weights.
For \(\omega\in\calD\), define its critical reverse-doubling exponent by
\[
d_-(\omega)=\sup\left\{b>0:\ \exists\,C_b\ge1,\quad
\widehat\omega(s)\le C_b
\left(\frac{1-s}{1-r}\right)^b\widehat\omega(r),\
0\le r\le s<1\right\}.
\]
Here \(C_b\) is independent of \(r,s\).
By \cite[Lemma~B]{PelaezDeLaRosa2022}, \(d_-(\omega)>0\); the notation
\(d_-(\omega)\) is introduced here for the endpoint of the admissible
reverse-doubling exponents.

For \(0<p<\infty\), the weighted Bergman space induced by \(\omega\) is
\[
        A^p_\omega=L^p_a(\omega)
        =\left\{f\in H(\D):
        \|f\|_{A^p_\omega}^p=
        \int_{\D}|f(z)|^p\omega(z)\dA(z)<\infty\right\}.
\]
For \(0<q<\infty\), the Hardy space \(H^q\) consists of the functions
\(f\in H(\D)\) for which
\[
        \|f\|_{H^q}
        =\sup_{0<r<1}
        \left(\int_{\T}|f(r\xi)|^q\,|d\xi|\right)^{1/q}<\infty.
\]

We fix the Stolz region
\[
        \Gamma(\xi)=\{z\in\D:|z-\xi|<2(1-|z|)\},
        \qquad \xi\in\T.
\]
For an arc \(I\subset\T\), with normalized length \(|I|\), set
\[
        S(I)=\{re^{it}:e^{it}\in I,\ 1-|I|\le r<1\}.
\]
A positive Borel measure \(\mu\) on \(\D\) is a Carleson measure if
\[
        \sup_{I\subset\T}\frac{\mu(S(I))}{|I|}<\infty,
\]
and it is a vanishing Carleson measure if
\[
        \lim_{\eta\to0^+}
        \sup_{0<|I|\le\eta}\frac{\mu(S(I))}{|I|}=0.
\]

The operator is defined through the following Riemann--Liouville
realization.  Put \(\Z_- =\{-1,-2,\ldots\}\), choose
\(\operatorname{Arg}z\in(-\pi,\pi]\), and for \(c\in\C\setminus\Z_-\) set
\[
        p_c(z)=\exp\!\bigl(c(\log|z|+i\operatorname{Arg}z)\bigr),
        \qquad z\in\D\setminus\{0\}.
\]
Let
\[
        \mathcal F
        =\{p_cg:c\in\C\setminus\Z_-,\ g\in H(\D)\}.
\]
All elements of \(\mathcal F\) are regarded as single-valued functions on
\(\D\setminus\{0\}\); in this identification, functions in \(H(\D)\) are
restricted to the punctured disc.  Because the principal argument jumps across
the negative real axis, elements with nonintegral powers may be discontinuous;
moreover, \(\mathcal F\) is a class of functions of the form \(p_cg\), not a
vector space
\cite[Definition~3.1 and the following remarks]{FangGuoHouZhu2025Studia}.
If \(F(z)=p_c(z)\sum_{n\ge0}a_nz^n\in\mathcal F\) and \(t\in\C\), define
\[
\Ihat_tF(z)=
\begin{cases}
\displaystyle
p_{c+t}(z)\sum_{n=0}^{\infty}
\frac{\Gamma(n+1+c)}{\Gamma(n+1+c+t)}a_nz^n,
& c+t\notin\Z_-,\\[4mm]
\displaystyle
\sum_{n=-(c+t)}^{\infty}
\frac{\Gamma(n+1+c)}{\Gamma(n+1+c+t)}a_nz^{n+c+t},
& c+t\in\Z_-.
\end{cases}
\]
The resulting map \(\Ihat_t:\mathcal F\to\mathcal F\) is well defined; see
\cite[Definitions~3.1--3.2 and Lemma~3.3]{FangGuoHouZhu2025Studia}.  The second line is the
finite-kernel clause that applies when the shifted power is a negative
integer.

The admissible parameter set for the induced Volterra operators is
\[
        \calP=(((\C\setminus\Z)\cup\{0\})\times\C)
              \cup(\N\times\Z).
\]
For \((\alpha,\beta)\in\calP\) with \(\alpha,\beta>0\), and for
\(f,\varphi\in H(\D)\), define
\[
        V^\varphi_{\alpha,\beta}f
        =
        \Ihat_{\alpha}\left(
        \Ihat_{\beta-\alpha}f\,
        \Ihat_{-\beta}\varphi
        \right).
\]
Admissibility ensures that the middle product belongs to \(\mathcal F\) and
that the final output belongs to \(H(\D)\); see
\cite[Lemmas~3.4--3.5]{FangGuoHouZhu2025Studia}.  We state the symbol
conditions in terms of the coefficient operator
\[
        \IRL_t f(z)=\sum_{n=0}^{\infty}
        \frac{\Gamma(n+1)}{\Gamma(n+1+t)}\widehat f(n)z^n,
        \qquad t\in\C,
\]
where \(1/\Gamma(-m)=0\) for \(m=0,1,2,\ldots\), as justified by
Lemma~\ref{lem:background-tools}\textup{(v)}, and we set
\[
        G_\varphi=\IRL_{-\beta}\varphi.
\]
Proposition~\ref{prop:model-reduction} connects the two realizations by bounded
Taylor coefficient multipliers and finite-rank polynomial corrections;
boundedness and compactness are therefore preserved.

The fractional shift leads to the radial weight
\[
        \nu_{\alpha,\beta,p}(z)
        =\omega(z)(1-|z|^2)^{p(\alpha-\beta)}.
\]
We write \((H)\) for the shifted-weight hypothesis
\[
        \nu_{\alpha,\beta,p}\in\calD,
        \qquad
        \widehat\nu_{\alpha,\beta,p}(z)
        \asymp
        \widehat\omega(z)(1-|z|^2)^{p(\alpha-\beta)}.
        \tag{H}\label{hyp:H}
\]
The hypothesis \((H)\) is automatic when \(\alpha\ge\beta\), because the
shift then has a nonnegative power.  When \(\alpha<\beta\), the shifted
density may fail to be integrable.  Proposition~\ref{prop:negative-obstruction}
shows more precisely that, with \(\tau=p(\beta-\alpha)>0\),
\[
        (H)\quad\Longleftrightarrow\quad
        \tau<d_-(\omega).
\]
In particular, the two clauses in \((H)\) are not independent: once the
negative shift belongs to \(\calD\), the tail equivalence follows
automatically.
Theorems~\ref{thm:boundedness-main} and~\ref{thm:compactness-main}
classify boundedness and compactness for \(\alpha\ge\beta>0\), whereas
Theorem~\ref{thm:conditional-extension} gives the corresponding result for
\(\alpha<\beta\) below this critical threshold.

The superquadratic diagonal range \(2<p=q\) is handled by Carleson lifting.
The two ranges \(q<p\), separated by the threshold \(p=2\),
require lattice randomization and tent-space factorization.  These mechanisms
also account for the different compactness conditions.
\begin{theorem}\label{thm:boundedness-main}
Let \(0<p,q<\infty\), \(\omega\in\calD\), \(\varphi\in H(\D)\),
\((\alpha,\beta)\in\calP\), and \(\alpha\ge\beta>0\).  Then the following
statements hold.

\begin{enumerate}[label=\textup{(\Alph*)},leftmargin=2.5em]
\item If \(0<p\le \min\{2,q\}\) or \(2<p<q<\infty\), then
\(V^\varphi_{\alpha,\beta}:A^p_\omega\to H^q\) is bounded if and only if
\[
        \sup_{z\in\D}
        |G_\varphi(z)|
        \widehat\omega(z)^{-1/p}
        (1-|z|^2)^{\beta+1/q-1/p}<\infty.
        \tag{A}
\]

\item If \(2<p=q<\infty\), then
\(V^\varphi_{\alpha,\beta}:A^p_\omega\to H^p\) is bounded if and only if
\[
        d\mu_{\varphi,B}(z)=
        |G_\varphi(z)|^{\frac{2p}{p-2}}
        \frac{(1-|z|^2)^{\frac{p+2+2p(\beta-1)}{p-2}}}
             {\widehat\omega(z)^{\frac{2}{p-2}}}\,dA(z)
        \tag{B}
\]
is a Carleson measure.

\item If \(0<q<\infty\) and \(p>\max\{2,q\}\), then
\(V^\varphi_{\alpha,\beta}:A^p_\omega\to H^q\) is bounded if and only if
\[
        \xi\mapsto
        \left(
        \int_{\Gamma(\xi)}
        |G_\varphi(z)|^{\frac{2p}{p-2}}
        \frac{(1-|z|^2)^{\frac{4+2p(\beta-1)}{p-2}}}
             {\widehat\omega(z)^{\frac{2}{p-2}}}
        \dA(z)
        \right)^{\frac{p-2}{2p}}
        \in L^{\frac{pq}{p-q}}(\T).
        \tag{C}
\]

\item If \(0<q<p\le2\), then
\(V^\varphi_{\alpha,\beta}:A^p_\omega\to H^q\) is bounded if and only if
\[
        \xi\mapsto
        \sup_{z\in\Gamma(\xi)}
        |G_\varphi(z)|
        \frac{(1-|z|^2)^\beta}{\widehat\omega(z)^{1/p}}
        \in L^{\frac{pq}{p-q}}(\T).
        \tag{D}
\]
\end{enumerate}
\end{theorem}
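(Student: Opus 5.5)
The proof will reduce the fractional operator to a weighted area-map problem and then invoke the known classification of the classical Volterra case for two-sided doubling weights.

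\emph{Step 1 (model reduction).} Using Proposition~\ref{prop:model-reduction}, replace \(V^\varphi_{\alpha,\beta}\) by the model operator built from the coefficient operators \(\IRL\). The two realizations differ by bounded Taylor coefficient multipliers and finite-rank polynomial corrections, so boundedness into \(H^q\) is unaffected. For \(f\in H(\D)\), put \(F=\IRL_{\beta-\alpha}f\). The model output \(T f=\IRL_{\alpha}(F\,G_\varphi)\) then has \(\IRL_{-\alpha}Tf=F\,G_\varphi\), up to finitely many exceptional modes at positive integral shift orders.

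\emph{Step 2 (transfer of the source space).} By the Riemann--Liouville transfer, which is derived from the moment-induced Littlewood--Paley theory for \(\calD\), the map \(f\mapsto F\) is an isomorphism modulo finite codimension from \(A^p_\omega\) onto \(A^p_{\nu}\), where \(\nu=\nu_{\alpha,\beta,p}\). Since \(\alpha\ge\beta\), hypothesis \((H)\) holds automatically. Hence \(\nu\in\calD\) and \(\widehat\nu\asymp\widehat\omega\,\delta^{p(\alpha-\beta)}\).

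\emph{Step 3 (transfer of the target space).} The fractional \(g\)-function characterization of \(H^q\) gives
\[
\|h\|_{H^q}^q\asymp|h(0)|^q+\int_\T\Bigl(\int_{\Gamma(\xi)}|\IRL_{-\alpha}h(z)|^2\delta(z)^{2\alpha-2}\dA(z)\Bigr)^{q/2}\dxi .
\]
This reduces boundedness of \(V^\varphi_{\alpha,\beta}\) to boundedness of the area map
\[
F\mapsto\Bigl(\int_{\Gamma(\xi)}|F G_\varphi|^2\delta^{2\alpha-2}\,dA\Bigr)^{1/2}
\]
from \(A^p_\nu\) to \(L^q(\T)\). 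Equivalently, it reduces to the embedding of \(A^p_\nu\) into the tent space \(T^q_2\) with measure \(d\mu=|G_\varphi|^2\delta^{2\alpha-2}\,dA\).

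\emph{Step 4 (tent-space embedding theorems).} Apply the Carleson embedding results for \(A^p_\nu\to T^q_2(\mu)\) with \(\nu\in\calD\). These are proved exactly as in \cite{DuanWangWang2021,MiihkinenPauPeralaWang2020}, with \(\widehat\nu(z)\delta(z)\) playing the role of \(\nu(S(I))\). The four regimes go as follows.
\begin{itemize}
\item In range (A), testing on normalized reproducing-kernel-type functions and using subharmonicity gives a pointwise condition. The condition is \(|G_\varphi|^2\delta^{2\alpha}\lesssim(\widehat\nu\,\delta)^{2/p}\delta^{-2/q}\).
\item Range (B) is the \(2<p=q\) Carleson lifting.
\item Ranges (C) and (D) use Luecking-type lattice randomization via Khinchine's inequality together with tent-space factorization.
\end{itemize}

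\emph{Step 5 (cancellation of \(\alpha\)).} Substitute \(\widehat\nu\asymp\widehat\omega\,\delta^{p(\alpha-\beta)}\). Then \(\widehat\nu^{1/p}\) contributes \(\delta^{\alpha-\beta}\), which cancels against \(\delta^{\alpha}\) and leaves \(\delta^{\beta}\). This produces (A) and (D) directly. For (B) and (C), raising to the exponent \(p/(p-2)\) gives the stated powers: the exponent of \(\delta\) works out to \((4+2p(\beta-1))/(p-2)\), with an extra \(\delta\) in (B) from dividing by \(|I|\).

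The main obstacle I expect is Step 4 in the ranges \(q<p\). The necessity direction requires the randomization estimate \(\|\sum a_k F_k\|_{A^p_\nu}\lesssim\|a\|_{\ell^p}\) for atoms adapted to a doubling weight without a power profile. To handle this, I would build the atoms from \(\widehat\nu\)-normalized powers of \((1-\bar a_k z)^{-N}\), with \(N\) large depending on \(d_-(\nu)\) and the doubling constant, and control them using the \(\calD\) tail estimates of \cite[Lemma~B]{PelaezDeLaRosa2022}.
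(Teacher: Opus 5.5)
Your outline follows the paper's proof step for step: model reduction, the Riemann--Liouville Littlewood--Paley transfer with \((H)\) automatic for \(\alpha\ge\beta\), the fractional \(g\)-function reduction to the area map on \(A^p_\nu\), the Duan--Wang--Wang machinery in the four ranges, and the cancellation of \(\alpha\) via \(\widehat\nu\asymp\widehat\omega\delta^{p(\alpha-\beta)}\). However, the passage from Step~1 to Step~3 rests on a false identity. It is not true that \(\IRL_{-\alpha}Tf=F\,G_\varphi\) up to finitely many modes. The composition \(\IRL_{-\alpha}\IRL_\alpha\) is the coefficient multiplier
\[
 m_{\alpha,n}=\frac{\Gamma(n+1)^2}{\Gamma(n+1+\alpha)\Gamma(n+1-\alpha)}
 =1-\frac{\alpha^2}{n}+O(n^{-2}),
\]
which differs from \(1\) for all large \(n\); for \(\alpha=1\) it is \(n/(n+1)\).

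Your \(g\)-function formula therefore only yields \(\|Tf\|_{H^q}\asymp\|M_{m_\alpha}(FG_\varphi)\|_{\mathcal A^q_\alpha}\) plus finitely many coefficients. To reach the area map \(\|FG_\varphi\|_{\mathcal A^q_\alpha}\), you need one more fact: Taylor-form multipliers, and their reciprocals on the tail, act boundedly on the analytic tent space \(\mathcal A^q_\alpha\) for every \(0<q<\infty\). The paper gets this from the analytic-tent multiplier theorem inside Lemma~\ref{lem:fractional-primitive-area}, which gives \(\|\IRL_\alpha U\|_{H^q}\asymp\|U\|_{\mathcal A^q_\alpha}\) directly. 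You could also derive it by conjugation: coefficient multipliers commute with \(\IRL_{-\alpha}\), so your \(g\)-function equivalence together with the \(H^q\) multiplier lemma gives the tent-space bound. In addition, for \(\alpha\in\N\) with \(\alpha\ge2\), the correction term in your formula must include all \(\widehat h(n)\), \(0\le n\le\alpha-1\), not just \(h(0)\). For example, \(\IRL_{-2}z=0\) while \(\|z\|_{H^q}=1\).

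A second, smaller omission concerns necessity. Your test functions, the kernels \(h_a\) and the randomized atom sums, range over all of \(A^p_\nu\). But when \(\alpha-\beta\in\N\), the map \(f\mapsto\IRL_{\beta-\alpha}f\) misses the polynomials of degree \(<\alpha-\beta\), so boundedness of \(V^\varphi_{\alpha,\beta}\) controls the area map only on the range. You still have to show that the area map is finite on that finite-dimensional complement. The paper does this in Proposition~\ref{prop:g-function-reduction}: \(z^NP\) lies in the range for large \(N\), and \(A_{\alpha,\varphi}P\le r^{-N}A_{\alpha,\varphi}(z^NP)+C_{P,r}\) pointwise on \(\T\). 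With these two repairs, the rest of your plan matches the paper. For the atoms, the paper simply cites the Pel\'aez--R\"atty\"a--Sierra atomic decomposition, in the formulation of Duan--Wang--Wang, rather than rebuilding it.
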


\begin{theorem}\label{thm:compactness-main}
Under the assumptions of Theorem~\ref{thm:boundedness-main}, the following
compactness criteria hold.

\begin{enumerate}[label=\textup{(\Alph*)},leftmargin=2.5em]
\item In the range of Theorem~\ref{thm:boundedness-main}\textup{(A)},
\(V^\varphi_{\alpha,\beta}:A^p_\omega\to H^q\) is compact if and only if
\[
        \lim_{|z|\to1^-}
        |G_\varphi(z)|
        \widehat\omega(z)^{-1/p}
        (1-|z|^2)^{\beta+1/q-1/p}=0.
        \tag{A$_0$}
\]

\item In the range \(2<p=q<\infty\), compactness is equivalent to the measure
\(\mu_{\varphi,B}\) in Theorem~\ref{thm:boundedness-main}\textup{(B)} being a
vanishing Carleson measure.

\item In the range \(0<q<\infty\) and \(p>\max\{2,q\}\), compactness is
equivalent to boundedness, and hence to the condition in
Theorem~\ref{thm:boundedness-main}\textup{(C)}.

\item If \(0<q<p\le2\), compactness is equivalent to
\[
        \lim_{r\to1^-}
        \int_{\T}
        \sup_{z\in\Gamma(\xi)\setminus r\D}
        |G_\varphi(z)|^{\frac{pq}{p-q}}
        \frac{(1-|z|^2)^{\frac{pq\beta}{p-q}}}
             {\widehat\omega(z)^{\frac{q}{p-q}}}
        \dxi=0.
        \tag{D$_0$}
\]
\end{enumerate}
\end{theorem}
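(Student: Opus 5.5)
We prove Theorem~\ref{thm:compactness-main} by the same reduction that gives Theorem~\ref{thm:boundedness-main}, now keeping track of compactness. Proposition~\ref{prop:model-reduction} writes \(V^\varphi_{\alpha,\beta}\) as a bounded Taylor-coefficient multiplier composed with a model operator, plus finite-rank polynomial corrections. The finite-codimensional range decomposition then transfers compactness in both directions. Finite-rank operators are compact, and bounded coefficient multipliers that are invertible modulo the finitely many exceptional modes preserve compactness. So it suffices to treat the model map \(f\mapsto J(f)\) with \(J(f)'\asymp \IRL_{\beta-\alpha}f\cdot G_\varphi'\), up to bounded multipliers, acting from \(A^p_\omega\) to \(H^q\).

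Write \(D^{\alpha-\beta}f\) for the fractional derivative of order \(\alpha-\beta\). The fractional Littlewood--Paley transfer identifies \(f\mapsto D^{\alpha-\beta}f\) (precisely, the coefficient operator \(\IRL_{\beta-\alpha}\)) with an isomorphism, modulo finitely many modes, from \(A^p_\omega\) onto \(A^p_{\nu_{\alpha,\beta,p}}\). Since \(\alpha\ge\beta\), hypothesis (H) holds automatically, and the tail equivalence \(\widehat\nu\asymp\widehat\omega\,\delta^{p(\alpha-\beta)}\) absorbs \(\alpha\) from the final conditions. The \(g\)-function characterization of \(H^q\) then gives
\[
\|Jf\|_{H^q}^q\asymp\int_{\T}\Bigl(\int_{\Gamma(\xi)}|h(z)|^2|G'(z)|^2\,\dA(z)\Bigr)^{q/2}\dxi,
\qquad h\in A^p_\nu .
\]
Hence compactness of \(V^\varphi_{\alpha,\beta}\) is equivalent to compactness of the weighted area map \(h\mapsto h\,G'\) from \(A^p_\nu\) into the tent space \(T^q_2\). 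Here \(G'\) is replaced by \(G_\varphi\,\delta^{-1}\) through the standard fractional-order \(g\)-function equivalence, and the \(\delta^{\beta}\) factor comes from this conversion. We then use the usual criterion for the area map: it is compact if and only if \(\|h_k G'\|_{T^q_2}\to0\) for every bounded sequence \(h_k\) in \(A^p_\nu\) tending to \(0\) uniformly on compacta. Equivalently, the restriction of \(G'\) to \(\D\setminus r\D\) has operator norm tending to \(0\) as \(r\to1^-\).

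The four cases go as follows.
\begin{enumerate}[label=\textup{(\Alph*)},leftmargin=2.5em]
\item Necessity of (A\(_0\)) comes from testing on normalized reproducing-kernel-type functions \(f_a\) of \(A^p_\omega\), which tend weakly to zero as \(|a|\to1\). Their images have \(H^q\)-norm bounded below by the quantity in (A) at \(a\). For sufficiency, split \(G=G\chi_{r\D}+G\chi_{\D\setminus r\D}\). The inner piece is compact because pointwise evaluation on compacta is compact. The outer piece is bounded by the supremum in (A) over \(|z|>r\), using the boundedness proof verbatim.
\item The boundedness proof shows the operator norm is comparable to the Carleson norm of \(\mu_{\varphi,B}\). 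Applied to the truncated symbol, this shows that the essential part is controlled by \(\sup_{|I|\le\eta}\mu_{\varphi,B}(S(I))/|I|\). For necessity, we test on the lattice-sum functions used in the Carleson lifting, localized to small boxes.
\item If (C) holds, the function in \(L^{pq/(p-q)}\) has tails \(\int_{\Gamma(\xi)\setminus r\D}\) that tend to \(0\) pointwise. Dominated convergence then shows the truncated (C)-norm tends to \(0\). The tent-space factorization bound gives operator norm \(\lesssim\) the (C)-quantity, so the outer piece is small and boundedness implies compactness.
\item Sufficiency uses the same truncation with the maximal-function bound, applied to the outer piece with the supremum restricted to \(\Gamma(\xi)\setminus r\D\).
\end{enumerate}

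The main obstacle is the necessity part of (D\(_0\)). The plan is to rerun the factorization and lattice-randomization argument from Theorem~\ref{thm:boundedness-main}(D) with Khintchine's inequality. We would use random sums \(\sum_k t_k(\theta)\lambda_k f_{a_k}\), supported on the lattice points with \(|a_k|>r\), where the \(f_{a_k}\) tend weakly to zero. This lets us argue by contradiction: if the quantity in (D\(_0\)) stays \(\ge\varepsilon\), we extract disjointly supported blocks, corresponding to successive annuli, whose images have norms bounded below. That is incompatible with compactness, because a compact operator must send such block sums into a relatively compact set.
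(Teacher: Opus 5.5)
\textbf{The reduction step is wrong as written.} Your architecture is the paper's. Proposition~\ref{prop:model-reduction} and the range decomposition reduce compactness of \(V^\varphi_{\alpha,\beta}\) to the sequential condition for an area map on \(A^p_\nu\) (Proposition~\ref{prop:g-function-reduction}). Sufficiency in every case comes from cutting off \(R\D\) and applying the boundedness estimate to the outer piece, and necessity comes from testing. The error is in the reduction itself, which is exactly the step that removes \(\alpha\).

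The modified model is \(\widetilde V^\varphi_{\alpha,\beta}f=\IRL_\alpha(hG_\varphi)\) with \(h=\IRL_{\beta-\alpha}f\). This is not a Volterra operator with \(J(f)'\asymp h\,G_\varphi'\): already for \(\alpha=\beta=1\) one has \(G_\varphi=z\varphi'\), so \(G_\varphi'\) is the wrong object. The tool you need is the order-\(\alpha\) fractional \(g\)-function of Lemma~\ref{lem:fractional-primitive-area}, \(\|\IRL_\alpha U\|_{H^q}\asymp\|U\|_{\mathcal A_\alpha^q}\), whose density carries \(\delta^{2\alpha-2}\). So the area-map symbol is \(u_\varphi=G_\varphi\delta^{\alpha-1}\), not \(G_\varphi\delta^{-1}\).

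With your \(u=G_\varphi\delta^{-1}\) and \(\widehat\nu\asymp\widehat\omega\delta^{p(\alpha-\beta)}\), the Case A quantity \(|u|\widehat\nu^{-1/p}\delta^{1+1/q-1/p}\) equals \(|G_\varphi|\widehat\omega^{-1/p}\delta^{\beta-\alpha+1/q-1/p}\), which is not \((\mathrm{A}_0)\). The factor \(\delta^\beta\) appears only as \(\delta^{\alpha-1}\cdot\delta\cdot\delta^{\beta-\alpha}\), and the same cancellation produces the densities in B, C and D (Proposition~\ref{prop:dictionary}). You also need an area-map argument that the polynomial complement of the range of \(\IRL_{\beta-\alpha}\) is harmless. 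The paper gets \(\|A_{\alpha,\varphi}P\|_{L^q}<\infty\) from the fact that \(z^NP\) lies in the range, together with a compact inner disc.

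\textbf{The case arguments, once \(u_\varphi\) is corrected.} Your Cases A and C and the sufficiency halves of B and D then essentially coincide with the paper's. Your necessity arguments in B and D take a different, more hands-on route.

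The paper stays soft. In B, relative compactness of the image of the unit ball gives uniform absolute continuity of \(\|A_uh\|_{L^p}^p\). Hence \(\sup_{\|h\|_{A^p_\nu}\le1}\int_{\wedge(I)}|h|^2\,d\sigma=o(|I|^{(p-2)/p})\), and the vanishing half of Lemma~\ref{lem:caseB-lifting-proposition} finishes. In D, Lemma~\ref{lem:caseD-outer-norm} converts the sequential condition into \(\|A^{\mathrm{out}}_{u,R_*}\|\to0\). The outer testing estimate (3.36), which is uniform in \(R\), together with Lemmas~\ref{lem:caseD-multiplier} and~\ref{lem:caseD-continuous-discrete}, then gives \(\|B_{\nu,u,R}\|_{L^r}\lesssim\|A^{\mathrm{out}}_{u,R_*}\|\).

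You instead feed localized randomized lattice sums straight into the sequential criterion. That works, but the contradiction cannot come from relative compactness alone, since a relatively compact set may have norms bounded below. You must check two things:
\begin{enumerate}
\item \(\sum_kr_k(t)\lambda_ke_k\), with \(\lambda\) normalized and supported near \(\partial\D\) (or in a small tent), is bounded in \(A^p_\nu\) and tends to \(0\) locally uniformly, uniformly in \(t\) and \(\lambda\). This holds for large \(M\), since \(\widehat\nu\gtrsim\delta^{c}\) for some \(c>0\).
\item Near-extremal \(\lambda\) for the truncated sequences \(b^{[R]}\) exist with constants independent of \(R\). This is the duality half of Lemma~\ref{lem:caseD-multiplier} together with (3.34).
\end{enumerate}
Your route is more direct; the paper's avoids this bookkeeping.

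Finally, test with bounded, locally uniformly null sequences rather than ``weakly null'' kernels, since \(p\) and \(q\) may be below \(1\).
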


\begin{theorem}\label{thm:conditional-extension}
Let \(0<p,q<\infty\), \(\omega\in\calD\), \(\varphi\in H(\D)\),
\((\alpha,\beta)\in\calP\), and \(0<\alpha<\beta\).  Suppose that
\[
        p(\beta-\alpha)<d_-(\omega).
\]
Then the boundedness and compactness conclusions of
Theorems~\ref{thm:boundedness-main} and
\ref{thm:compactness-main} remain valid, with
\(G_\varphi=\IRL_{-\beta}\varphi\).
\end{theorem}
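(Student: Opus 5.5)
The plan is to route everything through the hypothesis \eqref{hyp:H}, so that the proofs of Theorems~\ref{thm:boundedness-main} and~\ref{thm:compactness-main} can be reused verbatim. First, by Proposition~\ref{prop:negative-obstruction} with \(\tau=p(\beta-\alpha)>0\), the assumption \(\tau<d_-(\omega)\) gives
\[
\nu:=\nu_{\alpha,\beta,p}\in\calD
\quad\text{and}\quad
\widehat\nu(z)\asymp\widehat\omega(z)(1-|z|^2)^{-\tau}.
\]
Next, I will check that the arguments for \(\alpha\ge\beta\) used the sign \(\alpha\ge\beta\) only to obtain \eqref{hyp:H}. The argument runs as follows.

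\begin{enumerate}[label=\textup{(\roman*)},leftmargin=2.5em]
\item Proposition~\ref{prop:model-reduction} replaces \(V^\varphi_{\alpha,\beta}\) by a model operator. The model differs from \(V^\varphi_{\alpha,\beta}\) by bounded Taylor multipliers and finite-rank polynomial corrections, so boundedness and compactness are preserved. This step is purely coefficientwise and valid for all admissible \((\alpha,\beta)\in\calP\). The finite kernel at positive integral shift orders, which arises for \(\beta-\alpha\in\N\), is one of the exceptional modes already treated there.
\item The Riemann--Liouville transfer gives \(\|\IRL_{\beta-\alpha}f\|_{A^p_\nu}\asymp\|f\|_{A^p_\omega}\) modulo finitely many modes. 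It comes from the moment-induced Littlewood--Paley theory on \(A^p_\omega\) and needs only \(\omega\in\calD\) together with the target weight \(\nu\in\calD\) and its tail comparison. Here the shift order \(\beta-\alpha\) is positive, so this is fractional differentiation. I must check that the transfer lemma was stated with a signed order \(t=\alpha-\beta\) under \eqref{hyp:H}, rather than only for \(t\ge 0\).
\item Using the finite-codimensional range decomposition, the operator problem becomes the weighted area-map problem
\[
h\longmapsto
\Bigl(\int_{\Gamma(\xi)}|h(z)|^2|G_\varphi'(z)|^2(1-|z|^2)^{2\beta}\,\cdots\,dA(z)\Bigr)^{1/2}
\quad\text{from } A^p_\nu \text{ to } L^q(\T),
\]
with source weight \(\nu\in\calD\).
\end{enumerate}

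Then I apply the four-regime classification of the area map (pointwise, Carleson lifting, tent factorization, maximal function) with source weight \(\nu\). Each criterion involves \(\widehat\nu(z)^{-1/p}\) multiplied by a power of \((1-|z|^2)\) coming from \(\alpha-\beta\). Substituting \(\widehat\nu\asymp\widehat\omega\,\delta^{-\tau}\) gives
\[
\widehat\nu^{-1/p}=\widehat\omega^{-1/p}\delta^{\beta-\alpha},
\]
and this cancels the \(\alpha\)-dependence exactly as in the case \(\alpha\ge\beta\). Since only \(\asymp\) is used, the conditions (A)--(D) and (A\(_0\))--(D\(_0\)) are recovered with the same exponents, and \(G_\varphi=\IRL_{-\beta}\varphi\) is unchanged.

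The main obstacle is step (ii). The underlying Littlewood--Paley/multiplier comparison must hold when the shifted weight has a negative power twist. The natural way to verify it is to run the multiplier comparison in the reverse direction, deriving \(A^p_\omega\) from \(A^p_\nu\) via \(\IRL_{\alpha-\beta}\), which is integration and therefore the easier direction. The inverse bound then follows by the exact coefficient comparison, using that the moments satisfy \(\nu_{x}\asymp x^{\tau}\omega_{x}\), a relation that follows from the tail equivalence in \eqref{hyp:H}. Compactness transfers the same way, because the reductions are isomorphisms modulo finite rank, and compactness is invariant under such perturbations.
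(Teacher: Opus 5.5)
Your proposal follows the paper's proof: Proposition~\ref{prop:negative-obstruction} converts $p(\beta-\alpha)<d_-(\omega)$ into \textup{(H)}, and every reduction and case criterion (Corollary~\ref{cor:LP-shift-for-volterra}, Lemma~\ref{lem:LP-range-decomposition}, Proposition~\ref{prop:g-function-reduction}, Section~\ref{sec:four-cases}) is proved under \textup{(H)} without using $\alpha\ge\beta$, with the $\alpha<\beta$ transfer obtained just as you suggest, by applying Theorem~\ref{thm:fractional-LP-shift} with base weight $\nu$, whose shifted weight is $\omega$. There are two slips in your commentary: for $\beta>\alpha$ the operator $\IRL_{\beta-\alpha}$ is fractional integration and $\IRL_{\alpha-\beta}$ is differentiation, whose finite kernel when $\beta-\alpha\in\N$ is handled by the polynomial $Q_h$ in Lemma~\ref{lem:LP-range-decomposition} rather than in Proposition~\ref{prop:model-reduction}; and the area density is $|G_\varphi|^2\delta^{2\alpha-2}$, not an expression in $G_\varphi'$.
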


\begin{remark}\label{rem:exponent-bookkeeping}
Under \(u_\varphi=G_\varphi\delta^{\alpha-1}\) and
\(\widehat\nu\asymp\widehat\omega\delta^{p(\alpha-\beta)}\), the distinct
Carleson and tent densities in Cases B and C give, respectively, the powers
\[
 \frac{p+2+2p(\beta-1)}{p-2}
 \quad\text{and}\quad
 \frac{4+2p(\beta-1)}{p-2},
\]
with denominator \(\widehat\omega^{2/(p-2)}\) in both cases.
\end{remark}

\section{Fractional shifts and area-function reductions}

This section identifies the sharp threshold for negative weight shifts, develops
the Littlewood--Paley reduction, passes to the modified model and area map, and
records the resulting weight dictionary.
The regularization and local geometry of \(\calD\)-weights used below are
recorded in \cite[Lemmas~2.5--2.7]{DuanWangWang2021}.  The associated atomic
and Carleson measure theory appears in \cite{PelaezRattyaSierra2021}, with
related operator results for regular weights in
\cite{PelaezRattyaSierra2018}.

\subsection{Analytic and geometric preliminaries}

For \(z,w\in\D\), let
\[
        \varrho(z,w)=\left|\frac{z-w}{1-\overline zw}\right|
\]
be the pseudo-hyperbolic distance, and write
\(D(z,r)=\{w\in\D:\varrho(z,w)<r\}\).  For \(0<\eta<1\), an
\(\eta\)-lattice is a sequence \(Z=\{a_k\}\subset\D\) such that the discs
\(D(a_k,\eta)\) cover \(\D\) and the discs \(D(a_k,\eta/5)\) are pairwise
disjoint.  Every fixed pseudo-hyperbolic enlargement of the latter discs has
uniformly bounded overlap; see Lemma~\ref{lem:background-tools}\textup{(vi)}.
For an arc \(I\subset\T\), set
\[
        \wedge(I)=\D\setminus
        \bigcup_{\xi\in\T\setminus I}\Gamma(\xi).
\]
All continuous and discrete tent quasi-norms used below are equivalent under a
fixed change of aperture.  Moreover, \(S(I)\) and \(\wedge(I)\) are
interchangeable up to fixed enlargements of \(I\); see again
Lemma~\ref{lem:background-tools}\textup{(vi)}.

\begin{lemma}\label{lem:background-tools}
The following statements hold.
\begin{enumerate}[label=\textup{(\roman*)}]
\item On any measure space, generalized H\"older's inequality holds for a finite
family of exponents
\(0<r,r_j\le\infty\) satisfying \(\sum_j1/r_j=1/r\):
\[
        \left\|\prod_j f_j\right\|_{L^r}
        \le \prod_j\|f_j\|_{L^{r_j}}.
\]
Minkowski's inequality holds
for \(1\le r<\infty\), while
\[
        \left|\sum_j x_j\right|^r\le\sum_j|x_j|^r,
        \qquad 0<r\le1.
\]
For \(1<s<\infty\), the \(\ell^s\)--\(\ell^{s'}\) duality formula is valid.

\item Tonelli's theorem may be applied to nonnegative measurable functions, and
Fubini's theorem to complex-valued functions once absolute integrability has been
established.  Fatou's lemma applies to nonnegative measurable functions, while
monotone convergence applies to pointwise nondecreasing sequences of nonnegative
measurable functions.  Dominated convergence applies
to almost-everywhere convergent functions dominated in modulus by one integrable
function.

\item If \(F\) is analytic, then Cauchy's coefficient estimates hold; locally bounded
families of analytic functions are normal; and \(|F|^s\) is subharmonic for every
\(s>0\).  In particular,
\[
        |F(z)|\lesssim_p(1-|z|)^{-1/p}\|F\|_{H^p},
        \qquad 0<p<\infty.
        \tag{2.1a}
\]

\item If \(0<p<\infty\) and \(v\) is an integrable radial weight with
\(\widehat v(r)>0\) for \(r<1\), then point evaluations and Taylor coefficient
functionals are continuous on \(A^p_v\), bounded subsets of \(A^p_v\) are normal,
polynomials are dense in \(A^p_v\), and \(A^p_v\) is complete for its norm or
quasi-norm.

\item The reciprocal gamma function is entire and vanishes at the nonpositive
integers.  For fixed \(a,b\in\C\) and all sufficiently large integers \(n\),
\[
        \frac{\Gamma(n+a)}{\Gamma(n+b)}
        \sim n^{a-b}\sum_{k=0}^{\infty}c_k(a,b)n^{-k}
        \qquad(n\to\infty),
        \tag{2.1b}
\]
in the Poincar\'e sense.  Finite products of such quotients therefore have the
corresponding full inverse-power expansion; if their leading constant is nonzero,
their reciprocals do as well.

\item If \(v\in\calD\) and \(\kappa_v=\widehat v/\delta\), set
\(\kappa_v(E)=\int_E\kappa_v\,\dA\).  Then \(\kappa_v\) is
regular, \(\widehat\kappa_v\asymp\widehat v\), and
\[
        \int_\D |f|^p\kappa_v\dA\asymp\|f\|_{A^p_v}^p,
        \qquad
        \kappa_v(D(z,r))\asymp\widehat v(z)\delta(z)
        \tag{2.1d}
\]
for every fixed \(0<r<1\), and \(\delta\), \(\widehat v\), and \(\kappa_v\)
are locally comparable on such discs.  Maximal separated sets give
pseudo-hyperbolic lattices whose fixed enlargements have bounded overlap.
Boundary shadows have length comparable to \(\delta(z)\), and fixed changes of
cone aperture or tent enlargement change the associated
continuous and discrete tent quasi-norms only by constants.
\end{enumerate}
\end{lemma}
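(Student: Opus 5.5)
Lemma~\ref{lem:background-tools} is a collection of standard background facts, and I would prove it by citation together with short verifications rather than by a unified argument. Items (i)--(iii) are classical. Generalized H\"older follows by induction from the two-function case applied to $|f_j|^{r}$ with exponents $r_j/r$. The $p$-triangle inequality for $0<r\le1$ follows from concavity of $t\mapsto t^r$. Tonelli, Fubini, Fatou, monotone and dominated convergence are quoted. Cauchy estimates, Montel's theorem and subharmonicity of $|F|^s$ (via $\log|F|$ subharmonic) are standard. For (2.1a) I would apply subharmonicity of $|F|^p$ on the disc of radius $(1-|z|)/2$ about $z$, integrate against the Poisson kernel, and use the uniform $L^p$ bound on circles.

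For (iv), I would first obtain a pointwise bound. Since $|f|^p$ is subharmonic, its integral means $M_p(r,f)$ increase in $r$. Hence for $\rho<1$,
\[
\|f\|_{A^p_v}^p\ge 2\int_\rho^1 M_p^p(s,f)\,s\,v(s)\,ds\gtrsim \rho\,\widehat v(\rho)\,M_p^p(\rho,f),
\]
with $\widehat v(\rho)>0$. Combining this with (2.1a)-type estimates on $\{|z|\le r\}$ gives local uniform bounds. These yield continuity of point evaluations and, via Cauchy estimates, of coefficient functionals. Normality of bounded sets then follows from Montel. Completeness follows because a Cauchy sequence converges locally uniformly to an analytic limit, and Fatou identifies the norm limit. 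For density of polynomials, I would use dilations $f_r(z)=f(rz)$. The bound $M_p(s,f_r)\le M_p(s,f)$ together with dominated convergence gives $f_r\to f$. Each $f_r$ is then approximated uniformly on $\overline\D$ by its Taylor polynomials, which suffices since $v$ is integrable.

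For (v), I would cite the Weierstrass product for $1/\Gamma$ and the Stirling-type asymptotic for gamma ratios (Tricomi--Erd\'elyi). The expansion of finite products and of their reciprocals follows by formal multiplication and inversion of power series in $1/n$ when the leading constant is nonzero.

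Item (vi) is where the only real work lies. I would quote \cite[Lemmas~2.5--2.7]{DuanWangWang2021} and \cite{PelaezRattyaSierra2021} and check the specific claims.
\begin{itemize}
\item \emph{Tail of $\kappa_v$.} Since $v\in\widehat{\calD}$, $\widehat v(s)\ge C^{-1}\widehat v(r)$ for $r\le s\le (1+r)/2$, which gives $\int_r^1\widehat v(s)/(1-s)\,ds\gtrsim\widehat v(r)$. The reverse inequality $\lesssim\widehat v(r)$ follows from $v\in\check{\calD}$: summing over the geometric sequence $1-s_k=K^{-k}(1-r)$ gives a convergent series, since $\widehat v(s_k)\le C^{-k}\widehat v(r)$. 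Hence $\widehat\kappa_v\asymp\widehat v$.
\item \emph{Regularity and local comparability.} These come from $\widehat v(s)\asymp\widehat v(r)$ whenever $1-s\asymp 1-r$, which uses $\widehat{\calD}$ only. Integrating over a pseudo-hyperbolic disc, whose area is $\asymp\delta(z)^2$, gives $\kappa_v(D(z,r))\asymp\widehat v(z)\delta(z)$.
\item \emph{Norm equivalence.} The equivalence $\int|f|^p\kappa_v\asymp\|f\|^p_{A^p_v}$ is \cite[Lemma~2.6]{DuanWangWang2021}, or \cite[Theorem~5]{PelaezRattya2021}'s framework. Its proof writes both sides via integral means and integrates by parts in $r$ using monotonicity of $M_p$. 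The $\check{\calD}$ condition is exactly what gives the reverse direction.
\item \emph{Lattices, shadows and tent quasi-norms.} The lattice and shadow statements are elementary hyperbolic geometry. The aperture and tent comparisons for continuous and discrete tent spaces are the standard Coifman--Meyer--Stein change-of-aperture results, which I would cite.
\end{itemize}
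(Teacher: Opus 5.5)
Your proposal is correct and follows essentially the paper's route. Items (i)--(v) get short verifications or citations; your small-disc sub-mean proof of (2.1a) and the bound \(\|f\|_{A^p_v}^p\gtrsim\rho\,\widehat v(\rho)M_p(\rho,f)^p\) are minor variants of the paper's Poisson-majorization argument. Item (vi) rests on \cite[Lemmas~2.5--2.7]{DuanWangWang2021}, where you additionally check \(\widehat{\kappa_v}\asymp\widehat v\) and the norm equivalence by hand.

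One attribution should be fixed. Regularity of \(\kappa_v\), i.e.\ \(\widehat{\kappa_v}(r)\asymp(1-r)\kappa_v(r)\), is equivalent to \(\widehat{\kappa_v}\asymp\widehat v\), so it needs the \(\check{\calD}\) upper bound from your first bullet and not \(\widehat{\calD}\) alone; for \(v\in\widehat{\calD}\setminus\check{\calD}\) the tail \(\widehat{\kappa_v}\) can be infinite.
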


\begin{proof}
For \textup{(i)}, when \(r<\infty\), apply ordinary H\"older to
\(\prod_j|f_j|^r\) with exponents \(r_j/r\ge1\); an infinite \(r_j\) is
handled by the essential supremum, and \(r=\infty\) is immediate.  Ordinary
H\"older applied to
\[
 \int |f+g|^r\le
 \int |f|\,|f+g|^{r-1}+\int |g|\,|f+g|^{r-1}
\]
proves Minkowski for \(r>1\); \(r=1\) is the integral triangle inequality, and
the power inequality for \(0<r\le1\) follows from concavity of \(t^r\).
Sequence duality follows from H\"older and, for finitely supported nonzero
\(c\), the choice
\(d_k=\overline{c_k}|c_k|^{s-2}/\|c\|_{\ell^s}^{s-1}\), with \(d_k=0\)
when \(c_k=0\); truncation gives the general case.

Part \textup{(ii)} follows from the corresponding Tonelli, Fubini, Fatou,
monotone-convergence, and dominated-convergence theorems in
\cite[Theorems~1.7.18 and~1.7.21, Corollary~1.7.23,
Corollary~1.4.47, and Theorems~1.4.44 and~1.4.49]{TaoMeasureTheory}.
All applications below are to finite or \(\sigma\)-finite Borel spaces,
completed when necessary.

Cauchy's estimates, Montel's theorem, the Poisson representation, and the
maximum principle used in \textup{(iii)} are standard; see, for example,
\cite{ConwayComplexAnalysis}.  Off the zeros of
\(F\), direct differentiation gives
\(\Delta|F|^s=s^2|F|^{s-2}|F'|^2\ge0\); at a zero the sub-mean inequality is
automatic, so continuity makes \(|F|^s\) subharmonic.  For \(|z|<R<1\),
Poisson comparison, with \(P_w(\xi)=(1-|w|^2)/|\xi-w|^2\), gives
\[
 |F(z)|^p\le\int_\T P_{z/R}(\xi)|F(R\xi)|^p\dxi
 \le\frac{R+|z|}{R-|z|}\int_\T|F(R\xi)|^p\dxi .
\]
Letting \(R\uparrow1\) proves \((2.1a)\).

For \textup{(iv)}, fix a compact disc \(K\Subset\D\), choose
\(R>\sup_K|z|\), apply the preceding estimate on each circle of radius
\(t\ge R\), and integrate against \(2tv(t)\,dt\).  Since this measure has
positive mass on \([R,1)\),
\[
 \sup_K|f|^p\int_R^1 2tv(t)\,dt
 \le C_{K,R}\int_R^1M_p(t,f)^p2tv(t)\,dt
 \le C_{K,R}\|f\|_{A^p_v}^p. \tag{2.1c}
\]
Thus point evaluations are continuous, Cauchy's estimates give continuity of the
coefficient functionals, and Montel gives normality.  For
\(f_r(z)=f(rz)\), subharmonicity makes the integral means of \(|f|^p\)
nondecreasing.  Writing \(M_p(t,g)^p=\int_\T|g(t\xi)|^p\dxi\),
\[
 M_p(t,f_r-f)^p\le
 C_p\bigl(M_p(rt,f)^p+M_p(t,f)^p\bigr)\le2C_pM_p(t,f)^p.
\]
Dominated convergence gives \(\|f_r-f\|_{A^p_v}\to0\), and Taylor
polynomials approximate each fixed \(f_r\) uniformly on \(\overline\D\).
Thus polynomials are dense.  If \(f_n\) is Cauchy, \((2.1c)\) gives local
uniform convergence to some \(f\in H(\D)\), while Fatou gives
\[
 \|f_n-f\|_{A^p_v}^p\le
 \liminf_{m\to\infty}\|f_n-f_m\|_{A^p_v}^p,
\]
which proves completeness.

The first assertion in \textup{(v)} is given in the NIST DLMF
\cite[Sec.~5.2(i)]{NISTDLMF}; the quotient expansion is
\cite[Sec.~5.11(iii), Eq.~(5.11.13)]{NISTDLMF}, with \(c_0(a,b)=1\).
Multiplication gives the product expansion; for the gamma products below the
powers of \(n\) cancel.  Thus, with \(y=(n+1)^{-1}\) and
\(n^{-1}=y/(1-y)\), finite re-expansion gives
\[
 P_n=\sum_{j=0}^{N}d_jn^{-j}+O(n^{-N-1})
 =\sum_{j=0}^{N}\widetilde d_j(n+1)^{-j}
  +O((n+1)^{-N-1}).
\]
Thus \(P_n\) has Taylor form.  If \(\widetilde d_0\ne0\), recursive formal
inversion, with
\[
 e_0=\widetilde d_0^{-1},\qquad
 e_k=-\widetilde d_0^{-1}\sum_{j=1}^{k}\widetilde d_j e_{k-j},
\]
gives the same expansion for \(1/P_n\).

For \textup{(vi)}, regularization, norm equivalence, and disc comparability
are \cite[Lemmas~2.5--2.7]{DuanWangWang2021}, with
\(1-|z|\asymp\delta(z)\).  Local comparability and
\(A(D(z,r))\asymp\delta(z)^2\) give
\(\kappa_v(D(z,r))\asymp\widehat v(z)\delta(z)\).  A maximal
\(\eta/5\)-separated family covers \(\D\), and area comparison gives bounded
overlap of fixed enlargements.  The shadow and tent assertions follow from
\cite[formula~(2.6), the discussion preceding Lemma~2.11, Lemma~2.11, and the
following paragraph]{DuanWangWang2021}; aperture invariance for all exponents
used here is \cite[Section~2]{DuanWangWang2021}.  Applying it to
\(\sum_k\delta_{a_k}\) gives the discrete statement, including
\(\ell^\infty\).
\end{proof}

\begin{lemma}\label{lem:analytic-compactness-criterion}
Let \(0<p,q<\infty\), let \(v\) be an integrable radial weight with
\(\widehat v(r)>0\) for \(r<1\), and let
\(T:H(\D)\to H(\D)\) be linear and continuous for the compact-open topology.
If the restriction \(T:A^p_v\to H^q\) is bounded, then it is compact if and
only if
\[
        \|Tf_j\|_{H^q}\longrightarrow0
\]
for every bounded sequence \(\{f_j\}\subset A^p_v\) that converges to zero
locally uniformly on \(\D\).
\end{lemma}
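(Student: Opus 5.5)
We prove the two implications separately; the only analytic inputs are Lemma~\ref{lem:background-tools}\textup{(iv)}, which gives local boundedness and normality of bounded sets in \(A^p_v\), and the continuity of \(T\) in the compact-open topology.

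\emph{Compactness implies the sequential condition.}  Assume \(T:A^p_v\to H^q\) is compact.  Let \(\{f_j\}\) be bounded in \(A^p_v\) with \(f_j\to0\) locally uniformly, and suppose, towards a contradiction, that \(\|Tf_j\|_{H^q}\not\to0\).  Passing to a subsequence we may assume
\[
\|Tf_j\|_{H^q}\ge\varepsilon>0\quad\text{for all } j .
\]
By compactness, a further subsequence gives \(Tf_j\to g\) in \(H^q\).  Convergence in \(H^q\) implies local uniform convergence, by the pointwise estimate \((2.1a)\) applied to \(Tf_j-g\).  On the other hand, \(f_j\to0\) in the compact-open topology, so continuity of \(T\) gives \(Tf_j\to T0=0\) locally uniformly.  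Hence \(g=0\), so \(\|Tf_j\|_{H^q}\to0\), a contradiction.

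\emph{The sequential condition implies compactness.}  Let \(\{f_j\}\) be bounded in \(A^p_v\).  By Lemma~\ref{lem:background-tools}\textup{(iv)} it is a normal family, so some subsequence converges locally uniformly to an \(f\in H(\D)\).  By Fatou's lemma,
\[
\|f\|_{A^p_v}\le\liminf_j\|f_j\|_{A^p_v}<\infty,
\]
so \(f\in A^p_v\).  Then \(f_j-f\) is bounded in \(A^p_v\), using the quasi-triangle inequality when \(p<1\), and tends to zero locally uniformly.  The hypothesis gives
\[
\|T(f_j-f)\|_{H^q}=\|Tf_j-Tf\|_{H^q}\to0,
\]
so \(Tf_j\to Tf\) in \(H^q\).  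Thus every bounded sequence has a subsequence whose image converges in \(H^q\), which is compactness in the metric (quasi-normed) setting.

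The only mildly delicate point is the identification of the limit \(g\) in the first direction.  It requires that \(H^q\)-convergence control local uniform convergence, and this is exactly what \((2.1a)\) provides for every \(0<q<\infty\).  Note that boundedness of \(T:A^p_v\to H^q\) is only used so that \(T\) is a well-defined map into \(H^q\); neither direction needs it in any other way.
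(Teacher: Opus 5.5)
Your proposal is correct and follows essentially the same route as the paper. For sufficiency, normality and Fatou produce a limit \(f\in A^p_v\), and the sequential condition is applied to \(f_j-f\); for necessity, a contradiction argument shows the \(H^q\)-limit must vanish, since \(H^q\)-convergence gives local uniform convergence via \((2.1a)\) while compact-open continuity sends \(Tf_j\to0\) locally uniformly.
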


\begin{proof}
Suppose first that the displayed sequential condition holds.  Every bounded
sequence in \(A^p_v\) has, by Lemma~\ref{lem:background-tools}\textup{(iv)}, a
subsequence \(\{f_{j_k}\}\) converging locally uniformly to some
\(f\in H(\D)\).  Fatou's lemma gives \(f\in A^p_v\), while
\(\{f_{j_k}-f\}\) is bounded in \(A^p_v\) and converges to zero locally
uniformly.  The sequential condition therefore implies
\(Tf_{j_k}\to Tf\) in \(H^q\), which proves relative compactness.

Conversely, suppose that \(T\) is compact and that \(f_j\to0\) locally
uniformly, with \(\{f_j\}\) bounded in \(A^p_v\).  If
\(\|Tf_j\|_{H^q}\not\to0\), then some \(\varepsilon>0\) and a subsequence
satisfy \(\|Tf_{j_k}\|_{H^q}\ge\varepsilon\).  Compactness provides a
further norm-convergent subsequence.  Its limit must be zero because norm
convergence in \(H^q\) implies local uniform convergence, whereas compact-open
continuity gives \(Tf_{j_k}\to0\) locally uniformly.  This contradiction
proves the claim.  The same argument applies in the quasi-Banach ranges with
the standard power metrics.
\end{proof}

\subsection{Nonnegative shifts of \texorpdfstring{\(\calD\)}{D}-weights}

For \(\sigma\in\mathbb R\), set
\[
        \omega_\sigma(r)=\omega(r)(1-r^2)^\sigma
\]
and regard \(\omega_\sigma\) as a radial weight whenever it is integrable on
\([0,1)\).  Integrability is automatic when \(\sigma\ge0\).

\begin{lemma}\label{lem:positive-shift-D}
Let \(\omega\in\calD\) and \(\sigma\ge0\).  Then
\[
        \omega_\sigma\in\calD,
        \qquad
        \widehat{\omega_\sigma}(r)
        \asymp
        (1-r^2)^\sigma\widehat\omega(r),
        \qquad 0\le r<1.
\]
\end{lemma}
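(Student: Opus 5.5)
The plan is to prove the tail equivalence first and then read off both doubling conditions from it.

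\emph{Upper bound.} Since \((1-s^2)^\sigma\) is nonincreasing in \(s\) for \(\sigma\ge0\),
\[
\widehat{\omega_\sigma}(r)=\int_r^1\omega(s)(1-s^2)^\sigma\,ds\le(1-r^2)^\sigma\widehat\omega(r).
\]

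\emph{Lower bound.} Here I will use \(\omega\in\check{\calD}\). Fix the constants \(K>1\), \(C>1\) from the reverse doubling condition and put \(s_r=1-(1-r)/K\). Then
\[
\widehat{\omega_\sigma}(r)\ge\int_r^{s_r}\omega(s)(1-s^2)^\sigma\,ds\ge(1-s_r^2)^\sigma\bigl(\widehat\omega(r)-\widehat\omega(s_r)\bigr).
\]
Reverse doubling gives \(\widehat\omega(s_r)\le C^{-1}\widehat\omega(r)\), so the difference is at least \((1-C^{-1})\widehat\omega(r)\). Since \(1-s_r^2\asymp1-s_r=(1-r)/K\asymp1-r^2\), we get
\[
\widehat{\omega_\sigma}(r)\gtrsim K^{-\sigma}(1-r^2)^\sigma\widehat\omega(r).
\]
Together with the upper bound this proves \(\widehat{\omega_\sigma}\asymp(1-r^2)^\sigma\widehat\omega\). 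This step is the only place where care is needed, and reverse doubling is exactly what makes the mass of \(\omega\) near \(r\) comparable to the whole tail.

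\emph{Doubling classes.} Write \(\widehat{\omega_\sigma}\asymp\delta^\sigma\widehat\omega\).
\begin{itemize}
\item \(\widehat{\calD}\): the ratio of \(\delta^\sigma\) at \(r\) and at \((1+r)/2\) is bounded, because \(1-\bigl((1+r)/2\bigr)^2\asymp(1-r)/2\). Hence forward doubling of \(\omega\) transfers directly, with the constant multiplied by \(2^\sigma\) times the constants from the equivalence.
\item \(\check{\calD}\): reverse doubling only improves under the shift, because \(\delta^\sigma\) decays. Iterate the reverse doubling of \(\omega\) \(m\) times, with \(K'=K^m\), to get \(\widehat\omega(1-(1-r)/K')\le C^{-m}\widehat\omega(r)\). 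Combined with the equivalence,
\[
\widehat{\omega_\sigma}\bigl(1-(1-r)/K'\bigr)\lesssim C^{-m}\widehat{\omega_\sigma}(r).
\]
Choose \(m\) large enough that the implied constant times \(C^{-m}\) is less than \(1\). (The additional factor \((K')^{-\sigma}\) coming from \(\delta^\sigma\) only helps.) This gives reverse doubling for \(\omega_\sigma\).
\end{itemize}

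Finally, integrability and positivity of the tails are immediate: \(\sigma\ge0\) makes \(\omega_\sigma\le\omega\), and \(\omega_\sigma>0\) on a set of positive measure near \(1\) wherever \(\omega\) is. Hence \(\omega_\sigma\in\calD\).
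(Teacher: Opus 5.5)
Your proposal is correct and follows essentially the same route as the paper. You get the upper bound from monotonicity of \((1-s^2)^\sigma\), the lower bound by integrating over \([r,1-(1-r)/K]\) and using reverse doubling, and both doubling conditions by transferring through the tail equivalence, iterating \(r\mapsto1-(1-r)/K\) for the reverse one. The only difference is that you keep just the factor \(C^{-m}\) in the iteration, whereas the paper also retains \(K^{-\sigma m}\); this makes no essential difference.
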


\begin{proof}
Since \(1-r^2\asymp1-r\), it is enough to use \(1-r\).  Since
\((1-s^2)^\sigma\lesssim(1-r)^\sigma\) for \(r\le s<1\),
\[
 \widehat{\omega_\sigma}(r)
 =\int_r^1\omega(s)(1-s^2)^\sigma ds
 \lesssim(1-r)^\sigma\widehat\omega(r).
\]
Reverse doubling gives \(K>1\), \(0<\theta<1\), and
\(r_K=1-(1-r)/K\) such that
\(\widehat\omega(r_K)\le\theta\widehat\omega(r)\).  Since
\(1-s\asymp1-r\) for \(r\le s\le r_K\),
\[
\begin{aligned}
 \widehat{\omega_\sigma}(r)
 &\ge\int_r^{r_K}\omega(s)(1-s^2)^\sigma ds\\
 &\gtrsim(1-r)^\sigma
 \bigl(\widehat\omega(r)-\widehat\omega(r_K)\bigr)
 \gtrsim(1-r)^\sigma\widehat\omega(r).
\end{aligned}
\]
Thus the asserted tail equivalence holds.  With \(r_*=(1+r)/2\), it and
forward doubling for \(\omega\) give
\[
 \widehat{\omega_\sigma}(r)\lesssim
 (1-r)^\sigma\widehat\omega(r)\lesssim
 (1-r_*)^\sigma\widehat\omega(r_*)\lesssim
 \widehat{\omega_\sigma}(r_*),
\]
which proves forward doubling.  For reverse doubling, the same comparison at
\(r\) and \(r_K\) yields
\[
\widehat{\omega_\sigma}(r_K)\lesssim
K^{-\sigma}(1-r)^\sigma\widehat\omega(r_K)
\le C\theta K^{-\sigma}\widehat{\omega_\sigma}(r).
\]
Iteration of the base-weight estimate gives
\(\widehat\omega(r_{K^m})\le\theta^m\widehat\omega(r)\); applying the tail
comparison at \(r\) and \(r_{K^m}\) therefore yields
\[
\widehat{\omega_\sigma}(r_{K^m})
\le C(\theta K^{-\sigma})^m\widehat{\omega_\sigma}(r).
\]
Choose \(m\) so that the coefficient is less than one, and relabel \(K^m\).
This proves reverse doubling, hence \(\omega_\sigma\in\calD\).
\end{proof}

\begin{corollary}\label{cor:automatic-shift}
If \(0<p<\infty\), \(\omega\in\calD\), and \(\alpha\ge\beta\), then
\[
 \nu_{\alpha,\beta,p}=\omega\delta^{p(\alpha-\beta)}\in\calD,
 \qquad
 \widehat\nu_{\alpha,\beta,p}(z)\asymp
 \widehat\omega(z)\delta(z)^{p(\alpha-\beta)}.
\]
\end{corollary}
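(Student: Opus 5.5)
This is a direct specialization of Lemma~\ref{lem:positive-shift-D}, so the plan is simply to identify the shift exponent and apply it. Set \(\sigma=p(\alpha-\beta)\). Since \(0<p<\infty\) and \(\alpha\ge\beta\), we have \(\sigma\ge0\). Moreover, by definition,
\[
 \nu_{\alpha,\beta,p}(z)=\omega(z)(1-|z|^2)^{p(\alpha-\beta)}=\omega_\sigma(|z|),
\]
so \(\nu_{\alpha,\beta,p}\) is exactly the radial weight \(\omega_\sigma\) of the preceding subsection. It is integrable because \(\sigma\ge0\) and \(\omega\in L^1([0,1))\). Its tail is positive for every \(r<1\), since \(\widehat\omega(r)>0\) and \((1-s^2)^\sigma>0\) on \([r,1)\); hence it is a genuine radial weight.

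Lemma~\ref{lem:positive-shift-D} applied with this \(\sigma\) then gives \(\omega_\sigma\in\calD\) together with
\[
 \widehat{\omega_\sigma}(r)\asymp(1-r^2)^\sigma\widehat\omega(r),\qquad 0\le r<1 .
\]
Evaluating at \(r=|z|\), and using the conventions \(\widehat v(z)=\widehat v(|z|)\) and \(\delta(z)=1-|z|^2\), turns this into the asserted comparison \(\widehat\nu_{\alpha,\beta,p}(z)\asymp\widehat\omega(z)\delta(z)^{p(\alpha-\beta)}\). The implicit constants depend only on \(\omega\), \(p\), \(\alpha\) and \(\beta\), through the doubling constants of \(\omega\) and the value of \(\sigma\).

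No step presents a real obstacle. The only point worth recording is the boundary case \(\alpha=\beta\): then \(\sigma=0\), so \(\nu_{\alpha,\beta,p}=\omega\) and both conclusions are trivial; this case is also covered by the lemma. The corollary therefore confirms that hypothesis~\eqref{hyp:H} holds automatically in the setting of Theorems~\ref{thm:boundedness-main} and~\ref{thm:compactness-main}.
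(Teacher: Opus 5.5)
Your proposal is correct and is exactly the paper's argument: set \(\sigma=p(\alpha-\beta)\ge0\) and apply Lemma~\ref{lem:positive-shift-D}. Your extra check that \(\nu_{\alpha,\beta,p}\) is a genuine radial weight (integrable, with positive tail) is harmless; the lemma already takes it for granted.
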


\begin{proof}
Apply Lemma~\ref{lem:positive-shift-D} with
\(\sigma=p(\alpha-\beta)\ge0\).
\end{proof}

\subsection{The critical negative-shift exponent}\label{subsec:negative-shift}

Stability under sufficiently small negative boundary powers, together with
the corresponding tail comparison, follows from
\cite[Lemma~2]{PelaezRattya2023BMO} after taking the auxiliary weight there
to be \(\nu\equiv1\), so that \(\widehat\nu(r)=1-r\), and using
\(1-r^2\asymp1-r\).  The next proposition identifies the full interval
relevant to the shifted weight in this paper.

\begin{proposition}\label{prop:negative-obstruction}
Let \(\omega\in\calD\) and \(\tau>0\), and put
\[
        \nu_\tau(r)=\omega(r)(1-r^2)^{-\tau}.
\]
The tail \(\widehat{\nu_\tau}\) is initially interpreted as an extended
nonnegative integral.  Then the following conditions are equivalent:
\begin{enumerate}[label=\textup{(\roman*)},leftmargin=2.5em]
\item \(\nu_\tau\in\calD\);
\item
\[
        \widehat{\nu_\tau}(r)
        \asymp
        (1-r^2)^{-\tau}\widehat\omega(r),
        \qquad 0\le r<1;
\]
\item \(\tau<d_-(\omega)\).
\end{enumerate}
Consequently, if \(\alpha<\beta\), then the shifted-weight hypothesis
\textup{(H)} is equivalent to
\[
        p(\beta-\alpha)<d_-(\omega).
\]
The strict inequality is sharp: for every \(\tau>0\), there is an
\(\omega\in\calD\) with \(d_-(\omega)=\tau\) such that \(\nu_\tau\) is
integrable but does not belong to \(\calD\).
\end{proposition}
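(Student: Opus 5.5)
I will prove the cycle (iii)\(\Rightarrow\)(ii)\(\Rightarrow\)(i)\(\Rightarrow\)(iii), and then give a separate construction for sharpness.

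\emph{(iii)\(\Rightarrow\)(ii).} Pick \(b\) with \(\tau<b<d_-(\omega)\), so that \(\widehat\omega(s)\le C_b((1-s)/(1-r))^b\widehat\omega(r)\). Integrating by parts on \([r,t]\) and letting \(t\to1\),
\[
\widehat{\nu_\tau}(r)=\lim_{t\to1}\Bigl(\widehat\omega(r)(1-r^2)^{-\tau}-\widehat\omega(t)(1-t^2)^{-\tau}+2\tau\int_r^t\widehat\omega(s)s(1-s^2)^{-\tau-1}ds\Bigr).
\]
The boundary term at \(t\) tends to zero because \(\widehat\omega(t)\lesssim(1-t)^b\). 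The integral is dominated by \(\widehat\omega(r)(1-r)^{-b}\int_r^1(1-s)^{b-\tau-1}ds\asymp(1-r)^{-\tau}\widehat\omega(r)\). This gives the upper bound and finiteness. The lower bound follows directly from \((1-s^2)^{-\tau}\ge(1-r^2)^{-\tau}\), so \(\widehat{\nu_\tau}(r)\ge(1-r^2)^{-\tau}\widehat\omega(r)\).

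\emph{(ii)\(\Rightarrow\)(i).} With the tail equivalence in hand, the weight \(\nu_\tau\) is integrable, and forward doubling transfers exactly as in Lemma~\ref{lem:positive-shift-D}: the factor \((1-r)^{-\tau}\) changes only by \(2^\tau\) under \(r\mapsto(1+r)/2\). For reverse doubling I will use that \(\omega\in\calD\) gives \(\widehat\omega(r_{K^m})\le\theta^m\widehat\omega(r)\), and hence some decay \(\widehat\omega(r_{K^m})\lesssim K^{-mb_0}\widehat\omega(r)\) with \(b_0>0\). But \((1-r_{K^m})^{-\tau}=K^{m\tau}(1-r)^{-\tau}\), so decay of the tail of \(\nu_\tau\) is not automatic; a separate argument is needed. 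Instead, I will show directly that (ii) forces (iii).

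The key inequality: for \(r\le s\), (ii) at \(r\) and monotonicity give
\[
(1-r)^{-\tau}\widehat\omega(r)\gtrsim\widehat{\nu_\tau}(r)\ge\int_r^s\omega(t)(1-t)^{-\tau}dt+\widehat{\nu_\tau}(s)\ge\widehat{\nu_\tau}(s)\gtrsim(1-s)^{-\tau}\widehat\omega(s).
\]
This is only the \(b=\tau\) estimate, which is not enough, since we need strict inequality. To gain, I will apply a Gronwall argument to \(\Phi(r)=\widehat{\nu_\tau}(r)\). We have \(-\Phi'(r)=\omega(r)(1-r)^{-\tau}\), and by the integration-by-parts identity \(\Phi(r)=(1-r)^{-\tau}\widehat\omega(r)+\tau\int_r^1\widehat\omega(s)(1-s)^{-\tau-1}ds\) (up to harmless constants), so (ii) gives \(\int_r^1\widehat\omega(s)(1-s)^{-\tau-1}ds\lesssim(1-r)^{-\tau}\widehat\omega(r)\). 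Setting \(\Psi(r)=\int_r^1\widehat\omega(s)(1-s)^{-\tau-1}ds\), this reads \(\Psi\le -C(1-r)\Psi'\). Hence \(\Psi(r)(1-r)^{-1/C}\) is nonincreasing, so \(\Psi(s)\le((1-s)/(1-r))^{1/C}\Psi(r)\). Combining this with \(\Psi(r)\gtrsim(1-r)^{-\tau}\widehat\omega(r)\), which holds by forward doubling since \(\widehat\omega\) is comparable on \([r,(1+r)/2]\), yields \(\widehat\omega(s)\lesssim((1-s)/(1-r))^{\tau+1/C}\widehat\omega(r)\). So \(d_-(\omega)\ge\tau+1/C>\tau\), which proves (ii)\(\Rightarrow\)(iii).

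\emph{(i)\(\Rightarrow\)(iii).} If \(\nu_\tau\in\calD\), then \(\widehat{\nu_\tau}\) is finite and reverse doubling for \(\nu_\tau\) gives \(d_-(\nu_\tau)>0\). Applying the implication (iii)\(\Rightarrow\)(ii) with \(\nu_\tau\) in place of \(\omega\) and the shift \(+\tau\) (that is, Lemma~\ref{lem:positive-shift-D} with \(\sigma=\tau\)) gives \(\widehat\omega(r)\asymp(1-r)^\tau\widehat{\nu_\tau}(r)\). This is (ii), and the previous step then gives (iii). The statement about (H) follows by taking \(\tau=p(\beta-\alpha)\).

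\emph{Sharpness.} Take \(\omega(r)=(1-r)^{\tau-1}\log^{-2}\frac{e}{1-r}\). Then \(\widehat\omega(r)\asymp(1-r)^\tau\log^{-2}\frac{e}{1-r}\), so \(\omega\in\calD\) and \(d_-(\omega)=\tau\). Moreover \(\nu_\tau\asymp(1-r)^{-1}\log^{-2}\) is integrable, with \(\widehat{\nu_\tau}\asymp\log^{-1}\frac{e}{1-r}\), which is not reverse doubling. So \(\nu_\tau\notin\calD\).

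The main obstacle is the Gronwall self-improvement step in (ii)\(\Rightarrow\)(iii), which turns the non-strict \(b=\tau\) bound into a strict gain \(\tau+1/C\).
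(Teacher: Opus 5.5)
Your proof is correct in substance, but it reaches the strict gain over \(\tau\) by a different route from the paper. The paper proves (i)\(\Rightarrow\)(iii) by applying an external reverse-doubling lemma (Lemma~B of Pel\'aez--de la Rosa) to \(\nu_\tau\in\calD\). This gives some exponent \(\varepsilon>0\) for \(\widehat{\nu_\tau}\), and shifting by \(\tau\) yields \(d_-(\omega)\ge\tau+\varepsilon\). The paper proves (ii)\(\Rightarrow\)(i) directly: it splits \(\widehat\omega(r)\) at \(r_K\) and uses \(\widehat\omega\ge c(1-r^2)^\tau\widehat{\nu_\tau}\) to get \(\widehat{\nu_\tau}(r_K)\le\frac{1-c}{1-(2/K)^\tau}\widehat{\nu_\tau}(r)\), without passing through (iii). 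Your Gronwall argument on \(\Psi(r)=\int_r^1\widehat\omega(s)(1-s)^{-\tau-1}\,ds\) instead proves (ii)\(\Rightarrow\)(iii) directly. It is self-contained, needs no Lemma~B, and gives the explicit gain \(1/C\) in terms of the comparability constant in (ii). The remaining pieces match the paper:
\begin{itemize}
\item your integration by parts for (iii)\(\Rightarrow\)(ii) is interchangeable with the paper's dyadic summation over \(r_j=1-2^{-j}(1-r)\);
\item your (i)\(\Rightarrow\)(ii) via Lemma~\ref{lem:positive-shift-D} with base weight \(\nu_\tau\) is the paper's argument (the remark \(d_-(\nu_\tau)>0\) there is unnecessary);
\item your sharpness example is the paper's.
\end{itemize}

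One loose end remains. After observing that reverse doubling of \(\nu_\tau\) is ``not automatic,'' you switch to proving (ii)\(\Rightarrow\)(iii) and never return. So \(\nu_\tau\in\check{\calD}\) is never concluded, and the announced cycle does not close. It closes in one line with the computation you had already started. Take \(b=\tau+1/C\) from (iii) and \(r_K=1-(1-r)/K\). Then (ii) gives \(\widehat{\nu_\tau}(r_K)\lesssim K^{\tau}(1-r)^{-\tau}\widehat\omega(r_K)\lesssim K^{\tau-b}(1-r)^{-\tau}\widehat\omega(r)\lesssim K^{-1/C}\widehat{\nu_\tau}(r)\), with constants independent of \(K\), so choosing \(K\) large finishes. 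Alternatively, the paper's splitting argument gives reverse doubling from (ii) alone. You should write one of these out explicitly.
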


\begin{proof}
Write
\[
        W(r)=\widehat\omega(r),
        \qquad
        N(r)=\widehat{\nu_\tau}(r).
\]
If \(\nu_\tau\in\calD\), then
\(\omega=\nu_\tau(1-|\cdot|^2)^\tau\), so
Lemma~\ref{lem:positive-shift-D}, applied with base weight \(\nu_\tau\),
gives
\[
        W(r)\asymp(1-r^2)^\tau N(r).
\]
Thus \textup{(i)} implies \textup{(ii)}.

Conversely, assume \textup{(ii)}.  Then \(N(r)<\infty\), so \(\nu_\tau\)
is a radial weight.  If \(r_*=(1+r)/2\), forward doubling for \(\omega\)
and \textup{(ii)} yield
\[
\begin{aligned}
        N(r)
        &\lesssim (1-r^2)^{-\tau}W(r)
        \lesssim (1-r^2)^{-\tau}W(r_*) \\
        &\lesssim
        \left(\frac{1-r_*^2}{1-r^2}\right)^\tau N(r_*)
        \lesssim N(r_*).
\end{aligned}
\]
Hence \(\nu_\tau\in\widehat{\calD}\).

For reverse doubling, choose \(c\in(0,1)\) such that
\[
        W(r)\ge c(1-r^2)^\tau N(r),
        \qquad 0\le r<1.
\]
Let \(r_K=1-(1-r)/K\), where \(K>1\) is so large that
\((2/K)^\tau<c\).  Since
\[
        \frac{1-r_K^2}{1-r^2}
        =\frac1K\frac{1+r_K}{1+r}\le\frac2K,
\]
splitting the integral defining \(W(r)\) at \(r_K\) gives
\[
\begin{aligned}
        W(r)
        &\le
        (1-r^2)^\tau\bigl(N(r)-N(r_K)\bigr)
        +(1-r_K^2)^\tau N(r_K).
\end{aligned}
\]
It follows that
\[
        N(r_K)\le
        \frac{1-c}{1-(2/K)^\tau}N(r)=qN(r),
        \qquad q<1.
\]
Thus \(\nu_\tau\in\check{\calD}\), proving
\textup{(ii)} implies \textup{(i)}.

Suppose that \(\tau<d_-(\omega)\).  There is an admissible exponent
\(b>\tau\), so
\[
        W(s)\lesssim
        \left(\frac{1-s}{1-r}\right)^bW(r),
        \qquad 0\le r\le s<1.
\]
For fixed \(r\), set \(r_j=1-2^{-j}(1-r)\).  Since
\[
        1-r_{j+1}^2\ge2^{-(j+1)}(1-r^2),
        \qquad
        W(r_j)\lesssim2^{-jb}W(r),
\]
we obtain
\[
\begin{aligned}
        N(r)
        &\le
        \sum_{j=0}^{\infty}
        (1-r_{j+1}^2)^{-\tau}W(r_j) \\
        &\lesssim
        (1-r^2)^{-\tau}W(r)
        \sum_{j=0}^{\infty}2^{-j(b-\tau)}
        \lesssim
        (1-r^2)^{-\tau}W(r).
\end{aligned}
\]
The reverse inequality follows from
\((1-s^2)^{-\tau}\ge(1-r^2)^{-\tau}\) for \(s\ge r\).
This proves \textup{(iii)} implies \textup{(ii)}.

Finally, assume \textup{(i)}.  By
\cite[Lemma~B]{PelaezDeLaRosa2022}, there is an \(\varepsilon>0\) such
that
\[
        N(s)\lesssim
        \left(\frac{1-s}{1-r}\right)^\varepsilon N(r),
        \qquad 0\le r\le s<1.
\]
The already proved tail comparison and
\[
        \frac{1-s^2}{1-r^2}
        \le2\frac{1-s}{1-r}
\]
therefore give
\[
        W(s)\lesssim
        \left(\frac{1-s}{1-r}\right)^{\tau+\varepsilon}W(r).
\]
Hence \(d_-(\omega)\ge\tau+\varepsilon>\tau\), proving
\textup{(i)} implies \textup{(iii)} and completing the equivalence.
Taking \(\tau=p(\beta-\alpha)\) gives the asserted characterization of
\textup{(H)}.

It remains to verify sharpness.  Fix \(\tau>0\), set
\[
        L(r)=\log\frac e{1-r^2},
        \qquad
        \omega(r)=(1-r^2)^{\tau-1}L(r)^{-2}.
\]
Elementary one-dimensional tail estimates give
\[
        \widehat\omega(r)
        \asymp(1-r^2)^\tau L(r)^{-2}.
\]
Thus \(\omega\in\calD\).  Since \(L(s)\ge L(r)\) and
\(1-t^2\asymp1-t\), one has
\(\widehat\omega(s)/\widehat\omega(r)\lesssim((1-s)/(1-r))^\tau\), so
\(b=\tau\) is admissible.  If \(b>\tau\), setting \(r=0\) gives
\(\widehat\omega(s)/[(1-s)^b\widehat\omega(0)]
\asymp(1-s)^{\tau-b}L(s)^{-2}\to\infty\).  Hence
\(d_-(\omega)=\tau\).  The critical shift
\[
        \nu_\tau(r)=(1-r^2)^{-1}L(r)^{-2}
\]
is integrable, but
\[
        \widehat{\nu_\tau}(r)\asymp L(r)^{-1}
        \not\asymp
        (1-r^2)^{-\tau}\widehat\omega(r)\asymp L(r)^{-2}.
\]
Moreover, for each fixed \(K>1\),
\[
        \frac{
        \widehat{\nu_\tau}\!\left(1-\frac{1-r}{K}\right)}
        {\widehat{\nu_\tau}(r)}
        \longrightarrow1,
        \qquad r\to1^-,
\]
so \(\nu_\tau\in\widehat{\calD}\setminus\check{\calD}\).
This proves the endpoint assertion.  Failure can be more drastic: the power
weight \(\omega_0(r)=(1-r^2)^{\tau/2-1}\) belongs to \(\calD\), whereas
\(\omega_0(r)(1-r^2)^{-\tau}=(1-r^2)^{-1-\tau/2}\) is not integrable.
\end{proof}

Thus \((H)\) is automatic when \(\alpha\ge\beta\), by
Corollary~\ref{cor:automatic-shift}; when \(\alpha<\beta\), it holds exactly
below the critical threshold in Proposition~\ref{prop:negative-obstruction}.

\subsection{The fractional Littlewood--Paley shift}

We work with the modified Riemann--Liouville coefficient operator
\[
        \IRL_t f(z)=\sum_{n=0}^{\infty}
        \frac{\Gamma(n+1)}{\Gamma(n+1+t)}\widehat f(n)z^n.
\]
Here \(1/\Gamma(-m)=0\) for \(m=0,1,2,\ldots\), as in
Lemma~\ref{lem:background-tools}\textup{(v)}.

\begin{lemma}\label{lem:finite-block-multipliers}
Let \(0<p<\infty\).  There is a number \(A_p>0\) with the following property.
If \(I\subset\{0,1,\ldots,M\}\) is a finite interval of integers, \(M\ge2\), and
\(\{b_n\}_{n\in I}\) is bounded, then
\[
        T_{I,b}\left(\sum_{n\in I}a_nz^n\right)
        =
        \sum_{n\in I}b_na_nz^n
\]
satisfies
\[
        \|T_{I,b}P\|_{H^p}
        \le
        C_p M^{A_p}\sup_{n\in I}|b_n|\,\|P\|_{H^p}
\]
for every polynomial \(P\) supported in \(I\).
\end{lemma}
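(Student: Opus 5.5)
We prove the estimate by bounding the multiplier through its kernel on the circle of radius \(r=1-1/M\), using that a polynomial of degree at most \(M\) has comparable \(H^p\) norm and integral mean on that circle. The key point is that a polynomial \(P\) of degree at most \(M\) is essentially determined by its values on a finite set of about \(M\) equispaced points, so any coefficient multiplier acts on it like an \((M+1)\times(M+1)\) matrix. We then lose at most a polynomial factor in \(M\) when comparing norms.

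\textbf{Step 1 (reduction to a finite-dimensional estimate).} Let \(b=\{b_n\}_{n\in I}\) with \(\sup_{n\in I}|b_n|=1\), and write \(P=\sum_{n\in I}a_nz^n\). The coefficients are bounded by Cauchy's estimates and \((2.1a)\) from Lemma~\ref{lem:background-tools}\textup{(iii)}, applied on the circle \(|z|=r\) with \(r=1-1/M\):
\[
 |a_n|\le r^{-n}\sup_{|z|=r}|P(z)|\lesssim_p r^{-M}M^{1/p}\|P\|_{H^p}.
\]
Here \(r^{-M}=(1-1/M)^{-M}\le 4\) because \(M\ge2\). Hence \(|a_n|\lesssim_p M^{1/p}\|P\|_{H^p}\) for every \(n\in I\).

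\textbf{Step 2 (upper bound for the image).} The image satisfies
\[
 \|T_{I,b}P\|_{H^p}\le\sup_{\T}|T_{I,b}P|\le\sum_{n\in I}|a_n|\le (M+1)\,C_pM^{1/p}\|P\|_{H^p}.
\]
Here the first inequality holds because \(|d\xi|\) is normalized, and the second because each \(|b_n|\le1\) and each \(|z^n|=1\) on \(\T\). Restoring the general normalization by homogeneity gives the lemma with \(A_p=1+1/p+1\), say, where the extra power absorbs \(M+1\le 2M\).

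The only step needing care is Step~1, which relies on the pointwise estimate \((2.1a)\) with its \((1-|z|)^{-1/p}\) loss. Choosing the radius \(r=1-1/M\) balances this loss against the factor \(r^{-n}\), which stays bounded for \(n\le M\). No Littlewood--Paley or multiplier theorem is needed, since only a polynomial loss in \(M\) is claimed. This crude bound is what the later block decompositions require, where the blocks are dyadic and the polynomial losses are compensated by geometric decay of the weights.
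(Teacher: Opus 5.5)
Your Steps 1--2 are correct and are essentially the paper's argument: Cauchy's estimate combined with \((2.1a)\) on a circle of radius about \(1-1/M\) gives \(|a_n|\lesssim_p M^{1/p}\|P\|_{H^p}\) for \(n\le M\), and summing over the at most \(M+1\) indices finishes the proof. The one small difference is that you bound \(\|T_{I,b}P\|_{H^p}\) by the supremum norm for every \(p\). This avoids the paper's split into Minkowski (\(p\ge1\)) and the \(p\)-th power inequality (\(0<p<1\)), and it gives \(A_p=1+1/p\) for all \(p\), since the factor \(M+1\le 2M\) costs only a constant rather than an extra power; the opening remarks about kernels and equispaced points are never actually used.
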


\begin{proof}
Take \(A_p=1+1/p\) for \(p\ge1\) and \(A_p=2/p\) for \(0<p<1\).
Lemma~\ref{lem:background-tools}\textup{(iii)}, with
\(r=1-(M+1)^{-1}\), and Cauchy's estimate give
\[
 |P(re^{it})|\le C_p(1-r)^{-1/p}\|P\|_{H^p},
 \qquad |a_n|\le C_pM^{1/p}\|P\|_{H^p}\quad(0\le n\le M),
\]
because \(r^{-n}\le r^{-M}\le e\).  Minkowski's inequality when \(p\ge1\)
and the power inequality when \(0<p<1\), respectively, yield
\[
\begin{aligned}
 \|T_{I,b}P\|_{H^p}
 &\le\sum_{n\in I}|b_na_n|
 \le C_pM^{1+1/p}\sup_{n\in I}|b_n|\,\|P\|_{H^p},\\
 \|T_{I,b}P\|_{H^p}^p
 &\le\sum_{n\in I}|b_na_n|^p
 \le C_pM^2\sup_{n\in I}|b_n|^p\,\|P\|_{H^p}^p.
\end{aligned}
\]
\end{proof}

For \(P(z)=\sum p_nz^n\) and \(f(z)=\sum\widehat f(n)z^n\), write
\((P*f)(z)=\sum p_n\widehat f(n)z^n\).  Below \(P\) is a polynomial or a
finitely supported Ces\`aro block, so this Hadamard product is unambiguous.

\begin{lemma}\label{lem:D-Bergman-Cesaro-block-norm}
Let \(0<p<\infty\) and \(\rho\in\calD\).  There exist an integer \(K>1\) and a
universal Ces\`aro block system \(\{V_{j,K}\}_{j\ge0}\) such that, for every
\(f\in H(\D)\),
\[
        f=
        \sum_{j=0}^{\infty}V_{j,K}*f
\]
locally uniformly in \(\D\), each block \(V_{j,K}\) is supported on a fixed
multiplicative annulus of Taylor indices, and
\[
        \|f\|_{A^p_\rho}^p
        \asymp
        \sum_{j=0}^{\infty}
        \rho_{K^j}\,
        \|V_{j,K}*f\|_{H^p}^p,
        \qquad
        \rho_x=
        \int_0^1 r^x\rho(r)\,dr .
\]
The constants depend only on \(p\), \(\rho\), \(K\), and the chosen Ces\`aro
block system.
\end{lemma}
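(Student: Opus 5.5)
We will prove the lemma by passing through the standard Cesàro block decomposition of Taylor series and the known characterization of \(A^p_\rho\) for \(\rho\in\calD\) via Hardy norms of dyadic-type blocks.

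\textbf{Step 1: the block system.} Fix an integer \(K>1\), to be chosen later depending on \(\rho\). We build \(V_{j,K}\) in the standard way, as smooth (de la Vallée Poussin/Cesàro-type) polynomials. Let \(\Phi\) be a \(C^\infty\) bump equal to \(1\) on \([0,1]\) and vanishing on \([K,\infty)\). Set \(V_{0,K}(z)=\sum_n\Phi(n)z^n\) and, for \(j\ge1\),
\[
V_{j,K}(z)=\sum_n\bigl(\Phi(n/K^{j})-\Phi(n/K^{j-1})\bigr)z^n .
\]
Then \(V_{j,K}\) is supported on \(K^{j-1}\le n\le K^{j+1}\). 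The sum over \(j\) telescopes to \(1\) on every index, so \(f=\sum_jV_{j,K}*f\) locally uniformly, since the tails are Taylor tails. The smooth-multiplier estimates \(\|W*g\|_{H^p}\lesssim\|g\|_{H^p}\) hold for all \(0<p<\infty\) when \(W\) is a smooth dilated bump. For \(p\le1\) this uses the standard \(L^1\)-estimate of the kernel \(\sum\Phi(n/N)e^{int}\) raised to suitable power (Pavlović's lemma). Every polynomial multiplier used below is of this form, together with its products with a bounded number of shifted neighbours.

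\textbf{Step 2: reduce to the known equivalence.} For \(\rho\in\calD\) we invoke the known block characterization, Peláez--Rättyä's theorem for \(\rho\in\calD\) (the same source as \cite[Theorem~5]{PelaezRattya2021}, or Peláez--de la Rosa \cite{PelaezDeLaRosa2022}). It gives
\[
\|f\|_{A^p_\rho}^p\asymp\sum_j 2^{-j}\widehat\rho(1-2^{-j})\|\Delta_j f\|_{H^p}^p
\]
for dyadic smooth blocks \(\Delta_j\). To prove this directly instead, we would proceed as follows:
\begin{enumerate}[label=(\alph*)]
\item Write \(\|f\|_{A^p_\rho}^p=\int_0^1 M_p^p(r,f)\,2r\rho(r)\,dr\).
\item Split into annuli \(r\in[1-K^{-j},1-K^{-j-1}]\).
\item For the upper bound, on each annulus use \(M_p(r,f)^p\lesssim\sum_i r^{cK^{i}}\|V_{i,K}*f\|_{H^p}^p\). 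For \(p\le1\) this follows from the power inequality. For \(p>1\) it follows from Hölder with geometric weights, together with \(M_p(r,V_{i}*f)\lesssim r^{K^{i-1}}\|V_i*f\|_{H^p}\), obtained via a smooth multiplier of \(r^n\) on the block.
\item For the lower bound, use \(\|V_j*f\|_{H^p}\lesssim M_p(r_j,f)\) with \(r_j=1-K^{-j}\), since \(r_j^{-n}\) is a smooth bounded multiplier on the block support.
\end{enumerate}

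\textbf{Step 3: replace tails by moments.} For \(\rho\in\calD\), we use \(\rho_x\asymp\widehat\rho(1-1/x)\) for \(x\ge1\). Here the lower bound comes from \(r^x\ge e^{-2}\) on \([1-1/x,1)\). The upper bound comes from splitting \([0,1)\) dyadically and using forward doubling, which gives polynomial growth \(\widehat\rho(1-2^k/x)\lesssim 2^{k\gamma}\widehat\rho(1-1/x)\) against the exponential decay \(e^{-2^k}\). This converts the weights in Step 2 into \(\rho_{K^j}\) and absorbs the factor \(K^{-j}\)'s relation.

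\textbf{Main obstacle.} The main obstacle is the summation in Step 2(c). After inserting the annulus weights we need
\[
\sum_{j} \widehat\rho(1-K^{-j})\,r_j^{cK^i}\lesssim \widehat\rho(1-K^{-i}).
\]
For \(j\le i\), reverse doubling of \(\rho\) gives a geometric decay in \(i-j\) only if \(K\) is large; this is exactly where \(K\) is chosen. For \(j>i\), the factor \(e^{-cK^{j-i}}\) beats the polynomial growth permitted by forward doubling. The overlap of neighbouring blocks contributes only a bounded-multiplicity constant. Matching the moment \(\rho_{K^j}\) rather than \(\rho_{K^{j\pm1}}\) costs only a constant, again by doubling.
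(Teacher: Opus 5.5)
Your route differs from the paper's. The paper gives no argument: it quotes the statement from \cite[Proposition~9]{PelaezDeLaRosa2022}, with the block system, its supports and its uniform \(H^p\)-bounds taken from \cite[Proposition~4 and Theorem~D]{PelaezDeLaRosa2022}. That is the block characterization you want to invoke; \cite[Theorem~5]{PelaezRattya2021} is the derivative Littlewood--Paley formula, not a block result. Your direct argument (a)--(d) makes the lemma self-contained. It uses an explicit telescoping system with coefficients \(\Phi(n/K^j)-\Phi(n/K^{j-1})=\psi(n/K^{j-1})\), \(\psi(x)=\Phi(x/K)-\Phi(x)\), and compares \(\int_0^1M_p^p(r,f)\,2r\rho(r)\,dr\) with the block sum over the annuli \(r_j=1-K^{-j}\). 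This shows where \(\widehat{\calD}\), \(\check{\calD}\) and the size of \(K\) enter. Since your blocks have the form \(W^\psi_{K^{j-1}}\) with a fixed smooth \(\psi\), they also serve the later use in Lemma~\ref{lem:D-bergman-taylor-multipliers}.

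As written, three points must be corrected.

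First, the weight in your Step~2 display is wrong. Testing on a single block \(g=V_{j,K}*f\) gives \(\|g\|_{A^p_\rho}^p\asymp\widehat\rho(1-K^{-j})\|g\|_{H^p}^p\asymp\rho_{K^j}\|g\|_{H^p}^p\). So there is no factor \(2^{-j}\) (or \(K^{-j}\)). Such a factor is unbounded and cannot be ``absorbed'' in Step~3. Drop the quoted formula and rely on (a)--(d), which produce the correct weight.

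Second, your ``main obstacle'' has the two regimes reversed. Since \(r_j^{cK^i}\le e^{-cK^{i-j}}\), this factor is super-exponentially small for \(j<i\), where it beats the growth \(\widehat\rho(1-K^{-j})\lesssim K^{\gamma(i-j)}\widehat\rho(1-K^{-i})\) allowed by forward doubling. For \(j\ge i\) it is comparable to \(1\), and the decay must come from reverse doubling, \(\widehat\rho(1-K^{-j})\lesssim K^{-b(j-i)}\widehat\rho(1-K^{-i})\) for some \(b>0\). That decay is geometric for every \(K>1\); for \(p>1\) you only need Hölder weights \(\theta^{-|i-j|}\) with \(\theta K^{b}>1\). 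For \(p\le1\) no annuli are needed. Write \(V_{i,K}*f=z^{K^{i-1}}h\); then \(M_p(r,V_{i,K}*f)\le r^{K^{i-1}}\|V_{i,K}*f\|_{H^p}\), and integration gives \(\rho_{pK^{i-1}+1}\asymp\rho_{K^i}\). This factorization is also preferable to your multiplier justification, because the multiplier \(r^{n-K^{i-1}}\) has \(C^m\)-norms that are not uniform in \(r\).

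Third, the summation you left implicit in (d) is where \(\check{\calD}\) and a large \(K\) are genuinely needed. You must show \(\sum_{j\ge1}\widehat\rho(r_j)M_p^p(r_j,f)\lesssim\int_0^1M_p^p(r,f)\rho(r)\,dr\). This follows from monotonicity of \(M_p(\cdot,f)\) together with \(\int_{r_j}^{r_{j+1}}\rho\ge(1-c)\,\widehat\rho(r_j)\), that is, \(\widehat\rho(r_{j+1})\le c\,\widehat\rho(r_j)\) with \(c<1\). That holds once \(K\) is at least the reverse-doubling scale of \(\rho\), or after grouping boundedly many annuli.

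With these corrections your argument proves the lemma.
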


\begin{proof}
This is \cite[Proposition~9]{PelaezDeLaRosa2022}.  The decomposition, support
property, and uniform \(H^p\)-boundedness of the same universal Ces\`aro block
system are
given in \cite[Proposition~4 and Theorem~D]{PelaezDeLaRosa2022}.
\end{proof}

\begin{lemma}\label{lem:D-bergman-taylor-multipliers}
Let \(0<p<\infty\), \(\rho\in\calD\), and let
\[
        M_\lambda f(z)=\sum_{n=0}^{\infty}\lambda_n\widehat f(n)z^n .
\]
Assume that \(\lambda=\{\lambda_n\}\) has Taylor form: for every \(N\ge0\)
there are constants \(a_0,\ldots,a_N\) and \(C_N\) such that
\[
        \left|
        \lambda_n-\sum_{j=0}^{N}a_j(n+1)^{-j}
        \right|
        \le C_N(n+1)^{-N-1},
        \qquad n\ge1.
\]
Then \(M_\lambda\) is bounded on \(A^p_\rho\).  If, outside a finite set,
\(\lambda_n\ne0\) and \(1/\lambda_n\) also has Taylor form, then
\(M_\lambda\) is an isomorphism of \(A^p_\rho\) modulo a finite-dimensional
polynomial space.
\end{lemma}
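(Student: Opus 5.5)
We use the Ces\`aro block decomposition of Lemma~\ref{lem:D-Bergman-Cesaro-block-norm}, applied to both \(f\) and \(M_\lambda f\). This reduces boundedness of \(M_\lambda\) on \(A^p_\rho\) to a \emph{uniform} \(H^p\)-bound for \(M_\lambda\) acting on the blocks \(V_{j,K}*f\).

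\textbf{Step 1: reduction to a block estimate.} Since \(M_\lambda\) commutes with Hadamard products, \(V_{j,K}*(M_\lambda f)=M_\lambda(V_{j,K}*f)\). Each block \(V_{j,K}\) is supported in a fixed multiplicative annulus \(\{n: K^{j-1}\le n\le K^{j+1}\}\), up to the usual conventions. It therefore suffices to prove
\[
 \|M_\lambda P\|_{H^p}\le C\|P\|_{H^p}
\]
for polynomials \(P\) supported in such an annulus, with \(C\) independent of \(j\). Summing against \(\rho_{K^j}\) in Lemma~\ref{lem:D-Bergman-Cesaro-block-norm} then gives \(\|M_\lambda f\|_{A^p_\rho}\lesssim\|f\|_{A^p_\rho}\), first for polynomials and then, by density (Lemma~\ref{lem:background-tools}(iv)) and Fatou's lemma, for all \(f\).

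\textbf{Step 2: the uniform block bound via the Taylor form.} Fix \(N\) so large that \(N+1>A_p\), where \(A_p\) is the exponent of Lemma~\ref{lem:finite-block-multipliers}. Write
\[
 \lambda_n=\sum_{j=0}^N a_j(n+1)^{-j}+\epsilon_n,\qquad |\epsilon_n|\le C_N(n+1)^{-N-1}.
\]
\begin{itemize}
\item[(a)] \emph{Remainder.} On a block with \(n\le M\asymp K^{j+1}\), we have \(\sup|\epsilon_n|\lesssim K^{-j(N+1)}\). Lemma~\ref{lem:finite-block-multipliers} then bounds the remainder multiplier by \(M^{A_p}K^{-j(N+1)}\lesssim1\), uniformly in \(j\).
\item[(b)] \emph{Principal part.} Each multiplier \((n+1)^{-j}\), \(j\ge1\), is bounded on \(H^p\) for every \(p>0\), uniformly. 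Indeed, \((n+1)^{-1}\) corresponds to the averaging operator \(f\mapsto \frac1z\int_0^z f\). For \(p\ge1\) this is a contraction of \(H^p\) by Minkowski. For \(0<p<1\), I would use \(\frac1z\int_0^z f(\zeta)\,d\zeta=\int_0^1 f(tz)\,dt\) together with a Hardy--Littlewood type estimate. Alternatively, I would note that \((n+1)^{-1}\) is a Taylor-form sequence whose restriction to dyadic blocks has bounded variation in a Marcinkiewicz/Hörmander sense, which gives uniform \(H^p\) block bounds for all \(p\).
\end{itemize}
\textbf{This is the main obstacle for \(p<1\).} I would first try the standard fact that for \(0<p<1\) the Hardy operator and the Ces\`aro-type multipliers \((n+1)^{-k}\) are bounded on \(H^p\). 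On polynomials of degree \(\le M\) these can also be expressed as convolution with de la Vallée-Poussin-type kernels whose \(H^p\)-quasinorm is controlled by finite differences of order \(>1/p\). The Taylor form gives exactly such finite-difference decay, \(|\Delta^k\lambda_n|\lesssim n^{-k}\).

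\textbf{Step 3: the isomorphism statement.} Let \(F\) be the finite exceptional index set. Define \(\mu_n=1/\lambda_n\) for \(n\notin F\) and \(\mu_n=0\) (or \(1\)) for \(n\in F\). Modifying finitely many terms does not affect the Taylor form hypothesis, so Step 2 shows that \(M_\mu\) is bounded. Then \(M_\mu M_\lambda=M_\lambda M_\mu=\Id-Q\), where \(Q\) is the coefficient projection onto \(\operatorname{span}\{z^n:n\in F\}\). This projection is bounded because the coefficient functionals are continuous (Lemma~\ref{lem:background-tools}(iv)). Consequently \(M_\lambda\) restricts to an isomorphism of the closed finite-codimensional subspace \(\{f:\widehat f(n)=0,\ n\in F\}\) onto itself, with inverse \(M_\mu\). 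This is the asserted isomorphism modulo a finite-dimensional polynomial space.
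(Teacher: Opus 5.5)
Your proposal is correct. Its skeleton is the paper's: Ces\`aro blocks, the Taylor split \(\lambda_n=\sum_{\ell\le N}a_\ell(n+1)^{-\ell}+\epsilon_n\), and Lemma~\ref{lem:finite-block-multipliers} for the remainder. Two ingredients differ, plus a minor point in Step~3.

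\textbf{The reduction.} You use that \(M_\lambda\) commutes with the blocks, so
\(\|M_\lambda f\|_{A^p_\rho}^p\asymp\sum_j\rho_{K^j}\|M_\lambda(V_{j,K}*f)\|_{H^p}^p\), and a uniform \(H^p\) bound on block polynomials finishes directly. The paper instead rewrites \(V_{j,K}*S_\ell\) as a new smooth block \(W^{\Phi_{j,\ell}}_{K^{j-1}}\) and bounds it by the neighbouring blocks \(V_{m,K}*f\), \(|m-j|\le C_1\). It therefore also needs the moment comparability \(\rho_{K^j}\asymp\rho_{K^m}\). Your reduction is cleaner, and it shows in general that any coefficient multiplier bounded on \(H^p\) is bounded on \(A^p_\rho\).

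\textbf{The principal part.} The paper applies the smooth-multiplier theorem \cite[Theorem~D(iii)]{PelaezDeLaRosa2022}, with \(C^m\)-bounds uniform in \(j\) and \(mp>1\). Your main route is global \(H^p\)-boundedness of \(Tf(z)=\int_0^1f(tz)\,dt\), followed by iteration. For \(0<p<1\) you should pin down the ``Hardy--Littlewood type estimate''. One option is the pointwise bound \(|Tf(r\xi)|\le\sup_{0\le t<1}|f(t\xi)|\) combined with the Hardy--Littlewood radial maximal theorem in \(H^p\). That theorem holds for every \(p>0\), so it also covers \(p\ge1\) without Minkowski. Another option is the Hardy--Littlewood theorem that \(\int_0^z f\in H^{p/(1-p)}\) for \(f\in H^p\), \(p<1\). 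Either makes the principal part elementary, whereas the paper's route keeps everything inside the Pel\'aez--de la Rosa machinery. Your passing alternative is not right as stated: first-order (Marcinkiewicz) bounded variation does not give \(H^p\) multiplier bounds for \(p<1\). One needs differences of order \(>1/p\), as you note in the next sentence, and that is exactly the paper's Theorem~D(iii) route.

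\textbf{Step 3.} Taking \(\mu_n=0\) on the exceptional set gives \(M_\mu M_\lambda=\Id-Q\). This is a more explicit version of the paper's one-line argument for an inverse modulo finite rank. The choice \(\mu_n=1\) there does not work in general.
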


\begin{proof}
Changing finitely many coefficients gives a bounded finite-rank operator, so
it suffices to consider the tail and, by the density and completeness in
Lemma~\ref{lem:background-tools}\textup{(iv)}, to prove the estimate for
polynomials.
Choose the integer \(K>1\) and the universal Ces\`aro block system
\(\{V_{j,K}\}_{j\ge0}\) from
Lemma~\ref{lem:D-Bergman-Cesaro-block-norm}.  Its locally uniform
decomposition, block quasi-norm, and finite overlap will be used below.  The
moment sequence also satisfies
\[
        \rho_{K^j}\asymp \rho_{K^m},
        \qquad |j-m|\le C_0,
\]
for every fixed \(C_0\).  This is the moment-comparability property for
\(\calD\)-weights; see \cite[Lemma~A(v)]{PelaezDeLaRosa2022}.

We first show that the model multipliers
\[
        S_\ell f(z)=\sum_{n=0}^{\infty}(n+1)^{-\ell}\widehat f(n)z^n,
        \qquad \ell=0,1,2,\ldots,
\]
are bounded on \(A^p_\rho\).  The case \(\ell=0\) is the identity.  For \(\ell\ge1\),
write, for \(j\ge1\),
\[
        V_{j,K}=W^{\psi}_{K^{j-1}}
\]
with \(\psi\in C_c^\infty((K^{-1},K^2))\).  The block multiplier
\(V_{j,K}*S_\ell\) has coefficients
\[
        \psi\!\left(\frac{n}{K^{j-1}}\right)(n+1)^{-\ell}.
\]
Put \(\Phi_{j,\ell}(x)=\psi(x)(K^{j-1}x+1)^{-\ell}\).
Choose an integer \(m\) such that \(mp>1\).  On the fixed compact support
of \(\psi\), the \(C^m\)-seminorms of \(\Phi_{j,\ell}\) are bounded
uniformly in \(j\), because differentiating
\((K^{j-1}x+1)^{-\ell}\) produces factors
\[
        K^{(j-1)k}(K^{j-1}x+1)^{-\ell-k},
        \qquad 0\le k\le m,
\]
and \(x\) stays away from zero.  Hence
\cite[Theorem~D\textup{(iii)}]{PelaezDeLaRosa2022} gives
\[
        \|W^{\Phi_{j,\ell}}_{K^{j-1}}*F\|_{H^p}
        \le C_{p,\ell}\|F\|_{H^p},
        \qquad j\ge1.
\]
The low blocks are bounded and finite rank by
Lemma~\ref{lem:background-tools}\textup{(iv)}.  Finite overlap therefore gives
\[
        \|V_{j,K}*(S_\ell f)\|_{H^p}
        \le C_{p,\ell}
        \sum_{|m-j|\le C_1}\|V_{m,K}*f\|_{H^p},
\]
where the finite overlap comes from the supports of the Ces\`aro blocks.  Multiplying
by \(\rho_{K^j}\), summing over \(j\), using the moment comparability above, and
applying Lemma~\ref{lem:background-tools}\textup{(i)} to the finite overlap yields
\[
        \|S_\ell f\|_{A^p_\rho}
        \lesssim
        \|f\|_{A^p_\rho}.
\]
Thus each \(S_\ell\), and hence every finite linear combination of them, is
bounded on \(A^p_\rho\).

It remains to treat the Taylor remainder.  Choose an integer \(L\) so large that
\(L+1>A_p+2\), where \(A_p\) is the exponent in
Lemma~\ref{lem:finite-block-multipliers}.  Write
\[
        \lambda_n=\sum_{\ell=0}^{L}a_\ell(n+1)^{-\ell}+\varepsilon_n,
        \qquad
        |\varepsilon_n|\le C_L(n+1)^{-L-1}.
\]
The Taylor polynomial part is bounded.  Since
\(f=\sum_m V_{m,K}*f\) and the supports of the blocks have finite overlap,
\[
        V_{j,K}*(M_\varepsilon f)
        =
        \sum_{|m-j|\le C_2}
        V_{j,K}*M_\varepsilon*(V_{m,K}*f).
\]
On the \(j\)-th block, \(n\asymp K^j\), so
\(\sup|\varepsilon_n|\lesssim K^{-j(L+1)}\), and every summand is supported
in \(\{0,\ldots,CK^j\}\).  Lemma~\ref{lem:finite-block-multipliers} gives
\[
        \|V_{j,K}*M_\varepsilon*(V_{m,K}*f)\|_{H^p}
        \lesssim
        K^{jA_p}K^{-j(L+1)}
        \|V_{m,K}*f\|_{H^p}
        \lesssim
        K^{-2j}\|V_{m,K}*f\|_{H^p}.
\]
Summing over the finitely many neighboring \(m\), the block quasi-norm,
moment comparability, and finite overlap give
\[
        \|M_\varepsilon f\|_{A^p_\rho}
        \lesssim
        \|f\|_{A^p_\rho}.
\]
Therefore \(M_\lambda\) is bounded on \(A^p_\rho\).

If \(1/\lambda_n\) has Taylor form outside a finite set, the same proof applies to the tail
multiplier \(M_{1/\lambda}\).  Altering finitely many coefficients only changes a finite-rank
polynomial operator.  Hence \(M_\lambda\) has a bounded inverse modulo a fixed
finite-dimensional polynomial space.
\end{proof}

\begin{theorem}\label{thm:fractional-LP-shift}
Let \(0<p<\infty\), \(s>0\), and \(\rho\in\calD\).  Set
\[
        \rho_s(z)=\rho(z)(1-|z|^2)^{sp},
\]
and assume \(\rho_s\in\calD\).  Then, for every \(f\in H(\D)\),
\[
        f\in A^p_\rho
        \quad\Longleftrightarrow\quad
        \IRL_{-s}f\in A^p_{\rho_s}.
        \tag{2.2}
\]
More precisely, if
\[
        E_s=\{n\ge0:n+1-s\in\{0,-1,-2,\ldots\}\},
\]
then
\[
        \|f\|_{A^p_\rho}^p
        \asymp
        \|\IRL_{-s}f\|_{A^p_{\rho_s}}^p
        +\sum_{n\in E_s}|\widehat f(n)|^p.
        \tag{2.3}
\]
\end{theorem}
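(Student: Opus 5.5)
The plan is to reduce (2.3) to the Ces\`aro block norm of Lemma~\ref{lem:D-Bergman-Cesaro-block-norm}, applied once to \(\rho\) and once to \(\rho_s\), and then to compare the two block norms using a moment estimate and the multiplier Lemma~\ref{lem:D-bergman-taylor-multipliers}.

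\emph{Step 1 (moment comparison).} Write \(\rho_x\) and \((\rho_s)_x\) for the moments of \(\rho\) and \(\rho_s\). We aim to show
\[
 (\rho_s)_x\asymp x^{-sp}\rho_x,\qquad x\ge1 .
\]
For a \(\calD\)-weight \(v\) one has \(v_x\asymp\widehat v(1-1/x)\); this is one of the standard moment properties in \cite[Lemma~A]{PelaezDeLaRosa2022}. Because \(\rho_s=\rho\,\delta^{sp}\) with \(sp>0\), Lemma~\ref{lem:positive-shift-D} gives \(\widehat{\rho_s}(r)\asymp(1-r)^{sp}\widehat\rho(r)\). Evaluating at \(r=1-1/x\) yields the displayed comparison.

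\emph{Step 2 (the coefficient multiplier).} The coefficients of \(\IRL_{-s}\) are
\[
 \lambda_n=\frac{\Gamma(n+1)}{\Gamma(n+1-s)}\sim n^{s}.
\]
Set \(\mu_n=(n+1)^{-s}\lambda_n\). By Lemma~\ref{lem:background-tools}(v), with the reexpansion in powers of \((n+1)^{-1}\), \(\mu_n\) has Taylor form with leading constant \(1\). Hence \(1/\mu_n\) also has Taylor form outside a finite set. Note that \(\lambda_n=0\) exactly when \(n+1-s\in\{0,-1,\dots\}\), that is, for \(n\in E_s\). This set is finite and nonempty only when \(s\) is a positive integer.

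By Lemma~\ref{lem:D-bergman-taylor-multipliers}, \(M_\mu\) is bounded on \(A^p_{\rho_s}\). Its inverse on the tail is bounded too, up to finitely many coefficients; those coefficients are controlled through Lemma~\ref{lem:background-tools}(iv). It therefore suffices to prove
\[
 \|f\|_{A^p_\rho}^p\asymp\|D^sf\|_{A^p_{\rho_s}}^p+\text{(finitely many coefficients)},\qquad D^sf=\sum_n(n+1)^s\widehat f(n)z^n .
\]

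\emph{Step 3 (blockwise comparison).} On the \(j\)-th block \(n\asymp K^j\). Repeat the argument in the proof of Lemma~\ref{lem:D-bergman-taylor-multipliers} with the symbol
\[
 \Phi_j(x)=\psi(x)\Bigl(\frac{K^{j-1}x+1}{K^{j}}\Bigr)^{\pm s}.
\]
Its \(C^m\)-seminorms are uniform in \(j\), so \cite[Theorem~D(iii)]{PelaezDeLaRosa2022} gives
\[
 \|V_{j,K}*D^sf\|_{H^p}\asymp K^{js}\|V_{j,K}*f\|_{H^p},
\]
modulo finite overlap with neighbouring blocks. For the lower bound, write \(V_{j,K}*f=V_{j,K}*D^{-s}(\sum_{|m-j|\le C}V_{m,K}*D^sf)\) and estimate the same way. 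By Step 1, \(K^{jsp}(\rho_s)_{K^j}\asymp\rho_{K^j}\). Summing the blocks against the two block-norm formulas of Lemma~\ref{lem:D-Bergman-Cesaro-block-norm} then gives the equivalence, first for polynomials. The low block \(j=0\), together with the coefficients indexed by \(E_s\), is finite-dimensional and is absorbed by the term \(\sum_{n\in E_s}|\widehat f(n)|^p\).

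\emph{Step 4 (from polynomials to all of \(H(\D)\)).} If \(f\in A^p_\rho\), density of polynomials and completeness (Lemma~\ref{lem:background-tools}(iv)) together with Fatou's lemma give the upper bound. If \(\IRL_{-s}f\in A^p_{\rho_s}\), apply the estimate to the dilates \(f_r\), which are analytic on \(\overline\D\). Then let \(r\to1\), using the monotonicity of integral means for the right side and Fatou's lemma for the left side. This yields (2.2).

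The main obstacle is Step 3 in the lower direction. It needs \(D^{-s}\) to act uniformly boundedly on the neighbouring blocks, and it needs the finite kernel \(E_s\) to be separated cleanly. I would handle this by working with \(D^s\) throughout, which has no kernel, and placing all exceptional modes in the finite-rank correction of Step 2.
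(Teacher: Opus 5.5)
Your argument is correct, but it takes a different route from the paper.

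\emph{The paper's route.} The paper does not open the Ces\`aro blocks here. It quotes the fractional Littlewood--Paley theorem \cite[Theorem~1]{PelaezDeLaRosa2022} with the auxiliary weight \(\mu_s(r)=s(1-r^2)^{s-1}\). Since \(\widehat{\mu_s}^p\rho\asymp\rho_s\), this gives \(\|f\|_{A^p_\rho}\asymp\|D^{\mu_s}f\|_{A^p_{\rho_s}}\) in one step. Euler's beta integral then shows that \(D^{\mu_s}\) has the exact coefficients \(\frac{2}{\Gamma(s+1)}\frac{\Gamma(n+1+s)}{\Gamma(n+1)}\). A single application of Lemma~\ref{lem:D-bergman-taylor-multipliers} on \(A^p_{\rho_s}\), to the gamma-quotient ratio with \(\IRL_{-s}\), yields (2.3). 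The set \(E_s\) enters only at that point.

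\emph{Your route.} You instead reprove the needed special case of that theorem for the injective operator \(D^s\). The ingredients are:
the moment comparison \((\rho_s)_x\asymp x^{-sp}\rho_x\), obtained from Lemma~\ref{lem:positive-shift-D};
two-sided blockwise estimates from \cite[Theorem~D(iii)]{PelaezDeLaRosa2022} for the uniformly smooth symbols \(\Phi_j\), in the same style as the paper's proof of Lemma~\ref{lem:D-bergman-taylor-multipliers};
the passage from \(D^s\) to \(\IRL_{-s}\) through the Taylor-form ratio \((n+1)^{-s}\Gamma(n+1)/\Gamma(n+1-s)\).

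\emph{What each buys.} Your approach is more self-contained relative to the tools already in the paper. It also makes visible that the hypothesis \(\rho_s\in\calD\) is automatic by Lemma~\ref{lem:positive-shift-D}. The paper's approach is shorter and isolates the exceptional modes in one exact gamma computation.

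\emph{Points to tighten.}
First, you apply Lemma~\ref{lem:D-Bergman-Cesaro-block-norm} to \(\rho\) and \(\rho_s\) with a common base \(K\) and common blocks. The lemma as stated only provides some \(K\) for each weight. You should note that the block norm holds for all sufficiently large \(K\), since reverse doubling at a given \(K\) persists for larger \(K\). Then choose \(K\) admissible for both weights.
Second, the low block \(j=0\) is not absorbed by the \(E_s\)-term. Because \(D^s\) is injective, it is handled by equivalence of quasi-norms on a fixed finite-dimensional polynomial space. The set \(E_s\) enters only in your Step~2 comparison between \(D^s\) and \(\IRL_{-s}\).
Finally, Step~4 is superfluous. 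The block-norm equivalence is already stated for every \(f\in H(\D)\), and your blockwise estimates involve only polynomials.
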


\begin{proof}
Choose
\[
        \mu_s(r)=s(1-r^2)^{s-1}.
\]
For a radial weight \(\mu\) with positive moments
\(\mu_x=\int_0^1r^x\mu(r)\,dr\), write
\[
        D^\mu f(z)=
        \sum_{n=0}^{\infty}
        \frac{\widehat f(n)}{\mu_{2n+1}}z^n
\]
for the associated fractional derivative.  Then \(\mu_s\in\calD\) and
\[
        \widehat\mu_s(r)\asymp(1-r^2)^s.
\]
The fractional Littlewood--Paley theorem
\cite[Theorem~1]{PelaezDeLaRosa2022} gives
\[
        \|f\|_{A^p_\rho}^p
        \asymp
        \int_{\D}|D^{\mu_s}f(z)|^p\widehat\mu_s(z)^p\rho(z)\dA(z)
        \asymp
        \|D^{\mu_s}f\|_{A^p_{\rho_s}}^p.
        \tag{2.4}
\]
The moments of \(\mu_s\) are
\[
\begin{aligned}
        (\mu_s)_{2n+1}
        &=\int_0^1 r^{2n+1}s(1-r^2)^{s-1}\,dr  \\
        &=\frac{\Gamma(s+1)}{2}\frac{\Gamma(n+1)}{\Gamma(n+s+1)}.
\end{aligned}
\]
The last equality is Euler's beta integral
\cite[Sec.~5.12(i), Eq.~(5.12.1)]{NISTDLMF} after the substitution \(t=r^2\).
Thus
\[
        D^{\mu_s}f(z)=
        \frac{2}{\Gamma(s+1)}
        \sum_{n=0}^{\infty}
        \frac{\Gamma(n+s+1)}{\Gamma(n+1)}\widehat f(n)z^n.
\]
For \(n\notin E_s\), the ratio of this coefficient to the corresponding
coefficient of \(\IRL_{-s}f\) is
\[
        \lambda_n=
        \frac{2}{\Gamma(s+1)}
        \frac{\Gamma(n+s+1)\Gamma(n+1-s)}{\Gamma(n+1)^2}.
\]
The gamma-quotient expansion in
Lemma~\ref{lem:background-tools}\textup{(v)} shows that \(\lambda_n\) and
\(1/\lambda_n\) have Taylor form on the tail.
Lemma~\ref{lem:D-bergman-taylor-multipliers} therefore yields the quasi-norm
equivalence
\[
        \|D^{\mu_s}f\|_{A^p_{\rho_s}}^p
        \asymp
        \|\IRL_{-s}f\|_{A^p_{\rho_s}}^p
        +\sum_{n\in E_s}|\widehat f(n)|^p.
\]
Combining this with (2.4) proves (2.3), and hence (2.2).
\end{proof}

The two-weight moment-ratio theorem in
\cite{PeralaRattyaWang2025} gives a broader norm-transfer framework for
injective fractional derivatives.  The proof above uses the one-weight form
from \cite{PelaezDeLaRosa2022} because it exposes the comparison with the exact
Riemann--Liouville coefficients and identifies the exceptional set \(E_s\).
Those finite modes are needed in the range decomposition and compactness
transfer below.

\begin{corollary}\label{cor:LP-shift-for-volterra}
Let \(0<p<\infty\), \(\omega\in\calD\), \(\alpha,\beta>0\), and assume \((H)\).
Then
\[
        f\in A^p_\omega
        \quad\Longleftrightarrow\quad
        h=\IRL_{\beta-\alpha}f
        \in A^p_{\nu_{\alpha,\beta,p}}.
        \tag{2.5}
\]
More precisely, if \(\alpha>\beta\), \(s=\alpha-\beta\), and
\[
        E_s=\{n\ge0:n+1-s\in\{0,-1,-2,\ldots\}\},
\]
then
\[
        \|f\|_{A^p_\omega}^p
        \asymp
        \|h\|_{A^p_{\nu_{\alpha,\beta,p}}}^p
        +\sum_{n\in E_s}|\widehat f(n)|^p.
        \tag{2.5$'$}
\]
If \(\alpha\le\beta\), then
\(\|f\|_{A^p_\omega}\asymp
\|h\|_{A^p_{\nu_{\alpha,\beta,p}}}\).
\end{corollary}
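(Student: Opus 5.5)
The plan is to split according to the sign of \(\alpha-\beta\). When \(\alpha>\beta\) the corollary is a direct specialization of Theorem~\ref{thm:fractional-LP-shift}. When \(\alpha\le\beta\) the roles of source and target are exchanged: we apply that theorem with the shifted weight as base and then invert the coefficient operator.

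\emph{Case \(\alpha>\beta\).} Put \(s=\alpha-\beta>0\) and \(\rho=\omega\in\calD\). Then \(\rho_s=\omega\delta^{p(\alpha-\beta)}=\nu_{\alpha,\beta,p}\), which belongs to \(\calD\) by \((H)\), or directly by Corollary~\ref{cor:automatic-shift}. Since \(\IRL_{-s}=\IRL_{\beta-\alpha}\), formula (2.3) is exactly (2.5\('\)) with the same exceptional set \(E_s\), and (2.5) follows at once.

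\emph{Case \(\alpha=\beta\).} Here \(\IRL_0\) has coefficients \(\Gamma(n+1)/\Gamma(n+1)=1\), so it is the identity. Also \(\nu_{\alpha,\beta,p}=\omega\), so there is nothing to prove.

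\emph{Case \(\alpha<\beta\).} Put \(s=\beta-\alpha>0\) and take the base weight \(\rho=\nu_{\alpha,\beta,p}\), which lies in \(\calD\) by \((H)\) (equivalently \(p s<d_-(\omega)\) by Proposition~\ref{prop:negative-obstruction}). Then \(\rho_s=\nu\,\delta^{sp}=\omega\in\calD\), so Theorem~\ref{thm:fractional-LP-shift} applies to any \(h\in H(\D)\) and gives
\[
\|h\|_{A^p_\nu}^p\asymp\|\IRL_{-s}h\|_{A^p_\omega}^p+\sum_{n\in E_s}|\widehat h(n)|^p .
\]
Take \(h=\IRL_{s}f\). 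Its coefficients are \(\Gamma(n+1)\widehat f(n)/\Gamma(n+1+s)\), and these never vanish since \(n+1+s>0\). Applying \(\IRL_{-s}\) gives coefficients
\[
\frac{\Gamma(n+1)^2}{\Gamma(n+1+s)\Gamma(n+1-s)}\,\widehat f(n).
\]
This multiplier vanishes exactly on \(E_s\), so \(\IRL_{-s}h\) does not recover \(f\) on those modes. To handle this, set \(\lambda_n=\Gamma(n+1)^2/(\Gamma(n+1+s)\Gamma(n+1-s))\).

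By Lemma~\ref{lem:background-tools}(v), \(\lambda_n\) and \(1/\lambda_n\) have Taylor form on the tail, with leading constant \(1\). Lemma~\ref{lem:D-bergman-taylor-multipliers} on \(A^p_\omega\) then gives
\[
\|M_\lambda f\|_{A^p_\omega}^p+\sum_{n\in F}|\widehat f(n)|^p\asymp\|f\|_{A^p_\omega}^p
\]
for the finite set \(F\) where \(\lambda_n=0\) or where the modification occurs. Here we use that finitely many coefficient functionals are bounded on \(A^p_\omega\) by Lemma~\ref{lem:background-tools}(iv). For the same reason, each \(|\widehat h(n)|\) with \(n\in E_s\cup F\) is bounded by both \(\|f\|_{A^p_\omega}\) and \(\|h\|_{A^p_\nu}\), since \(\widehat h(n)\) is a fixed nonzero multiple of \(\widehat f(n)\).

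Combining these estimates gives \(\|f\|_{A^p_\omega}\asymp\|h\|_{A^p_\nu}\) with no leftover exceptional term, because the finite modes are dominated on both sides. One point needs care: the equivalence of Theorem~\ref{thm:fractional-LP-shift} holds for every analytic \(h\), with both sides possibly infinite. Hence the implication ``\(h\in A^p_\nu\Rightarrow f\in A^p_\omega\)'' holds without assuming a priori that \(f\in A^p_\omega\).

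The main obstacle is this last bookkeeping in the case \(\alpha<\beta\). We must check that the exceptional modes of \(\IRL_{-s}\) and the finitely modified coefficients of \(\lambda\) are each controlled by both norms. This is where bounded finite coefficient functionals on both \(A^p_\omega\) and \(A^p_\nu\) are needed, and where nonvanishing of \(\Gamma(n+1)/\Gamma(n+1+s)\) for \(s>0\) is used, so that no exceptional sum survives.
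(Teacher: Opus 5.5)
Your proposal is correct and follows the paper's proof essentially step for step. For \(\alpha>\beta\) you specialize Theorem~\ref{thm:fractional-LP-shift} directly, and the case \(\alpha=\beta\) is trivial. For \(\alpha<\beta\) you reverse the roles (\(\rho=\nu_{\alpha,\beta,p}\), \(\rho_s=\omega\)), compare the composite multiplier \(\Gamma(n+1)^2/[\Gamma(n+1-s)\Gamma(n+1+s)]\) via Lemma~\ref{lem:D-bergman-taylor-multipliers}, and use the nonvanishing factor \(\Gamma(n+1)/\Gamma(n+1+s)\) on \(E_s\). Your explicit bookkeeping of the finite exceptional modes on both sides, and your remark on a priori finiteness, only spell out what the paper compresses into its final sentence.
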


\begin{proof}
If \(\alpha=\beta\), then \(\IRL_{\beta-\alpha}=\Id\) and
\(\nu_{\alpha,\beta,p}=\omega\), so the claim is immediate.  If
\(\alpha>\beta\), put
\(s=\alpha-\beta\).  Then \(\IRL_{\beta-\alpha}=\IRL_{-s}\) and
\(\nu_{\alpha,\beta,p}=\omega\delta^{sp}\), so Theorem~\ref{thm:fractional-LP-shift}
applies and gives \((2.5')\).

If \(\alpha<\beta\), put \(s=\beta-\alpha\).  Apply
Theorem~\ref{thm:fractional-LP-shift} to \(\rho=\nu_{\alpha,\beta,p}\).  Since
\(\rho_s=\omega\), we get
\[
        \|h\|_{A^p_{\nu_{\alpha,\beta,p}}}^p
        \asymp
        \|\IRL_{-s}h\|_{A^p_\omega}^p
        +\sum_{n\in E_s}|\widehat h(n)|^p,
        \qquad
        E_s=\{n\ge0:n+1-s\in\Z_{\le0}\}.
        \tag{2.5$''$}
\]
With \(h=\IRL_s f\), the composition \(\IRL_{-s}\IRL_s\) has multiplier
\(c_n(s)=\Gamma(n+1)^2/[\Gamma(n+1-s)\Gamma(n+1+s)]\).  Outside \(E_s\),
both \(c_n(s)\) and \(c_n(s)^{-1}\) have Taylor form, while for \(n\in E_s\),
\(c_n(s)=0\) but
\(\widehat h(n)=\Gamma(n+1)\widehat f(n)/\Gamma(n+1+s)\) with a nonzero factor.
Lemma~\ref{lem:D-bergman-taylor-multipliers} therefore shows that the
right-hand side of \((2.5'')\) is comparable to
\(\|f\|_{A^p_\omega}^p\), proving the claimed quasi-norm equivalence.
\end{proof}

\begin{lemma}\label{lem:LP-range-decomposition}
Let \(0<p<\infty\), \(\omega\in\calD\), and \(\alpha,\beta>0\).  Assume
\((H)\), and put \(\nu=\nu_{\alpha,\beta,p}\).  There exist a fixed
finite-dimensional polynomial space \(\mathcal P_{\alpha,\beta}\), a bounded projection
\[
 \Pi_{\alpha,\beta}:A^p_\nu\to\mathcal P_{\alpha,\beta},
 \qquad
 R_{\alpha,\beta}:(\Id-\Pi_{\alpha,\beta})A^p_\nu\to A^p_\omega
\]
such that
\[
        \IRL_{\beta-\alpha}R_{\alpha,\beta}h=h,
        \qquad h\in (\Id-\Pi_{\alpha,\beta})A^p_\nu .
        \tag{2.5a}
\]
Moreover, for every \(h\in A^p_\nu\),
\[
        \left\|R_{\alpha,\beta}
        \bigl(h-\Pi_{\alpha,\beta}h\bigr)\right\|_{A^p_\omega}
        +
        \left\|\Pi_{\alpha,\beta}h\right\|_{\mathrm{poly}}
        \lesssim
        \|h\|_{A^p_\nu},
        \tag{2.5b}
\]
where \(\|\cdot\|_{\mathrm{poly}}\) is any norm on the fixed finite-dimensional
space.  If \(h_j\) is bounded in \(A^p_\nu\) and \(h_j\to0\) locally
uniformly, then
\[
 \Pi_{\alpha,\beta}h_j\to0\quad\text{in the polynomial norm},\qquad
 R_{\alpha,\beta}(h_j-\Pi_{\alpha,\beta}h_j)\to0
\]
locally uniformly on \(\D\).
\end{lemma}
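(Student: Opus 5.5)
The plan is to build \(R_{\alpha,\beta}\) as the coefficient inverse of \(\IRL_{\beta-\alpha}\) on all but finitely many Taylor modes, and to let \(\Pi_{\alpha,\beta}\) be projection onto the complementary finite set of modes. Write \(t=\beta-\alpha\). The multiplier of \(\IRL_t\) is
\[
m_n=\frac{\Gamma(n+1)}{\Gamma(n+1+t)} .
\]
It vanishes exactly when \(n+1+t\in\{0,-1,\ldots\}\), which is a finite set \(E\). When \(\alpha\le\beta\) this set is empty; when \(\alpha>\beta\) it is the set \(E_s\) with \(s=\alpha-\beta\). Enlarge \(E\) to a finite initial segment \(F=\{0,\ldots,N\}\) with \(N\) large enough that \(m_n\neq0\) for \(n>N\). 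Set \(\mathcal P_{\alpha,\beta}=\operatorname{span}\{z^n:n\in F\}\), define \(\Pi_{\alpha,\beta}h=\sum_{n\in F}\widehat h(n)z^n\), and put
\[
R_{\alpha,\beta}h=\sum_{n>N}m_n^{-1}\widehat h(n)z^n
\]
on \((\Id-\Pi_{\alpha,\beta})A^p_\nu\), whose elements are exactly the \(h\in A^p_\nu\) with \(\widehat h(n)=0\) for \(n\in F\). Identity (2.5a) is then immediate from the coefficient computation.

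The projection is bounded by Lemma~\ref{lem:background-tools}(iv): coefficient functionals are continuous on \(A^p_\nu\), since \(\nu\in\calD\) by \((H)\). Consequently \(\Id-\Pi_{\alpha,\beta}\) is bounded on \(A^p_\nu\) as well, because polynomials lie in \(A^p_\nu\). For \(R_{\alpha,\beta}\), the strategy is to apply Corollary~\ref{cor:LP-shift-for-volterra} to \(f=R_{\alpha,\beta}(h-\Pi h)\). The difficulty is that the corollary assumes \(f\in A^p_\omega\) in advance, whereas here membership is what we want to prove. I would therefore follow the proof of Theorem~\ref{thm:fractional-LP-shift} directly.

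When \(\alpha>\beta\), (2.4) and its multiplier comparison give the two-sided estimate \(\|f\|_{A^p_\omega}\asymp\|\IRL_{-s}f\|_{A^p_\nu}\) for \(f\) whose coefficients vanish on \(F\). Every step is a two-sided statement about \(f\in H(\D)\), with the norms allowed to be infinite; Lemma~\ref{lem:D-bergman-taylor-multipliers} is first applied to polynomials and then extended by a Fatou/dilation argument. Applying this with \(\IRL_{-s}f=h-\Pi h\) gives (2.5b).

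When \(\alpha<\beta\), \(m_n^{-1}\) equals \(\Gamma(n+1+t)/\Gamma(n+1)\), the coefficient of \(\IRL_{-t}\) up to the factor \(c_n(t)^{-1}\). Here \(c_n(t)^{-1}\) has Taylor form on \(n>N\) by Lemma~\ref{lem:background-tools}(v). So \(R_{\alpha,\beta}\) is a bounded tail multiplier (Lemma~\ref{lem:D-bergman-taylor-multipliers}) composed with \(\IRL_{-t}:A^p_\nu\to A^p_\omega\). The latter is bounded by Theorem~\ref{thm:fractional-LP-shift} with \(\rho=\nu\) and \(\rho_t=\omega\in\calD\); in this direction the theorem is used exactly as stated, with \(h\in A^p_\nu\) given.

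For the sequential statement, let \(h_j\) be bounded in \(A^p_\nu\) with \(h_j\to0\) locally uniformly. Cauchy's estimates give \(\widehat{h_j}(n)\to0\) for each \(n\in F\), so \(\Pi h_j\to0\) in any norm on the finite-dimensional space. Next, \(g_j=R_{\alpha,\beta}(h_j-\Pi h_j)\) is bounded in \(A^p_\omega\) by (2.5b), hence normal by Lemma~\ref{lem:background-tools}(iv). Each Taylor coefficient of \(g_j\) is a fixed multiple \(m_n^{-1}\widehat{h_j}(n)\to0\). A normal sequence all of whose coefficients tend to zero converges to zero locally uniformly, since every subsequential limit has zero Taylor series.

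The main obstacle is the a priori membership issue in the case \(\alpha>\beta\), that is, making the norm equivalences valid for arbitrary analytic \(f\) rather than for \(f\) already in \(A^p_\omega\). I would handle it by proving the estimate for the dilations \(f_r\), which are analytic on a neighbourhood of \(\overline\D\), and then letting \(r\to1\). This works because the \(A^p\) norms of dilations increase monotonically to the norm of the function for radial weights, and dilation commutes with all coefficient multipliers.
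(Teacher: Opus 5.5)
Your proof is correct and follows essentially the paper's argument: you invert $\IRL_{\beta-\alpha}$ coefficientwise off finitely many modes, use Theorem~\ref{thm:fractional-LP-shift} when $\alpha>\beta$, use Theorem~\ref{thm:fractional-LP-shift} with $\rho=\nu$ composed with a Taylor-form multiplier when $\alpha<\beta$, and use normality for the sequential claim. The differences are minor. First, the paper takes $\Pi_{\alpha,\beta}=0$ when $\alpha<\beta$ and handles the modes $n<t$ (for $t\in\N$) through an explicit polynomial $Q_h$, whereas you absorb them into $\Pi_{\alpha,\beta}$; for this you must take $N\ge t-1$ there, which your stated criterion ``$m_n\ne0$ for $n>N$'' does not by itself force. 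Second, your dilation step is a harmless safeguard but not needed, since (2.3) in Theorem~\ref{thm:fractional-LP-shift} is asserted for every $f\in H(\D)$, and that is exactly how the paper applies it to $f=R_{\alpha,\beta}h$.
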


\begin{proof}
If \(\alpha=\beta\), then \(\nu=\omega\),
\(\IRL_{\beta-\alpha}=\Id\), and
\(\Pi_{\alpha,\beta}=0\), \(R_{\alpha,\beta}=\Id\) have all the required
properties.

Suppose next that \(\alpha>\beta\).  Put \(s=\alpha-\beta>0\), so that
\(\IRL_{\beta-\alpha}=\IRL_{-s}\) and \(\nu=\omega\delta^{sp}\), and let
\[
        E_s=\{n\ge0:n+1-s\in\{0,-1,-2,\ldots\}\}.
\]
Thus \(E_s=\varnothing\) unless \(s\in\N\), in which case
\(E_s=\{0,1,\ldots,s-1\}\).  Define
\[
        \Pi_{\alpha,\beta}h(z)=\sum_{n\in E_s}\widehat h(n)z^n.
\]
This is a bounded finite-rank projection on \(A^p_\nu\).  For
\(h\in(\Id-\Pi_{\alpha,\beta})A^p_\nu\), set
\[
        R_{\alpha,\beta}h(z)
        =
        \sum_{n\notin E_s}
        \frac{\Gamma(n+1-s)}{\Gamma(n+1)}\widehat h(n)z^n.
        \tag{2.5c}
\]
Then \(\IRL_{-s}R_{\alpha,\beta}h=h\).  Since the coefficients indexed by
\(E_s\) vanish, Theorem~\ref{thm:fractional-LP-shift} gives
\[
        \|R_{\alpha,\beta}h\|_{A^p_\omega}
        \lesssim
        \|h\|_{A^p_\nu},
        \qquad h\in(\Id-\Pi_{\alpha,\beta})A^p_\nu.
\]
Together with boundedness of the finite-rank projection, this is \((2.5b)\).

Finally, suppose that \(\alpha<\beta\), and put \(s=\beta-\alpha>0\).
Then \(\IRL_{\beta-\alpha}=\IRL_s\),
\(\nu=\omega\delta^{-sp}\), and \(\nu\delta^{sp}=\omega\).
There is no range obstruction for \(\IRL_s\).  We set
\(\Pi_{\alpha,\beta}=0\) and define
\[
        R_{\alpha,\beta}h(z)
        =
        \sum_{n=0}^\infty
        \frac{\Gamma(n+1+s)}{\Gamma(n+1)}\widehat h(n)z^n.
        \tag{2.5d}
\]
Then \(\IRL_sR_{\alpha,\beta}h=h\) coefficientwise.  Applying
Theorem~\ref{thm:fractional-LP-shift} to \(\rho=\nu\), for which
\(\rho_s=\omega\), gives
\[
        \|\IRL_{-s}h\|_{A^p_\omega}
        \lesssim
        \|h\|_{A^p_\nu},
        \qquad h\in A^p_\nu.
        \tag{2.5e}
\]
If \(s\notin\N\), then
\(R_{\alpha,\beta}h=M_s(\IRL_{-s}h)\), where
\[
        M_sF(z)=\sum_{n=0}^\infty m_{s,n}\widehat F(n)z^n,
        \qquad
        m_{s,n}
        =
        \frac{\Gamma(n+1+s)\Gamma(n+1-s)}{\Gamma(n+1)^2}.
\]
By Lemma~\ref{lem:background-tools}\textup{(v)}, the sequence
\(\{m_{s,n}\}\) has Taylor form on the tail.  Hence
Lemma~\ref{lem:D-bergman-taylor-multipliers}, applied to \(A^p_\omega\), gives
\[
        \|R_{\alpha,\beta}h\|_{A^p_\omega}
        \lesssim
        \|\IRL_{-s}h\|_{A^p_\omega}
        \lesssim
        \|h\|_{A^p_\nu}.
\]
If \(s=m\in\N\), the same identity holds on the tail \(n\ge m\).  The missing
low-order terms form the polynomial
\[
        Q_h(z)=
        \sum_{n=0}^{m-1}
        \frac{\Gamma(n+1+m)}{\Gamma(n+1)}\widehat h(n)z^n.
\]
Lemma~\ref{lem:background-tools}\textup{(iv)} gives
\(\|Q_h\|_{A^p_\omega}\lesssim\|h\|_{A^p_\nu}\).
On the tail, \(R_{\alpha,\beta}h-Q_h=M_s(\IRL_{-s}h)\), where now
\(M_s\) is understood after deleting the finitely many exceptional coefficients.
Lemma~\ref{lem:D-bergman-taylor-multipliers} and \((2.5e)\) therefore yield
\[
        \|R_{\alpha,\beta}h\|_{A^p_\omega}
        \lesssim
        \|\IRL_{-s}h\|_{A^p_\omega}+\|Q_h\|_{A^p_\omega}
        \lesssim
        \|h\|_{A^p_\nu}.
\]
This proves \((2.5b)\) in the negative-shift case as well, with
\(\Pi_{\alpha,\beta}=0\).

It remains to prove the local uniform convergence assertion.  If
\(h_j\to0\) locally uniformly, Cauchy's estimates imply convergence of every
Taylor coefficient to zero; hence \(\Pi_{\alpha,\beta}h_j\to0\) in the
polynomial norm.  By Lemma~\ref{lem:background-tools}\textup{(iv)}, the sequence
\[
        R_{\alpha,\beta}
        \bigl(h_j-\Pi_{\alpha,\beta}h_j\bigr)
\]
is locally bounded and therefore normal.  Every locally uniform subsequential
limit has all Taylor coefficients equal to zero, because each coefficient is a
fixed scalar multiple of a coefficient of
\(h_j-\Pi_{\alpha,\beta}h_j\).  Thus every convergent subsequence has limit
zero, and normality forces the full sequence to converge locally uniformly to
zero.
\end{proof}

\subsection{Reduction to the modified model}

\begin{lemma}\label{lem:taylor-multiplier-Hq}
Let \(0<q<\infty\).  Every Taylor-form coefficient multiplier is bounded on
\(H^q\).  If the multiplier sequence and its reciprocal have Taylor form
outside a finite set, then the multiplier is an isomorphism modulo
finite-dimensional polynomials.  Assigning nonzero values at the exceptional
indices yields a bounded automorphism of \(H^q\).
\end{lemma}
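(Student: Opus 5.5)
The plan is to copy the proof of Lemma~\ref{lem:D-bergman-taylor-multipliers}, with \(A^p_\rho\) replaced by \(H^q\) and the Bergman block norm replaced by a Hardy Littlewood--Paley block characterization. Changing finitely many coefficients is a finite-rank, bounded operation on \(H^q\): each coefficient functional is continuous by Cauchy's estimates and (2.1a), and each monomial lies in \(H^q\). It therefore suffices to treat the tail and, by density of polynomials in \(H^q\), to prove the estimates for polynomials.

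\textbf{Step 1: model multipliers.} Write \(\lambda_n=\sum_{\ell=0}^{L}a_\ell(n+1)^{-\ell}+\varepsilon_n\) with \(|\varepsilon_n|\lesssim(n+1)^{-L-1}\), and consider the model multipliers \(S_\ell\). For \(\ell\ge1\), \(S_\ell\) is the composition of \(\ell\) copies of \(S_1\). On the function side,
\[
S_1f(z)=\frac1z\int_0^z f(\zeta)\,d\zeta ,
\]
an averaging operator. I expect its boundedness on \(H^q\) to be easiest to obtain from the Ces\`aro block route, to handle \(q<1\) as well. Use the universal Ces\`aro block system \(\{V_{j,K}\}\) of \cite[Proposition~4 and Theorem~D]{PelaezDeLaRosa2022}. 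For the Hardy norm, the square-function block description is not available in this form for \(q<1\), so instead work blockwise. Write \(S_\ell f=\sum_j V_{j,K}*(S_\ell f)\). As in Lemma~\ref{lem:D-bergman-taylor-multipliers}, the symbols \(\Phi_{j,\ell}(x)=\psi(x)(K^{j-1}x+1)^{-\ell}\) have \(C^m\) seminorms of size \(O(K^{-(j-1)\ell})\). Theorem~D(iii) then gives
\[
\|V_{j,K}*S_\ell F\|_{H^q}\lesssim K^{-j\ell}\|F\|_{H^q}.
\]
Take \(F=f\), and use the uniform \(H^q\) boundedness of the blocks, \(\|V_{m,K}*f\|_{H^q}\lesssim\|f\|_{H^q}\). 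Summing the geometric series with the triangle inequality (for \(q\ge1\)) or the \(q\)-power inequality of Lemma~\ref{lem:background-tools}\textup{(i)} (for \(q<1\)) bounds \(\|S_\ell f-\text{low blocks}\|_{H^q}\) by \(\|f\|_{H^q}\). The low blocks are finite rank. For \(\ell=0\) there is nothing to prove. Thus only \(\ell\ge1\) uses decay, and the decay makes summation trivial; this avoids any square-function characterization.

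\textbf{Step 2: the remainder.} Choose \(L\) with \(L+1>A_q+1\). On the \(j\)-th block Lemma~\ref{lem:finite-block-multipliers} gives
\[
\|V_{j,K}*M_\varepsilon f\|_{H^q}\lesssim K^{jA_q}K^{-j(L+1)}\sum_{|m-j|\le C}\|V_{m,K}*f\|_{H^q}\lesssim K^{-j}\|f\|_{H^q},
\]
and again the sum converges. Together with Step~1 this shows that \(M_\lambda\) is bounded.

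\textbf{Step 3: inverse and automorphism.} If \(1/\lambda_n\) has Taylor form off a finite set \(F\), apply boundedness to the multiplier \(\mu_n=1/\lambda_n\) for \(n\notin F\) and \(\mu_n=0\) for \(n\in F\). Then \(M_\mu M_\lambda\) and \(M_\lambda M_\mu\) equal the identity minus the coefficient projection onto \(\operatorname{span}\{z^n:n\in F\}\), which is the isomorphism modulo polynomials. If instead one redefines \(\lambda_n\) at the indices in \(F\) to nonzero values, the modified sequence \(\tilde\lambda\) still has Taylor form, since Taylor form concerns only the tail. Its reciprocal is given exactly by \(1/\tilde\lambda_n\) everywhere. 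Both \(M_{\tilde\lambda}\) and \(M_{1/\tilde\lambda}\) are bounded, and they are mutually inverse, so \(M_{\tilde\lambda}\) is a bounded automorphism.

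The main obstacle is Step~1 for \(q<1\): one must confirm that Theorem~D(iii) gives the decaying constant \(K^{-j\ell}\) in terms of the \(C^m\) seminorms. If it does not, I would fall back to \(\ell=1\), using the averaging identity and the classical Hardy inequality that \(f\mapsto\frac1z\int_0^zf\) is bounded on \(H^q\) for all \(q>0\) (for instance via maximal functions), and then iterate.
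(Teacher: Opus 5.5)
Your argument is correct, but it takes a different route from the paper. The paper does not prove boundedness of Taylor-form multipliers on $H^q$. It imports this from \cite[Lemmas~5--6]{FangGuoHouZhuBergmanHardy}, applies the imported result to $1/\lambda_n$ on the tail, and then does exactly the bookkeeping of your Step~3: continuity of the coefficient functionals, finite-rank corrections, and mutually inverse multipliers after assigning nonzero values at the exceptional indices.

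You instead prove boundedness with tools already in the paper. You correctly identify the one place where the proof of Lemma~\ref{lem:D-bergman-taylor-multipliers} does not transfer: $H^q$ has no block-sum norm characterization like Lemma~\ref{lem:D-Bergman-Cesaro-block-norm}. Your replacement works. The coefficient $a_0$ gives a multiple of the identity. Every other piece has geometrically decaying block norms, which the $\min\{1,q\}$-power triangle inequality sums.

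The point you flag is harmless. The rescaled symbol $K^{(j-1)\ell}\Phi_{j,\ell}$ has fixed support and $C^m$-seminorms bounded uniformly in $j$. So the uniform bound of \cite[Theorem~D(iii)]{PelaezDeLaRosa2022}, which is the same uniformity the paper relies on, gives the factor $K^{-(j-1)\ell}$ by homogeneity.

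Your fallback is in fact the most elementary route. Since $S_1f(r\xi)=\int_0^1 f(tr\xi)\,dt$, it is dominated by the radial maximal function of $f$, which lies in $L^q(\T)$ for every $q>0$. The remainder needs no blocks either. By (2.1a) and Cauchy's estimate, $|\widehat f(n)|\lesssim (n+1)^{1/q}\|f\|_{H^q}$, so for $L>1/q$ the operator $M_\varepsilon$ maps $H^q$ boundedly into $H^\infty$.

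The paper's citation buys brevity. Your route buys a self-contained proof, and in its elementary form it avoids the Ces\`aro block machinery entirely.
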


\begin{proof}
The Hardy-space multiplier assertion follows from
\cite[Lemmas~5--6]{FangGuoHouZhuBergmanHardy}.  Applying the same result
to \(1/\lambda_n\) on the tail gives the reciprocal multiplier.  For each
fixed \(n\), the Taylor coefficient functional
\(f\mapsto\widehat f(n)\) is continuous on \(H^q\); hence changing finitely
many coefficients produces a bounded finite-rank polynomial operator.
After assigning nonzero values at the exceptional indices, the two
coefficientwise multipliers are bounded inverses of one another on \(H^q\).
\end{proof}

Define the modified model
\[
        \widetilde V^\varphi_{\alpha,\beta}f
        =
        \IRL_{\alpha}
        \left(\IRL_{\beta-\alpha}f\,\IRL_{-\beta}\varphi\right).
        \tag{2.6}
\]

\begin{proposition}\label{prop:model-reduction}
Let \(0<p,q<\infty\), let \(\omega\) be a radial weight, let
\(\varphi\in H(\D)\), and let \((\alpha,\beta)\in\calP\) with
\(\alpha,\beta>0\).  Then
\(V^\varphi_{\alpha,\beta}:A^p_\omega\to H^q\) is bounded, respectively
compact, if and only if
\(\widetilde V^\varphi_{\alpha,\beta}:A^p_\omega\to H^q\) is bounded,
respectively compact.
\end{proposition}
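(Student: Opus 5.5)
We compare the two realizations coefficient by coefficient and show that
\[
 V^\varphi_{\alpha,\beta}=M_\lambda\,\widetilde V^\varphi_{\alpha,\beta}\,M_\theta + F,
\]
where \(M_\lambda\) is a Taylor-form multiplier on \(H^q\), \(M_\theta\) is a Taylor-form multiplier acting on the input, and \(F\) is a bounded finite-rank operator with polynomial range. Lemma~\ref{lem:taylor-multiplier-Hq} then allows the identity to be reversed up to finite rank. Since bounded finite-rank perturbations and compositions with bounded operators and isomorphisms preserve boundedness and compactness, both equivalences follow. Compact-open continuity, which is needed for Lemma~\ref{lem:analytic-compactness-criterion}, is clear for coefficient multipliers with polynomially bounded coefficients.

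\textbf{Step 1 (inner factors).} For \(f=\sum a_nz^n\), i.e.\ \(c=0\), the definition gives
\[
 \Ihat_{\beta-\alpha}f=p_{\beta-\alpha}\sum_n\frac{\Gamma(n+1)}{\Gamma(n+1+\beta-\alpha)}a_nz^n=p_{\beta-\alpha}\,\IRL_{\beta-\alpha}f
\]
when \(\beta-\alpha\notin\Z_-\). When \(\beta-\alpha=-m\in\Z_-\), the finite-kernel clause deletes the first \(m\) coefficients, which vanish anyway because \(1/\Gamma(n+1-m)=0\) for \(n<m\); so the identity \(\Ihat_{\beta-\alpha}f=z^{\beta-\alpha}\IRL_{\beta-\alpha}f\) still holds, with the power understood accordingly. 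In the same way,
\[
 \Ihat_{-\beta}\varphi=p_{-\beta}\,G_\varphi,
\]
up to the analogous truncation when \(\beta\in\N\). Hence the middle product equals \(p_{-\alpha}H\) with \(H=\IRL_{\beta-\alpha}f\cdot G_\varphi\in H(\D)\), possibly after the finite-kernel identifications. Admissibility, via \cite[Lemmas~3.4--3.5]{FangGuoHouZhu2025Studia}, guarantees that this is in \(\mathcal F\).

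\textbf{Step 2 (outer factor).} Apply \(\Ihat_\alpha\) with \(c=-\alpha\):
\[
 V^\varphi_{\alpha,\beta}f=\sum_n\frac{\Gamma(n+1-\alpha)}{\Gamma(n+1)}\widehat H(n)z^n,
\]
whereas \(\widetilde V^\varphi_{\alpha,\beta}f=\sum_n\frac{\Gamma(n+1)}{\Gamma(n+1+\alpha)}\widehat H(n)z^n\). The ratio is
\[
 \lambda_n=\frac{\Gamma(n+1-\alpha)\Gamma(n+1+\alpha)}{\Gamma(n+1)^2}
\]
for indices where both coefficients are defined and nonzero. By Lemma~\ref{lem:background-tools}\textup{(v)}, \(\lambda_n\) and \(1/\lambda_n\) have Taylor form with leading constant \(1\). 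If \(\alpha\in\N\), which admissibility permits only together with \(\beta\in\Z\), the indices \(n<\alpha\) are exceptional. There the two outputs differ by the low Taylor coefficients of \(H\), giving the finite-rank term
\[
 F f=\sum_{n<\alpha}c_n\widehat H(n)z^n .
\]
Its boundedness from \(A^p_\omega\) to polynomials follows because \(\widehat H(n)\) for \(n<\alpha\) is a finite combination of \(\widehat f(k)\), \(k\le n\), and the coefficients of \(\varphi\); these are continuous on \(A^p_\omega\) by Lemma~\ref{lem:background-tools}\textup{(iv)}. Thus \(V^\varphi_{\alpha,\beta}=M_\lambda\widetilde V^\varphi_{\alpha,\beta}+F\). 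Taking the version of \(M_\lambda\) with nonzero values assigned at exceptional indices, Lemma~\ref{lem:taylor-multiplier-Hq} makes it an automorphism of \(H^q\), and we get
\[
 \widetilde V^\varphi_{\alpha,\beta}=M_\lambda^{-1}\bigl(V^\varphi_{\alpha,\beta}-F\bigr)+F',
\]
with \(F'\) again finite rank, arising from the exceptional coefficients of \(\widetilde V\).

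\textbf{Main obstacle.} The delicate point is the exceptional bookkeeping when \(\alpha\), \(\beta\) or \(\beta-\alpha\) are integers. One must ensure that the second clause of \(\Ihat\) and the convention \(1/\Gamma(-m)=0\) in \(\IRL\) kill exactly the same coefficients. Otherwise \(V\) and \(\widetilde V\) could differ by a non-finite-rank piece involving \(\varphi\). I would check this case by case over \(\calP\cap(0,\infty)^2\): the non-integral \(\alpha\) case has no outer exceptions, and the case \(\alpha\in\N\), \(\beta\in\N\) needs a careful look. In each case I would verify that every discrepancy is confined to indices \(n<\max\{\alpha,\beta\}\) of the relevant series, so that only finitely many coefficient functionals are involved.
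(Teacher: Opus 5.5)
Your overall strategy is the paper's. You compare the Taylor coefficients of the two outputs and show that on the tail they differ by the multiplier $m_{\alpha,n}=\Gamma(n+1-\alpha)\Gamma(n+1+\alpha)/\Gamma(n+1)^2$. That multiplier becomes an automorphism of $H^q$ via Lemma~\ref{lem:taylor-multiplier-Hq}, and the rest is absorbed into a finite-rank term controlled by the coefficient functionals of Lemma~\ref{lem:background-tools}\textup{(iv)}. Two pieces of your setup are unnecessary: the input multiplier $M_\theta$ you announce never appears, and compact-open continuity is not needed, since compactness passes directly through composition with bounded maps and finite-rank perturbations. For $\alpha\notin\N$ your Steps 1--2 are complete. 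In fact $\lambda_n$ is then finite and nonzero for every $n$, so $V^\varphi_{\alpha,\beta}=M_\lambda\widetilde V^\varphi_{\alpha,\beta}$ exactly.

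The step that is not yet a proof is the integer case $\alpha=m\in\N$, hence $\beta=\ell\in\N$, which you flag but leave open. There Step 2 applies the first clause of $\Ihat_\alpha$ to ``$p_{-\alpha}H$'' with $c=-m\in\Z_-$. That lies outside $\mathcal F$, and the formula would evaluate $\Gamma(n+1-m)$ at its poles, so your displayed formula for $V^\varphi_{\alpha,\beta}f$ is not derived in this case.

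The missing observation is a vanishing-order count:
\begin{itemize}
\item $G_\varphi$ vanishes to order at least $\ell$, since $1/\Gamma(j+1-\ell)=0$ for $j<\ell$.
\item When $\ell<m$, $\IRL_{\ell-m}f$ vanishes to order at least $m-\ell$.
\end{itemize}
Hence $H=\IRL_{\ell-m}f\,G_\varphi$ vanishes to order at least $m$. In both subcases the middle product $\Ihat_{\ell-m}f\,\Ihat_{-\ell}\varphi=z^{-m}H$ is a genuine analytic function, so $c=0$. Applying $\Ihat_m$ with $c=0$ gives $V^\varphi_{m,\ell}f=\sum_{N\ge m}\frac{\Gamma(N+1-m)}{\Gamma(N+1)}\widehat H(N)z^N$. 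Meanwhile $\widetilde V^\varphi_{m,\ell}f=\IRL_mH$ also has vanishing coefficients for $N<m$.

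Thus the ratio $m_{m,N}$ holds on every nonzero coefficient, the non-finite-rank discrepancy you feared cannot occur, and your correction $F$ is in fact zero. This is exactly the paper's re-indexing: it defines $c_N(f)$ as the coefficient of $z^{N-m}$ in the analytic product and notes $c_N(f)=0$ for $N<m$; it keeps a polynomial term only for uniformity. With this supplied, your argument is complete.
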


\begin{proof}
We compare the coefficients of the original (bona fide) Riemann--Liouville
realization and the modified coefficient model.  The integer case requires
separate treatment because the formal factor \(p_{-\alpha}\) then belongs to
the exceptional set of the original calculus.

Write \(f(z)=\sum_{n\ge0}a_nz^n\) and
\(\varphi(z)=\sum_{n\ge0}b_nz^n\).

First suppose that \(\alpha\notin\N\).  Then \(-\alpha\notin\{-1,-2,\ldots\}\),
and the fractional-calculus representation of the analytic
paraproduct~\cite[Lemmas~3.4--3.5]{FangGuoHouZhu2025Studia} gives a
sequence \(\{c_n(f)\}_{n\ge0}\) such that, on \(\D\setminus\{0\}\),
\[
        \Ihat_{\beta-\alpha}f\,\Ihat_{-\beta}\varphi
        =p_{-\alpha}(z)\sum_{n=0}^{\infty}c_n(f)z^n .
        \tag{2.6a}
\]
Here the coefficients are explicitly
\[
        c_n(f)=
        \sum_{k+j=n}
        \frac{\Gamma(k+1)}{\Gamma(k+1+\beta-\alpha)}
        \frac{\Gamma(j+1)}{\Gamma(j+1-\beta)}
        a_k b_j,
        \qquad n\ge0,
\]
where \(1/\Gamma(-m)=0\) for \(m=0,1,2,\ldots\), as justified by
Lemma~\ref{lem:background-tools}\textup{(v)}.  This formula is obtained by
collecting the coefficient of \(p_{-\alpha}(z)z^n\) in the original product.
Admissibility of \((\alpha,\beta)\) ensures that this coefficient is well
defined.  The same finite convolution is the ordinary Taylor coefficient of
\(\IRL_{\beta-\alpha}f\,\IRL_{-\beta}\varphi\).  Applying \(\Ihat_\alpha\) to
\((2.6a)\) and using the coefficient model in \((2.6)\) give
\begin{align}
 V^\varphi_{\alpha,\beta}f(z)
 &=\sum_{n=0}^{\infty}
 \frac{\Gamma(n+1-\alpha)}{\Gamma(n+1)}c_n(f)z^n ,
 \tag{2.6b}\\
 \widetilde V^\varphi_{\alpha,\beta}f(z)
 &=\sum_{n=0}^{\infty}
 \frac{\Gamma(n+1)}{\Gamma(n+1+\alpha)}c_n(f)z^n .
 \tag{2.6c}
\end{align}
This coefficient comparison between the original and modified
Riemann--Liouville models is the explicit form used in the standard-weight
theory~\cite[Equations~(15)--(16) and the paragraph following
them]{FangGuoHouZhuBergmanHardy}.

Now suppose that \(\alpha=m\in\N\).  Since \((\alpha,\beta)\in\calP\) and
\(\beta>0\), we then have \(\beta=\ell\in\N\).  In this case we do not use the
formal symbol \(p_{-m}\).  Instead, define \(c_N(f)=0\) for \(0\le N<m\), and
for \(N\ge m\) let \(c_N(f)\) be the coefficient of \(z^{N-m}\) in the analytic
product
\[
        \Ihat_{\ell-m}f\,\Ihat_{-\ell}\varphi .
\]
This indexing aligns the final Taylor degree in the two models; the missing
degrees \(N<m\) form the finite-dimensional correction.
Equivalently, with the convention \(1/\Gamma(-j)=0\) for \(j=0,1,2,\ldots\)
from Lemma~\ref{lem:background-tools}\textup{(v)},
\[
        c_N(f)=
        \sum_{k+j=N}
        \frac{\Gamma(k+1)}{\Gamma(k+1+\ell-m)}
        \frac{\Gamma(j+1)}{\Gamma(j+1-\ell)}
        a_k b_j,
        \qquad N\ge0.
        \tag{2.6d}
\]
Formula \((2.6d)\) also gives \(c_N(f)=0\) for \(N<m\): a nonzero summand
requires \(j\ge\ell\) and, when \(\ell<m\), \(k\ge m-\ell\), so
\(N=k+j\ge m\).  The original \(m\)-fold integration and the modified model
therefore give
\begin{align}
 V^\varphi_{m,\ell}f(z)
 &=\sum_{N=m}^{\infty}
 \frac{\Gamma(N+1-m)}{\Gamma(N+1)}c_N(f)z^N ,
 \tag{2.6e}\\
 \widetilde V^\varphi_{m,\ell}f(z)
 &=\sum_{N=0}^{\infty}
 \frac{\Gamma(N+1)}{\Gamma(N+1+m)}c_N(f)z^N .
 \tag{2.6f}
\end{align}
Since \(c_N(f)=0\) for \(N<m\), equations \((2.6e)\)--\((2.6f)\) yield the
same coefficient ratio on every nonzero term, without invoking the undefined
formal factor \(p_{-m}\).

Thus in both cases there is an integer \(N_0=N_0(\alpha)\) such that, for
\(n\ge N_0\), the \(n\)-th coefficient of the original output equals
\[
        m_{\alpha,n}
\]
times the \(n\)-th coefficient of the modified output, where
\[
        m_{\alpha,n}
        =
        \frac{\Gamma(n+1-\alpha)\Gamma(n+1+\alpha)}{\Gamma(n+1)^2}.
        \tag{2.6g}
\]
Choose arbitrary nonzero values for \(m_{\alpha,n}\) when \(n<N_0\), and let
\(M_{m_\alpha}\) be the corresponding coefficient multiplier.  By
Lemma~\ref{lem:background-tools}\textup{(v)}, \(\{m_{\alpha,n}\}\) has Taylor form on the tail,
is bounded away from zero there, and has a Taylor-form reciprocal.  Hence
\(M_{m_\alpha}\) and the coefficientwise reciprocal multiplier are bounded on
\(H^q\) by Lemma~\ref{lem:taylor-multiplier-Hq}.  Thus \(M_{m_\alpha}\) is a
bounded automorphism of \(H^q\).

The comparison implies
\[
        V^\varphi_{\alpha,\beta}f
        =
        M_{m_\alpha}\widetilde V^\varphi_{\alpha,\beta}f
        +P_{\alpha,\beta,\varphi}f,
        \tag{2.6h}
\]
where \(P_{\alpha,\beta,\varphi}f\) is a polynomial of degree less than \(N_0\).
Its coefficients are finite linear combinations of \(c_0(f),\ldots,c_{N_0-1}(f)\),
and hence finite linear combinations of the Taylor coefficients
\(a_0,\ldots,a_{N_0-1}\) of \(f\).  These coefficient functionals are bounded on
\(A^p_\omega\) by Lemma~\ref{lem:background-tools}\textup{(iv)}.  Therefore
\[
        P_{\alpha,\beta,\varphi}:A^p_\omega\to H^q
\]
is bounded and has finite-dimensional range; in particular it is compact.

Thus boundedness or compactness of the modified model implies the same
property for the original model.  Conversely, applying \(M_{m_\alpha}^{-1}\)
to \((2.6h)\) gives
\[
        \widetilde V^\varphi_{\alpha,\beta}f
        =
        M_{m_\alpha}^{-1}V^\varphi_{\alpha,\beta}f
        -M_{m_\alpha}^{-1}P_{\alpha,\beta,\varphi}f .
\]
The second term is finite rank, so either property of the original model
implies the corresponding property of the modified one.
\end{proof}

\subsection{Fractional \texorpdfstring{\(g\)}{g}-function reduction}

The reduction to an area map requires the fractional \(g\)-function and the
analytic-tent multiplier theorem used in the standard-weight theory; the
ordinary Bergman Littlewood--Paley theorem alone is insufficient.

For \(a>0\) and \(0<q<\infty\), define
\[
        \|U\|_{\mathcal A_a^q}
        =
        \left(
        \int_{\T}
        \left(
        \int_{\Gamma(\xi)}|U(z)|^2\delta(z)^{2a-2}\dA(z)
        \right)^{q/2}\dxi
        \right)^{1/q}.
\]

\begin{lemma}\label{lem:fractional-primitive-area}
Let \(0<q<\infty\) and \(a>0\).  Then, for every \(U\in H(\D)\),
\[
        \|\IRL_a U\|_{H^q}
        \asymp
        \|U\|_{\mathcal A_a^q}.
\]
\end{lemma}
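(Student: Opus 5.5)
We would prove Lemma~\ref{lem:fractional-primitive-area} by comparing it with the integer-order Lusin area theorem, and then transferring between integer and fractional orders of differentiation inside the analytic tent space. Put \(F=\IRL_aU\). Since \(\Gamma(n+1)/\Gamma(n+1+a)\neq0\) for every \(n\ge0\), the map \(U\mapsto F\) is injective, and coefficientwise
\[
\widehat U(n)=\frac{\Gamma(n+1+a)}{\Gamma(n+1)}\widehat F(n).
\]
Fix an integer \(k>a\). The classical Calder\'on--Fefferman--Stein area-function characterization of \(H^q\), valid for all \(0<q<\infty\), together with its higher-order form (obtained by iterating the case of one derivative), gives
\[
\|F\|_{H^q}\asymp\sum_{n<k}|\widehat F(n)|
+\Bigl\|\xi\mapsto\Bigl(\int_{\Gamma(\xi)}|F^{(k)}|^2\delta^{2k-2}\dA\Bigr)^{1/2}\Bigr\|_{L^q(\T)} .
\]
So it suffices to show that \(\|U\|_{\mathcal A^q_a}\) is comparable to the right-hand side.

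Next we relate \(F^{(k)}\) to \(U\) by coefficient multipliers. Up to the shift \(z^{-k}\), which is harmless away from the origin and handled by the low-order terms, we have
\[
F^{(k)}(z)=\sum_{n\ge k}\frac{\Gamma(n+1)}{\Gamma(n+1-k)}\widehat F(n)z^{n-k}
=\sum_{n\ge k}\frac{\Gamma(n+1)^2}{\Gamma(n+1-k)\,\Gamma(n+1+a)}\,\widehat U(n)\,z^{n-k}.
\]
By Lemma~\ref{lem:background-tools}\textup{(v)}, the multiplier equals \(n^{k-a}\) times a sequence of Taylor form whose reciprocal also has Taylor form. Thus \(F^{(k)}\) is, modulo finite-rank polynomial corrections controlled by finitely many coefficients, the fractional derivative of order \(k-a\) of \(U\), composed with an invertible Taylor-form multiplier. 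This is the analytic-tent analogue of the reduction in Lemma~\ref{lem:D-bergman-taylor-multipliers}.

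The required two-sided estimate is then the following statement: for \(b=k-a>0\),
\[
\bigl\|\mathcal R^{b}U\,\delta^{b+a-1}\bigr\|_{T^q_2}\asymp\|U\delta^{a-1}\|_{T^q_2}
\]
modulo low coefficients, where \(\mathcal R^b\) is the Taylor multiplier by \((n+1)^{b}\) and \(T^q_2\) denotes the tent norm with the cone \(\Gamma(\xi)\). Taylor-form multipliers without growth are bounded on these analytic tent spaces. This is the analytic-tent multiplier theorem of \cite[Lemmas~5--6]{FangGuoHouZhuBergmanHardy}, which we would invoke exactly as in Lemma~\ref{lem:taylor-multiplier-Hq}.

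For the fractional shift itself, we would write \(U\) and \(\mathcal R^bU\) through the reproducing formula for the standard weight \(\delta^{N}\) with \(N\) large:
\[
U(z)=c_N\int_\D\frac{U(w)\,\delta(w)^N}{(1-\overline w z)^{2+N}}\dA(w).
\]
Applying \(\mathcal R^b\) under the integral produces a kernel of size \(|1-\overline wz|^{-2-N-b}\). Each direction of the comparison then becomes boundedness on \(T^q_2\) of a positive Bergman-type integral operator
\[
S_{N,b}g(z)=\delta(z)^{c}\int_\D\frac{\delta(w)^{d}}{|1-\overline wz|^{2+N+b'}}\,g(w)\dA(w)
\]
with exponents balanced so that the operator is homogeneous. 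For \(N\) sufficiently large (depending on \(q\), \(a\), \(b\)), such operators are bounded on every \(T^q_2\), \(0<q<\infty\). This is the Coifman--Meyer--Stein/Cohn--Verbitsky type lemma. For \(q\le 2\) it would be proved via atomic decomposition of \(T^q_2\); for \(q>2\) it would be proved via duality with \(T^{(q/2)'}_\infty\)-Carleson type estimates. The point \(z=0\) needs separate attention, but since \(0\in\Gamma(\xi)\), subharmonicity (Lemma~\ref{lem:background-tools}\textup{(iii)}) shows that \(\|U\|_{\mathcal A^q_a}\) controls \(|\widehat U(n)|\) for \(n<k\), and conversely these terms are trivially dominated by both sides. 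This absorbs all finite-rank corrections.

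We expect the main obstacle to be the tent-space boundedness of the kernel operators for small \(q\), especially \(q<1\), where duality is unavailable. There we would first try atomic decomposition of \(T^q_2\): we need the uniform estimate \(\|S_{N,b}A\|_{T^q_2}\lesssim1\) for each \(T^q_2\)-atom \(A\) supported in a tent \(\wedge(I)\). We would prove it by splitting the output into the enlarged tent over \(3I\), handled by the \(L^2\) bound via Schur's test, and its dyadic complements \(2^{j+1}I\setminus2^jI\), where the kernel decays like \(2^{-j(N+b'-\text{const})}\). Summing with the power inequality of Lemma~\ref{lem:background-tools}\textup{(i)} then succeeds once \(N\) is chosen large relative to \(1/q\).
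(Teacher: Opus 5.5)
Your argument is correct in outline, but it takes a genuinely different route from the paper's.

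The paper never passes through integer order. It imports the fractional Flett area formula \cite[Eq.~(5.3)]{FangGuoHouZhu2025Studia}, $\|F\|_{H^q}\asymp|\widehat F(0)|+\|I^F_{-a}(F-F(0))\|_{\mathcal A^q_a}$, which is already the fractional Lusin theorem, and then only does coefficient bookkeeping. The analytic-tent multiplier theorem \cite[Lemma~5.4]{FangGuoHouZhu2025Studia} trades the Flett multiplier $(n+1)^a$ for the Riemann--Liouville coefficients, since their ratio has Taylor form on the tail. The modes in $E_a$ (nonempty only when $a\in\N$) are treated by hand, and finitely many coefficients are controlled through $r_0\D\subset\Gamma(\xi)$, exactly as in your last step.

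You instead import only the classical integer-order area theorem and prove the fractional content yourself, as the tent-space shift $\|\mathcal R^bU\|_{\mathcal A^q_{a+b}}\asymp\|U\|_{\mathcal A^q_a}$, via reproducing kernels and positive Bergman-type operators on $T^q_2$. Because $(n+1)^b$ never vanishes, this step has no exceptional modes. Moreover, your kernel bound $|1-\overline wz|^{-2-N-b}$, for multipliers of the form $(n+1)^b$ times a Taylor-form sequence, yields the analytic-tent multiplier theorem itself as the case $b=0$. So your route is more self-contained, at the price of actually proving the $T^q_2$ bounds for every $0<q<\infty$. There, atoms cover only $q\le1$, and the Schur test covers $q=2$. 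For $1<q<2$ you need duality $(T^q_2)^*=T^{q'}_2$ (the adjoint is again a positive homogeneous kernel operator of the same kind) or interpolation.

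Two attributions should be adjusted. First, \cite[Lemmas~5--6]{FangGuoHouZhuBergmanHardy} are the lemmas the paper uses for $H^q$ in Lemma~\ref{lem:taylor-multiplier-Hq}; its tent-space multiplier theorem is \cite[Lemma~5.4]{FangGuoHouZhu2025Studia}. On the integer-weight space $\mathcal A^q_k$ you could also transfer the $H^q$ result through the classical area theorem, or simply use your kernel bound with $b=0$. Second, the higher-order area theorem is not obtained by literally iterating the first-order $H^q$ statement. It does follow from your own tent-space shift, or you can bypass it by taking $k=1$ and $b=1-a$, since the kernel argument only needs $a>0$, $a+b>0$, and $N$ large.
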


\begin{proof}
Since \((x^q+y^q)^{1/q}\asymp_q x+y\) for \(x,y\ge0\), the \(q\)-th-power
Flett formula \cite[Eq.~(5.3)]{FangGuoHouZhu2025Studia}, applied to
\(F-F(0)\) and combined with boundedness of the constant-term projection on
\(H^q\), is equivalent for every \(0<q<\infty\) to
\[
        \|F\|_{H^q}
        \asymp
        |\widehat F(0)|
        +\|I^F_{-a}(F-F(0))\|_{\mathcal A_a^q}.
        \tag{2.6i}
\]
Here
\[
        I^F_{-a}(F-F(0))(z)
        =\sum_{n=1}^{\infty}(n+1)^a\widehat F(n)z^n
\]
is the Flett coefficient operator; see also
\cite[Theorem~9 and the following remark]{FangGuoHouZhuBergmanHardy}.
For \(n\ge1\) outside the finite exceptional set \(E_a\) defined below, the
quotient
\[
        \frac{\Gamma(n+1)}
        {\Gamma(n+1-a)(n+1)^a}
\]
and its reciprocal have Taylor form by
Lemma~\ref{lem:background-tools}\textup{(v)}.  Since \(2a-2>-2\), the
analytic-tent multiplier theorem
\cite[Lemma~5.4]{FangGuoHouZhu2025Studia} applies to \(\mathcal A_a^q\) and
compares the two derivative operators on the tail; the remaining coefficients
are handled separately.  If \(a\notin\N\), the constant coefficient of
\(\IRL_{-a}F\) is \(\widehat F(0)/\Gamma(1-a)\).  Since
\(1/\Gamma(1-a)\ne0\), this coefficient controls \(|\widehat F(0)|\).  If
\(a\in\N\), then \(0\in E_a\).  Thus \((2.6i)\) becomes
\[
        \|F\|_{H^q}
        \asymp
        \|\IRL_{-a}F\|_{\mathcal A_a^q}
        +
        \sum_{n\in E_a}|\widehat F(n)|,
        \qquad
        E_a=\{n\ge0:n+1-a\in\Z_{\le0}\}.
        \tag{2.6j}
\]
The finite-dimensional terms remain controlled when \(0<q<1\).  Choose
\(0<r_0<1/3\).  Then \(r_0\D\subset\Gamma(\xi)\) for every
\(\xi\in\T\), and \(\delta^{2a-2}\asymp1\) on \(r_0\D\).  Consequently
\[
        \|U\|_{\mathcal A_a^q}
        \gtrsim
        \left(\int_{r_0\D}|U(z)|^2\dA(z)\right)^{1/2}.
        \tag{2.6k}
\]
Orthogonality of the monomials on \(r_0\D\) therefore gives, for each fixed
\(n\ge0\),
\[
        |\widehat U(n)|
        \lesssim_{n,r_0,a}
        \|U\|_{\mathcal A_a^q}.
\]
Conversely, every fixed polynomial has finite tent quasi-norm because
\(2a-2>-2\) and a Stolz region has cross-section comparable to \(\delta\) near
the boundary.  Hence the finite-dimensional correction is valid for every
\(0<q<\infty\) and does not require local convexity.

Apply this equivalence to \(F=\IRL_a U\).  Then
\[
        \IRL_{-a}F
        =
        \IRL_{-a}\IRL_aU
        =M_aU,
\]
where
\[
        M_aU(z)=\sum_{n=0}^{\infty}m_{a,n}\widehat U(n)z^n,
        \qquad
        m_{a,n}=\frac{\Gamma(n+1)^2}
        {\Gamma(n+1+a)\Gamma(n+1-a)}.
\]
Outside \(E_a\), the sequence \(m_{a,n}\) has Taylor form; for all
sufficiently large \(n\), it is bounded away from zero and its reciprocal also
has Taylor form.  Hence
\[
        \|M_aU\|_{\mathcal A_a^q}
        +
        \sum_{n\in E_a}|\widehat U(n)|
        \asymp
        \|U\|_{\mathcal A_a^q}.
\]
For each fixed index, the coefficient of \(\IRL_aU\) is a nonzero constant
multiple of the corresponding coefficient of \(U\).  Combining these facts
gives
\[
        \|\IRL_aU\|_{H^q}
        \asymp
        \|U\|_{\mathcal A_a^q}.
\]
\end{proof}

For the symbol \(\varphi\), set \(G_\varphi=\IRL_{-\beta}\varphi\).  For
analytic \(h\), define
\[
 A_{\alpha,\varphi}h(\xi)=
 \left(\int_{\Gamma(\xi)}
 |h(z)|^2|G_\varphi(z)|^2\delta(z)^{2\alpha-2}\dA(z)\right)^{1/2},
 \qquad
 \|A_{\alpha,\varphi}h\|_{L^q(\T)}
 =\|hG_\varphi\|_{\mathcal A_\alpha^q}.
\]

\begin{proposition}\label{prop:g-function-reduction}
Let \(0<p,q<\infty\), let \(\omega\in\calD\), let \(\alpha,\beta>0\), and let
\(\varphi\in H(\D)\).  Assume \((\alpha,\beta)\in\calP\) and \((H)\), and put
\(\nu=\nu_{\alpha,\beta,p}\).  Then
\(V^\varphi_{\alpha,\beta}:A^p_\omega\to H^q\) is bounded, respectively
compact, if and only if \(A_{\alpha,\varphi}:A^p_\nu\to L^q(\T)\) is
bounded, respectively compact.  Here compactness of
the positive sublinear area map \(A_{\alpha,\varphi}\) means boundedness together
with the following sequential condition: whenever \(\{h_j\}\) is bounded in \(A^p_\nu\) and
\(h_j\to0\) locally uniformly in \(\D\), then
\[
        \|A_{\alpha,\varphi}h_j\|_{L^q(\T)}\to0.
\]
\end{proposition}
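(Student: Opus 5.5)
By Proposition~\ref{prop:model-reduction}, we may replace \(V^\varphi_{\alpha,\beta}\) by the modified model \(\widetilde V^\varphi_{\alpha,\beta}\) for both boundedness and compactness. Write \(h=\IRL_{\beta-\alpha}f\). Then
\[
\widetilde V^\varphi_{\alpha,\beta}f=\IRL_\alpha(hG_\varphi),
\]
and Lemma~\ref{lem:fractional-primitive-area}, applied with \(a=\alpha\) and \(U=hG_\varphi\), gives
\[
\|\widetilde V^\varphi_{\alpha,\beta}f\|_{H^q}\asymp\|hG_\varphi\|_{\mathcal A^q_\alpha}=\|A_{\alpha,\varphi}h\|_{L^q(\T)}.
\]
So the operator problem becomes the statement that \(f\mapsto A_{\alpha,\varphi}(\IRL_{\beta-\alpha}f)\) is bounded or compact from \(A^p_\omega\) to \(L^q(\T)\). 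It remains to transport between \(f\in A^p_\omega\) and \(h\in A^p_\nu\). For this we use Corollary~\ref{cor:LP-shift-for-volterra} in one direction and the range decomposition of Lemma~\ref{lem:LP-range-decomposition} in the other.

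\emph{Boundedness.} Suppose \(A_{\alpha,\varphi}:A^p_\nu\to L^q\) is bounded. By \((2.5')\) we have \(\|\IRL_{\beta-\alpha}f\|_{A^p_\nu}\lesssim\|f\|_{A^p_\omega}\), so the composite estimate follows at once. Conversely, suppose \(\widetilde V\) is bounded and take \(h\in A^p_\nu\). Split \(h=\Pi h+(h-\Pi h)\), with \(\Pi=\Pi_{\alpha,\beta}\) and \(R=R_{\alpha,\beta}\) from Lemma~\ref{lem:LP-range-decomposition}, and put \(f=R(h-\Pi h)\). By \((2.5a)\) we have \(\IRL_{\beta-\alpha}f=h-\Pi h\), so by \((2.5b)\)
\[
\|A_{\alpha,\varphi}(h-\Pi h)\|_{L^q}\lesssim\|f\|_{A^p_\omega}\lesssim\|h\|_{A^p_\nu}.
\]
For the polynomial part \(\Pi h\), which lies in a fixed finite-dimensional space, we need \(A_{\alpha,\varphi}z^n\in L^q\) for \(n\in E_s\). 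Here \(z^n\) is \(\IRL_{-s}\)-annihilated, so I would pick a monomial preimage \(z^{n'}\) with \(n'\notin E_s\) and use boundedness of \(\widetilde V\) on \(z^{n'}\in A^p_\omega\), which gives \(z^{n'}G_\varphi\in\mathcal A^q_\alpha\). Then \(|z^n G_\varphi|\le|z^{n'}G_\varphi|+|G_\varphi|\chi_{r_0\D}\)-type local terms finish the estimate. An alternative is to bound \(|z^n|\lesssim 1\) and use the monomial \(n'=s\), the smallest index outside \(E_s\); near the boundary \(|z^n|\asymp|z^{s}|\), and the compact piece is controlled by \((2.6k)\) together with local boundedness of \(G_\varphi\). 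Finally, the quasi-triangle inequality for the sublinear map \(A_{\alpha,\varphi}\) combines the two pieces.

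\emph{Compactness.} Use Lemma~\ref{lem:analytic-compactness-criterion} for \(\widetilde V\); the operator \(\widetilde V\) is compact-open continuous because it is a composition of coefficient multipliers with polynomial growth and multiplication by \(G_\varphi\). Suppose first the sequential condition holds for \(A_{\alpha,\varphi}\), and let \(f_j\) be bounded in \(A^p_\omega\) with \(f_j\to0\) locally uniformly. Then \(h_j=\IRL_{\beta-\alpha}f_j\) is bounded in \(A^p_\nu\), and it tends to \(0\) locally uniformly by normality: its coefficients are fixed multiples of those of \(f_j\), the same argument used at the end of Lemma~\ref{lem:LP-range-decomposition}. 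Hence \(\|\widetilde Vf_j\|_{H^q}\asymp\|A_{\alpha,\varphi}h_j\|_{L^q}\to0\). Conversely, let \(h_j\) be bounded in \(A^p_\nu\) with \(h_j\to0\) locally uniformly, and decompose as above. The last assertion of Lemma~\ref{lem:LP-range-decomposition} gives \(f_j=R(h_j-\Pi h_j)\to0\) locally uniformly and boundedly in \(A^p_\omega\), so \(\|A_{\alpha,\varphi}(h_j-\Pi h_j)\|_{L^q}\to0\). Also \(\Pi h_j\to0\) in the finite-dimensional norm, and the boundedness step shows \(A_{\alpha,\varphi}\) is bounded on that space, so \(\|A_{\alpha,\varphi}\Pi h_j\|_{L^q}\to0\).

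The main obstacle is the finite-dimensional exceptional piece \(\Pi h\) when \(s=\alpha-\beta\in\N\). These modes are invisible to \(\widetilde V\), so their control must be extracted indirectly from the boundedness of \(\widetilde V\) on nearby monomials. I would handle this by splitting \(\Gamma(\xi)\) into \(\Gamma(\xi)\cap r_0\D\) and its complement, and using \(|z|^n\asymp|z|^{s}\) on the complement.
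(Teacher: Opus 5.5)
Your proposal is correct and follows the paper's proof essentially step for step. Both arguments pass to the modified model and use Lemma~\ref{lem:fractional-primitive-area} on the exact range. Both use the range decomposition of Lemma~\ref{lem:LP-range-decomposition} for the converse and the compactness transfer. Both control the exceptional polynomials by shifting to a monomial in the range and splitting off a compact disc, which is the paper's estimate \(A_{\alpha,\varphi}P\le r^{-N}A_{\alpha,\varphi}(z^NP)+C_{P,r}\).

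One small correction to your alternative route: the inner piece on \(r_0\D\) is bounded simply because \(G_\varphi\) and \(\delta^{2\alpha-2}\) are bounded there. The estimate \((2.6k)\) is a lower bound and plays no role in that step.
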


\begin{proof}
By Proposition~\ref{prop:model-reduction}, it is enough to work with the
modified model \((2.6)\).  Put \(\nu=\nu_{\alpha,\beta,p}\),
\(h=\IRL_{\beta-\alpha}f\), and \(U=hG_\varphi\).
By Corollary~\ref{cor:LP-shift-for-volterra},
\(f\mapsto\IRL_{\beta-\alpha}f\) maps \(A^p_\omega\) onto a
finite-codimensional subspace of \(A^p_\nu\).  More precisely,
Lemma~\ref{lem:LP-range-decomposition} gives, for every \(h\in A^p_\nu\),
\[
        h=(h-P_h)+P_h,
        \qquad P_h=\Pi_{\alpha,\beta}h,
\]
where \(h-P_h\) lies in the exact range of \(\IRL_{\beta-\alpha}\), and there is
\(f=R_{\alpha,\beta}(h-P_h)\in A^p_\omega\) such that
\[
        \IRL_{\beta-\alpha}f=h-P_h,
        \qquad
        \|f\|_{A^p_\omega}+\|P_h\|_{\mathrm{poly}}
        \lesssim
        \|h\|_{A^p_\nu}.
\]

For functions in the exact range of the shift, Lemma~\ref{lem:fractional-primitive-area}
gives
\[
\begin{aligned}
        \|\widetilde V^\varphi_{\alpha,\beta}f\|_{H^q}
        &=
        \|\IRL_\alpha(hG_\varphi)\|_{H^q}       \\
        &\asymp
        \|hG_\varphi\|_{\mathcal A_\alpha^q}
        =
        \|A_{\alpha,\varphi}h\|_{L^q(\T)}.
\end{aligned}
\]
The area map is pointwise subadditive:
\[
        A_{\alpha,\varphi}(h_1+h_2)
        \le
        A_{\alpha,\varphi}h_1+A_{\alpha,\varphi}h_2.
        \tag{2.8a}
\]
We therefore use Minkowski's inequality when \(q\ge1\) and the power
inequality after taking \(q\)-th powers when \(0<q<1\); see
Lemma~\ref{lem:background-tools}\textup{(i)}.
If \(A_{\alpha,\varphi}:A^p_\nu\to L^q(\T)\) is bounded and \(f\in A^p_\omega\), then
\(h=\IRL_{\beta-\alpha}f\in A^p_\nu\) and
\(\|h\|_{A^p_\nu}\lesssim\|f\|_{A^p_\omega}\).  Hence
\[
        \|\widetilde V^\varphi_{\alpha,\beta}f\|_{H^q}
        \lesssim
        \|A_{\alpha,\varphi}h\|_{L^q}
        \lesssim
        \|h\|_{A^p_\nu}
        \lesssim
        \|f\|_{A^p_\omega}.
\]
Thus \(\widetilde V^\varphi_{\alpha,\beta}\) is bounded, and the original operator is
bounded by Proposition~\ref{prop:model-reduction}.

Conversely, suppose that \(\widetilde V^\varphi_{\alpha,\beta}\) is bounded and let
\(h\in A^p_\nu\).  Choose \(f\) and \(P_h\) as above.  Then
\[
        \|A_{\alpha,\varphi}(h-P_h)\|_{L^q}
        \asymp
        \|\widetilde V^\varphi_{\alpha,\beta}f\|_{H^q}
        \lesssim
        \|h\|_{A^p_\nu}.
\]
It remains to control the finite-dimensional complement.  It suffices to treat
one polynomial \(P\) from a fixed basis.  Choose an integer \(N\) so large that
\(z^NP\) belongs to the exact range of
\(\IRL_{\beta-\alpha}\).  The preceding estimate applied to \(z^NP\) gives
\(\|A_{\alpha,\varphi}(z^NP)\|_{L^q}<\infty\).  Fix \(0<r<1\).  Since
\(|z|^{-N}\le r^{-N}\) on \(\D\setminus r\D\), while
\[
        C_{P,r}^2=
        \int_{r\D}|P(z)|^2|G_\varphi(z)|^2
        \delta(z)^{2\alpha-2}\dA(z)<\infty,
\]
we have pointwise on \(\T\)
\[
        A_{\alpha,\varphi}P
        \le r^{-N}A_{\alpha,\varphi}(z^NP)+C_{P,r}.
        \tag{2.8b}
\]
Since \(\T\) has finite measure, \((2.8b)\) implies
\(\|A_{\alpha,\varphi}P\|_{L^q}<\infty\), with the power inequality used
when \(0<q<1\).  Applying this to a fixed basis and using finite
dimensionality and subadditivity shows that
\(P\mapsto A_{\alpha,\varphi}P\) is continuous from the polynomial space into
\(L^q(\T)\).  Consequently
\[
        \|A_{\alpha,\varphi}P_h\|_{L^q}
        \lesssim
        \|P_h\|_{\mathrm{poly}}
        \lesssim
        \|h\|_{A^p_\nu}.
\]
Using \((2.8a)\) and Lemma~\ref{lem:background-tools}\textup{(i)} gives
\(\|A_{\alpha,\varphi}h\|_{L^q}\lesssim \|h\|_{A^p_\nu}\).  Thus
\(A_{\alpha,\varphi}\) is bounded.

Compactness is handled by the same decomposition into the exact range and its
finite-dimensional complement.  Each fractional coefficient operator is
continuous in the compact-open topology by Cauchy's estimates and the
polynomial growth of its gamma quotient; multiplication by the fixed analytic
symbol is likewise continuous.  Hence
\(\widetilde V^\varphi_{\alpha,\beta}:H(\D)\to H(\D)\) is continuous in the
compact-open topology, and Lemma~\ref{lem:analytic-compactness-criterion}
applies to it.
If \(A_{\alpha,\varphi}\) satisfies the sequential compactness condition and
\(f_j\) is bounded in \(A^p_\omega\) with \(f_j\to0\) locally uniformly, then
\(h_j=\IRL_{\beta-\alpha}f_j\) is bounded in \(A^p_\nu\).  For
\(0<r<R<1\), Cauchy's estimate and the \(O((n+1)^K)\) growth of the
multiplier give
\[
 \sup_{|z|\le r}|h_j(z)|
 \le C M_j(R)\sum_{n\ge0}(n+1)^K(r/R)^n\longrightarrow0,
 \qquad M_j(R)=\max_{|z|=R}|f_j(z)|\to0,
\]
so \(h_j\to0\) locally uniformly.
Thus \(\|A_{\alpha,\varphi}h_j\|_{L^q}\to0\), and the preceding norm
equivalence gives \(\|\widetilde V^\varphi_{\alpha,\beta}f_j\|_{H^q}\to0\).
Lemma~\ref{lem:analytic-compactness-criterion} now shows that
\(\widetilde V^\varphi_{\alpha,\beta}\) is compact.

Conversely, assume that \(\widetilde V^\varphi_{\alpha,\beta}\) is compact.  Let
\(h_j\) be bounded in \(A^p_\nu\) and tend to zero locally uniformly.  Put
\[
        P_{h_j}=\Pi_{\alpha,\beta}h_j,
        \qquad
        f_j=R_{\alpha,\beta}(h_j-P_{h_j}).
\]
By Lemma~\ref{lem:LP-range-decomposition}, \(P_{h_j}\to0\) in the polynomial norm,
\(\{f_j\}\) is bounded in \(A^p_\omega\), and \(f_j\to0\) locally uniformly.
Lemma~\ref{lem:analytic-compactness-criterion} gives
\(\|\widetilde V^\varphi_{\alpha,\beta}f_j\|_{H^q}\to0\), hence
\(\|A_{\alpha,\varphi}(h_j-P_{h_j})\|_{L^q}\to0\).  The finite-dimensional estimate
above gives \(\|A_{\alpha,\varphi}P_{h_j}\|_{L^q}\to0\).  Since
\[
        A_{\alpha,\varphi}h_j
        \le
        A_{\alpha,\varphi}(h_j-P_{h_j})+A_{\alpha,\varphi}P_{h_j},
\]
Lemma~\ref{lem:background-tools}\textup{(i)} gives
\(\|A_{\alpha,\varphi}h_j\|_{L^q}\to0\).  Thus \(A_{\alpha,\varphi}\) satisfies the
sequential compactness condition.

Finally, Proposition~\ref{prop:model-reduction} transfers the boundedness and
compactness conclusions between the modified and original models.
\end{proof}

\subsection{The Volterra-weight dictionary}

Set
\[
        \nu=\nu_{\alpha,\beta,p}=\omega\delta^{p(\alpha-\beta)},
        \qquad
        u_\varphi(z)=G_\varphi(z)\delta(z)^{\alpha-1}.
        \tag{2.9}
\]
Under \((H)\),
\[
        \widehat\nu(z)
        \asymp
        \widehat\omega(z)\delta(z)^{p(\alpha-\beta)}.
        \tag{2.10}
\]
Then
\[
        A_{\alpha,\varphi}h(\xi)=
        \left(
        \int_{\Gamma(\xi)}|h(z)|^2|u_\varphi(z)|^2\dA(z)
        \right)^{1/2}.
        \tag{2.11}
\]
Thus \((2.11)\) is the weighted Volterra area-map problem under the
substitutions
\[
        \omega\rightsquigarrow\nu,
        \qquad
        g'(z)\rightsquigarrow u_\varphi(z),
        \qquad
        f\rightsquigarrow h=\IRL_{\beta-\alpha}f.
        \tag{2.12}
\]

\begin{lemma}\label{lem:radialized-symbol-local}
Let \(0<s<\infty\), \(0<r<1\), and
\(u(z)=G(z)\delta(z)^\tau\), where \(G\in H(\D)\) and \(\tau\in\mathbb R\).  Then
\[
        |u(z)|^s
        \lesssim
        \frac{1}{\delta(z)^2}
        \int_{D(z,r)}|u(\zeta)|^s\dA(\zeta).
        \tag{2.13}
\]
\end{lemma}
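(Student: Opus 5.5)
The plan is to combine subharmonicity of $|G|^s$ with the local comparability of $\delta$ on pseudo-hyperbolic discs. Fix $z\in\D$ and $0<r<1$. For $\zeta\in D(z,r)$ we have $\delta(\zeta)\asymp\delta(z)$, with constants depending only on $r$. This is the standard estimate $1-|\zeta|^2\asymp 1-|z|^2$ on $D(z,r)$, also recorded in Lemma~\ref{lem:background-tools}\textup{(vi)}. Consequently $\delta(\zeta)^{s\tau}\asymp\delta(z)^{s\tau}$ on $D(z,r)$ for any real $\tau$, whatever its sign. It therefore suffices to prove
\[
 |G(z)|^s\lesssim\frac{1}{\delta(z)^2}\int_{D(z,r)}|G(\zeta)|^s\dA(\zeta).
\]
Multiplying both sides by $\delta(z)^{s\tau}$ and moving that factor inside the integral as $\delta(\zeta)^{s\tau}$ (at the cost of a constant) then yields $(2.13)$.

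For the displayed estimate, I would use that the pseudo-hyperbolic disc $D(z,r)$ contains a Euclidean disc $B(z,c_r\delta(z))$ with some $c_r>0$. Indeed, $D(z,r)$ is a Euclidean disc with centre $\frac{(1-r^2)z}{1-r^2|z|^2}$ and radius $\frac{r(1-|z|^2)}{1-r^2|z|^2}$. The distance from $z$ to the boundary of this disc is at least
\[
\frac{r(1-|z|^2)}{1-r^2|z|^2}-\frac{r^2|z|(1-|z|^2)}{1-r^2|z|^2}\ge \frac{r(1-r)}{1}\cdot\frac{1-|z|^2}{1}\cdot\frac{1}{1}\,,
\]
which is up to constants $\gtrsim_r\delta(z)$. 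Since $|G|^s$ is subharmonic (Lemma~\ref{lem:background-tools}\textup{(iii)}), the sub-mean-value property on $B(z,c_r\delta(z))$ gives
\[
|G(z)|^s\le\frac{1}{c_r^2\delta(z)^2}\int_{B(z,c_r\delta(z))}|G|^s\dA\le\frac{1}{c_r^2\delta(z)^2}\int_{D(z,r)}|G|^s\dA.
\]
This uses the normalization $dA=dx\,dy/\pi$, under which a disc of radius $\rho$ has $A$-measure $\rho^2$.

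The only step needing care is the geometric inclusion of a Euclidean disc of radius comparable to $\delta(z)$ inside $D(z,r)$. This can be checked as above, or more simply through the Möbius map $\varphi_z$: it sends $r\D$ onto $D(z,r)$, and its derivative at $0$ has modulus $\delta(z)$, so Koebe's $1/4$-theorem gives $B(z,r\delta(z)/4)\subset D(z,r)$. Everything else is routine, and the constant depends only on $s$, $r$, and $\tau$.
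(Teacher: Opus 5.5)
Your argument is correct and is the same as the paper's. Subharmonicity of $|G|^s$ gives the sub-mean estimate over $D(z,r)$, and the local comparability $\delta(\zeta)\asymp\delta(z)$ there lets you move $\delta^{s\tau}$ inside the integral for any real $\tau$. Your explicit check that $D(z,r)$ contains a Euclidean disc of radius $\gtrsim_r\delta(z)$ simply fills in a step the paper leaves implicit.
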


\begin{proof}
For \(\zeta\in D(z,r)\), Lemma~\ref{lem:background-tools}\textup{(vi)} gives
\(\delta(\zeta)\asymp\delta(z)\).  Since \(|G|^s\) is
subharmonic by Lemma~\ref{lem:background-tools}\textup{(iii)},
\[
        |G(z)|^s
        \lesssim
        \delta(z)^{-2}\int_{D(z,r)}|G(\zeta)|^s\dA(\zeta).
\]
Multiplying by \(\delta(z)^{s\tau}\) and using local comparability of \(\delta\)
gives (2.13).
\end{proof}

The following proposition records the resulting exponent identities.  Because
\(u_\varphi=G_\varphi\delta^{\alpha-1}\) need not be analytic, the local
submean estimates required below are supplied by
Lemma~\ref{lem:radialized-symbol-local}.

\begin{proposition}\label{prop:dictionary}
Assume \((H)\).  Substituting \((2.9)\)--\((2.10)\) into the pointwise,
Carleson, tent, and maximal conditions for the area map \((2.11)\) yields the
four conditions in Theorems~\ref{thm:boundedness-main} and
\ref{thm:compactness-main}.  The substitutions corresponding to conditions
\textup{(B)} and \textup{(C)} are asserted only when \(p>2\).
\end{proposition}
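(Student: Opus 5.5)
The plan is to treat \((2.11)\) as an instance of the weighted Volterra area-map problem of \cite[Theorems~1.1--1.2]{DuanWangWang2021}, run with source weight \(\nu\in\calD\) (guaranteed by \((H)\)) and with the analytic derivative \(g'\) replaced by \(u_\varphi=G_\varphi\delta^{\alpha-1}\). I will then perform the exponent bookkeeping dictated by \((2.10)\).

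\textbf{Step 1: the area-map criteria for \(\nu\).} I first record the area-map criteria in generic form, with \(u\) in place of \(g'\). Boundedness of \(A_{\alpha,\varphi}:A^p_\nu\to L^q(\T)\) should be characterized by one of four conditions, according to the range of \((p,q)\).
\begin{itemize}
\item For \(p\le\min\{2,q\}\) or \(2<p<q\): \(\sup_z|u(z)|\,\delta(z)^{1+1/q-1/p}\,\widehat\nu(z)^{-1/p}<\infty\).
\item For \(2<p=q\): the measure \(|u|^{2p/(p-2)}\delta^{(p+2)/(p-2)}\widehat\nu^{-2/(p-2)}\dA\) is Carleson.
\item For \(p>\max\{2,q\}\): the cone integral of \(|u|^{2p/(p-2)}\delta^{4/(p-2)}\widehat\nu^{-2/(p-2)}\), raised to the power \((p-2)/(2p)\), lies in \(L^{pq/(p-q)}(\T)\).
\item For \(q<p\le2\): the nontangential maximal function of \(|u|\,\delta\,\widehat\nu^{-1/p}\) lies in \(L^{pq/(p-q)}(\T)\).
\end{itemize}
In each case the compactness version is the matching vanishing condition, and in the tent range it coincides with boundedness. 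Compactness is understood in the sequential sense of Proposition~\ref{prop:g-function-reduction}.

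\textbf{Step 2: the substitution.} I insert \(u=G_\varphi\delta^{\alpha-1}\) and \(\widehat\nu\asymp\widehat\omega\,\delta^{p(\alpha-\beta)}\) and check that every occurrence of \(\alpha\) cancels.
\begin{itemize}
\item In (A), the power of \(\delta\) becomes \((\alpha-1)+1+1/q-1/p-(\alpha-\beta)=\beta+1/q-1/p\).
\item In (D), it becomes \(\alpha-(\alpha-\beta)=\beta\).
\item In (B) and (C), the factor \(u\) contributes \(2p(\alpha-1)/(p-2)\) and the factor \(\widehat\nu^{-2/(p-2)}\) contributes \(-2p(\alpha-\beta)/(p-2)\). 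Their sum is \(2p(\beta-1)/(p-2)\). Adding this to \((p+2)/(p-2)\), respectively \(4/(p-2)\), gives exactly the exponents in Remark~\ref{rem:exponent-bookkeeping}, with the weight \(\widehat\omega^{-2/(p-2)}\) left over.
\end{itemize}
The comparability in \((2.10)\) holds with uniform constants. It therefore changes the suprema, Carleson constants, and tent and maximal quasi-norms only by fixed factors, and it preserves the vanishing limits (A\(_0\)), vanishing Carleson, and (D\(_0\)). The exponents \(2p/(p-2)\) and the densities in (B) and (C) only make sense for \(p>2\), which is why those substitutions are asserted only in that range.

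\textbf{Step 3 (main obstacle): transplanting the proofs.} The real work is checking that the proofs in \cite{DuanWangWang2021} use analyticity of \(g'\) only through the local submean inequality \(|g'(z)|^s\lesssim\delta(z)^{-2}\int_{D(z,r)}|g'|^s\dA\). Lemma~\ref{lem:radialized-symbol-local} supplies exactly this inequality for \(u_\varphi\).

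For the sufficiency directions I will follow their arguments verbatim: Carleson embeddings of \(A^p_\nu\) via Lemma~\ref{lem:background-tools}(vi), Carleson lifting for \(p=q>2\), and tent-space factorization with Hölder for \(q<p\).

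For the necessity directions I will reuse their test functions:
\begin{itemize}
\item normalized \(\nu\)-kernels or atoms, for the pointwise and Carleson conditions;
\item Khinchine/lattice randomization \(\sum_k r_k(t)c_k\,\delta(a_k)^{\cdot}\widehat\nu(a_k)^{-1/p}(1-\overline a_kz)^{-M}\), for (C) and (D).
\end{itemize}
The lower bounds from these test functions produce integrals of \(|u|^2\) over lattice discs. I will convert these to pointwise or local-supremum bounds using \((2.13)\), together with local comparability of \(\delta\) and \(\widehat\nu\) on discs.

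I expect the delicate point to be the maximal condition (D). There one needs \(\sup_{\Gamma(\xi)}\) of \(|u|\delta\,\widehat\nu^{-1/p}\) to be controlled by a discrete lattice supremum over a wider cone. My first approach is to apply \((2.13)\) with a fixed \(r\), followed by aperture invariance (Lemma~\ref{lem:background-tools}(vi)). For compactness, I will use the sequential criterion with \(h_j\) obtained from truncated lattice sums near the boundary, and rely on the same submean replacement.
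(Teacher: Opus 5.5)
Your Step 2 is exactly the paper's proof: the same substitution $|u_\varphi|=|G_\varphi|\delta^{\alpha-1}$ and $\widehat\nu^{-1/p}\asymp\widehat\omega^{-1/p}\delta^{-(\alpha-\beta)}$, the same four exponent cancellations (including $2p(\alpha-1)-2p(\alpha-\beta)+p+2=p+2+2p(\beta-1)$ and its tent analogue with $4$ in place of $p+2$), and the same observation that uniform pointwise comparability preserves boundary limits, vanishing Carleson properties, and outer tails. Step 3 is not needed for this proposition, which asserts only the bookkeeping; the paper proves the area-map criteria themselves separately in Section~\ref{sec:four-cases}, and for Case A sufficiency it does not transplant a submean-only argument but bounds $\|\IRL_\alpha(hG_\varphi)\|_{H^q}$ via the fractional Bergman-to-Hardy embedding applied to the analytic product $hG_\varphi$.
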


\begin{proof}
From \((2.9)\)--\((2.10)\),
\(|u_\varphi|=|G_\varphi|\delta^{\alpha-1}\) and
\(\widehat\nu^{-1/p}\asymp
\widehat\omega^{-1/p}\delta^{-(\alpha-\beta)}\).  Hence the four
substitutions are
\begin{align*}
 |u_\varphi|\widehat\nu^{-1/p}\delta^{1+1/q-1/p}
 &\asymp
 |G_\varphi|\widehat\omega^{-1/p}\delta^{\beta+1/q-1/p},\\
 |u_\varphi|^{\frac{2p}{p-2}}
 \frac{\delta^{\frac{p+2}{p-2}}}{\widehat\nu^{\frac{2}{p-2}}}
 &\asymp
 |G_\varphi|^{\frac{2p}{p-2}}
 \frac{\delta^{\frac{p+2+2p(\beta-1)}{p-2}}}
      {\widehat\omega^{\frac{2}{p-2}}},\\
 |u_\varphi|^{\frac{2p}{p-2}}
 \left(\frac{\delta^2}{\widehat\nu}\right)^{\frac{2}{p-2}}
 &\asymp
 |G_\varphi|^{\frac{2p}{p-2}}
 \frac{\delta^{\frac{4+2p(\beta-1)}{p-2}}}
      {\widehat\omega^{\frac{2}{p-2}}},\\
 |u_\varphi|\frac{\delta}{\widehat\nu^{1/p}}
 &\asymp
 |G_\varphi|\frac{\delta^\beta}{\widehat\omega^{1/p}}.
\end{align*}
For the middle two lines, the powers are obtained from
\[
 2p(\alpha-1)+p+2-2p(\alpha-\beta)
 =p+2+2p(\beta-1),
 \quad
 2p(\alpha-1)+4-2p(\alpha-\beta)
 =4+2p(\beta-1).
\]
These pointwise comparisons have constants independent of \(z\), so they
also preserve boundary limits, vanishing Carleson properties, and truncated
tent tails.
\end{proof}

\section{Area-map criteria in the four exponent ranges}\label{sec:four-cases}

The classical result~\cite{DuanWangWang2021} identifies the four geometric
regimes.  After the fractional reductions of Section~2, boundedness and
compactness are determined by the corresponding area-map criteria below.

\subsection{Case A: boundedness}

Assume
\[
        0<p\le \min\{2,q\}
        \qquad\hbox{or}\qquad
        2<p<q<\infty.
\]

\begin{lemma}\label{lem:caseA-fractional-embeddings}
Let \(\alpha>0\) and \(U\in H(\D)\).
\begin{enumerate}[label=\textup{(\roman*)},leftmargin=2em]
\item If \(0<s\le2\), then
\[
        \|\IRL_{\alpha}U\|_{H^s}
        \lesssim
        \left(\int_{\D}|U(z)|^s\delta(z)^{\alpha s-1}\dA(z)\right)^{1/s}.
        \tag{3.1}
\]
\item If \(0<s<t<\infty\), then
\[
        \|\IRL_{\alpha}U\|_{H^t}
        \lesssim
        \left(\int_{\D}|U(z)|^s\delta(z)^{s/t+s\alpha-2}\dA(z)\right)^{1/s}.
        \tag{3.2}
\]
\end{enumerate}
\end{lemma}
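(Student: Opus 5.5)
Both estimates will be derived from Lemma~\ref{lem:fractional-primitive-area}, which gives
\[
\|\IRL_\alpha U\|_{H^q}\asymp\|U\|_{\mathcal A^q_\alpha}
=\Bigl\|\Bigl(\int_{\Gamma(\xi)}|U|^2\delta^{2\alpha-2}\dA\Bigr)^{1/2}\Bigr\|_{L^q(\T)}.
\]
So it suffices to bound the tent quasi-norm \(\|U\|_{\mathcal A^s_\alpha}\), respectively \(\|U\|_{\mathcal A^t_\alpha}\), by the stated area integrals. This mirrors the classical embeddings \(H^s\supset\) weighted Bergman spaces for \(J_g\), with \(U\) in place of \(f g'\) and \(\delta^{\alpha-1}\) in place of \(\delta^{0}\).

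For (i), with \(0<s\le2\), I would first use the local subharmonic estimate (Lemma~\ref{lem:radialized-symbol-local}, or directly subharmonicity of \(|U|^s\)) to write
\[
|U(z)|^2\delta(z)^{2\alpha-2}\le\bigl(|U(z)|\delta(z)^{\alpha}\bigr)^{2-s}\,|U(z)|^s\delta(z)^{\alpha s-2}.
\]
The first factor will be bounded by \(\sup_{w\in\Gamma(\xi)}|U(w)|\delta(w)^\alpha=:N(\xi)\), a non-tangential maximal function on a slightly wider cone. This gives
\[
\int_{\Gamma(\xi)}|U|^2\delta^{2\alpha-2}\dA\le N(\xi)^{2-s}\int_{\Gamma(\xi)}|U|^s\delta^{\alpha s-2}\dA.
\]
Raising this to the power \(s/2\), integrating over \(\T\), and applying H\"older with exponents \(2/(2-s)\) and \(2/s\) yields \(\|N\|_{L^s}^{s(2-s)/2}\) times \(\bigl(\int_\T\int_{\Gamma(\xi)}|U|^s\delta^{\alpha s-2}\bigr)^{1/2}\). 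By Tonelli and the fact that shadows have length \(\asymp\delta(z)\) (Lemma~\ref{lem:background-tools}(vi)), the second factor is \(\bigl(\int_\D|U|^s\delta^{\alpha s-1}\dA\bigr)^{1/2}\).

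It then remains to show \(\|N\|_{L^s(\T)}^s\lesssim\int_\D|U|^s\delta^{\alpha s-1}\dA\). I would dominate \(|U(w)|^s\delta(w)^{\alpha s}\) by \(\delta(w)^{\alpha s-2}\int_{D(w,r)}|U|^s\) via the submean property. Then \(N(\xi)^s\) is dominated by a sum over lattice points of the cone of local integrals, which is the step I expect to be the main obstacle. The supremum does not simply sum, so I would bound the sup by the sum over the cone's lattice points, giving \(N(\xi)^s\lesssim\sum_{a_k\in\tilde\Gamma(\xi)}\delta(a_k)^{\alpha s-2}\int_{D(a_k,r)}|U|^s\). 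Integrating in \(\xi\) gives a factor \(\delta(a_k)\), and bounded overlap (Lemma~\ref{lem:background-tools}(vi)) yields the claim. Combining the two factors and dividing out the partial power \(\|N\|^{s(2-s)/2}\) then gives (3.1). To justify dividing when \(\|N\|=\infty\), I would first apply the argument to dilates \(U_\rho\) and then pass to the limit by Fatou.

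For (ii), with \(s<t\), I would take the right side of (3.1) with \(s\) replaced by the exponent \(\min\{t,2\}\). Alternatively, I would use (i) at level \(t\) when \(t\le2\), or the \(t\ge2\) embedding \(\|U\|_{\mathcal A^t_\alpha}\lesssim\|\ldots\|\) obtained via the same maximal splitting. In either case the goal is a Bergman–Bergman inclusion
\[
\Bigl(\int|U|^{t'}\delta^{\alpha t'-1}\Bigr)^{1/t'}\lesssim\Bigl(\int|U|^s\delta^{s/t+s\alpha-2}\Bigr)^{1/s},
\]
which is the standard embedding \(A^s_{\gamma}\subset A^{t'}_{\gamma'}\) for \(s<t'\) with matched homogeneity. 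It follows from the pointwise bound \(|U(z)|\delta(z)^{\alpha+1/t}\lesssim\|U\|_{\text{RHS}}\) (submean) raised to the power \(t'-s\), and the exponents check as \((\alpha s+s/t-2)+(t'-s)(\alpha+1/t)=\alpha t'-1\) when \(t'=t\). For \(t>2\), I would instead split the inner integral as \((|U|\delta^{\alpha+1/t})^{2-s}\)-type factors and use that pointwise bound directly in the area function, followed by Tonelli.
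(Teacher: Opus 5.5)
Your part (i) is correct, by a route different from the paper's; the paper simply invokes the fractional Bergman-to-Hardy integration theorem of the standard-weight paper. Your ingredients are Lemma~\ref{lem:fractional-primitive-area}, the splitting \(|U|^2\delta^{2\alpha-2}=(|U|\delta^\alpha)^{2-s}|U|^s\delta^{\alpha s-2}\), H\"older with exponents \(2/(2-s)\) and \(2/s\), Tonelli, and the lattice bound \(\|N\|_{L^s}^s\lesssim\int_\D|U|^s\delta^{\alpha s-1}\dA\). Together they give (3.1); nothing needs to be divided out, since that bound is simply substituted.

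Part (ii) for \(t\le2\) is also fine. Put \(\gamma=s/t+s\alpha-2\) and \(\|U\|_{s,\gamma}^s=\int_\D|U|^s\delta^\gamma\dA\). Then (3.1) at level \(t\) plus the submean bound \(|U(z)|\delta(z)^{\alpha+1/t}\lesssim\|U\|_{s,\gamma}\) gives (3.2), even without the paper's separate case \(\gamma\le-1\).

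The gap is (ii) for \(t>2\), where none of your routes works.
\begin{enumerate}
\item Replacing \(s\) by \(\min\{t,2\}=2\) only controls \(\|\IRL_\alpha U\|_{H^2}\), not the \(H^t\) norm.
\item Passing through \(A^t_{\alpha t-1}\) is hopeless, since \(\|\IRL_\alpha U\|_{H^t}\lesssim\|U\|_{A^t_{\alpha t-1}}\) is false for \(t>2\). For \(\alpha=1\) it is the failing direction of the Littlewood--Paley inequality; lacunary series with coefficients in \(\ell^t\setminus\ell^2\) are counterexamples.
\item The splitting does not end in Tonelli. For \(s\le2\) it gives \(|U|^2\delta^{2\alpha-2}\lesssim\|U\|_{s,\gamma}^{2-s}|U|^s\delta^{\gamma-2/t}\). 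You must then bound \(\int_\T\bigl(\int_{\Gamma(\xi)}|U|^s\delta^{\gamma-2/t}\dA\bigr)^{t/2}\dxi\), whose outer exponent is \(t/2>1\). The H\"older step of (i) would need \(2/t\ge1\). The pointwise bound gives no uniform control of the cone integrals either; it only yields \(\int_{\Gamma(\xi)}\delta^{-2-2/t}\dA=\infty\).
\item For \(s>2\) the factor \((|U|\delta^{\alpha+1/t})^{2-s}\) has a negative exponent, so an upper bound for it is useless.
\end{enumerate}

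A genuine \(t>2\) mechanism is needed. If \(s\le2<t\) it is elementary. Cover \(\Gamma(\xi)\) by lattice discs \(D_k=D(a_k,\rho)\) with \(a_k\) in a wider cone \(\widetilde\Gamma(\xi)\), and set \(\widetilde I(a_k)=\{\xi:a_k\in\widetilde\Gamma(\xi)\}\) and \(c_k=\delta(a_k)^{\alpha s}\sup_{D_k}|U|^s\). Using \((\sum_kx_k)^{s/2}\le\sum_kx_k^{s/2}\) gives \(\bigl(\int_{\Gamma(\xi)}|U|^2\delta^{2\alpha-2}\dA\bigr)^{s/2}\lesssim\sum_kc_k\chi_{\widetilde I(a_k)}(\xi)\). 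Minkowski's inequality in \(L^{t/s}(\T)\), legitimate because \(t/s>1\), together with the submean property and bounded overlap, then gives
\[
\|U\|_{\mathcal A^t_\alpha}^s\lesssim\sum_kc_k|\widetilde I(a_k)|^{s/t}\lesssim\sum_k\delta(a_k)^{\gamma}\int_{D(a_k,\rho')}|U|^s\dA\lesssim\int_\D|U|^s\delta^{\gamma}\dA .
\]
Combined with Lemma~\ref{lem:fractional-primitive-area}, this proves (3.2) in that range.

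For \(2<s<t\) even this breaks down. On the cone sums \(\ell^s\not\subset\ell^2\), and H\"older cannot lower \(s\) at this homogeneity, because it produces \(\int_\D\delta^{-2}\dA=\infty\). Estimate (3.2) is then a Jawerth--Franke-type embedding of \(A^s_\gamma\) into the tent space underlying \(\mathcal A^t_\alpha\). This is exactly what the paper imports from the cited fractional integration theorem. Your proposal has no substitute for it, so (3.2) is not established for \(t>2\), and there is no elementary repair when \(s>2\).
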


\begin{proof}
Both assertions follow from the modified Riemann--Liouville form of the
fractional Bergman-to-Hardy integration theorem
\cite[Theorem~7]{FangGuoHouZhuBergmanHardy}, which also treats the Hadamard
and Flett realizations.  For \textup{(i)} we apply this result with exponent
\(s\) and weight exponent
\(\alpha s-1>-1\), obtaining
\[
        \|\IRL_\alpha U\|_{H^s}
        \lesssim
        \left(\int_\D |U(z)|^s\delta(z)^{\alpha s-1}\dA(z)\right)^{1/s},
        \qquad 0<s\le2.
\]
For \textup{(ii)}, put
\[
        \gamma=s/t+s\alpha-2.
\]
If \(\gamma>-1\), then
\[
        \frac{\gamma+2}{s}-\frac1t
        =
        \alpha,
\]
and the same fractional integration theorem gives
\[
        \|\IRL_\alpha U\|_{H^t}
        \lesssim
        \left(\int_\D |U(z)|^s\delta(z)^\gamma\dA(z)\right)^{1/s}.
\]
If \(\gamma\le -1\), the right-hand side of
\((3.2)\) is finite only for \(U\equiv0\).  If \(U\not\equiv0\), then the
integral means
\[
        M_s(r,U)=
        \left(\int_\T |U(r\xi)|^s\dxi\right)^{1/s}
\]
are nondecreasing by Lemma~\ref{lem:background-tools}\textup{(iii)} and are
positive for all sufficiently large \(r\); hence they are eventually bounded
from below by a positive constant.  Using polar coordinates and normalized
area measure, for some \(r_0<1\),
\[
\begin{aligned}
        \int_\D |U(z)|^s\delta(z)^\gamma\dA(z)
        &=2\int_0^1 M_s(r,U)^s(1-r^2)^\gamma r\,dr\\
        &\gtrsim \int_{r_0}^1(1-r)^\gamma\,dr=\infty,
\end{aligned}
\]
because \(1-r^2\asymp1-r\) on \([r_0,1)\).
Thus, if \(U\not\equiv0\), the right-hand side of \((3.2)\) is infinite and
the estimate is vacuous; if \(U\equiv0\), both sides vanish.
\end{proof}

\begin{lemma}\label{lem:D-normalized-test-functions}
Let \(0<p<\infty\) and \(\nu\in\calD\).  There exists an integer \(N>0\) such
that, for \(a\in\D\),
\[
        h_a(z)=
        \frac{\delta(a)^{N-1/p}}
             {\widehat\nu(a)^{1/p}(1-\overline a z)^N}
\]
satisfies
\[
        \|h_a\|_{A^p_\nu}\lesssim1,
        \qquad
        |h_a(a)|=
        \frac{1}{\widehat\nu(a)^{1/p}\delta(a)^{1/p}},
        \tag{3.4}
\]
and \(h_a\to0\) uniformly on compact subsets as \(|a|\to1^-\).
\end{lemma}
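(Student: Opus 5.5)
The plan is to reduce the $A^p_\nu$-norm of $h_a$ to a one-dimensional radial integral and then use the tail geometry of $\nu\in\calD$ to choose $N$. By Lemma~\ref{lem:background-tools}\textup{(vi)},
\[
\|h_a\|_{A^p_\nu}^p\asymp\int_\D|h_a|^p\kappa_\nu\dA,
\qquad \kappa_\nu=\widehat\nu/\delta .
\]
Integrating over circles first, the standard integral-mean estimate gives, for $Np>1$,
\[
\int_\T\frac{|d\xi|}{|1-\overline a r\xi|^{Np}}\asymp(1-|a|r)^{1-Np}.
\]
Hence
\[
\|h_a\|_{A^p_\nu}^p\lesssim\frac{\delta(a)^{Np-1}}{\widehat\nu(a)}\int_0^1\frac{\widehat\nu(r)}{(1-r)(1-|a|r)^{Np-1}}\,dr .
\]

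Next split this integral at $r=|a|$.

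\emph{Range $r\ge|a|$.} Here $1-|a|r\asymp\delta(a)$. The integral $\int_{|a|}^1\widehat\nu(r)(1-r)^{-1}dr$ is $\asymp\widehat\nu(a)$; this is the comparability $\widehat{\kappa_\nu}\asymp\widehat\nu$ from Lemma~\ref{lem:background-tools}\textup{(vi)}. So this part contributes $O(1)$.

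\emph{Range $r<|a|$.} Here $1-|a|r\gtrsim 1-r$, so the integrand is at most $\widehat\nu(r)(1-r)^{-Np}$. I will use forward doubling in its power form: there exist $C$ and $\gamma>0$ with
\[
\widehat\nu(r)\le C\left(\frac{1-r}{1-t}\right)^{\gamma}\widehat\nu(t),\qquad 0\le r\le t<1 .
\]
This follows by iterating the $\widehat{\calD}$ inequality (see \cite[Lemma~A]{PelaezDeLaRosa2022}). Choose $N$ with $Np-1>\gamma$, which also secures $Np>1$. Then
\[
\int_0^{|a|}\widehat\nu(r)(1-r)^{-Np}\,dr\lesssim\frac{\widehat\nu(a)}{\delta(a)^{\gamma}}\int_0^{|a|}(1-r)^{\gamma-Np}\,dr\asymp\widehat\nu(a)\,\delta(a)^{1-Np}.
\]
Multiplying by the prefactor gives $O(1)$ again. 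This step, choosing $N$ through the doubling exponent so the inner integral converges at the right rate, is the only real obstacle, and it is routine once the power-form doubling is available.

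The evaluation at $a$ is direct: $|h_a(a)|=\delta(a)^{N-1/p}\widehat\nu(a)^{-1/p}(1-|a|^2)^{-N}$, which is the expression in (3.4).

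For local uniform convergence, note that on $|z|\le R<1$ the factor $|1-\overline az|^{-N}$ is bounded. It then suffices to show $\delta(a)^{N-1/p}\widehat\nu(a)^{-1/p}\to0$. The power form above with $r=0$ gives $\widehat\nu(a)\gtrsim\widehat\nu(0)\,\delta(a)^{\gamma}$. Therefore
\[
\delta(a)^{N-1/p}\widehat\nu(a)^{-1/p}\lesssim\delta(a)^{N-(1+\gamma)/p}\to0,
\]
since $Np>1+\gamma$.
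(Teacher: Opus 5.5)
Your proof is correct, but you reach the norm bound by a different route from the paper. The paper chooses $N$ large enough that the kernel estimate \cite[Lemma~A(vi)]{PelaezRattyaSierra2018},
\[
\int_\D\frac{\nu(z)}{|1-\overline a z|^{Np}}\dA(z)\lesssim\frac{\widehat\nu(a)}{\delta(a)^{Np-1}},
\]
applies, and reads off $\|h_a\|_{A^p_\nu}\lesssim1$. It then iterates forward doubling on the radii $1-2^{-n}$ to get $\widehat\nu(a)\gtrsim\delta(a)^{\lambda}$, enlarging $N$ if necessary for the local uniform convergence. You instead prove that kernel estimate yourself. You pass to the regularized weight $\kappa_\nu$, integrate over circles with the Forelli--Rudin mean estimate, and split at $r=|a|$. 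The outer piece reduces to $\widehat{\kappa_\nu}\asymp\widehat\nu$, and the inner piece is controlled by the power form of forward doubling. The same power estimate, taken at $r=0$, gives the lower bound for $\widehat\nu$, so your single explicit condition $Np>1+\gamma$ handles both the norm bound and the convergence. The paper's route is shorter and, for the norm bound, needs only $\nu\in\widehat{\calD}$. Yours is self-contained and makes the threshold for $N$ explicit. The price is that it uses reverse doubling through Lemma~\ref{lem:background-tools}\textup{(vi)}, via the norm equivalence and $\widehat{\kappa_\nu}\asymp\widehat\nu$. That is harmless here because $\nu\in\calD$. Working with $\nu$ itself, plus one integration by parts on $[0,|a|]$, would remove even that dependence.
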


\begin{proof}
Choose the integer \(N\) large enough so that the kernel estimate for
\(\calD\)-weights \cite[Lemma~A(vi)]{PelaezRattyaSierra2018} gives
\[
        \int_{\D}\frac{\nu(z)}{|1-\overline a z|^{Np}}\dA(z)
        \lesssim
        \frac{\widehat\nu(a)}{\delta(a)^{Np-1}}.
\]
Then
\[
        \|h_a\|_{A^p_\nu}^p
        =
        \frac{\delta(a)^{Np-1}}{\widehat\nu(a)}
        \int_{\D}\frac{\nu(z)}{|1-\overline a z|^{Np}}\dA(z)
        \lesssim1.
\]
The displayed value at \(a\) follows from \(1-\overline a a=\delta(a)\).

To prove local uniform convergence, note first that
\(\nu\in\calD\subset\widehat{\calD}\), the forward doubling condition gives a
polynomial lower bound for the tail: there is \(\lambda>0\) such that
\[
        \widehat\nu(a)\gtrsim \delta(a)^\lambda,
        \qquad a\in\D.
\]
Let \(r_n=1-2^{-n}\).  Iterating
\(\widehat\nu(r_n)\le C\widehat\nu(r_{n+1})\) gives
\[
        \widehat\nu(r_n)\ge C^{-n}\widehat\nu(0)
        =\widehat\nu(0)(1-r_n)^{\log_2 C}.
\]
Monotonicity of \(\widehat\nu\) extends this estimate from the dyadic radii to
all \(0\le r<1\), and \(1-r\asymp\delta(r)\) near the boundary.  Increasing
\(N\), if necessary, we may assume
\[
        N-\frac1p-\frac{\lambda}{p}>0.
\]
For each compact set \(K\Subset\D\), \(|1-\overline a z|\) is bounded below
uniformly for \(z\in K\) and \(|a|\) close to one.  Hence
\[
        \sup_{z\in K}|h_a(z)|
        \lesssim_K
        \frac{\delta(a)^{N-1/p}}{\widehat\nu(a)^{1/p}}
        \lesssim
        \delta(a)^{N-1/p-\lambda/p}
        \longrightarrow0,
        \qquad |a|\to1^-.
\]
\end{proof}

\begin{theorem}\label{thm:caseA-boundedness}
Let \(0<p,q<\infty\), \(\omega\in\calD\), \(\varphi\in H(\D)\),
\((\alpha,\beta)\in\calP\), and \(\alpha,\beta>0\).  Assume \((H)\) and
\[
        0<p\le\min\{2,q\}
        \qquad\text{or}\qquad
        2<p<q<\infty.
\]
Then
\(V^\varphi_{\alpha,\beta}:A^p_\omega\to H^q\) is bounded if and only if
\[
        \sup_{z\in\D}
        |G_\varphi(z)|\widehat\omega(z)^{-1/p}\delta(z)^{\beta+1/q-1/p}<\infty.
        \tag{3.5}
\]
\end{theorem}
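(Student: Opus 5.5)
By Proposition~\ref{prop:g-function-reduction}, boundedness of \(V^\varphi_{\alpha,\beta}:A^p_\omega\to H^q\) is equivalent to boundedness of the area map \(A_{\alpha,\varphi}:A^p_\nu\to L^q(\T)\) with \(\nu=\nu_{\alpha,\beta,p}\in\calD\). By Proposition~\ref{prop:dictionary}, condition (3.5) is equivalent to
\[
 \sup_{z\in\D}|u_\varphi(z)|\,\widehat\nu(z)^{-1/p}\delta(z)^{1+1/q-1/p}<\infty .
\]
I would therefore prove the area-map statement for \(\nu\) and \(u_\varphi\), following the classical Case~A argument of \cite{DuanWangWang2021}. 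Since \(\|A_{\alpha,\varphi}h\|_{L^q}=\|hG_\varphi\|_{\mathcal A^q_\alpha}\asymp\|\IRL_\alpha(hG_\varphi)\|_{H^q}\) by Lemma~\ref{lem:fractional-primitive-area}, I may move freely between the area formulation and the Hardy norm of \(\IRL_\alpha(hG_\varphi)\).

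\emph{Necessity.} Take the test functions \(h_a\) of Lemma~\ref{lem:D-normalized-test-functions} for \(\nu\). Boundedness gives \(\|A_{\alpha,\varphi}h_a\|_{L^q}\lesssim1\). For \(\xi\) in the shadow arc \(I_a\) of \(a\), of length \(\asymp\delta(a)\), the cone \(\Gamma(\xi)\) (after a fixed aperture change, allowed by Lemma~\ref{lem:background-tools}(vi)) contains \(D(a,r)\). On that disc \(|h_a|\asymp|h_a(a)|=\widehat\nu(a)^{-1/p}\delta(a)^{-1/p}\). Applying Lemma~\ref{lem:radialized-symbol-local} with \(s=2\) gives
\[
 A_{\alpha,\varphi}h_a(\xi)\gtrsim \widehat\nu(a)^{-1/p}\delta(a)^{-1/p}\,\delta(a)\,|u_\varphi(a)| ,\qquad \xi\in I_a .
\]
Integrating over \(I_a\) yields \(\delta(a)^{1/q}\delta(a)^{1-1/p}|u_\varphi(a)|\widehat\nu(a)^{-1/p}\lesssim1\), which is the desired supremum bound.

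\emph{Sufficiency.} Set \(M\) equal to the supremum. Then \(|G_\varphi(z)|\le M\widehat\nu(z)^{1/p}\delta(z)^{1/p-1/q-\alpha}\), using \(\widehat\nu\asymp\widehat\omega\delta^{p(\alpha-\beta)}\).
\begin{itemize}
\item In the range \(p\le\min\{2,q\}\) I would apply Lemma~\ref{lem:caseA-fractional-embeddings}(i) with \(s=p\le2\) to \(U=hG_\varphi\). This bounds \(\|\IRL_\alpha(hG_\varphi)\|_{H^p}^p\) by \(M^p\int|h|^p\widehat\nu\,\delta^{-1-p/q}\dA\).
\item When \(q=p\) this is \(M^p\int|h|^p\kappa_\nu\dA\asymp M^p\|h\|_{A^p_\nu}^p\) by (2.1d).
\item When \(q>p\) I need the \(H^q\) norm, so instead I use part (ii) with \(s=p<t=q\). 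The right side becomes \(\int|h|^p|G_\varphi|^p\delta^{p/q+p\alpha-2}\dA\le M^p\int|h|^p\widehat\nu\,\delta^{-1}\dA\asymp M^p\|h\|^p_{A^p_\nu}\), with the exponents cancelling exactly.
\item The same part (ii) computation works verbatim for \(2<p<q\), so both subcases collapse to the single estimate (3.2) with \(s=p\), \(t=q\).
\item For \(p=q\le2\), part (i) gives the same conclusion.
\end{itemize}
This yields \(\|A_{\alpha,\varphi}h\|_{L^q}\lesssim M\|h\|_{A^p_\nu}\).

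\emph{Main obstacle.} The sufficiency direction is essentially bookkeeping once (3.2) is available. I expect the delicate step to be the necessity lower bound: justifying that \(|h_a|\) stays comparable to \(|h_a(a)|\) on \(D(a,r)\) (true since \(|1-\overline a z|\asymp\delta(a)\) there), and that \(D(a,r)\subset\Gamma(\xi)\) for all \(\xi\) in an arc of length \(\asymp\delta(a)\), with the aperture adjustment absorbed into constants. A second point to check is that the weight \(\delta^{p/q+p\alpha-2}\) in (3.2) may have exponent \(\le-1\). This is harmless: the right side is then still finite because \(|G_\varphi|^p\) supplies compensating decay, and the estimate of \(\int|h|^p\widehat\nu\,\delta^{-1}\dA\) does not depend on that exponent. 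The degenerate branch in the proof of Lemma~\ref{lem:caseA-fractional-embeddings}(ii) only concerns the literal power weight, so I would restate (3.2) for \(U=hG_\varphi\) directly.
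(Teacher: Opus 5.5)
Your proposal is correct and follows the paper's proof essentially line for line. It uses the same reduction to the area map on $A^p_\nu$, the same sufficiency estimate via Lemma~\ref{lem:caseA-fractional-embeddings}(i) for $p=q\le2$ and (ii) for $p<q$, and the same necessity argument with the kernels $h_a$, the shadow arc, and the local sub-mean bound of Lemma~\ref{lem:radialized-symbol-local}. Two small points: in your first bullet the exponent should be $\delta^{-p/q}$, not $\delta^{-1-p/q}$; and your remark about $p/q+p\alpha-2\le-1$ misreads the degenerate branch, since (3.2) already holds for every analytic $U$ and in that regime the finiteness guaranteed by (3.5) forces $hG_\varphi\equiv0$, so no restatement is needed.
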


\begin{proof}
By Propositions~\ref{prop:model-reduction} and
\ref{prop:g-function-reduction}, it suffices to work with
\(h=\IRL_{\beta-\alpha}f\in A^p_\nu\) and
\(u_\varphi=G_\varphi\delta^{\alpha-1}\).  Condition \((3.5)\) is equivalent to
\[
        \mathcal M_A:=
        \sup_{z\in\D}|G_\varphi(z)|\widehat\nu(z)^{-1/p}
        \delta(z)^{\alpha+1/q-1/p}<\infty.
        \tag{3.6}
\]

If \(p=q\le2\), apply
Lemma~\ref{lem:caseA-fractional-embeddings}\textup{(i)}; if \(p<q\), apply
part \textup{(ii)}.  In either case,
\[
\begin{aligned}
 \|\widetilde V^\varphi_{\alpha,\beta}f\|_{H^q}
 &\lesssim
 \left(\int_{\D}|h|^p|G_\varphi|^p
 \delta^{p/q+p\alpha-2}\dA\right)^{1/p}\\
 &\le\mathcal M_A
 \left(\int_{\D}|h|^p\frac{\widehat\nu}{\delta}\dA\right)^{1/p}
 \lesssim\mathcal M_A\|h\|_{A^p_\nu},
\end{aligned}
\]
where Lemma~\ref{lem:background-tools}\textup{(vi)} is used in the last
step.
For necessity, Proposition~\ref{prop:g-function-reduction} shows that the area map
\[
        A_{\alpha,\varphi}:A^p_\nu\to L^q(\T)
\]
is bounded.  Let \(h_a\) be the normalized test function from
Lemma~\ref{lem:D-normalized-test-functions}.  Then
\[
        \|A_{\alpha,\varphi}h_a\|_{L^q(\T)}\lesssim1.
\]
Let \(I(a)=\{\xi\in\T:a\in\Gamma(\xi)\}\), so that
\(|I(a)|\asymp\delta(a)\), and fix a small pseudo-hyperbolic radius \(r\).
After a fixed aperture enlargement,
Lemma~\ref{lem:background-tools}\textup{(vi)} implies that
\(D(a,r)\subset\Gamma(\xi)\) for every \(\xi\in I(a)\); see
also~\cite[Lemma~2.11]{DuanWangWang2021}.  On \(D(a,r)\),
\[
        |h_a(z)|\asymp
        \widehat\nu(a)^{-1/p}\delta(a)^{-1/p},
        \qquad
        \delta(z)\asymp\delta(a).
\]
Lemma~\ref{lem:radialized-symbol-local} also gives the local lower estimate
\[
        \int_{D(a,r)}|G_\varphi(z)|^2\delta(z)^{2\alpha-2}\dA(z)
        \gtrsim |G_\varphi(a)|^2\delta(a)^{2\alpha}.
\]
Consequently,
\[
        |G_\varphi(a)|\widehat\nu(a)^{-1/p}
        \delta(a)^{\alpha+1/q-1/p}
        \lesssim
        \|A_{\alpha,\varphi}h_a\|_{L^q(\T)}
        \lesssim1.
\]
Expanding \(\widehat\nu\) by \((H)\) gives (3.5).
\end{proof}

\subsection{Case A: compactness}

\begin{proposition}\label{prop:caseA-compactness}
Under the assumptions of Theorem~\ref{thm:caseA-boundedness},
\(V^\varphi_{\alpha,\beta}:A^p_\omega\to H^q\) is compact if and only if
\[
        \lim_{|z|\to1^-}
        |G_\varphi(z)|\widehat\omega(z)^{-1/p}\delta(z)^{\beta+1/q-1/p}=0.
        \tag{3.7}
\]
\end{proposition}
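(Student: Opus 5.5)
We reduce everything, as in Theorem~\ref{thm:caseA-boundedness}, to the area map. Proposition~\ref{prop:g-function-reduction} says \(V^\varphi_{\alpha,\beta}\) is compact if and only if \(A_{\alpha,\varphi}:A^p_\nu\to L^q(\T)\) is bounded and satisfies the sequential condition: \(\|A_{\alpha,\varphi}h_j\|_{L^q}\to0\) for bounded, locally uniformly null \(\{h_j\}\subset A^p_\nu\). By \((H)\), condition \((3.7)\) is equivalent to
\[
\lim_{|z|\to1^-}|G_\varphi(z)|\widehat\nu(z)^{-1/p}\delta(z)^{\alpha+1/q-1/p}=0 ,
\]
exactly as \((3.5)\) was converted to \((3.6)\). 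Both directions will be proved in this area-map form.

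\emph{Necessity.} Since \((3.7)\) implies \((3.5)\) and compactness implies boundedness, we may assume \((3.6)\). Take the test functions \(h_a\) from Lemma~\ref{lem:D-normalized-test-functions}. They are bounded in \(A^p_\nu\) and tend to zero locally uniformly as \(|a|\to1^-\), so the sequential condition gives \(\|A_{\alpha,\varphi}h_a\|_{L^q}\to0\). The localization argument in the proof of Theorem~\ref{thm:caseA-boundedness} uses the inclusion \(D(a,r)\subset\Gamma(\xi)\) for \(\xi\in I(a)\) and Lemma~\ref{lem:radialized-symbol-local}. It yields, with constants independent of \(a\),
\[
|G_\varphi(a)|\widehat\nu(a)^{-1/p}\delta(a)^{\alpha+1/q-1/p}\lesssim\|A_{\alpha,\varphi}h_a\|_{L^q(\T)} ,
\]
and letting \(|a|\to1^-\) gives the vanishing condition.

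\emph{Sufficiency.} Fix \(\varepsilon>0\) and choose \(r_0<1\) such that \(|G_\varphi|\widehat\nu^{-1/p}\delta^{\alpha+1/q-1/p}<\varepsilon\) on \(\D\setminus r_0\D\). Let \(\{h_j\}\) be bounded in \(A^p_\nu\) with \(h_j\to0\) locally uniformly, and put \(f_j=R_{\alpha,\beta}(h_j-\Pi_{\alpha,\beta}h_j)\). By Lemma~\ref{lem:LP-range-decomposition}, \(\{f_j\}\) is bounded in \(A^p_\omega\), \(f_j\to 0\) locally uniformly, and \(\Pi_{\alpha,\beta}h_j\to0\). Since \(A_{\alpha,\varphi}\) is bounded by \((3.6)\) and Theorem~\ref{thm:caseA-boundedness}, the polynomial part contributes \(o(1)\), so it suffices to show \(\|\widetilde V^\varphi_{\alpha,\beta}f_j\|_{H^q}\to0\). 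Here \(\widetilde V^\varphi_{\alpha,\beta}f_j=\IRL_\alpha(k_jG_\varphi)\) with \(k_j=h_j-\Pi_{\alpha,\beta}h_j\). We apply Lemma~\ref{lem:caseA-fractional-embeddings} exactly as in the boundedness proof, but split the integral at \(r_0\):
\[
\|\IRL_\alpha(k_jG_\varphi)\|_{H^q}^p\lesssim
\int_{r_0\D}|k_j|^p|G_\varphi|^p\delta^{p/q+p\alpha-2}\dA
+\varepsilon^p\int_{\D\setminus r_0\D}|k_j|^p\frac{\widehat\nu}{\delta}\dA .
\]
The first term tends to zero by local uniform convergence; only finitely many bounded factors appear on the compact set \(\overline{r_0\D}\). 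The second term is at most \(C\varepsilon^p\sup_j\|k_j\|_{A^p_\nu}^p\) by Lemma~\ref{lem:background-tools}(vi). Since \(\varepsilon\) is arbitrary, \(\widetilde V^\varphi_{\alpha,\beta}f_j\to0\) in \(H^q\). The norm equivalence \(\|A_{\alpha,\varphi}k_j\|_{L^q}\asymp\|\widetilde V^\varphi_{\alpha,\beta}f_j\|_{H^q}\) from the proof of Proposition~\ref{prop:g-function-reduction} then gives \(\|A_{\alpha,\varphi}k_j\|_{L^q}\to0\). Subadditivity \((2.8a)\) yields the sequential condition, and Proposition~\ref{prop:g-function-reduction} transfers compactness back to \(V^\varphi_{\alpha,\beta}\).

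\emph{Main obstacle.} The delicate step is the splitting in the sufficiency argument. The embedding of Lemma~\ref{lem:caseA-fractional-embeddings} is applied to the whole function \(k_jG_\varphi\), so the pointwise smallness must be inserted inside one integral rather than by truncating the analytic function. This works because the right-hand sides of \((3.1)\) and \((3.2)\) are area integrals that split additively in the domain. In case (ii), whenever the weight exponent satisfies \(\gamma\le-1\), the bound \((3.6)\) already forces the relevant quantities to behave as in the boundedness proof, so no new case arises.
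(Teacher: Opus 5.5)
Your proof is correct and follows the paper's argument. Necessity uses the normalized kernels $h_a$ and the local disc lower bound, and sufficiency splits the area integral from Lemma~\ref{lem:caseA-fractional-embeddings} at a radius beyond which the shifted Case~A quantity is below $\varepsilon$. The only difference is bookkeeping. The paper tests the modified model directly on sequences $f_j\in A^p_\omega$ with $h_j=\IRL_{\beta-\alpha}f_j$, whereas you verify the area-map sequential condition through the range decomposition. That detour is unnecessary, since Lemma~\ref{lem:fractional-primitive-area} applies to $h_jG_\varphi$ for every analytic $h_j$.
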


\begin{proof}
By Proposition~\ref{prop:dictionary}, (3.7) is equivalent to
\[
        \lim_{|z|\to1^-}
        |G_\varphi(z)|\widehat\nu(z)^{-1/p}
        \delta(z)^{\alpha+1/q-1/p}=0.
        \tag{3.8}
\]
For sufficiency, let \(f_j\) be bounded in \(A^p_\omega\) and converge to zero
uniformly on compact subsets, and set \(h_j=\IRL_{\beta-\alpha}f_j\).  Then
\(h_j\) is bounded in \(A^p_\nu\) and converges to zero uniformly on compact
subsets.  Given \(\varepsilon>0\), choose
\(R<1\) such that the shifted Case A quantity in (3.8) is less than \(\varepsilon\) on
\(\D\setminus R\D\).  The estimate used in
Theorem~\ref{thm:caseA-boundedness}, with
Lemma~\ref{lem:caseA-fractional-embeddings}\textup{(i)} or \textup{(ii)} as
appropriate, gives
\[
 \|\widetilde V^\varphi_{\alpha,\beta}f_j\|_{H^q}^p
 \lesssim
 \int_{R\D}|h_j|^p|G_\varphi|^p\delta^{p/q+p\alpha-2}\dA
 +\varepsilon^p\int_{\D}|h_j|^p\frac{\widehat\nu}{\delta}\dA,
\]
where the first term tends to zero by local uniform convergence and the
second is \(O(\varepsilon^p)\) by
Lemma~\ref{lem:background-tools}\textup{(vi)}.  Hence the limsup of the
left-hand side as \(j\to\infty\) is at most \(C\varepsilon^p\).  Letting first
\(j\to\infty\) and then
\(\varepsilon\to0\) proves compactness of the modified model, and
Proposition~\ref{prop:model-reduction} transfers compactness to the original
operator.

For necessity, compactness of \(V\) is equivalent to the sequential condition
for the area map in Proposition~\ref{prop:g-function-reduction}.
Use the same normalized kernels \(h_a\) as above.  Then \(h_a\to0\) uniformly
on compact subsets,
so
\[
        \|A_{\alpha,\varphi}h_a\|_{L^q}\to0,
        \qquad |a|\to1^-.
\]
The local disc estimate in the necessity proof of
Theorem~\ref{thm:caseA-boundedness} gives
\[
        |G_\varphi(a)|\widehat\nu(a)^{-1/p}\delta(a)^{\alpha+1/q-1/p}
        \lesssim
        \|A_{\alpha,\varphi}h_a\|_{L^q}.
\]
Letting \(|a|\to1\) proves (3.8), and hence (3.7).
\end{proof}

\subsection{Case B: \texorpdfstring{\(2<p=q<\infty\)}{2<p=q}}

The following lifting principle recovers the Case B Carleson measure from area
boundedness.  Put \(\kappa_\nu=\widehat\nu/\delta\), and, for measurable
\(E\subset\D\), write
\(\kappa_\nu(E)=\int_E\kappa_\nu(z)\dA(z)\).
A radial weight \(v\) is called regular if it is continuous and
\(\widehat v(r)\asymp(1-r)v(r)\).  For \(\nu\in\calD\), the weight
\(\kappa_\nu\) is regular, \(\widehat\kappa_\nu\asymp\widehat\nu\), and
\[
        \int_\D |h(z)|^p\kappa_\nu(z)\dA(z)
        \asymp
        \|h\|_{A^p_\nu}^p,
        \qquad h\in H(\D).
\]
These assertions follow from Lemma~\ref{lem:background-tools}\textup{(vi)}.
If \(I\subset\T\) is an arc, \(\wedge(I)\) denotes the tent over \(I\); replacing
\(\wedge(I)\) by the Carleson square \(S(I)\) only changes constants, again by
Lemma~\ref{lem:background-tools}\textup{(vi)}.

\begin{lemma}\label{lem:caseB-Hardy-Carleson}
Let \(2<p<\infty\), \(r=p/(p-2)\), and let \(\tau\) be a positive finite measure
on \(\D\).  Define
\[
        \widetilde\tau(\xi)=
        \int_{\Gamma(\xi)}\frac{d\tau(z)}{\delta(z)},
        \qquad \xi\in\T .
\]
Then
\[
        \|\widetilde\tau\|_{L^{p/2}(\T)}
        \asymp
        \|\Id:H^r\to L^1(\tau)\|.
\]
\end{lemma}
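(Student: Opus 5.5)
We prove the two inequalities separately. The upper bound $\|\Id:H^r\to L^1(\tau)\|\lesssim\|\widetilde\tau\|_{L^{p/2}}$ comes from Fubini and H\"older. The lower bound comes from testing against a suitable $H^r$ function built from $\widetilde\tau$ and nontangential maximal functions. The relevant exponents are $1/(p/2)+1/r=2/p+(p-2)/p=1$, so $p/2$ and $r$ are conjugate, and both lie in $(1,\infty)$ since $p>2$.

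\emph{Upper bound.} Let $F\in H^r$ and write $F^*(\xi)=\sup_{z\in\Gamma(\xi)}|F(z)|$. For $z\in\D$ set $I(z)=\{\xi\in\T:z\in\Gamma(\xi)\}$; Lemma~\ref{lem:background-tools}\textup{(vi)} gives $|I(z)|\asymp\delta(z)$. Every $\xi\in I(z)$ satisfies $|F(z)|\le F^*(\xi)$. Tonelli then yields
\[
\int_\D|F|\,d\tau\lesssim\int_\D\frac{1}{\delta(z)}\int_{I(z)}F^*(\xi)\dxi\,d\tau(z)=\int_\T F^*(\xi)\widetilde\tau(\xi)\dxi .
\]
H\"older with exponents $r$ and $p/2$ bounds the right side by $\|F^*\|_{L^r}\|\widetilde\tau\|_{L^{p/2}}$. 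The classical maximal theorem $\|F^*\|_{L^r}\asymp\|F\|_{H^r}$ finishes this direction.

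\emph{Lower bound.} Let $M=\|\Id:H^r\to L^1(\tau)\|$, and first reduce to the case where $\widetilde\tau$ is finite in $L^{p/2}$. Replace $\tau$ by its restriction $\tau_R=\tau|_{R\D}$. For $R<1$ the truncated function $\widetilde{\tau_R}$ is bounded, because $\delta\ge 1-R^2$ on $R\D$ and $\tau$ is finite. Monotone convergence as $R\to1$ then transfers the estimate back to $\tau$. By duality in $L^{p/2}$, it suffices to show
\[
\int_\T g\,\widetilde\tau\dxi\lesssim M\|g\|_{L^r}\qquad\text{for }g\ge0,\ g\in L^r(\T).
\]
By Tonelli the left side equals $\int_\D \delta(z)^{-1}\int_{I(z)}g\dxi\,d\tau(z)$. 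The inner average is dominated by $P[g](z)$, the Poisson integral of $g$, because the Poisson kernel is $\gtrsim\delta(z)^{-1}$ on the shadow $I(z)$. Now take $F$ analytic with $\operatorname{Re}F=P[g]$, which is possible since $g\ge0$ makes $P[g]$ a nonnegative harmonic function. As $1<r<\infty$, the M.~Riesz theorem gives $\|F\|_{H^r}\lesssim\|g\|_{L^r}$. Therefore
\[
\int_\T g\,\widetilde\tau\dxi\lesssim\int_\D P[g]\,d\tau\le\int_\D|F|\,d\tau\le M\|F\|_{H^r}\lesssim M\|g\|_{L^r}.
\]
Taking the supremum over $\|g\|_{L^r}\le1$ gives $\|\widetilde\tau\|_{L^{p/2}}\lesssim M$.

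The main obstacle is this duality step. It depends on producing an analytic function whose modulus controls the Poisson averages on shadows, and the M.~Riesz theorem does this because $r>1$. The truncation is needed so that the duality argument is applied to a function already known to lie in $L^{p/2}$. The remaining geometric facts, namely the shadow length $|I(z)|\asymp\delta(z)$ and the Poisson kernel lower bound on $I(z)$, are standard and are covered by Lemma~\ref{lem:background-tools}\textup{(vi)}.
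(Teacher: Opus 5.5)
Your proof is correct, but it takes a different route from the paper. The paper gives no argument of its own: it invokes Pau's tent-transform theorem (Theorem~E in Pau's paper), a Luecking-type characterization of the embeddings $H^r\to L^s(\tau)$ for $0<s<r$. It only checks that $r'=p/2$ and that $1-|z|$ may be replaced by $\delta(z)$.

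You instead use the special features of this instance, namely target exponent $s=1$ and $r>1$. Then $L^{p/2}$ and $L^r$ are dual and the M.~Riesz theorem is available. Your upper bound is Tonelli, then H\"older, then the nontangential maximal theorem. Your lower bound is duality plus the elementary estimate $P_z(\xi)\ge 1/(4\delta(z))$ on the shadow $I(z)$, which follows from $|\xi-z|<2(1-|z|)\le 2\delta(z)$, with $F=P[g]+i\widetilde{P[g]}$.

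What each route buys: yours is self-contained and transparent. The cited theorem is shorter and covers the whole range $0<s<r$, where $L^1$-duality and M.~Riesz are unavailable and Luecking's more delicate machinery is needed.

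Three small remarks.
\begin{enumerate}
\item The existence of $F$ with $\operatorname{Re}F=P[g]$ does not depend on $g\ge0$. Nonnegativity is used elsewhere: when you discard the part of the Poisson integral lying off $I(z)$.
\item The Poisson lower bound is a one-line computation, not part of Lemma~\ref{lem:background-tools}(vi).
\item The truncation $\tau_R$ is harmless but not needed. The $L^{p/2}$ norm of a nonnegative function is recovered by testing against nonnegative $g$ in the unit ball of $L^r$, even when that norm is a priori infinite.
\end{enumerate}
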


\begin{proof}
Apply Pau's tent-transform theorem \cite[Theorem~E]{Pau2016} to the embedding
\(\Id:H^r\to L^1(\tau)\).  Since \(r'=r/(r-1)=p/2\), that theorem yields
the displayed \(L^{p/2}\)-norm of \(\widetilde\tau\); replacing
\(1-|z|\) there by \(\delta(z)\) only changes constants.  This specialization
is also used in \cite[Proposition~2.13]{DuanWangWang2021}.
\end{proof}

\begin{lemma}\label{lem:caseB-lifting-proposition}
Let \(2<p<\infty\), \(\nu\in\calD\), and let \(\sigma\) be a positive locally
finite Borel measure on \(\D\).  Fix a pseudo-hyperbolic radius \(0<\rho<1\),
and set
\[
        d\Lambda_\sigma(z)=
        \left(
        \frac{\sigma(D(z,\rho))}{\kappa_\nu(D(z,\rho))}
        \right)^{\frac{p}{p-2}}
        \kappa_\nu(z)\dA(z).
\]
Then \(\Lambda_\sigma\) is a Carleson measure if and only if
\[
        \sup_{\|h\|_{A^p_\nu}\le1}
        \sup_{I\subset\T}
        \frac{\displaystyle\int_{\wedge(I)} |h(z)|^2\,d\sigma(z)}
             {|I|^{(p-2)/p}}
        <\infty.
\]
Moreover, \(\Lambda_\sigma\) is a vanishing Carleson measure if and only if
\[
        \lim_{\eta\to0^+}
        \sup_{0<|I|\le\eta}
        \sup_{\|h\|_{A^p_\nu}\le1}
        \frac{\displaystyle\int_{\wedge(I)} |h(z)|^2\,d\sigma(z)}
             {|I|^{(p-2)/p}}
        =0.
\]
\end{lemma}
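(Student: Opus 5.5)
We plan to prove the lemma by localizing to the tent, passing to a discrete lattice, and reducing to a duality between \(\ell^{p/2}\) and \(\ell^{p/(p-2)}\). Throughout, \(s=p/2>1\) and \(s'=p/(p-2)\).

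First we discretize. Fix an \(\eta\)-lattice \(\{a_k\}\) with \(\eta\) small relative to \(\rho\), and write \(D_k=D(a_k,\rho)\). By Lemma~\ref{lem:background-tools}\textup{(vi)}, \(\kappa_\nu(D_k)\asymp\widehat\nu(a_k)\delta(a_k)\), and \(\kappa_\nu\), \(\delta\), \(\widehat\nu\) are locally constant on the \(D_k\). Since the quotient \(\sigma(D(z,\rho))/\kappa_\nu(D(z,\rho))\) is controlled above and below on \(D(a_k,\eta)\) by the same quotient with slightly enlarged or shrunk radii, and the Carleson property does not depend on the fixed radius (a standard covering argument), we get
\[
\Lambda_\sigma(S(I))\asymp\sum_{a_k\in S(\widetilde I)}\sigma(D_k)^{s'}\kappa_\nu(D_k)^{1-s'} ,
\]
after enlarging \(I\) to a fixed \(\widetilde I\). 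Up to such enlargements, \(\wedge(I)\) and \(S(I)\) are interchangeable.

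For sufficiency, take \(h\) with \(\|h\|_{A^p_\nu}\le1\). Subharmonicity, as in Lemma~\ref{lem:radialized-symbol-local}, gives
\[
\sup_{D_k}|h|^2\lesssim\left(\kappa_\nu(\widetilde D_k)^{-1}\int_{\widetilde D_k}|h|^p\kappa_\nu\right)^{2/p} .
\]
Hence
\[
\int_{\wedge(I)}|h|^2d\sigma\lesssim\sum_{a_k\in S(\widetilde I)}\sigma(D_k)\kappa_\nu(D_k)^{-2/p}\Bigl(\int_{\widetilde D_k}|h|^p\kappa_\nu\Bigr)^{2/p}.
\]
We then apply H\"older with exponents \(s'\) and \(s\), using \(2s/p=1\) and bounded overlap. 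The first factor becomes \(\bigl(\sum\sigma(D_k)^{s'}\kappa_\nu(D_k)^{-2s'/p}\bigr)^{1/s'}\), and we check \(-2s'/p=1-s'\). So this factor is \(\Lambda_\sigma(S(\widetilde I))^{(p-2)/p}\lesssim|I|^{(p-2)/p}\). The second factor is at most \(\|h\|_{A^p_\nu}^2\le 1\).

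For necessity, fix \(I\) and a finitely supported nonnegative sequence \(\{c_k\}\) indexed by \(a_k\in S(I)\) with \(\sum c_k^s\le1\). We build the test function \(h=\sum_k c_k^{1/2}\varepsilon_k h_{a_k}\) using the kernels of Lemma~\ref{lem:D-normalized-test-functions} with large \(N\) and Rademacher signs \(\varepsilon_k\). A Khinchine argument combined with the \(\calD\) kernel estimate for atomic decomposition should give \(\mathbb E\|h\|_{A^p_\nu}^p\lesssim\sum c_k^{p/2}\le1\). This is the lattice randomization alluded to in the introduction, and the key input is \(\kappa_\nu\)-atomic decomposition for regular weights from \cite{PelaezRattyaSierra2021}. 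Averaging the hypothesis over the signs and using Khinchine for the \(L^2(\sigma)\) quantity gives
\[
\int_{\wedge(I')}\sum_k c_k|h_{a_k}|^2d\sigma\lesssim|I|^{(p-2)/p}.
\]
Since \(|h_{a_k}|^2\asymp(\widehat\nu(a_k)\delta(a_k))^{-2/p}\) on \(D_k\), this yields
\[
\sum_k c_k\,\sigma(D_k)\kappa_\nu(D_k)^{-2/p}\lesssim|I|^{(p-2)/p}.
\]
Taking the supremum over \(c\) and using the \(\ell^s\)--\(\ell^{s'}\) duality of Lemma~\ref{lem:background-tools}\textup{(i)} gives \(\Lambda_\sigma(S(I))\lesssim|I|\).

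The vanishing version uses the same two estimates restricted to arcs with \(|I|\le\eta\). In the necessity direction the test functions depend only on lattice points in \(S(I)\), so the constants stay uniform in \(\eta\). We expect the main obstacle to be the randomized upper bound \(\|\sum c_k^{1/2}\varepsilon_kh_{a_k}\|_{A^p_\nu}\lesssim\|c\|_{\ell^{s}}^{1/2}\) for \(\calD\)-weights, particularly for \(p<2\) where a convexity step fails. This is why we use \(\ell^{p/2}\) scaling with \(p>2\), which is exactly the present range, and cite the atomic theory rather than reprove it.
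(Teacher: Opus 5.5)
Your proposal is correct and follows essentially the paper's proof. For sufficiency, both use the sub-mean estimate and H\"older's inequality with exponents \(p/2\) and \(p/(p-2)\). For necessity, both use Rademacher-randomized atoms \(\delta(a_k)^{N-1/p}\widehat\nu(a_k)^{-1/p}(1-\overline{a_k}z)^{-N}\), \(L^2(\sigma)\)-orthogonality, and \(\ell^{p/2}\)--\(\ell^{p/(p-2)}\) duality, with tent enlargements absorbing the lattice discs near the edge of the tent. The differences are minor: the paper does the sufficiency step continuously, where Tonelli exchanges \(z\)- and \(\zeta\)-centred discs so no change of radius is needed, and in the necessity step it tests on \(D(a_k,R)\) with \((\rho+\eta)/(1+\rho\eta)<R\), which is exactly the radius change your ``standard covering argument'' provides. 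Also, your closing remark has it backwards: for \(p\le2\) the randomized \(\ell^p\) bound follows directly from Khinchine and subadditivity, and it is for \(p>2\) that the cited atomic decomposition is genuinely needed.
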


The proof is adapted from the argument for the \(s=1\), \(\iota=2/p\) case of
\cite[Proposition~2.10]{DuanWangWang2021}.  We include the details because the
uniform unit-ball formulation used here is not stated there in precisely this
normalization.

\begin{proof}
Put \(\iota=2/p\), \(r=(1-\iota)^{-1}=p/(p-2)\), and
\[
Q_\sigma(I)=\sup_{\|h\|_{A^p_\nu}\le1}
\int_{\wedge(I)}|h(z)|^{p\iota}\,d\sigma(z).
\]
Since \(\nu\in\calD\), the weight \(\kappa_\nu=\widehat\nu/\delta\) is regular,
and, for every fixed \(0<s<1\),
\(\kappa_\nu(D(z,s))\asymp\kappa_\nu(z)\delta(z)^2\).
Both facts follow from Lemma~\ref{lem:background-tools}\textup{(vi)}.  For fixed radii
\(s_1,s_2\in(0,1)\), every arc \(I\) has a concentric enlargement \(I^*\),
with \(|I^*|\asymp|I|\), such that
\(D(a,s_1)\cap\wedge(I)\ne\varnothing\) implies
\(D(a,s_2)\subset\wedge(I^*)\).
For large arcs we take \(I^*=\T\).  This is the tent-enlargement statement in
Lemma~\ref{lem:background-tools}\textup{(vi)}; see also
\cite[Lemma~2.11]{DuanWangWang2021}.

Suppose first that \(\Lambda_\sigma\) is Carleson.  The sub-mean estimate and
regularity in Lemma~\ref{lem:background-tools}\textup{(iii), (vi)}, together
with \(p\iota=2\), give
\[
        |h(z)|^{p\iota}
        \lesssim
        \frac1{\kappa_\nu(D(z,\rho))}
        \int_{D(z,\rho)}
        |h(\zeta)|^{p\iota}\kappa_\nu(\zeta)\dA(\zeta).
\]
Here \(\zeta\in D(z,\rho)\) if and only if \(z\in D(\zeta,\rho)\), and for
such pairs
\[
        \kappa_\nu(D(z,\rho))
        \asymp \kappa_\nu(D(\zeta,\rho))
\]
by Lemma~\ref{lem:background-tools}\textup{(vi)}.  These comparisons, together
with the fixed tent enlargement, justify the change from \(z\)-centered to
\(\zeta\)-centered discs in the first line below.
Tonelli's theorem from Lemma~\ref{lem:background-tools}\textup{(ii)} and the
tent enlargement in part \textup{(vi)} therefore give
\[
\begin{aligned}
        \int_{\wedge(I)}|h(z)|^{p\iota}\,d\sigma(z)
        &\lesssim
        \int_{\wedge(I^*)}|h(\zeta)|^{p\iota}
        \frac{\sigma(D(\zeta,\rho))}
             {\kappa_\nu(D(\zeta,\rho))}
        \kappa_\nu(\zeta)\dA(\zeta)                                  \\
        &\le
        \left(\int_\D |h(\zeta)|^p
        \kappa_\nu(\zeta)\dA(\zeta)\right)^\iota
        \Lambda_\sigma(\wedge(I^*))^{1/r}                             \\
        &\lesssim
        \|h\|_{A^p_\nu}^{2}|I|^{1/r}.
\end{aligned}
\]
Thus \(Q_\sigma(I)\lesssim|I|^{1/r}\).  If \(\Lambda_\sigma\) is
vanishing, the same estimate yields
\[
        \frac{Q_\sigma(I)}{|I|^{1/r}}
        \lesssim
        \left(
        \frac{\Lambda_\sigma(\wedge(I^*))}{|I^*|}
        \right)^{1/r}
        \longrightarrow0.
\]

Conversely, let \(\{a_k\}\) be a sufficiently fine \(\eta\)-lattice as in
Lemma~\ref{lem:background-tools}\textup{(vi)}, and choose
\(R\in(0,1)\) so that \((\rho+\eta)/(1+\rho\eta)<R\).
The atomic decomposition for \(\calD\)-weighted Bergman spaces
\cite[Lemma~2.12]{DuanWangWang2021} gives, for sufficiently large \(M\),
\[
        f_k(z)=
        \frac{\delta(a_k)^{M-1/p}\widehat\nu(a_k)^{-1/p}}
             {(1-\overline{a_k}z)^M},
        \qquad
        \left\|\sum_k\lambda_k f_k\right\|_{A^p_\nu}
        \lesssim\|\lambda\|_{\ell^p}.
\]
On \(D(a_k,R)\), the same atoms satisfy
\[
        |f_k(z)|^p
        \asymp
        \frac1{\widehat\nu(a_k)\delta(a_k)}
        \asymp
        \frac1{\kappa_\nu(D(a_k,R))}.
\]
The last comparison is Lemma~\ref{lem:background-tools}\textup{(vi)}.

For an arc \(J\), let
\(\mathcal K_R(J)=\{k:D(a_k,R)\subset\wedge(J)\}\); for a finitely supported
\(\lambda\) on this set, put
\(F_t(z)=\sum_k\lambda_k r_k(t)f_k(z)\), where \(r_k\) are the Rademacher
functions.  The atomic estimate is uniform in \(t\).  Hence the definition
of \(Q_\sigma(J)\), Tonelli's theorem from
Lemma~\ref{lem:background-tools}\textup{(ii)}, Rademacher orthogonality
\cite[Lemma~2.8, with exponent~2]{DuanWangWang2021}, and the local estimate
above give
\[
\begin{aligned}
        Q_\sigma(J)\|\lambda\|_{\ell^p}^{2}
        &\gtrsim
        \int_0^1\int_{\wedge(J)}|F_t(z)|^2\,d\sigma(z)\,dt          \\
        &\gtrsim
        \sum_{k\in\mathcal K_R(J)}
        |\lambda_k|^2
        \frac{\sigma(D(a_k,R))}
             {\kappa_\nu(D(a_k,R))^\iota}.
\end{aligned}
\]
The sequence duality formula in Lemma~\ref{lem:background-tools}\textup{(i)}
therefore yields
\[
        \sum_{k\in\mathcal K_R(J)}
        \left(
        \frac{\sigma(D(a_k,R))}
             {\kappa_\nu(D(a_k,R))}
        \right)^r
        \kappa_\nu(D(a_k,R))
        \lesssim Q_\sigma(J)^r.
\]

It remains to include lattice discs meeting the boundary of the target tent.
If \(z\in D(a_k,\eta)\), direct substitution into the definition of \(\varrho\)
gives \(\varrho(w,a_k)\le(\rho+\eta)/(1+\rho\eta)<R\) for
\(w\in D(z,\rho)\).  Thus \(D(z,\rho)\subset D(a_k,R)\), and the regularity
assertion in Lemma~\ref{lem:background-tools}\textup{(vi)} gives
\[
\kappa_\nu(D(z,\rho))\asymp\widehat\nu(z)\delta(z)
\asymp\widehat\nu(a_k)\delta(a_k)\asymp\kappa_\nu(D(a_k,R)),
\qquad
\frac{\sigma(D(z,\rho))}{\kappa_\nu(D(z,\rho))}
\lesssim\frac{\sigma(D(a_k,R))}{\kappa_\nu(D(a_k,R))}.
\]
Choose \(I^*\) so that
\(D(a_k,\eta)\cap\wedge(I)\ne\varnothing\) implies
\(D(a_k,R)\subset\wedge(I^*)\).
The lattice covering in Lemma~\ref{lem:background-tools}\textup{(vi)} and the
last two estimates imply
\[
\begin{aligned}
        \Lambda_\sigma(\wedge(I))
        &\lesssim
        \sum_{k\in\mathcal K_R(I^*)}
        \left(
        \frac{\sigma(D(a_k,R))}
             {\kappa_\nu(D(a_k,R))}
        \right)^r
        \kappa_\nu(D(a_k,R))                                      \\
        &\lesssim Q_\sigma(I^*)^r.
\end{aligned}
\]
If \(Q_\sigma(J)\lesssim|J|^{1/r}\), this proves that
\(\Lambda_\sigma\) is Carleson.  If
\(Q_\sigma(J)/|J|^{1/r}\to0\), then
\[
        \frac{\Lambda_\sigma(\wedge(I))}{|I|}
        \lesssim
        \left(
        \frac{Q_\sigma(I^*)}{|I^*|^{1/r}}
        \right)^r
        \longrightarrow0,
\]
so \(\Lambda_\sigma\) is a vanishing Carleson measure.
\end{proof}

\begin{lemma}\label{lem:caseB-area-criterion}
Let \(2<p<\infty\), \(\nu\in\calD\), and
\[
        u(z)=G(z)\delta(z)^\tau,
        \qquad G\in H(\D),\quad \tau\in\mathbb R.
\]
Define
\[
        A_u h(\xi)=
        \left(\int_{\Gamma(\xi)}|h(z)|^2|u(z)|^2\dA(z)\right)^{1/2}.
\]
Then \(A_u:A^p_\nu\to L^p(\T)\) is bounded if and only if
\[
        d\mu_{u,B}(z)=
        |u(z)|^{\frac{2p}{p-2}}
        \frac{\delta(z)^{\frac{p+2}{p-2}}}
             {\widehat\nu(z)^{\frac{2}{p-2}}}\dA(z)
\]
is a Carleson measure.  Moreover, a bounded \(A_u\) satisfies the sequential
condition in Proposition~\ref{prop:g-function-reduction} if and only if
\(\mu_{u,B}\) is a vanishing Carleson measure.  For this positive
sublinear map, that condition is also equivalent to relative compactness of the
image of the unit ball, as proved below.
\end{lemma}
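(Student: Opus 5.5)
The plan is to rewrite the $L^p$ norm of the area function as a tent-space duality pairing and then apply Lemma~\ref{lem:caseB-lifting-proposition} with the measure $d\sigma=|u|^2\dA$.

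\emph{Step 1: reduction to a Carleson embedding.} Since $p>2$, we have $\|A_uh\|_{L^p}^2=\|(A_uh)^2\|_{L^{p/2}}$. By Tonelli,
\[
\int_\T (A_uh)^2\,g\dxi=\int_\D |h|^2|u|^2\Bigl(\int_{I(z)}g\dxi\Bigr)\dA(z).
\]
Duality of $L^{p/2}$ with $L^{r}$, $r=p/(p-2)$, together with the standard estimate $\int_{I(z)}g\asymp\delta(z)\,(\text{Poisson/maximal average of }g)$, shows that boundedness of $A_u$ is equivalent to
\[
\int_\D |h|^2|u|^2\,\delta\,|F|\dA\lesssim\|h\|_{A^p_\nu}^2\|F\|_{H^r}
\]
for analytic $F$. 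For a fixed $h$, Lemma~\ref{lem:caseB-Hardy-Carleson} states exactly this: the norm of the embedding $H^r\to L^1(\tau_h)$, with $d\tau_h=|h|^2|u|^2\delta\,\dA$, is comparable to $\|\widetilde\tau_h\|_{L^{p/2}}=\|A_uh\|_{L^p}^2$, because $\widetilde\tau_h=(A_uh)^2$. Hence $A_u$ is bounded if and only if $\tau_h$ is an $r$-Carleson measure for $H^r$, uniformly over $\|h\|_{A^p_\nu}\le1$.

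\emph{Step 2: passing to tents and lifting.} Being a Carleson measure for $H^r\to L^1$ means $\tau(\wedge(I))\lesssim|I|^{1/r}$ (Duren's theorem). So, with $d\sigma=|u|^2\delta\,\dA$, boundedness is equivalent to
\[
\sup_{\|h\|\le1}\sup_I \frac{\int_{\wedge(I)}|h|^2\,d\sigma}{|I|^{(p-2)/p}}<\infty.
\]
Lemma~\ref{lem:caseB-lifting-proposition} converts this into the statement that $\Lambda_\sigma$ is Carleson. Lemma~\ref{lem:radialized-symbol-local} and local comparability then give
\[
\frac{\sigma(D(z,\rho))}{\kappa_\nu(D(z,\rho))}\asymp\frac{|u(z)|^2\delta(z)^3}{\widehat\nu(z)\delta(z)},
\]
with the lower bound pointwise and the upper bound after averaging. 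The averaging is harmless under the Carleson test, since replacing the disc average by its maximum over slightly larger discs changes the Carleson constant only by a constant. Raising to the power $p/(p-2)$ and multiplying by $\kappa_\nu=\widehat\nu/\delta$ produces exactly the exponents $\delta^{(p+2)/(p-2)}\widehat\nu^{-2/(p-2)}$ of $\mu_{u,B}$.

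\emph{Step 3: compactness.} The vanishing version of Lemma~\ref{lem:caseB-lifting-proposition} gives $\mu_{u,B}$ vanishing Carleson if and only if the tent ratio tends to zero uniformly on small arcs. Given this, for $h_j$ bounded and $h_j\to0$ locally uniformly, split $\sigma=\sigma\chi_{R\D}+\sigma\chi_{\D\setminus R\D}$. The first part contributes $o(1)$ by uniform convergence, and the second has a small Carleson constant, so it contributes $O(\varepsilon)$ by Step 1. Conversely, sequential compactness applied to the normalized atoms $h_a$ of Lemma~\ref{lem:D-normalized-test-functions} gives smallness of $\sigma(D(a,\rho))/\kappa_\nu$-type quantities. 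To get the full vanishing Carleson condition, I would instead rerun the Rademacher argument of Lemma~\ref{lem:caseB-lifting-proposition} with lattice points restricted to $|a_k|>R$, since such randomized sums tend to zero locally uniformly.

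\emph{Main obstacle.} I expect the hardest step to be equating relative compactness of the image of the unit ball with the sequential condition for this sublinear map. I would handle it through Step 1's duality: truncating $\sigma$ to $R\D$ gives maps that are compact by Montel's theorem, and these approximate $A_u$ uniformly on the unit ball when $\mu_{u,B}$ is vanishing. In the other direction, the sequential condition follows from relative compactness plus a normal-families argument.
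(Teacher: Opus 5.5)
Your necessity direction is correct and is essentially the paper's. Boundedness gives $\int_{\wedge(I)}|h|^2\,d\sigma\lesssim\int_I (A_uh)^2\dxi\lesssim|I|^{(p-2)/p}\|h\|_{A^p_\nu}^2$ uniformly on the unit ball. Lemma~\ref{lem:caseB-lifting-proposition} then makes $\Lambda_\sigma$ Carleson, and the sub-mean estimate gives $d\mu_{u,B}\lesssim d\Lambda_\sigma$.

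The gap is in sufficiency, at the claim that being a Carleson measure for $H^r\to L^1$ ``means'' $\tau(\wedge(I))\lesssim|I|^{1/r}$. Duren's theorem characterizes $H^r\subset L^s(\tau)$ by $\tau(S(I))\lesssim|I|^{s/r}$ only when $s\ge r$. Here $s=1<r=p/(p-2)$, and in this range the box condition is necessary but not sufficient. For example, $d\tau=(1-t)^{1/r-1}dt$ on $[0,1)$ satisfies $\tau(S(I))\lesssim|I|^{1/r}$. Yet $F(z)=(1-z)^{-1/r}\bigl(\log\frac{e}{1-z}\bigr)^{-b}$ with $1/r<b\le1$ lies in $H^r$, while $\int|F|\,d\tau=\infty$. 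The correct characterization in this range (Luecking) is $\widetilde\tau\in L^{r'}=L^{p/2}(\T)$, i.e.\ Lemma~\ref{lem:caseB-Hardy-Carleson}. For $\tau=\tau_h$ this merely restates $\|A_uh\|_{L^p}<\infty$. So your chain ``$\mu_{u,B}$ Carleson $\Rightarrow\Lambda_\sigma$ Carleson $\Rightarrow$ uniform tent bound'' stops there. Nothing in your proposal converts the uniform tent bound back into $\|\widetilde\tau_h\|_{L^{p/2}}\lesssim\|h\|_{A^p_\nu}^2$.

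The missing idea is the factorization $|u|^2\delta=(d\mu_{u,B}/dA)^{(p-2)/p}\kappa_\nu^{2/p}$ used in the paper. H\"older with exponents $r$ and $p/2$ then gives
\[
\int_\D|F||h|^2\,d\sigma\le\Bigl(\int_\D|F|^r\,d\mu_{u,B}\Bigr)^{1/r}\Bigl(\int_\D|h|^p\kappa_\nu\dA\Bigr)^{2/p}\lesssim\|F\|_{H^r}\|h\|_{A^p_\nu}^2 .
\]
Here the \emph{diagonal} embedding $H^r\subset L^r(\mu_{u,B})$ is genuinely Carleson's theorem. Taking $F\equiv1$ shows that $\tau_h$ is finite. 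Lemma~\ref{lem:caseB-Hardy-Carleson} then yields $\|A_uh\|_{L^p}^2\lesssim\|h\|_{A^p_\nu}^2$, with a constant controlled by the Carleson norm of $\mu_{u,B}$. Your Step~3 sufficiency (``small Carleson constant contributes $O(\varepsilon)$'') needs exactly this quantitative estimate for the truncated measures, and so does your truncation--Montel argument. Both inherit the gap until it is supplied.

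Separately, relative compactness $\Rightarrow$ sequential condition is not a normal-families argument. A subsequential $L^p$-limit of $A_uh_j$ with $h_j\to0$ locally uniformly is not forced to vanish by local considerations, because the mass of $|h_j|^2\,d\sigma$ can escape to $\T$. For instance, for radial $u$ and $h_j=c_jz^{n_j}$, each $A_uh_j$ is a constant function. Normal families together with $|A_uh_1-A_uh_2|\le A_u(h_1-h_2)$ give the converse implication, sequential $\Rightarrow$ relatively compact. The paper instead passes from relative compactness to uniform absolute continuity of the $L^p$-norms. That yields the uniform small-arc tent estimate and hence, via the vanishing half of Lemma~\ref{lem:caseB-lifting-proposition}, that $\mu_{u,B}$ is vanishing.

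Your alternative route to vanishing, rerunning the Rademacher argument over lattice points with $|a_k|>R$, can work. You need to obtain uniformity in $t$ and $\lambda$ by a contradiction argument against the sequential condition.
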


\begin{proof}
Set \(d\sigma(z)=|u(z)|^2\delta(z)\dA(z)\).  Then
\[
A_u h(\xi)^2=
\int_{\Gamma(\xi)}\frac{|h(z)|^2\,d\sigma(z)}{\delta(z)}.
\]

Assume first that \(\mu_{u,B}\) is Carleson.  Let \(F\in H^{p/(p-2)}\).  By
Lemma~\ref{lem:background-tools}\textup{(i)} and the identity
\[
        \left(
        |u|^{\frac{2p}{p-2}}
        \frac{\delta^{\frac{p+2}{p-2}}}
             {\widehat\nu^{\frac{2}{p-2}}}
        \right)^{\frac{p-2}{p}}
        \left(\frac{\widehat\nu}{\delta}\right)^{2/p}
        =|u|^2\delta,
\]
we get
\[
\begin{aligned}
        \int_\D |F(z)| |h(z)|^2\,d\sigma(z)
        &\le
        \left(\int_\D |F(z)|^{p/(p-2)}d\mu_{u,B}(z)\right)^{(p-2)/p}   \\
        &\quad\times
        \left(\int_\D |h(z)|^p\kappa_\nu(z)\dA(z)\right)^{2/p}        \\
        &\lesssim
        \|F\|_{H^{p/(p-2)}}\|h\|_{A^p_\nu}^2.
\end{aligned}
\]
Here the last inequality uses the Hardy--Carleson embedding
\cite[Lemma~21]{FangGuoHouZhuBergmanHardy} and the norm equivalence in
Lemma~\ref{lem:background-tools}\textup{(vi)}.
Taking \(F\equiv1\) in the preceding estimate shows that the measure
\(d\tau_h=|h|^2d\sigma\) is finite.  Lemma~\ref{lem:caseB-Hardy-Carleson},
applied to \(\tau_h\), gives
\(\|A_u h\|_{L^p}^2\lesssim\|h\|_{A^p_\nu}^2\), and boundedness follows.

Conversely, assume that \(A_u:A^p_\nu\to L^p(\T)\) is bounded.  For an arc \(I\) and
\(h\in A^p_\nu\), the identity above gives
\[
        \int_{\wedge(I)} |h(z)|^2\,d\sigma(z)
        \lesssim
        \int_I A_u h(\xi)^2|d\xi|.
\]
Lemma~\ref{lem:background-tools}\textup{(i)} on \(I\) yields
\[
        \int_{\wedge(I)} |h|^2\,d\sigma
        \lesssim
        |I|^{(p-2)/p}\|A_u h\|_{L^p}^2
        \lesssim
        |I|^{(p-2)/p}\|h\|_{A^p_\nu}^2.
\]
By Lemma~\ref{lem:caseB-lifting-proposition}, the lifted measure
\(\Lambda_\sigma\) defined there is Carleson.  To compare \(\mu_{u,B}\) with
\(\Lambda_\sigma\), use \(u=G\delta^\tau\).  Lemma~\ref{lem:radialized-symbol-local}
with exponent \(2\) gives
\[
        |u(z)|^2\lesssim
        \delta(z)^{-2}\int_{D(z,\rho)}|u(\zeta)|^2\dA(\zeta).
\]
Hence, using \(d\sigma=|u|^2\delta\dA\) and
\(\kappa_\nu(D(z,\rho))\asymp\widehat\nu(z)\delta(z)\) from
Lemma~\ref{lem:background-tools}\textup{(vi)},
\[
|u(z)|^2\frac{\delta(z)^2}{\widehat\nu(z)}
\lesssim\frac{\sigma(D(z,\rho))}{\kappa_\nu(D(z,\rho))}.
\]
Raising the local comparison to the power \(p/(p-2)\) and multiplying by
\(\kappa_\nu(z)\dA(z)=\widehat\nu(z)\delta(z)^{-1}\dA(z)\) gives
\(d\mu_{u,B}\lesssim d\Lambda_\sigma\).
Therefore \(\mu_{u,B}\) is Carleson.

We first relate the sequential condition to relative compactness for this
positive sublinear map.  Pointwise in \(\xi\),
\[
        |A_u h_1(\xi)-A_u h_2(\xi)|\le A_u(h_1-h_2)(\xi),
        \tag{3.9}
\]
so every bounded \(A_u\) satisfying this condition maps the unit ball into a
relatively compact subset of \(L^p(\T)\).  Indeed,
let \(\{h_j\}\) be any sequence in the unit ball of \(A^p_\nu\).  By
Lemma~\ref{lem:background-tools}\textup{(iii), (iv)}, a
subsequence, still denoted by \(h_j\), converges locally uniformly to some analytic
function \(h\).  Fatou's lemma in Lemma~\ref{lem:background-tools}\textup{(ii)}
gives \(h\in A^p_\nu\), and the sequence \(\{h_j-h\}\) is bounded in
\(A^p_\nu\) and converges to zero locally uniformly.  The sequential condition
then gives \(\|A_u(h_j-h)\|_{L^p}\to0\), and \((3.9)\) implies
\(A_u h_j\to A_u h\) in \(L^p(\T)\).

Suppose now that \(\mu_{u,B}\) is a vanishing Carleson measure and let \(h_j\)
be bounded in \(A^p_\nu\) with \(h_j\to0\) uniformly on compact subsets.  For
fixed \(R\in(0,1)\), let \(A_u^{\mathrm{in},R}\) and
\(A_{u,R}^{\mathrm{out}}\) denote the area maps obtained by restricting the
inner integral to \(R\D\) and \(\D\setminus R\D\), respectively.  Pointwise
subadditivity gives
\[
        A_u h_j\le A_u^{\mathrm{in},R}h_j+A_{u,R}^{\mathrm{out}}h_j.
\]
For the inner term, local uniform convergence gives
\(\sup_{z\in R\D}|h_j(z)|\to0\).  Moreover, for every \(\xi\in\T\),
\[
\begin{aligned}
        A_u^{\mathrm{in},R}h_j(\xi)^2
        &=
        \int_{\Gamma(\xi)\cap R\D}
        |h_j(z)|^2|u(z)|^2\,\dA(z)\\
        &\le
        \sup_{z\in R\D}|h_j(z)|^2
        \int_{R\D}|u(z)|^2\,\dA(z).
\end{aligned}
\]
Since \(u\) is bounded on \(R\overline\D\), the last integral is finite,
and hence
\(\|A_u^{\mathrm{in},R}h_j\|_{L^p(\T)}\to0\).
The outer term is estimated by the sufficiency argument above after replacing
\(d\sigma\) and \(d\mu_{u,B}\) by their restrictions to
\(\D\setminus R\D\).  The Carleson norm of the latter restricted measure tends
to zero as \(R\to1^-\): cover an arc longer than \(1-R\) by arcs whose lengths
lie between \(1-R\) and \(2(1-R)\), and apply the vanishing Carleson condition
to the shorter arcs.  Hence the outer term is uniformly small, and
\(A_u h_j\to0\) in \(L^p(\T)\).

Finally, assume that the image of the unit ball of \(A^p_\nu\) under \(A_u\)
is relatively compact in \(L^p(\T)\).  The \(L^p\)-norms on its compact
closure are uniformly absolutely continuous: take a finite
\(\varepsilon\)-net, use absolute continuity of the integral for its finitely many
members, and control the approximation error by
Lemma~\ref{lem:background-tools}\textup{(i)}.  Thus for every \(\varepsilon>0\)
there exists \(\eta>0\)
such that, whenever \(|E|<\eta\),
\[
        \sup_{\|h\|_{A^p_\nu}\le1}
        \int_E |A_u h(\xi)|^p|d\xi|<\varepsilon^p.
\]
Taking \(E=I\) and using Lemma~\ref{lem:background-tools}\textup{(i)} as above,
we obtain
\[
        \sup_{\|h\|_{A^p_\nu}\le1}
        \frac{\displaystyle\int_{\wedge(I)}|h|^2\,d\sigma}{|I|^{(p-2)/p}}
        \lesssim \varepsilon^2,
        \qquad |I|<\eta.
\]
Thus the uniform vanishing estimate holds.  Lemma~\ref{lem:caseB-lifting-proposition}
shows that \(\Lambda_\sigma\) is a vanishing Carleson measure, and the domination
\(d\mu_{u,B}\lesssim d\Lambda_\sigma\) proves that \(\mu_{u,B}\) is vanishing.
\end{proof}

\begin{theorem}\label{thm:caseB-boundedness}
Let \(2<p=q<\infty\), \(\omega\in\calD\), \(\varphi\in H(\D)\),
\((\alpha,\beta)\in\calP\), and \(\alpha,\beta>0\).  Assume \((H)\).  Then
\(V^\varphi_{\alpha,\beta}:A^p_\omega\to H^p\) is bounded if and only if the
measure \(\mu_{\varphi,B}\) in Theorem~\ref{thm:boundedness-main}\textup{(B)} is a
Carleson measure.
\end{theorem}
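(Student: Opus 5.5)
The argument chains three earlier results. By Proposition~\ref{prop:g-function-reduction}, which applies under \((H)\) with \(\nu=\nu_{\alpha,\beta,p}\in\calD\), the operator \(V^\varphi_{\alpha,\beta}:A^p_\omega\to H^p\) is bounded if and only if the area map \(A_{\alpha,\varphi}:A^p_\nu\to L^p(\T)\) is bounded. By \((2.11)\), this area map is exactly \(A_u\) in Lemma~\ref{lem:caseB-area-criterion} with
\[
 u=u_\varphi=G_\varphi\delta^{\alpha-1},
 \qquad G=G_\varphi\in H(\D),
 \qquad \tau=\alpha-1\in\mathbb R .
\]
Lemma~\ref{lem:caseB-area-criterion} applies because \(2<p<\infty\) and \(\nu\in\calD\). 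It gives boundedness of \(A_{\alpha,\varphi}\) if and only if
\[
 d\mu_{u_\varphi,B}
 =|u_\varphi|^{\frac{2p}{p-2}}
 \frac{\delta^{\frac{p+2}{p-2}}}{\widehat\nu^{\frac{2}{p-2}}}\dA
\]
is a Carleson measure.

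The last step translates this measure into \(\mu_{\varphi,B}\). By \((2.10)\), which holds under \((H)\), together with the second identity in Proposition~\ref{prop:dictionary}, the density of \(\mu_{u_\varphi,B}\) is pointwise comparable, with constants independent of \(z\), to the density of \(\mu_{\varphi,B}\) in Theorem~\ref{thm:boundedness-main}(B). The relevant exponent is
\[
 2p(\alpha-1)+p+2-2p(\alpha-\beta)=p+2+2p(\beta-1),
\]
with the factor \(\widehat\omega^{-2/(p-2)}\) coming from \(\widehat\nu\asymp\widehat\omega\,\delta^{p(\alpha-\beta)}\). Since comparable densities give comparable Carleson quotients \(\mu(S(I))/|I|\), \(\mu_{u_\varphi,B}\) is Carleson exactly when \(\mu_{\varphi,B}\) is, and the theorem follows.

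There is no real obstacle at this stage; the substantive work sits in Lemmas~\ref{lem:caseB-lifting-proposition} and~\ref{lem:caseB-area-criterion}. The two points to check are:
\begin{enumerate}[label=\textup{(\roman*)},leftmargin=2.5em]
\item Lemma~\ref{lem:caseB-area-criterion} needs \(\nu\in\calD\) and not merely integrability. This is exactly the first clause of \((H)\), supplied by Corollary~\ref{cor:automatic-shift} when \(\alpha\ge\beta\) and by Proposition~\ref{prop:negative-obstruction} otherwise.
\item The symbol \(u_\varphi\) is not analytic, so the local sub-mean step inside Lemma~\ref{lem:caseB-area-criterion} relies on Lemma~\ref{lem:radialized-symbol-local}. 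That lemma covers functions of the form \(G\delta^\tau\) with \(G\in H(\D)\) and \(\tau=\alpha-1\) possibly negative.
\end{enumerate}
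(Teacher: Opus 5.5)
Your proposal is correct and is essentially the paper's proof: Proposition~\ref{prop:g-function-reduction} reduces boundedness to that of $A_{u_\varphi}:A^p_\nu\to L^p(\T)$ with $u_\varphi=G_\varphi\delta^{\alpha-1}$, Lemma~\ref{lem:caseB-area-criterion} gives the Carleson criterion for $\mu_{u_\varphi,B}$, and Proposition~\ref{prop:dictionary} converts that measure into $\mu_{\varphi,B}$. Your two side checks are the points the paper relies on implicitly: $\nu\in\calD$ is part of (H), and Lemma~\ref{lem:radialized-symbol-local} handles the non-analytic symbol even when $\tau=\alpha-1$ is negative.
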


\begin{proof}
By Proposition~\ref{prop:g-function-reduction}, the problem is equivalent to
boundedness of \(A_{u_\varphi}:A^p_\nu\to L^p(\T)\), with
\(u_\varphi=G_\varphi\delta^{\alpha-1}\).  Lemma~\ref{lem:caseB-area-criterion}
gives its Carleson-measure criterion, which
Proposition~\ref{prop:dictionary} transforms into
\(\mu_{\varphi,B}\).
\end{proof}

\begin{proposition}\label{prop:caseB-compactness}
Under the assumptions of Theorem~\ref{thm:caseB-boundedness},
\(V^\varphi_{\alpha,\beta}:A^p_\omega\to H^p\) is compact if and only if
\(\mu_{\varphi,B}\) is a vanishing Carleson measure.
\end{proposition}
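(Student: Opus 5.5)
The plan is to mirror the proof of Theorem~\ref{thm:caseB-boundedness}, replacing boundedness by the sequential compactness condition throughout. First, Proposition~\ref{prop:g-function-reduction} under \((H)\), with \(\nu=\nu_{\alpha,\beta,p}\in\calD\), reduces compactness of \(V^\varphi_{\alpha,\beta}:A^p_\omega\to H^p\) to two facts about \(A_{\alpha,\varphi}=A_{u_\varphi}:A^p_\nu\to L^p(\T)\), where \(u_\varphi=G_\varphi\delta^{\alpha-1}\): it must be bounded, and \(\|A_{u_\varphi}h_j\|_{L^p}\to0\) for every bounded sequence \(\{h_j\}\) in \(A^p_\nu\) that tends to zero locally uniformly. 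Here \(u_\varphi\) has the form \(G\delta^\tau\) with \(G=G_\varphi\in H(\D)\) and \(\tau=\alpha-1\), so Lemma~\ref{lem:caseB-area-criterion} applies directly with \(2<p<\infty\).

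Second, I invoke the compactness half of Lemma~\ref{lem:caseB-area-criterion}. If \(\mu_{u,B}\) is a vanishing Carleson measure, it is in particular Carleson, so \(A_u\) is bounded, and the inner/outer splitting in that lemma gives the sequential condition. Conversely, if \(A_u\) is bounded and satisfies the sequential condition, the lemma shows the image of the unit ball is relatively compact. Uniform absolute continuity then yields the uniform vanishing tent estimate, and Lemma~\ref{lem:caseB-lifting-proposition} together with \(d\mu_{u,B}\lesssim d\Lambda_\sigma\) makes \(\mu_{u,B}\) vanishing Carleson.

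Third, Proposition~\ref{prop:dictionary} transports the measure. Its pointwise comparison
\[
|u_\varphi|^{\frac{2p}{p-2}}\delta^{\frac{p+2}{p-2}}\widehat\nu^{-\frac{2}{p-2}}\asymp d\mu_{\varphi,B}/\dA
\]
has constants independent of \(z\), using \((2.10)\). Hence \(\mu_{u_\varphi,B}\) is vanishing Carleson if and only if \(\mu_{\varphi,B}\) is.

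The only point requiring care is consistency between the notion of compactness in Proposition~\ref{prop:g-function-reduction} and the one used in Lemma~\ref{lem:caseB-area-criterion}. Both are the sequential condition together with boundedness, so the compactness equivalence passes through with no further work.
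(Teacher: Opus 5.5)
Your proposal is correct and follows the paper's proof exactly. Proposition~\ref{prop:g-function-reduction} reduces compactness to boundedness plus the sequential condition for \(A_{u_\varphi}:A^p_\nu\to L^p(\T)\) with \(u_\varphi=G_\varphi\delta^{\alpha-1}\); the compactness half of Lemma~\ref{lem:caseB-area-criterion} identifies this with \(\mu_{u_\varphi,B}\) being a vanishing Carleson measure; and the uniform pointwise comparison in Proposition~\ref{prop:dictionary} transfers this to \(\mu_{\varphi,B}\). Your remark that both results use the same compactness notion (boundedness plus the sequential condition) is what legitimizes the chaining, and the paper's short argument relies on it implicitly.
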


\begin{proof}
By Proposition~\ref{prop:g-function-reduction}, compactness of
\(V^\varphi_{\alpha,\beta}:A^p_\omega\to H^p\) is equivalent to the sequential
condition for \(A_{u_\varphi}:A^p_\nu\to L^p(\T)\), where
\(u_\varphi=G_\varphi\delta^{\alpha-1}\).  Lemma~\ref{lem:caseB-area-criterion}
identifies this with the vanishing form of the same measure criterion, and
Proposition~\ref{prop:dictionary} transforms that measure into
\(\mu_{\varphi,B}\).
\end{proof}

\subsection{Case C: the tent-integral range}

Assume
\[
        0<q<\infty,
        \qquad
        p>\max\{2,q\}.
\]
Set
\[
        s=\frac{2p}{p-2},
        \qquad
        r=\frac{pq}{p-q}.
        \tag{3.11}
\]

Fix a pseudo-hyperbolic lattice
\[
        Z=\{a_k\}_{k\ge1}\subset\D.
\]
For a sequence \(c=\{c_k\}\) and \(0<a<\infty\), \(0<b\le\infty\), we write
\(c\in T^a_b(Z)\) if
\[
        \|c\|_{T^a_b(Z)}^a
        =
        \int_\T
        \left(\sum_{a_k\in\Gamma(\xi)}|c_k|^b\right)^{a/b}\dxi<\infty,
\qquad 0<b<\infty.
\]
For \(b=\infty\) we define
\[
        \|c\|_{T^a_\infty(Z)}^a
        =
        \int_\T
        \left(\sup_{a_k\in\Gamma(\xi)}|c_k|\right)^a\dxi .
\]
Different fixed apertures yield equivalent discrete and continuous tent
quasi-norms.  We use the aperture invariance in
Lemma~\ref{lem:background-tools}\textup{(vi)} without further comment.

\begin{lemma}\label{lem:caseC-continuous-discrete}
Let \(0<q<\infty\), \(p>\max\{2,q\}\), \(\nu\in\calD\), and set
\[
        s=\frac{2p}{p-2},
        \qquad
        r=\frac{pq}{p-q}.
\]
Let
\[
        u(z)=G(z)\delta(z)^\tau,
        \qquad G\in H(\D),\quad \tau\in\mathbb R.
\]
Choose fixed pseudo-hyperbolic radii
\[
        0<\rho_0<\rho_1<\rho_2<1
\]
such that the discs \(D(a_k,\rho_0)\) cover \(\D\) and the enlarged discs
\(D(a_k,\rho_2)\) have bounded overlap, as in
Lemma~\ref{lem:background-tools}\textup{(vi)}.  Put
\[
        b_k
        =
        \frac{\delta(a_k)}{\widehat\nu(a_k)^{1/p}}
        \sup_{z\in D(a_k,\rho_0)}|u(z)|.
        \tag*{(3.12)}
\]
Then
\[
        \left\|
        \left(
        \int_{\Gamma(\xi)}
        |u(z)|^s
        \left(\frac{\delta(z)^2}{\widehat\nu(z)}\right)^{\frac{2}{p-2}}
        \dA(z)
        \right)^{1/s}
        \right\|_{L^r(\T)}
        \asymp
        \|\{b_k\}\|_{T^r_s(Z)} .
        \tag*{(3.13)}
\]
\end{lemma}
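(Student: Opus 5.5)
The plan is to discretize the inner cone integral over the lattice and compare both sides through pointwise two-sided estimates, using aperture invariance of tent quasi-norms (Lemma~\ref{lem:background-tools}\textup{(vi)}) to absorb the geometric losses.

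Write the integrand as \(|u|^s(\delta^2/\widehat\nu)^{2/(p-2)}\). Since \(s=2p/(p-2)\), we have \(2/(p-2)=s/p\), so the integrand equals
\[
\Bigl(|u|\,\delta^{2/p}\,\widehat\nu^{-1/p}\Bigr)^s .
\]
By Lemma~\ref{lem:background-tools}\textup{(vi)}, \(\delta\) and \(\widehat\nu\) are comparable to \(\delta(a_k)\) and \(\widehat\nu(a_k)\) on \(D(a_k,\rho_2)\), and the area of such a disc is comparable to \(\delta(a_k)^2\). Hence
\[
\int_{D(a_k,\rho_0)}|u|^s(\delta^2/\widehat\nu)^{2/(p-2)}\dA\ \lesssim\ \delta(a_k)^2\,\delta(a_k)^{2s/p}\,\widehat\nu(a_k)^{-s/p}\sup_{D(a_k,\rho_0)}|u|^s .
\]
Since \(2+2s/p=s\), the right-hand side is exactly \(b_k^s\).

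\textbf{Upper bound for the continuous norm.} Cover \(\Gamma(\xi)\) by the discs \(D(a_k,\rho_0)\) that meet it. Every such \(a_k\) lies in a cone \(\widetilde\Gamma(\xi)\) of a fixed larger aperture. Therefore
\[
\int_{\Gamma(\xi)}\cdots\ \lesssim\ \sum_{a_k\in\widetilde\Gamma(\xi)}b_k^s .
\]
Raising to the power \(r/s\), integrating in \(\xi\), and applying aperture invariance of the discrete tent norm gives \(\lesssim\|b\|_{T^r_s(Z)}^r\).

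\textbf{Lower bound.} Apply Lemma~\ref{lem:radialized-symbol-local} with exponent \(s\), using the fact that \(u=G\delta^\tau\) with \(G\) analytic. For each \(z\in D(a_k,\rho_0)\), the disc \(D(z,\rho')\) is contained in \(D(a_k,\rho_1)\) once \(\rho'\) is chosen small relative to \(\rho_0,\rho_1\). This gives
\[
\sup_{D(a_k,\rho_0)}|u|^s\ \lesssim\ \delta(a_k)^{-2}\int_{D(a_k,\rho_1)}|u|^s\dA .
\]
Inserting the local comparability of \(\delta\) and \(\widehat\nu\), we get
\[
b_k^s\ \lesssim\ \int_{D(a_k,\rho_1)}|u|^s(\delta^2/\widehat\nu)^{2/(p-2)}\dA .
\]
If \(a_k\in\Gamma(\xi)\), then \(D(a_k,\rho_1)\) lies in a cone \(\Gamma'(\xi)\) of larger fixed aperture. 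Bounded overlap of the enlarged discs then yields
\[
\sum_{a_k\in\Gamma(\xi)}b_k^s\ \lesssim\ \int_{\Gamma'(\xi)}\cdots\dA .
\]
Integrating the \(r/s\)-th power and using aperture invariance of the continuous tent quasi-norm, again from Lemma~\ref{lem:background-tools}\textup{(vi)}, finishes the proof.

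\textbf{Main obstacle.} The delicate point is the geometric containment of discs in cones of larger aperture, uniformly including discs near the origin where the cones degenerate. I would handle it by the fixed-aperture-enlargement statement of Lemma~\ref{lem:background-tools}\textup{(vi)} (cf.\ \cite[Lemma~2.11]{DuanWangWang2021}). The finitely many lattice points in a fixed compact set contribute a bounded amount on both sides, since every cone contains a fixed small disc \(r_0\D\). The exponents \(r,s\) may be below \(1\), but only positive sums and integrals are involved, so no convexity is needed.
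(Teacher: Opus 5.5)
Your proposal is correct and follows the paper's proof essentially step for step. The upper bound covers $\Gamma(\xi)$ by the discs $D(a_k,\rho_0)$ and passes to a wider cone, and the lower bound uses the sub-mean estimate of Lemma~\ref{lem:radialized-symbol-local} on small discs inside $D(a_k,\rho_1)$, bounded overlap, and aperture invariance. Applying the estimate at every $z\in D(a_k,\rho_0)$ instead of at the paper's near-maximizer $w_k$ is immaterial.

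Your closing worry about the origin is unnecessary. The containment $D(a_k,\rho_1)\subset\widetilde\Gamma(\xi)$ whenever $a_k\in\Gamma(\xi)$ holds uniformly, for an aperture depending only on $\rho_1$. The remark that lattice points in a compact set ``contribute a bounded amount'' would not by itself give a two-sided comparison, so it is best dropped.
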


\begin{proof}
Although \(u\) need not be analytic, its representation \(u=G\delta^\tau\) and
Lemma~\ref{lem:background-tools}\textup{(iii), (vi)} give
subharmonicity of \(|G|^a\) for every \(a>0\) and local comparability of
\(\delta\); hence
\[
        |u(z)|^a
        \lesssim
        \frac1{\delta(z)^2}
        \int_{D(z,\rho)}|u(\zeta)|^a\dA(\zeta)
        \tag*{(3.14)}
\]
for every fixed \(0<\rho<1\).  The implicit constant depends on \(a,\rho,\tau\).

For the upper estimate in (3.13), cover \(\Gamma(\xi)\) by those discs
\(D(a_k,\rho_0)\) meeting a slightly enlarged cone.  On such a disc,
\(\delta(z)\asymp\delta(a_k)\) and
\(\widehat\nu(z)\asymp\widehat\nu(a_k)\).  Hence
\[
\begin{aligned}
        &\int_{\Gamma(\xi)}
        |u(z)|^s
        \left(\frac{\delta(z)^2}{\widehat\nu(z)}\right)^{\frac{2}{p-2}}
        \dA(z)                                                   \\
        &\qquad\lesssim
        \sum_{a_k\in\widetilde\Gamma(\xi)}
        \left(
        \frac{\delta(a_k)}{\widehat\nu(a_k)^{1/p}}
        \sup_{D(a_k,\rho_0)}|u|
        \right)^s
        =
        \sum_{a_k\in\widetilde\Gamma(\xi)}|b_k|^s .
\end{aligned}
\]
Lemma~\ref{lem:background-tools}\textup{(vi)} gives the upper bound for the
\(L^r\)-norm after restoring the original aperture.

For the reverse estimate, choose \(w_k\in D(a_k,\rho_0)\) with
\(|u(w_k)|\ge \frac12\sup_{D(a_k,\rho_0)}|u|\).  Applying (3.14) with exponent
\(s\) to a small disc centered at \(w_k\), contained in \(D(a_k,\rho_1)\), gives
\[
        \delta(a_k)^2\sup_{D(a_k,\rho_0)}|u|^s
        \lesssim
        \int_{D(a_k,\rho_1)}|u(z)|^s\dA(z).
\]
Using again the local comparability in
Lemma~\ref{lem:background-tools}\textup{(vi)},
\[
        |b_k|^s
        \lesssim
        \int_{D(a_k,\rho_1)}
        |u(z)|^s
        \left(\frac{\delta(z)^2}{\widehat\nu(z)}\right)^{\frac{2}{p-2}}
        \dA(z).
        \tag*{(3.15)}
\]
Summing (3.15) over \(a_k\in\Gamma(\xi)\), using finite overlap and enlarging the
cone once more, yields
\[
        \sum_{a_k\in\Gamma(\xi)}|b_k|^s
        \lesssim
        \int_{\widetilde\Gamma(\xi)}
        |u(z)|^s
        \left(\frac{\delta(z)^2}{\widehat\nu(z)}\right)^{\frac{2}{p-2}}
        \dA(z).
\]
Taking \(1/s\)-powers, then \(L^r\)-quasi-norms, and using
Lemma~\ref{lem:background-tools}\textup{(vi)} proves the lower bound in \((3.13)\).
\end{proof}

\begin{lemma}\label{lem:caseC-random-testing}
Let the assumptions and notation of Lemma~\ref{lem:caseC-continuous-discrete} hold.
If
\[
        A_u h(\xi)=
        \left(
        \int_{\Gamma(\xi)}|h(z)|^2|u(z)|^2\dA(z)
        \right)^{1/2}
\]
is bounded from \(A^p_\nu\) to \(L^q(\T)\), then, for every finitely supported
sequence \(\lambda=\{\lambda_k\}\),
\[
        \|\{\lambda_k b_k\}\|_{T^q_2(Z)}
        \lesssim
        \|A_u\|_{A^p_\nu\to L^q(\T)}
        \|\lambda\|_{T^p_p(Z)}.
        \tag*{(3.16)}
\]
\end{lemma}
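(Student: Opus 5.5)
This is the standard lattice-randomization argument used for Volterra operators from \(A^p_\nu\) to \(H^q\); compare the case \(q<p\) in \cite{DuanWangWang2021}. Let \(\lambda\) be finitely supported. We test \(A_u\) on random sums of the normalized atoms from the atomic decomposition \cite[Lemma~2.12]{DuanWangWang2021}, already used in the proof of Lemma~\ref{lem:caseB-lifting-proposition}:
\[
F_t(z)=\sum_k\lambda_k r_k(t)f_k(z),\qquad
f_k(z)=\frac{\delta(a_k)^{M-1/p}\widehat\nu(a_k)^{-1/p}}{(1-\overline{a_k}z)^M},
\]
with \(M\) large and \(r_k\) the Rademacher functions. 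The atomic estimate holds uniformly in \(t\). Since \(\Gamma(\xi)\) captures \(a_k\) on a set of \(\xi\) of measure \(\asymp\delta(a_k)\), we have \(\|\lambda\|_{\ell^p}\asymp\|\lambda\|_{T^p_p(Z)}\). Hence \(\|F_t\|_{A^p_\nu}\lesssim\|\lambda\|_{T^p_p(Z)}\), and boundedness gives
\[
\int_0^1\int_\T A_uF_t(\xi)^q\,\dxi\,dt\lesssim\|A_u\|^q\|\lambda\|_{T^p_p(Z)}^q .
\]

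Next, interchange the integrals by Tonelli and apply Khinchine's inequality twice. First, for fixed \(\xi\) and \(z\), Khinchine gives
\[
\int_0^1|F_t(z)|^2\,dt=\sum_k|\lambda_k|^2|f_k(z)|^2 .
\]
Second, since \(A_uF_t(\xi)\) is the \(L^2(\Gamma(\xi),|u|^2dA)\)-norm of a Rademacher sum with vector coefficients \(\lambda_kf_k\), Kahane--Khinchine gives
\[
\int_0^1A_uF_t(\xi)^q\,dt\asymp\left(\int_{\Gamma(\xi)}\sum_k|\lambda_k|^2|f_k(z)|^2|u(z)|^2\,\dA(z)\right)^{q/2}.
\]
I would cite this as \cite[Lemma~2.8]{DuanWangWang2021}, which is exactly the Kahane-type averaging used there for the unweighted Volterra case. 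Then discard all terms except the disc \(D(a_k,\rho_0)\). On that disc \(|f_k(z)|^2\asymp\widehat\nu(a_k)^{-2/p}\delta(a_k)^{-2/p}\). For the \(u\)-factor, pick \(w_k\in D(a_k,\rho_0)\) nearly maximizing \(|u|\). By (3.14) with exponent \(2\) on a small disc about \(w_k\) contained in \(D(a_k,\rho_1)\),
\[
\int_{D(a_k,\rho_1)}|u|^2\,\dA\gtrsim\delta(a_k)^2\sup_{D(a_k,\rho_0)}|u|^2 .
\]
Combining the two estimates gives
\[
\int_{D(a_k,\rho_1)}|f_k|^2|u|^2\,\dA\gtrsim|b_k|^2\,\delta(a_k)^{-2/p}.
\]

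This leaves the factor \(\delta(a_k)^{-2/p}\), and reconciling it with (3.16) is the main obstacle. The plan is to absorb it into the sequence by rescaling. Apply the above with \(\lambda_k\) replaced by \(\lambda_k\delta(a_k)^{1/p}\). This gives
\[
\|\lambda\delta(a_\cdot)^{1/p}\|_{\ell^p}\asymp\|\lambda\|_{T^p_p(Z)},
\]
and on the left it produces exactly \(|\lambda_k|^2|b_k|^2\). That is, the correct normalized atoms are \(\delta(a_k)^{1/p}f_k\), and with them the weights match. The remaining geometric point is that \(D(a_k,\rho_1)\subset\widetilde\Gamma(\xi)\) whenever \(a_k\in\Gamma(\xi)\), for a fixed wider aperture. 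Restrict the \(\xi\)-integral to those \(k\) with \(a_k\in\Gamma(\xi)\) and use the disc estimate. Then the aperture-invariance part of Lemma~\ref{lem:background-tools}\textup{(vi)} replaces \(\widetilde\Gamma\) by \(\Gamma\) in \(A_u\) at the cost of a constant. This yields
\[
\int_\T\Bigl(\sum_{a_k\in\Gamma(\xi)}|\lambda_kb_k|^2\Bigr)^{q/2}\dxi\lesssim\|A_u\|^q\|\lambda\|_{T^p_p(Z)}^q,
\]
which is (3.16).
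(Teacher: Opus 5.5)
Your proposal is correct and is essentially the paper's proof: the rescaled atoms $\delta(a_k)^{1/p}f_k$ you arrive at are exactly the functions $e_k$ in (3.17), and the remaining steps are the paper's. These are the Hilbert-valued Khinchine inequality in $L^2(\Gamma(\xi),|u|^2\,dA)$, keeping only the disc $D(a_k,\rho_1)$ for $a_k\in\Gamma(\xi)$ after a fixed aperture enlargement, and the sub-mean lower bound (3.21). The one slip is the first-paragraph claim $\|\lambda\|_{\ell^p}\asymp\|\lambda\|_{T^p_p(Z)}$, which is false; the correct identity is $\|\lambda\|_{T^p_p(Z)}^p\asymp\sum_k|\lambda_k|^p\delta(a_k)$, and your later rescaling relies on that. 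Also, the disc you retain should be $D(a_k,\rho_1)$ rather than $D(a_k,\rho_0)$; $|f_k|$ is equally comparable to a constant there.
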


\begin{proof}
We use the randomized atomic testing argument from
\cite[Proposition~2.18]{DuanWangWang2021}, including the details needed to fix
the normalization and derive the lower bound from a local integral rather than
a pointwise supremum.  Choose \(M\) large and set
\[
        e_k(z)=
        \frac{\delta(a_k)^M}
             {\widehat\nu(a_k)^{1/p}(1-\overline{a_k}z)^M}.
        \tag*{(3.17)}
\]
The \(\calD\)-weighted atomic decomposition of Pel\'aez--R\"atty\"a--Sierra
\cite[Theorem~1]{PelaezRattyaSierra2021}, in the formulation
\cite[Lemma~2.12]{DuanWangWang2021}, gives, for every finitely supported
\(\lambda\),
\[
        h_{\lambda,t}(z)=\sum_k r_k(t)\lambda_k e_k(z),
        \qquad 0<t<1,
\]
where \(\{r_k\}\) are the Rademacher functions, and
\[
        \|h_{\lambda,t}\|_{A^p_\nu}
        \lesssim
        \|\lambda\|_{T^p_p(Z)}
        \quad\hbox{uniformly in }t.
        \tag*{(3.18)}
\]
Indeed, \(\|\lambda\|_{T^p_p(Z)}^p\asymp\sum_k|\lambda_k|^p\delta(a_k)\), and
\(e_k\) is the normalized atom in that decomposition multiplied by
\(\delta(a_k)^{1/p}\).
On each lattice disc,
\[
        |e_k(z)|\asymp \widehat\nu(a_k)^{-1/p},
        \qquad z\in D(a_k,\rho_1).
        \tag*{(3.19)}
\]

Fix \(\xi\).  We apply the Hilbert-valued Khinchine inequality
\cite[Lemmas~2.8--2.9]{DuanWangWang2021}.  The Hilbert space is
\(L^2(\Gamma(\xi),|u|^2dA)\).  Tonelli's theorem, the boundedness of \(A_u\),
\((3.18)\), and the aperture invariance in
Lemma~\ref{lem:background-tools}\textup{(ii), (vi)} then give
\[
\begin{aligned}
        &\left\|
        \left(
        \int_{\Gamma(\xi)}
        \sum_k |\lambda_k|^2 |e_k(z)|^2 |u(z)|^2\dA(z)
        \right)^{1/2}
        \right\|_{L^q(\T)}                                      \\
        &\qquad\lesssim
        \left(\int_0^1\|A_u h_{\lambda,t}\|_{L^q(\T)}^qdt\right)^{1/q}
        \lesssim
        \|A_u\|_{A^p_\nu\to L^q}
        \|\lambda\|_{T^p_p(Z)} .
\end{aligned}
\tag*{(3.20)}
\]
Because the integrand in \((3.20)\) is a sum of nonnegative terms, for each
\(k\) with \(a_k\in\Gamma(\xi)\) we may retain only the contribution of
\(D(a_k,\rho_1)\) to the \(k\)-th summand, after a fixed aperture enlargement.
No disjointization is required.  Using \((3.19)\), and then \((3.14)\) with
exponent \(2\), gives
\[
\begin{aligned}
        \widehat\nu(a_k)^{-2/p}
        \int_{D(a_k,\rho_1)} |u(z)|^2\dA(z)
        &\gtrsim
        \widehat\nu(a_k)^{-2/p}
        \delta(a_k)^2
        \sup_{D(a_k,\rho_0)}|u|^2                         \\
        &= |b_k|^2 .
\end{aligned}
\tag*{(3.21)}
\]
Thus the left-hand side of (3.20) dominates
\(\|\{\lambda_k b_k\}\|_{T^q_2(Z)}\), with the fixed aperture change justified by
Lemma~\ref{lem:background-tools}\textup{(vi)}.  This proves
(3.16).
\end{proof}

\begin{lemma}\label{lem:caseC-multiplier}
Let \(0<q<\infty\), \(p>\max\{2,q\}\), and define \(s,r\) by
\[
        \frac1q=\frac1p+\frac1r,
        \qquad
        \frac12=\frac1p+\frac1s.
\]
For a sequence \(b=\{b_k\}\) on a pseudo-hyperbolic lattice \(Z\), multiplication by
\(b\) is bounded from \(T^p_p(Z)\) to \(T^q_2(Z)\) if and only if
\(b\in T^r_s(Z)\).  Moreover,
\[
        \|b\|_{T^r_s(Z)}
        \asymp
        \sup_{\lambda\ne0}
        \frac{\|\{\lambda_k b_k\}\|_{T^q_2(Z)}}
             {\|\lambda\|_{T^p_p(Z)}}.
        \tag*{(3.22)}
\]
\end{lemma}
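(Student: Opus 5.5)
The lemma is a discrete tent-space multiplier statement, so I would prove it in the two directions separately. The easy direction, sufficiency together with the bound of the supremum in (3.22) by \(\|b\|_{T^r_s(Z)}\), is a pointwise Hölder argument inside each cone followed by Hölder on \(\T\). The necessity direction I would get by duality and factorization of discrete tent spaces, as in \cite[Proposition~2.18 and the surrounding discussion]{DuanWangWang2021}, which is where this multiplier result originates for \(\calD\)-weights.

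\emph{Sufficiency.} Fix \(\xi\in\T\). Since \(1/2=1/p+1/s\), Lemma~\ref{lem:background-tools}\textup{(i)} on the counting measure of \(\{k:a_k\in\Gamma(\xi)\}\) gives
\[
\Bigl(\sum_{a_k\in\Gamma(\xi)}|\lambda_kb_k|^2\Bigr)^{1/2}
\le\Bigl(\sum_{a_k\in\Gamma(\xi)}|\lambda_k|^p\Bigr)^{1/p}
\Bigl(\sum_{a_k\in\Gamma(\xi)}|b_k|^s\Bigr)^{1/s}.
\]
Because \(1/q=1/p+1/r\), applying Hölder on \(\T\) then yields \(\|\{\lambda_kb_k\}\|_{T^q_2(Z)}\le\|\lambda\|_{T^p_p(Z)}\|b\|_{T^r_s(Z)}\).

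\emph{Necessity.} Let \(K\) be the operator quasi-norm, tested on finitely supported \(\lambda\). By monotone convergence (Lemma~\ref{lem:background-tools}\textup{(ii)}) it suffices to bound \(\|b\|_{T^r_s}\) for finitely supported truncations of \(b\) by a constant times \(K\). The plan is to exhibit a test sequence \(\lambda\) with \(\|\lambda\|_{T^p_p}\lesssim1\) and \(\|\{\lambda_kb_k\}\|_{T^q_2}\gtrsim\|b\|_{T^r_s}\) after normalization. I would first try the natural choice \(\lambda_k=|b_k|^{s/p}\,\Phi(a_k)\), where \(\Phi\) is built from the cone functional \(B(\xi)=(\sum_{a_k\in\Gamma(\xi)}|b_k|^s)^{1/s}\), say \(\Phi(a_k)\) is a suitable power of \(B\) averaged over the shadow \(I(a_k)\). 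The exponents are chosen so that \(|\lambda_kb_k|^2=|b_k|^s\Phi(a_k)^2\), using \(2+2s/p=s\).

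Two relations would drive the computation. The first is \(\|\lambda\|_{T^p_p}^p\asymp\sum_k|\lambda_k|^p\delta(a_k)\), because \(|I(a_k)|\asymp\delta(a_k)\) (Lemma~\ref{lem:background-tools}\textup{(vi)}). The second is the duality \((T^{q/2}_1)^*\) pairing, which reduces the needed lower bound to the estimate \(\int_\T\sum_{a_k\in\Gamma(\xi)}|b_k|^s\,\Psi\,|d\xi|\asymp\|b\|_{T^r_s}^r\) for an appropriate weight \(\Psi\).

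Since direct test sequences tend to fail when \(q<2\) or when \(B\) is irregular, I would instead route the argument through the stopping-time/factorization theorem for discrete tent spaces. This is the discrete Cohn--Verbitsky factorization \(T^{q}_2=T^{p}_p\cdot T^{r}_s\), used in \cite{DuanWangWang2021} and ultimately Pau's tent-space theory \cite{Pau2016}. The factorization (or equivalently the duality \((T^{q/2}_{1})\)-versus-\((T^{\,(q/2)'}_\infty)\)-type pairing when \(q<2\) and Hölder-type duality otherwise) gives
\[
\|b\|_{T^r_s}\asymp\sup\Bigl\{\|\{\lambda_kb_k\}\|_{T^q_2}:\|\lambda\|_{T^p_p}\le1\Bigr\}.
\]
I would justify this by writing \(|b_k|^2\) as a multiplier from \(T^{p/2}_{p/2}\) into \(T^{q/2}_1\). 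Applying the classical factorization \(T^{r/2}_{s/2}\cdot T^{p/2}_{p/2}=T^{q/2}_{1}\) to \(|b|^2\) and then dualizing the \(\ell^{p/2}\) part via \(\ell^{s/2}\), these squaring reductions put all exponents in a range where the known discrete tent-space multiplier theorem \cite[Proposition~2.18 / Lemma~2.16]{DuanWangWang2021} applies verbatim.

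\textbf{Main obstacle.} The main obstacle is this necessity step, specifically the quasi-Banach range \(q<1\) or \(q<2\), where \(T^q_2\) has no good dual. I would overcome it by the squaring trick just described. That trick converts the statement into a multiplier \(T^{p/2}_{p/2}\to T^{q/2}_1\) with \(p/2>1\), and then invokes the factorization theorem there, checking only that finitely supported truncation and aperture changes (Lemma~\ref{lem:background-tools}\textup{(vi)}) cost fixed constants.
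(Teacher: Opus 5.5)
Your sufficiency step is exactly the paper's, but the necessity direction has a genuine gap.

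First, a factorization of the \emph{target} space does not give the converse. This applies both to \(T^q_2=T^p_p\cdot T^r_s\) and to its squared form \(T^{q/2}_1=T^{r/2}_{s/2}\cdot T^{p/2}_{p/2}\). Such a factorization only re-encodes the H\"older direction: knowing that every element of \(T^q_2\) is such a product says nothing about whether a given multiplier \(b\) lies in \(T^r_s\). The converse needs duality for (a convexification of) \(T^r_s\) together with a factorization of its \emph{predual}.

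Second, the squaring trick moves the exponents the wrong way. It is true that \(|b|^2\) multiplies \(T^{p/2}_{p/2}\) into \(T^{q/2}_1\) with norm \(K^2\). The inner cone pairing is also harmless, because \(2/p+2/s=1\). But the outer exponent becomes \(q/2\), which is below \(1\) exactly when \(q<2\), the case you flag as the obstacle. For \(q/2<1\) the duality \((T^{q/2}_1)^*=T^{(q/2)'}_\infty\) you invoke is unavailable, since the conjugate exponent \((q/2)'\) does not exist. So the argument does not close. Appealing to \cite[Proposition~2.18]{DuanWangWang2021} at that point merely cites the result, not the mechanism you describe.

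The missing idea is to take a root instead of a square. Choose \(\theta>\max\{1/q,1/2\}\). Then
\(\|\{|\lambda_kb_k|^{1/\theta}\}\|_{T^{q\theta}_{2\theta}(Z)}=\|\{\lambda_kb_k\}\|_{T^q_2(Z)}^{1/\theta}\)
and
\(\|\{|\lambda_k|^{1/\theta}\}\|_{T^{p\theta}_{p\theta}(Z)}=\|\lambda\|_{T^p_p(Z)}^{1/\theta}\).
Hence \(|b|^{1/\theta}\) multiplies \(T^{p\theta}_{p\theta}\) into \(T^{q\theta}_{2\theta}\) with norm at most \(K^{1/\theta}\), and now every exponent exceeds one. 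Since \(1/(r\theta)'=1/(q\theta)'+1/(p\theta)\) and \(1/(s\theta)'=1/(2\theta)'+1/(p\theta)\), the discrete duality and factorization theorems \cite[Lemmas~2.14--2.15]{DuanWangWang2021} give
\((T^{r\theta}_{s\theta})^*=T^{(r\theta)'}_{(s\theta)'}=T^{(q\theta)'}_{(2\theta)'}\cdot T^{p\theta}_{p\theta}\).

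To finish:
\begin{enumerate}
\item Test a finite truncation \(|b^E|^{1/\theta}\) against a nonnegative finitely supported \(e=xv\), and put \(\eta=v^\theta\).
\item Use H\"older in the cone with \(((2\theta)',2\theta)\) and on \(\T\) with \(((q\theta)',q\theta)\). This yields \(\sum_ke_k|b^E_k|^{1/\theta}\delta(a_k)\lesssim K^{1/\theta}\|x\|_{T^{(q\theta)'}_{(2\theta)'}}\|v\|_{T^{p\theta}_{p\theta}}\).
\item By the duality, \(\|b^E\|_{T^r_s}\lesssim K\) uniformly in \(E\), and monotone convergence finishes.
\end{enumerate}
This is the paper's argument. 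Your squaring corresponds to \(\theta=1/2\), which lies outside the admissible range.
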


\begin{proof}
Denote the supremum in \textup{(3.22)} by \(\mathfrak M_b\).  If
\(b\in T^r_s(Z)\), applying generalized H\"older's inequality first on
\(\Gamma(\xi)\cap Z\), with \(1/2=1/p+1/s\), and then on \(\T\), with
\(1/q=1/p+1/r\), gives
\[
\begin{aligned}
        \|\{\lambda_kb_k\}\|_{T^q_2(Z)}
        &\le
        \left\|
        \left(\sum_{a_k\in\Gamma(\xi)}|\lambda_k|^p\right)^{1/p}
        \left(\sum_{a_k\in\Gamma(\xi)}|b_k|^s\right)^{1/s}
        \right\|_{L^q(\T)}                                      \\
        &\le
        \|\lambda\|_{T^p_p(Z)}\|b\|_{T^r_s(Z)} .
\end{aligned}
\]
Thus \(\mathfrak M_b\lesssim\|b\|_{T^r_s(Z)}\).

Conversely, suppose that \(\mathfrak M_b<\infty\).  We give the power
argument underlying \cite[Proposition~2.18]{DuanWangWang2021}, using the
duality and factorization theorems
\cite[Lemmas~2.14--2.15]{DuanWangWang2021}.  Choose
\[
        \theta>\max\left\{\frac1q,\frac12\right\}.
\]
Then all exponents below exceed one, and
\[
        \frac1{(r\theta)'}
        =
        \frac1{(q\theta)'}+\frac1{p\theta},
        \qquad
        \frac1{(s\theta)'}
        =
        \frac1{(2\theta)'}+\frac1{p\theta}.
\]
Since \(r>q\) and \(s>2\), factorization gives
\[
        T^{(r\theta)'}_{(s\theta)'}(Z)
        =
        T^{(q\theta)'}_{(2\theta)'}(Z)
        \cdot T^{p\theta}_{p\theta}(Z),
\]
with the norm in the space on the left equivalent to the infimum of the
products of the two factor norms.

Let \(E\subset\N\) be finite and put \(b_k^E=b_k\) for \(k\in E\), and
\(b_k^E=0\) otherwise.  Its multiplier norm is at most \(\mathfrak M_b\).
By solidity, it suffices in the duality pairing to take a nonnegative finitely
supported \(e\in T^{(r\theta)'}_{(s\theta)'}(Z)\).  Factor
\[
        e_k=x_kv_k,
        \qquad
        x\in T^{(q\theta)'}_{(2\theta)'}(Z),
        \quad
        v\in T^{p\theta}_{p\theta}(Z),
\]
with nonnegative factors, and set \(\eta_k=v_k^\theta\).  Then
\[
        \|\eta\|_{T^p_p(Z)}^{1/\theta}
        =
        \|v\|_{T^{p\theta}_{p\theta}(Z)}.
\]
By Lemma~\ref{lem:background-tools}\textup{(vi)}, the boundary shadow of
\(a_k\) has length comparable to \(\delta(a_k)\).  Tonelli's theorem and
H\"older's inequality from Lemma~\ref{lem:background-tools}\textup{(ii), (i)},
applied with the conjugate pairs \(((2\theta)',2\theta)\) and
\(((q\theta)',q\theta)\), yield
\[
\begin{aligned}
        \sum_k e_k|b_k^E|^{1/\theta}\delta(a_k)
        &\asymp
        \int_\T\sum_{a_k\in\Gamma(\xi)}
        x_k\bigl(\eta_k|b_k^E|\bigr)^{1/\theta}\dxi                 \\
        &\le
        \|x\|_{T^{(q\theta)'}_{(2\theta)'}(Z)}
        \|\{\eta_kb_k^E\}\|_{T^q_2(Z)}^{1/\theta}                  \\
        &\le
        \mathfrak M_b^{1/\theta}
        \|x\|_{T^{(q\theta)'}_{(2\theta)'}(Z)}
        \|v\|_{T^{p\theta}_{p\theta}(Z)} .
\end{aligned}
\]
Taking the infimum over all factorizations and using
\[
        \bigl(T^{r\theta}_{s\theta}(Z)\bigr)^*
        =
        T^{(r\theta)'}_{(s\theta)'}(Z)
\]
under the pairing
\(\sum_k c_k\overline{e_k}\delta(a_k)\), we obtain
\[
        \bigl\|\{|b_k^E|^{1/\theta}\}\bigr\|
        _{T^{r\theta}_{s\theta}(Z)}
        \lesssim \mathfrak M_b^{1/\theta}.
\]
Raising to the power \(\theta\) gives
\(\|b^E\|_{T^r_s(Z)}\lesssim\mathfrak M_b\), uniformly in \(E\).
Letting \(E\uparrow\N\) and applying monotone convergence from
Lemma~\ref{lem:background-tools}\textup{(ii)} proves the reverse
estimate in \textup{(3.22)}.
\end{proof}

\begin{lemma}\label{lem:caseC-area-criterion}
Let \(0<q<\infty\), \(p>\max\{2,q\}\), and \(\nu\in\calD\).  Put
\[
        s=\frac{2p}{p-2},
        \qquad
        r=\frac{pq}{p-q}.
\]
Let
\[
        u(z)=G(z)\delta(z)^\tau,
        \qquad G\in H(\D),\quad \tau\in\mathbb R,
\]
and define
\[
        A_u h(\xi)
        =
        \left(
        \int_{\Gamma(\xi)}|h(z)|^2|u(z)|^2\dA(z)
        \right)^{1/2}.
\]
Then \(A_u:A^p_\nu\to L^q(\T)\) is bounded if and only if
\[
        \Psi_{\nu,u}(\xi)
        =
        \left(
        \int_{\Gamma(\xi)}
        |u(z)|^s
        \left(\frac{\delta(z)^2}{\widehat\nu(z)}\right)^{\frac{2}{p-2}}
        \dA(z)
        \right)^{1/s}
        \in L^r(\T).
        \tag*{(3.24)}
\]
Moreover, every bounded \(A_u\) satisfies the sequential condition in
Proposition~\ref{prop:g-function-reduction}.
\end{lemma}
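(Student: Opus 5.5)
The plan is to go through the discrete sequence \(b=\{b_k\}\) of (3.12). Lemma~\ref{lem:caseC-continuous-discrete} gives \(\|\Psi_{\nu,u}\|_{L^r(\T)}\asymp\|b\|_{T^r_s(Z)}\), so boundedness of \(A_u\) has to be matched with \(b\in T^r_s(Z)\). \emph{Necessity} is then immediate. If \(A_u\) is bounded, Lemma~\ref{lem:caseC-random-testing} gives (3.16) for every finitely supported \(\lambda\). Lemma~\ref{lem:caseC-multiplier} converts this into \(\|b\|_{T^r_s(Z)}\lesssim\|A_u\|\), and hence \(\Psi_{\nu,u}\in L^r(\T)\).

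For \emph{sufficiency} I plan to discretize \(A_uh\) as well. Cover \(\Gamma(\xi)\) by the lattice discs \(D(a_k,\rho_0)\) with \(a_k\) in an enlarged cone \(\widetilde\Gamma(\xi)\). This gives
\[
 A_uh(\xi)^2\lesssim\sum_{a_k\in\widetilde\Gamma(\xi)}\delta(a_k)^2\sup_{D(a_k,\rho_0)}|u|^2\sup_{D(a_k,\rho_0)}|h|^2
 =\sum_{a_k\in\widetilde\Gamma(\xi)}|b_k|^2|\lambda_k|^2,
\]
with \(\lambda_k=\widehat\nu(a_k)^{1/p}\sup_{D(a_k,\rho_0)}|h|\). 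Next I apply the subharmonic sub-mean estimate of Lemma~\ref{lem:background-tools}(iii), together with the local comparability of \(\widehat\nu\) and \(\delta\) from (vi). This yields \(|\lambda_k|^p\lesssim\delta(a_k)^{-1}\int_{D(a_k,\rho_1)}|h|^p\kappa_\nu\,dA\). Since the shadows of the \(a_k\) have length \(\asymp\delta(a_k)\) and the enlarged discs have bounded overlap, summation gives \(\|\lambda\|_{T^p_p(Z)}^p\asymp\sum_k|\lambda_k|^p\delta(a_k)\lesssim\|h\|_{A^p_\nu}^p\). The H\"older half of Lemma~\ref{lem:caseC-multiplier}, used with aperture invariance, then gives \(\|A_uh\|_{L^q}\lesssim\|b\|_{T^r_s}\|\lambda\|_{T^p_p}\lesssim\|b\|_{T^r_s}\|h\|_{A^p_\nu}\).

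For the \emph{compactness} claim, take \(h_j\) bounded in \(A^p_\nu\) with \(h_j\to0\) locally uniformly, and split \(A_u\le A_u^{\mathrm{in},R}+A_{u,R}^{\mathrm{out}}\) as in the proof of Lemma~\ref{lem:caseB-area-criterion}. The inner part tends to zero, exactly as in Case~B. For the outer part I would run the sufficiency argument with \(b\) replaced by \(b^R_k=b_k\mathbf 1_{\{|a_k|>R'\}}\), where \(R'<R\) is chosen so that every lattice disc meeting \(\D\setminus R\D\) has its center outside \(R'\D\). This yields \(\|A^{\mathrm{out}}_{u,R}h_j\|_{L^q}\lesssim\|b^R\|_{T^r_s(Z)}\sup_j\|h_j\|_{A^p_\nu}\).

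The main point, and the reason boundedness already forces compactness, is that \(\|b^R\|_{T^r_s(Z)}\to0\) as \(R\to1\). Since \(b\in T^r_s(Z)\) and \(s,r<\infty\), the function \(\xi\mapsto\sum_{a_k\in\Gamma(\xi)}|b_k|^s\) is finite almost everywhere. Hence its tails over \(|a_k|>R'\) tend to zero almost everywhere and are dominated by the full sum, whose \(r/s\)-power is integrable. Dominated convergence from Lemma~\ref{lem:background-tools}(ii) then gives the claim. The only delicacy is that the finiteness of both \(r\) and \(s\) must be used, in contrast with Cases~B and~D.
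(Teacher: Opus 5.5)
Your proof is correct. The necessity half is exactly the paper's argument: random testing (Lemma~\ref{lem:caseC-random-testing}), then the multiplier characterization (Lemma~\ref{lem:caseC-multiplier}), then the continuous--discrete equivalence (Lemma~\ref{lem:caseC-continuous-discrete}).

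Your route differs from the paper's in the sufficiency step and in the tail argument.

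\emph{Sufficiency.} The paper never discretizes here. H\"older on \(\Gamma(\xi)\) with exponents \(p/2\) and \(p/(p-2)\) gives the pointwise bound \(A_uh(\xi)\le H_h(\xi)\Psi_{\nu,u}(\xi)\), where \(H_h(\xi)^p=\int_{\Gamma(\xi)}|h|^p\widehat\nu\,\delta^{-2}\dA\). H\"older on \(\T\) with \(1/q=1/p+1/r\) then reduces matters to \(\|H_h\|_{L^p}\). Tonelli and the shadow estimate give \(\|H_h\|_{L^p}^p\asymp\int_\D|h|^p\kappa_\nu\dA\asymp\|h\|_{A^p_\nu}^p\), which finishes the argument.

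\emph{Sequential condition.} The same pointwise bound on \(\Gamma(\xi)\setminus R\D\) controls the outer area map by the continuous tail \(\Psi_{\nu,u,R}\). This tail tends to zero in \(L^r\) by dominated convergence, which is the continuous analogue of your argument for \(b^R\).

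\emph{Trade-off.} The paper's route is shorter and uses no structure of \(u\) in the sufficiency direction. Any measurable \(u\) works there, and no sub-mean estimates for \(h\) or \(u\) are needed. Your route needs the lower bound \(\|b\|_{T^r_s(Z)}\lesssim\|\Psi_{\nu,u}\|_{L^r}\) from (3.13), which rests on the subharmonicity of \(|G|^s\). It also needs a sub-mean estimate for \(h\) on lattice discs to control \(\|\lambda\|_{T^p_p(Z)}\). In exchange, your version runs both directions through one statement: \(b\) is a \(T^p_p(Z)\to T^q_2(Z)\) multiplier. The H\"older half of Lemma~\ref{lem:caseC-multiplier} then serves sufficiency and its duality half serves necessity. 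The vanishing of tails, and hence the fact that boundedness forces compactness, is also visible directly at the level of sequences.
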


\begin{proof}
For sufficiency, apply generalized H\"older's inequality on \(\Gamma(\xi)\)
with conjugate exponents \(p/2\) and \(p/(p-2)\).  This gives
\[
        A_u h(\xi)
        \le
        H_h(\xi)\Psi_{\nu,u}(\xi),
\]
where
\[
        H_h(\xi)=
        \left(
        \int_{\Gamma(\xi)}
        |h(z)|^p\frac{\widehat\nu(z)}{\delta(z)^2}\dA(z)
        \right)^{1/p}.
\]
Since \(1/q=1/p+1/r\), a second application of
Lemma~\ref{lem:background-tools}\textup{(i)} on \(\T\) gives
\[
        \|A_u h\|_{L^q}
        \le
        \|H_h\|_{L^p}\|\Psi_{\nu,u}\|_{L^r}.
\]
By Tonelli's theorem and the boundary-shadow estimate in
Lemma~\ref{lem:background-tools}\textup{(ii), (vi)},
\[
        \|H_h\|_{L^p}^p
        \asymp
        \int_\D |h(z)|^p\frac{\widehat\nu(z)}{\delta(z)}\dA(z).
\]
By Lemma~\ref{lem:background-tools}\textup{(vi)}, the regularized weight
\(\widehat\nu/\delta\) induces an equivalent Bergman norm on analytic
functions.  Hence
\[
        \|A_u h\|_{L^q}
        \lesssim
        \|\Psi_{\nu,u}\|_{L^r}\|h\|_{A^p_\nu}.
\]

To verify the sequential condition, assume \(\Psi_{\nu,u}\in L^r\) and define
the outer tail
\[
        \Psi_{\nu,u,R}(\xi)=
        \left(
        \int_{\Gamma(\xi)\setminus R\D}
        |u(z)|^s
        \left(\frac{\delta(z)^2}{\widehat\nu(z)}\right)^{\frac{2}{p-2}}\dA(z)
        \right)^{1/s}.
\]
Since \(\Psi_{\nu,u,R}^r\to0\) pointwise and is dominated by
\(\Psi_{\nu,u}^r\in L^1(\T)\), dominated convergence from
Lemma~\ref{lem:background-tools}\textup{(ii)} gives
\(\|\Psi_{\nu,u,R}\|_{L^r}\to0\) as \(R\to1^-\).  For a bounded sequence
\(h_j\in A^p_\nu\) converging to zero locally uniformly, let
\(A_u^{\mathrm{in},R}\) denote the area map restricted to \(R\D\).  Then
\[
        \|A_u^{\mathrm{in},R}h_j\|_{L^q}
        \le
        \sup_{|z|\le R}|h_j(z)|
        \left\|
        \left(\int_{\Gamma(\xi)\cap R\D}|u(z)|^2\dA(z)\right)^{1/2}
        \right\|_{L^q(\T)}
        \longrightarrow0.
        \tag{3.24a}
\]
The second factor is finite because \(u\) is bounded on \(R\overline\D\).
The corresponding outer area map is controlled by
\(\|\Psi_{\nu,u,R}\|_{L^r}\).  Pointwise subadditivity therefore gives
\(A_u h_j\to0\) in \(L^q\).

We prove necessity.  Assume \(A_u:A^p_\nu\to L^q(\T)\) is bounded.  Let \(Z=\{a_k\}\)
be a lattice as in Lemma~\ref{lem:caseC-continuous-discrete}, and let \(b=\{b_k\}\) be
defined by (3.12).  Lemma~\ref{lem:caseC-random-testing} gives the multiplier
estimate for finitely supported sequences.  Since finitely supported
sequences are dense in \(T^p_p(Z)\cong\ell^p(\delta(a_k))\) and
\(T^q_2(Z)\) is complete, the estimate extends uniquely to all of
\(T^p_p(Z)\).  By
Lemma~\ref{lem:caseC-multiplier}, \(b\in T^r_s(Z)\), and
\[
        \|b\|_{T^r_s(Z)}
        \lesssim
        \|A_u\|_{A^p_\nu\to L^q}.
\]
Finally, Lemma~\ref{lem:caseC-continuous-discrete} gives
\[
        \|\Psi_{\nu,u}\|_{L^r(\T)}
        \asymp
        \|b\|_{T^r_s(Z)}<\infty.
\]
Thus (3.24) is necessary.  Together with the sufficiency and sequential
estimates above, this proves both assertions.
\end{proof}

\begin{theorem}\label{thm:caseC-bounded-compact}
Let \(0<q<\infty\), \(p>\max\{2,q\}\), \(\omega\in\calD\),
\(\varphi\in H(\D)\), \((\alpha,\beta)\in\calP\), and
\(\alpha,\beta>0\).  Assume \((H)\).  Then the following are
equivalent:
\begin{enumerate}[label=\textup{(\roman*)},leftmargin=2em]
\item \(V^\varphi_{\alpha,\beta}:A^p_\omega\to H^q\) is bounded;
\item \(V^\varphi_{\alpha,\beta}:A^p_\omega\to H^q\) is compact;
\item the tent condition in Theorem~\ref{thm:boundedness-main}\textup{(C)} holds.
\end{enumerate}
\end{theorem}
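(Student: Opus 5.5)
The proof assembles results already established; no new estimate is needed. The plan is to pass through the area map, apply the Case C area criterion, and translate back with the dictionary.

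First, by Proposition~\ref{prop:g-function-reduction}, which applies because \((H)\) is assumed, boundedness of \(V^\varphi_{\alpha,\beta}:A^p_\omega\to H^q\) is equivalent to boundedness of \(A_{\alpha,\varphi}:A^p_\nu\to L^q(\T)\) with \(\nu=\nu_{\alpha,\beta,p}\in\calD\). Compactness of \(V^\varphi_{\alpha,\beta}\) is equivalent to boundedness of \(A_{\alpha,\varphi}\) together with the sequential condition stated there. By \((2.11)\), \(A_{\alpha,\varphi}=A_u\) with \(u=u_\varphi=G_\varphi\delta^{\alpha-1}\). This \(u\) has the form \(G\delta^\tau\) with \(G=G_\varphi\in H(\D)\) and \(\tau=\alpha-1\), so Lemma~\ref{lem:caseC-area-criterion} applies to it directly.

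Second, Lemma~\ref{lem:caseC-area-criterion} shows that \(A_u\) is bounded if and only if \(\Psi_{\nu,u}\in L^r(\T)\), with \(s=2p/(p-2)\) and \(r=pq/(p-q)\). The same lemma shows that every bounded \(A_u\) automatically satisfies the sequential condition. Hence (i) implies the boundedness of \(A_u\), which in turn gives both the sequential condition and, through Proposition~\ref{prop:g-function-reduction} again, (ii). The implication (ii)\(\Rightarrow\)(i) is trivial, since compactness includes boundedness in this setting.

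Third, I would identify \(\Psi_{\nu,u_\varphi}\in L^r\) with condition (C). By the third line of Proposition~\ref{prop:dictionary}, using \(\widehat\nu\asymp\widehat\omega\delta^{p(\alpha-\beta)}\) from \((H)\), the integrand satisfies
\(|u_\varphi|^s(\delta^2/\widehat\nu)^{2/(p-2)}\asymp|G_\varphi|^{s}\delta^{(4+2p(\beta-1))/(p-2)}\widehat\omega^{-2/(p-2)}\)
pointwise, with constants independent of \(z\). Integrating over \(\Gamma(\xi)\), taking the \(1/s=(p-2)/(2p)\) power, and then the \(L^{pq/(p-q)}\) quasi-norm gives exactly the function in (C), up to constants. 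This yields (i)\(\Leftrightarrow\)(iii).

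The only point needing care is that Proposition~\ref{prop:g-function-reduction} transfers compactness exactly through the sequential condition, and not through relative compactness of the image of the unit ball. Since Lemma~\ref{lem:caseC-area-criterion} delivers precisely that sequential condition, no further argument is required, and the proof is a direct assembly of the earlier results.
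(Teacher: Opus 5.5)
Your proposal is correct and follows the paper's proof essentially verbatim. Both reduce to the area map via Proposition~\ref{prop:g-function-reduction}, apply Lemma~\ref{lem:caseC-area-criterion} to \(u_\varphi=G_\varphi\delta^{\alpha-1}\) and \(\nu=\omega\delta^{p(\alpha-\beta)}\) to get the criterion together with the automatic sequential condition, and then expand the tent density using \((H)\) to arrive at condition (C).
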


\begin{proof}
By Proposition~\ref{prop:g-function-reduction}, it remains to apply
Lemma~\ref{lem:caseC-area-criterion} to
\(u_\varphi=G_\varphi\delta^{\alpha-1}\) and \(\nu=\omega\delta^{p(\alpha-\beta)}\).  The
tent density expands as
\[
\begin{aligned}
        &|u_\varphi|^{\frac{2p}{p-2}}
        \left(\frac{\delta^2}{\widehat\nu}\right)^{\frac{2}{p-2}}  \\
        &\qquad\asymp
        |G_\varphi|^{\frac{2p}{p-2}}
        \frac{\delta^{\frac{2p(\alpha-1)+4-2p(\alpha-\beta)}{p-2}}}
             {\widehat\omega^{\frac{2}{p-2}}}  \\
        &\qquad=
        |G_\varphi|^{\frac{2p}{p-2}}
        \frac{\delta^{\frac{4+2p(\beta-1)}{p-2}}}
             {\widehat\omega^{\frac{2}{p-2}}}.
\end{aligned}
\]
This is condition \textup{(C)} in Theorem~\ref{thm:boundedness-main}.
\end{proof}

\subsection{Case D: the non-tangential maximal range}

Assume throughout this subsection that
\[
        0<q<p\le2,
        \qquad
        r=\frac{pq}{p-q}.
        \tag{3.27}
\]
For \(\nu\in\calD\) and a measurable function \(u\), put
\[
        A_u h(\xi)=
        \left(
        \int_{\Gamma(\xi)}|h(z)|^2|u(z)|^2\dA(z)
        \right)^{1/2}
\]
and
\[
        B_{\nu,u}(\xi)=
        \sup_{z\in\Gamma(\xi)}
        |u(z)|\frac{\delta(z)}{\widehat\nu(z)^{1/p}}.
        \tag{3.28}
\]
For \(0<R<1\), define the outer versions
\[
        A_{u,R}^{\rm out}h(\xi)=
        \left(
        \int_{\Gamma(\xi)\setminus R\D}|h(z)|^2|u(z)|^2\dA(z)
        \right)^{1/2}
\]
and
\[
        B_{\nu,u,R}(\xi)=
        \sup_{z\in\Gamma(\xi)\setminus R\D}
        |u(z)|\frac{\delta(z)}{\widehat\nu(z)^{1/p}}.
        \tag{3.29}
\]

\begin{lemma}\label{lem:caseD-basic-tent-estimate}
Let \(0<p\le2\), \(\nu\in\calD\), and
\[
        H_h(\xi)=
        \left(
        \int_{\Gamma(\xi)}|h(z)|^2
        \frac{\widehat\nu(z)^{2/p}}{\delta(z)^2}\dA(z)
        \right)^{1/2}.
\]
Then
\[
        \|H_h\|_{L^p(\T)}\lesssim \|h\|_{A^p_\nu},
        \qquad h\in A^p_\nu .
        \tag{3.30}
\]
The same estimate holds with \(\Gamma(\xi)\) replaced by
\(\Gamma(\xi)\setminus R\D\), uniformly in \(R\).
\end{lemma}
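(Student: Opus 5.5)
The plan is to discretize the area integral over a pseudo-hyperbolic lattice, bound $h$ on each lattice disc by its $L^p$-mean, and use the embedding $\ell^1\subset\ell^{2/p}$, which is available exactly because $p\le2$. This turns $H_h(\xi)^p$ into a sum of local $L^p$-integrals of $h$. Integrating over $\T$ then produces the Bergman quasi-norm with the regularized weight $\kappa_\nu=\widehat\nu/\delta$.

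\emph{Discretization.} Fix an $\eta$-lattice $Z=\{a_k\}$ and radii $0<\rho_0<\rho_1<1$ such that the discs $D_k=D(a_k,\rho_0)$ cover $\D$ and the discs $D_k'=D(a_k,\rho_1)$ have bounded overlap, as in Lemma~\ref{lem:background-tools}\textup{(vi)}. Let $\widetilde\Gamma(\xi)$ be a cone of fixed larger aperture that contains every $D_k$ meeting $\Gamma(\xi)$. By Lemma~\ref{lem:background-tools}\textup{(vi)}, $\delta$ and $\widehat\nu$ are comparable to $\delta(a_k)$ and $\widehat\nu(a_k)$ on $D_k'$, and $A(D_k)\asymp\delta(a_k)^2$. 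Hence
\[
 H_h(\xi)^2\lesssim\sum_{a_k\in\widetilde\Gamma(\xi)}\widehat\nu(a_k)^{2/p}\sup_{D_k}|h|^2
 =\sum_{a_k\in\widetilde\Gamma(\xi)}x_k^{2/p},
 \qquad x_k=\widehat\nu(a_k)\sup_{D_k}|h|^p .
\]

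\emph{Use of $p\le2$ and subharmonicity.} Since $2/p\ge1$, we have $(\sum_k x_k^{2/p})^{p/2}\le\sum_k x_k$, and therefore
\[
 H_h(\xi)^p\lesssim\sum_{a_k\in\widetilde\Gamma(\xi)}\widehat\nu(a_k)\sup_{D_k}|h|^p .
\]
Subharmonicity of $|h|^p$ (Lemma~\ref{lem:background-tools}\textup{(iii)}), applied on small discs centred at points of $D_k$ and contained in $D_k'$, gives
\[
 \sup_{D_k}|h|^p\lesssim\delta(a_k)^{-2}\int_{D_k'}|h|^p\dA .
\]

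\emph{Integration over $\T$.} Integrate over $\T$ and apply Tonelli's theorem. The set of $\xi$ with $a_k\in\widetilde\Gamma(\xi)$ has length $\asymp\delta(a_k)$ by the shadow estimate in Lemma~\ref{lem:background-tools}\textup{(vi)}. This gives
\[
 \|H_h\|_{L^p}^p\lesssim\sum_k\frac{\widehat\nu(a_k)}{\delta(a_k)}\int_{D_k'}|h|^p\dA
 \asymp\sum_k\int_{D_k'}|h|^p\kappa_\nu\dA
 \lesssim\int_\D|h|^p\kappa_\nu\dA\asymp\|h\|_{A^p_\nu}^p .
\]
The last two steps use bounded overlap of the discs $D_k'$ and the norm equivalence $(2.1d)$.

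\emph{Truncated version.} For $\Gamma(\xi)\setminus R\D$, run the same argument keeping only the lattice points whose discs $D_k$ meet $\D\setminus R\D$. Every inequality above holds termwise with constants independent of $R$, and the final sum is still dominated by $\int_\D|h|^p\kappa_\nu\dA$, so the bound is uniform in $R$.

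The only delicate point is bookkeeping the fixed aperture enlargement $\Gamma\to\widetilde\Gamma$. It is harmless here, because after Tonelli only the shadow length $\asymp\delta(a_k)$ enters, and no tent-space aperture equivalence is needed.
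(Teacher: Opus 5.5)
Your proposal is correct and follows essentially the same route as the paper: lattice discretization of the cone integral, the power inequality $\bigl(\sum_k c_k\bigr)^{p/2}\le\sum_k c_k^{p/2}$ (valid because $p\le2$), the sub-mean estimate on enlarged lattice discs, Tonelli with the shadow length $\asymp\delta(a_k)$, and bounded overlap together with the $\kappa_\nu$ norm equivalence. The differences are only cosmetic: you apply the power inequality to the local suprema before invoking subharmonicity, and you rerun the argument for the truncated cone, whereas the paper simply notes that the inner integral can only decrease.
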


\begin{proof}
Put
\[
        \kappa_\nu(z)=\frac{\widehat\nu(z)}{\delta(z)}.
\]
Lemma~\ref{lem:background-tools}\textup{(vi)} gives
\[
        \int_\D |h(z)|^p\kappa_\nu(z)\dA(z)
        \asymp
        \|h\|_{A^p_\nu}^p
        \tag{3.30a}
\]
for analytic \(h\).  Choose pseudo-hyperbolic radii
\(0<\rho_0<\rho_1<1\) and a lattice \(Z=\{a_k\}\) such that
\(\{D(a_k,\rho_0)\}\) covers \(\D\) and \(\{D(a_k,\rho_1)\}\) has bounded overlap,
as in Lemma~\ref{lem:background-tools}\textup{(vi)}.  We
write
\[
        \delta_k=\delta(a_k),\qquad
        \widehat\nu_k=\widehat\nu(a_k),\qquad
        D_k=D(a_k,\rho_0),\qquad
        D_k^*=D(a_k,\rho_1),
\]
and
\[
        J_k=\int_{D_k^*}|h(w)|^p\kappa_\nu(w)\dA(w).
\]
By Lemma~\ref{lem:background-tools}\textup{(vi)}, the quantities \(\delta\),
\(\widehat\nu\), and \(\kappa_\nu\) on \(D_k^*\) are comparable to their
values at \(a_k\).  By the sub-mean estimate in part \textup{(iii)}, for
\(z\in D_k\),
\[
        |h(z)|^p
        \lesssim
        \frac{1}{\delta_k^2}\int_{D_k^*}|h(w)|^p\dA(w)
        \lesssim
        \frac{J_k}{\delta_k\widehat\nu_k}.
        \tag{3.30b}
\]
Therefore, for
\[
        c_k=
        \int_{D_k}|h(z)|^2
        \frac{\widehat\nu(z)^{2/p}}{\delta(z)^2}\dA(z),
\]
we have, using local comparability and \(A(D_k)\asymp\delta_k^2\),
\[
\begin{aligned}
        c_k
        &\lesssim
        \left(\frac{J_k}{\delta_k\widehat\nu_k}\right)^{2/p}
        \frac{\widehat\nu_k^{2/p}}{\delta_k^2}A(D_k)       \\
        &\lesssim
        \delta_k^{-2/p}J_k^{2/p}.
\end{aligned}
        \tag{3.30c}
\]
After the fixed aperture enlargement in
Lemma~\ref{lem:background-tools}\textup{(vi)}, the covering by the discs \(D_k\)
gives
\[
        H_h(\xi)^2
        \lesssim
        \sum_{a_k\in\widetilde\Gamma(\xi)}c_k.
\]
Since \(0<p\le2\), the exponent \(p/2\le1\), and
Lemma~\ref{lem:background-tools}\textup{(i)} gives
\[
        H_h(\xi)^p
        \lesssim
        \sum_{a_k\in\widetilde\Gamma(\xi)}c_k^{p/2}.
\]
Integrating in \(\xi\) and using the boundary-shadow estimate in
Lemma~\ref{lem:background-tools}\textup{(vi)}, we obtain
\[
\begin{aligned}
        \|H_h\|_{L^p(\T)}^p
        &\lesssim
        \sum_k c_k^{p/2}\delta_k                                      \\
        &\lesssim
        \sum_k J_k.
\end{aligned}
\]
The bounded overlap in Lemma~\ref{lem:background-tools}\textup{(vi)}, together
with \((3.30a)\), gives
\[
        \sum_k J_k
        \lesssim
        \int_\D |h(w)|^p\kappa_\nu(w)\dA(w)
        \lesssim
        \|h\|_{A^p_\nu}^p.
\]
This proves \((3.30)\).  If \(\Gamma(\xi)\) is replaced by
\(\Gamma(\xi)\setminus R\D\), the inner integral only decreases, so the same estimate
holds uniformly in \(R\).
\end{proof}

\begin{lemma}\label{lem:caseD-continuous-discrete}
Let \(0<q<p\le2\), \(r=pq/(p-q)\), \(\nu\in\calD\), and let
\[
        u(z)=G(z)\delta(z)^\tau,
        \qquad G\in H(\D),\quad \tau\in\mathbb R.
\]
Choose fixed radii \(0<\rho_0<\rho_1<\rho_2<1\) and a pseudo-hyperbolic lattice
\(Z=\{a_k\}\) such that \(\{D(a_k,\rho_0)\}\) covers \(\D\), while
\(\{D(a_k,\rho_2)\}\) has bounded overlap, as guaranteed by
Lemma~\ref{lem:background-tools}\textup{(vi)}.  Set
\[
        b_k=
        \frac{\delta(a_k)}{\widehat\nu(a_k)^{1/p}}
        \sup_{z\in D(a_k,\rho_0)}|u(z)|.
        \tag{3.31}
\]
Then
\[
        \|B_{\nu,u}\|_{L^r(\T)}\asymp \|\{b_k\}\|_{T^r_\infty(Z)}.
        \tag{3.32}
\]
For \(0<R<1\), define the truncated discrete sequence
\[
        b_k^{[R]}=
        \frac{\delta(a_k)}{\widehat\nu(a_k)^{1/p}}
        \sup_{z\in D(a_k,\rho_0)\cap(\D\setminus R\D)}|u(z)|,
        \tag{3.33}
\]
where the supremum is understood to be zero if the set is empty.  Then
\[
        \|B_{\nu,u,R}\|_{L^r(\T)}\asymp \|\{b_k^{[R]}\}\|_{T^r_\infty(Z)},
        \qquad 0<R<1.
        \tag{3.34}
\]
The constants are independent of \(R\), by the aperture invariance in
Lemma~\ref{lem:background-tools}\textup{(vi)}.
\end{lemma}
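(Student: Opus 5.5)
The plan is to mirror the proof of Lemma~\ref{lem:caseC-continuous-discrete}, with the \(\ell^s\)-sum over a cone replaced by a supremum over the cone. Both directions reduce to pointwise comparisons between \(B_{\nu,u}(\xi)\) and the discrete maximal function \(\sup_{a_k\in\Gamma'(\xi)}b_k\) for cones \(\Gamma'\) of a different, but fixed, aperture. The aperture invariance in Lemma~\ref{lem:background-tools}\textup{(vi)}, applied with \(b=\infty\), then converts these into the \(L^r\)-equivalence (3.32). Throughout we use that \(\delta\) and \(\widehat\nu\) are comparable to \(\delta(a_k)\) and \(\widehat\nu(a_k)\) on \(D(a_k,\rho_2)\).

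For the upper bound, fix \(\xi\) and \(z\in\Gamma(\xi)\). Choose \(k\) with \(z\in D(a_k,\rho_0)\). Since \(\rho_0\) is fixed, \(a_k\) lies in a fixed enlarged cone \(\widetilde\Gamma(\xi)\). Local comparability then gives
\[
|u(z)|\,\delta(z)\widehat\nu(z)^{-1/p}\lesssim b_k\le\sup_{a_j\in\widetilde\Gamma(\xi)}b_j .
\]
Taking the supremum over \(z\) yields \(B_{\nu,u}(\xi)\lesssim\sup_{a_k\in\widetilde\Gamma(\xi)}b_k\).

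For the lower bound, fix \(a_k\in\Gamma(\xi)\) and pick \(w\in D(a_k,\rho_0)\) with \(|u(w)|\ge\frac12\sup_{D(a_k,\rho_0)}|u|\). Then \(w\) lies in a fixed enlargement \(\widetilde\Gamma(\xi)\), and
\[
b_k\lesssim |u(w)|\,\delta(w)\widehat\nu(w)^{-1/p}\le \sup_{z\in\widetilde\Gamma(\xi)}|u(z)|\,\delta(z)\widehat\nu(z)^{-1/p},
\]
which is the cone maximal function with enlarged aperture. Note that no sub-mean estimate (3.14) is needed here, because we compare suprema with suprema. Aperture invariance for the continuous nontangential maximal quantity, also recorded in Lemma~\ref{lem:background-tools}\textup{(vi)}, returns us to \(\Gamma(\xi)\) and completes (3.32).

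For the truncated statement (3.34), repeat the argument with \(u\) replaced by \(u\mathbf 1_{\D\setminus R\D}\). In the upper bound, a point \(z\in\Gamma(\xi)\setminus R\D\) lies in \(D(a_k,\rho_0)\cap(\D\setminus R\D)\), so its contribution is bounded by \(b_k^{[R]}\). In the lower bound, the near-maximizer \(w\) is chosen in \(D(a_k,\rho_0)\setminus R\D\), so it lies in \(\widetilde\Gamma(\xi)\setminus R\D\). All constants come only from local comparability and the fixed aperture changes, so they are independent of \(R\).

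The main obstacle I expect is the last step: aperture invariance must hold for \(\sup_{\Gamma_\gamma(\xi)\setminus R\D}\) uniformly in \(R\), and for this truncated continuous maximal function. I would handle it by noting that the standard proof (for \(\gamma'>\gamma\), the set where the wide-aperture maximal function exceeds \(\lambda\) is contained in a bounded dilate of the set where the narrow one does, via the Hardy--Littlewood maximal function) applies verbatim to any function supported off \(R\D\). Applying it to \(|u|\delta\widehat\nu^{-1/p}\mathbf 1_{\D\setminus R\D}\) and to the sequences \(b^{[R]}\) gives constants depending only on the apertures and on \(r\).
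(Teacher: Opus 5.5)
Your proposal is correct and matches the paper's proof step for step. The upper bound is a covering argument with local comparability. The lower bound uses a near-maximizer \(w_k\in D(a_k,\rho_0)\), with no sub-mean estimate. The truncated estimate (3.34) is the same argument on \(D(a_k,\rho_0)\cap(\D\setminus R\D)\). Aperture invariance from Lemma~\ref{lem:background-tools}\textup{(vi)} then returns to the original cones.

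Your one addition is to treat the truncated case as the untruncated case applied to \(u\mathbf 1_{\D\setminus R\D}\). Since the aperture-invariance constants do not depend on the function, this makes the \(R\)-uniformity explicit; the paper only asserts it by citing that lemma.
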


\begin{proof}
If \(z\in\Gamma(\xi)\), choose \(a_k\) with \(z\in D(a_k,\rho_0)\).  Then
\(a_k\in\widetilde\Gamma(\xi)\), \(\delta(z)\asymp\delta(a_k)\), and
\(\widehat\nu(z)\asymp\widehat\nu(a_k)\), by
Lemma~\ref{lem:background-tools}\textup{(vi)}.  Therefore
\[
        |u(z)|\frac{\delta(z)}{\widehat\nu(z)^{1/p}}
        \lesssim
        \sup_{a_k\in\widetilde\Gamma(\xi)} b_k.
\]
This proves
\(\|B_{\nu,u}\|_{L^r}\lesssim\|\{b_k\}\|_{T^r_\infty}\).  Conversely, if
\(a_k\in\Gamma(\xi)\) and
\(w_k\in D(a_k,\rho_0)\) is chosen so that
\(|u(w_k)|\ge \frac12\sup_{D(a_k,\rho_0)}|u|\), then
\(w_k\in\widetilde\Gamma(\xi)\) and local comparability in
Lemma~\ref{lem:background-tools}\textup{(vi)} gives
\[
        b_k
        \lesssim
        |u(w_k)|\frac{\delta(w_k)}{\widehat\nu(w_k)^{1/p}}
        \le B_{\nu,u}^{\widetilde\Gamma}(\xi).
\]
Taking the supremum over \(a_k\in\Gamma(\xi)\) and applying
Lemma~\ref{lem:background-tools}\textup{(vi)} proves the reverse inequality.

For \((3.34)\), repeat the two preceding inequalities with
\(D(a_k,\rho_0)\) replaced by
\(D(a_k,\rho_0)\cap(\D\setminus R\D)\).  If
\(z\in\Gamma(\xi)\setminus R\D\), choose \(a_k\) with
\(z\in D(a_k,\rho_0)\).  Then
\[
        |u(z)|\frac{\delta(z)}{\widehat\nu(z)^{1/p}}
        \lesssim
        \sup_{a_k\in\widetilde\Gamma(\xi)} b_k^{[R]}.
\]
Conversely, if \(a_k\in\Gamma(\xi)\) and the set
\(D(a_k,\rho_0)\cap(\D\setminus R\D)\) is non-empty, choose
\(w_k\) in this set with
\[
        |u(w_k)|\ge \frac12
        \sup_{D(a_k,\rho_0)\cap(\D\setminus R\D)}|u|.
\]
Then \(w_k\in\widetilde\Gamma(\xi)\setminus R\D\), and hence
\[
        b_k^{[R]}
        \lesssim
        |u(w_k)|\frac{\delta(w_k)}{\widehat\nu(w_k)^{1/p}}
        \le B_{\nu,u,R}^{\widetilde\Gamma}(\xi).
\]
Taking suprema and applying Lemma~\ref{lem:background-tools}\textup{(vi)} proves
\((3.34)\).
\end{proof}

\begin{lemma}\label{lem:caseD-random-testing}
Let the assumptions and notation of Lemma~\ref{lem:caseD-continuous-discrete} hold.
If \(A_u:A^p_\nu\to L^q(\T)\) is bounded, then for every finitely supported sequence
\(\lambda=\{\lambda_k\}\),
\[
        \|\{\lambda_k b_k\}\|_{T^q_2(Z)}
        \lesssim
        \|A_u\|_{A^p_\nu\to L^q(\T)}\,
        \|\lambda\|_{T^p_p(Z)}.
        \tag{3.35}
\]
For the outer operator, one has uniformly in \(R\)
\[
        \|\{\lambda_k b_k^{[R]}\}\|_{T^q_2(Z)}
        \lesssim
        \|A_{u,R_*}^{\rm out}\|_{A^p_\nu\to L^q(\T)}\,
        \|\lambda\|_{T^p_p(Z)},
        \tag{3.36}
\]
where \(R_*=R_*(R)<R\) and \(R_*\to1^-\) as \(R\to1^-\).
\end{lemma}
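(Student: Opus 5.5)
The estimate \((3.35)\) is the Case D analogue of Lemma~\ref{lem:caseC-random-testing}, and I would repeat that randomized atomic testing argument verbatim; the Case C proof used nothing about the exponents beyond \(0<p,q<\infty\). Take the atoms \(e_k\) of \((3.17)\) with \(M\) large and, for finitely supported \(\lambda\), the random functions \(h_{\lambda,t}=\sum_k r_k(t)\lambda_ke_k\). The atomic decomposition \cite[Lemma~2.12]{DuanWangWang2021} gives \(\|h_{\lambda,t}\|_{A^p_\nu}\lesssim\|\lambda\|_{T^p_p(Z)}\) uniformly in \(t\), as in \((3.18)\). Integrating \(\|A_uh_{\lambda,t}\|_{L^q}^q\) in \(t\), then applying Tonelli and the Hilbert-valued Khinchine inequality in \(L^2(\Gamma(\xi),|u|^2dA)\), gives the square-function bound \((3.20)\). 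Keeping only the piece \(D(a_k,\rho_1)\) of the \(k\)-th summand for \(a_k\in\Gamma(\xi)\) (after a fixed aperture enlargement), the comparison \((3.19)\) and the sub-mean estimate \((3.14)\) with exponent \(2\) produce \(|b_k|^2\) exactly as in \((3.21)\). Aperture invariance in Lemma~\ref{lem:background-tools}\textup{(vi)} then yields \((3.35)\).

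For the outer estimate \((3.36)\), I would run the same argument with \(A_u\) replaced by \(A^{\rm out}_{u,R_*}\). The point to arrange is that every lattice point with \(b_k^{[R]}\ne0\) still lies in the region where the outer operator sees it. If \(b_k^{[R]}\ne0\), then \(D(a_k,\rho_0)\) meets \(\D\setminus R\D\). Since \(D(a_k,\rho_1)\) has Euclidean size comparable to \(\delta(a_k)\), there is a constant \(C=C(\rho_0,\rho_1)\) with \(\delta(w)\le C\delta(z)\) for all \(z,w\in D(a_k,\rho_1)\). So I would choose \(R_*\) with \(1-R_*=C'(1-R)\), where \(C'\) is large; then \(D(a_k,\rho_1)\subset\D\setminus R_*\D\), and clearly \(R_*\to1^-\) as \(R\to1^-\).

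Next I need a lower bound for the outer square function over \(D(a_k,\rho_1)\) in terms of \(b_k^{[R]}\). Pick \(w_k\in D(a_k,\rho_0)\setminus R\D\) nearly attaining the truncated supremum, and apply \((3.14)\) on a small disc \(D(w_k,\rho')\subset D(a_k,\rho_1)\subset\D\setminus R_*\D\). This gives
\[
 \widehat\nu(a_k)^{-2/p}\int_{D(a_k,\rho_1)\setminus R_*\D}|u|^2\dA\gtrsim |b_k^{[R]}|^2 ,
\]
with constants depending only on \(\rho_0,\rho_1,\tau\), hence uniform in \(R\).

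The Khinchine step is unchanged, because \(A^{\rm out}_{u,R_*}\) is the area map for the restricted weight \(|u|^2\mathbf 1_{\D\setminus R_*\D}\). The only genuine care is therefore the choice of \(R_*\) that places the whole disc \(D(a_k,\rho_1)\) inside the outer annulus, and I expect that to be the main, though modest, obstacle. Summing over \(a_k\in\Gamma(\xi)\), restoring the aperture, and using \(\|\int_0^1\|A^{\rm out}_{u,R_*}h_{\lambda,t}\|_{L^q}^q\,dt\|^{1/q}\le\|A^{\rm out}_{u,R_*}\|\,\|\lambda\|_{T^p_p(Z)}\) gives \((3.36)\) with constants independent of \(R\).
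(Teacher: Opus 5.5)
Your proposal is correct and follows the paper's proof: the same Rademacher-randomized atoms, the Hilbert-valued Khinchine inequality in \(L^2(\Gamma(\xi),|u|^2dA)\), the local lower bound from the sub-mean estimate at a near-maximizer \(w_k\), and a fixed aperture enlargement. The only difference is the choice of \(R_*\), and it is cosmetic: you take \(1-R_*=C'(1-R)\) so that the whole disc \(D(a_k,\rho_1)\) lies in \(\D\setminus R_*\D\), whereas the paper takes \(R_*=\max\{0,(R-\sigma)/(1-\sigma R)\}\) so that only the small disc \(D(w_k,\sigma)\) does; both choices give constants uniform in \(k\) and \(R\).
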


\begin{proof}
Choose \(M\) large and put
\[
        e_k(z)=
        \frac{\delta(a_k)^M}
             {\widehat\nu(a_k)^{1/p}(1-\overline{a_k}z)^M}.
        \tag{3.37}
\]
The atomic decomposition for \(\calD\)-weighted Bergman spaces
\cite[Lemma~2.12]{DuanWangWang2021} implies that, for every finitely supported
\(\lambda\) and every choice of signs \(\varepsilon_k\),
\[
        h_\varepsilon(z)=\sum_k \varepsilon_k\lambda_k e_k(z)
\]
satisfies
\[
        \|h_\varepsilon\|_{A^p_\nu}\lesssim \|\lambda\|_{T^p_p(Z)},
        \tag{3.38}
\]
and on \(D(a_k,\rho_1)\)
\[
        |e_k(z)|\asymp \widehat\nu(a_k)^{-1/p}.
        \tag{3.39}
\]
Fix \(\xi\).  We apply the Hilbert-valued Khinchine inequality
\cite[Lemmas~2.8--2.9]{DuanWangWang2021}.  The Hilbert space is
\(L^2(\Gamma(\xi),|u|^2dA)\).  Aperture invariance is supplied by
Lemma~\ref{lem:background-tools}\textup{(ii), (vi)}.  Tonelli's theorem and
the boundedness of \(A_u\) then give
\[
\begin{aligned}
        &\left\|
        \left(
        \int_{\Gamma(\xi)}
        \sum_k |\lambda_k|^2|e_k(z)|^2|u(z)|^2\dA(z)
        \right)^{1/2}
        \right\|_{L^q(\T)}                                     \\
        &\qquad\lesssim
        \|A_u\|_{A^p_\nu\to L^q(\T)}\|\lambda\|_{T^p_p(Z)}.
\end{aligned}
        \tag{3.40}
\]
Because the integrand in \((3.40)\) is a sum of nonnegative terms, for each
\(k\) with \(a_k\in\Gamma(\xi)\) we may retain only the contribution of
\(D(a_k,\rho_1)\) to the \(k\)-th summand, after a fixed aperture enlargement.
Using \((3.39)\), the resulting contribution is bounded below by
\[
        |\lambda_k|^2\widehat\nu(a_k)^{-2/p}
        \int_{D(a_k,\rho_1)}|u(z)|^2\dA(z).
\]
To obtain the required lower bound from this local integral, choose
\(w_k\in D(a_k,\rho_0)\) so that
\[
        |u(w_k)|\ge\frac12\sup_{D(a_k,\rho_0)}|u|.
\]
Since \(u=G\delta^\tau\), Lemma~\ref{lem:background-tools}\textup{(iii), (vi)}
implies
\[
        \delta(a_k)^2\sup_{D(a_k,\rho_0)}|u|^2
        \lesssim
        \int_{D(a_k,\rho_1)}|u(z)|^2\dA(z).
        \tag{3.41}
\]
Thus the left-hand side of (3.40) dominates
\(\|\{\lambda_k b_k\}\|_{T^q_2(Z)}\), with the aperture enlargement justified
by Lemma~\ref{lem:background-tools}\textup{(vi)}.  This proves
(3.35).

For the outer estimate, repeat \((3.37)\)--\((3.41)\), restricting the
\(k\)-th summand to the corresponding truncated disc and the cone integral to
\(\Gamma(\xi)\setminus R_*\D\).  If
\(b_k^{[R]}\ne0\), choose
\(w_k\in D(a_k,\rho_0)\cap(\D\setminus R\D)\) so that
\[
        |u(w_k)|\ge\frac12
        \sup_{D(a_k,\rho_0)\cap(\D\setminus R\D)}|u|.
\]
Choose \(0<\sigma<1\) so that
\(D(w,\sigma)\subset D(a_k,\rho_1)\) whenever
\(w\in D(a_k,\rho_0)\), and set
\[
        R_*(R)=\max\left\{0,\frac{R-\sigma}{1-\sigma R}\right\}.
\]
If \(|w|\ge R\), then
\(D(w,\sigma)\subset\D\setminus R_*(R)\D\).
In particular, \(R_*(R)\to1^-\) as \(R\to1^-\), uniformly in \(k\).
Applying the local estimate inside
\(D(a_k,\rho_1)\cap(\D\setminus R_*\D)\) gives
\[
        \delta(a_k)^2
        \sup_{D(a_k,\rho_0)\cap(\D\setminus R\D)}|u|^2
        \lesssim
        \int_{D(a_k,\rho_1)\cap(\D\setminus R_*\D)}|u(z)|^2\dA(z).
        \tag{3.41a}
\]
The preceding randomized estimate therefore yields (3.36) with \(b_k^{[R]}\) in
place of \(b_k\).
\end{proof}

\begin{lemma}\label{lem:caseD-multiplier}
Let \(0<q<p\le2\) and \(r=pq/(p-q)\).  For a sequence
\(b=\{b_k\}\) on a pseudo-hyperbolic lattice \(Z\), multiplication by \(b\) maps
\(T^p_p(Z)\) boundedly into \(T^q_2(Z)\) if and only if
\(b\in T^r_\infty(Z)\).  Moreover,
\[
        \|b\|_{T^r_\infty(Z)}
        \asymp
        \sup_{\lambda\ne0}
        \frac{\|\{\lambda_kb_k\}\|_{T^q_2(Z)}}{\|\lambda\|_{T^p_p(Z)}}.
        \tag{3.42}
\]
\end{lemma}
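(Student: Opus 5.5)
The proof has two directions. Sufficiency is a two-step H\"older argument. Necessity goes through duality and factorization of discrete tent spaces, following the pattern of Lemma~\ref{lem:caseC-multiplier} with \(s\) replaced by \(\infty\). Write \(\mathfrak M_b\) for the supremum in \((3.42)\).

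\emph{Sufficiency.} Suppose \(b\in T^r_\infty(Z)\). For each \(\xi\), and since \(p\le2\), we have
\[
\Bigl(\sum_{a_k\in\Gamma(\xi)}|\lambda_kb_k|^2\Bigr)^{1/2}
\le\sup_{a_k\in\Gamma(\xi)}|b_k|\Bigl(\sum_{a_k\in\Gamma(\xi)}|\lambda_k|^2\Bigr)^{1/2}
\le\sup_{a_k\in\Gamma(\xi)}|b_k|\Bigl(\sum_{a_k\in\Gamma(\xi)}|\lambda_k|^p\Bigr)^{1/p}.
\]
The last step is the embedding \(\ell^p\subset\ell^2\), which is the power inequality of Lemma~\ref{lem:background-tools}\textup{(i)} with exponent \(p/2\le1\). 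Generalized H\"older on \(\T\) with \(1/q=1/r+1/p\) then gives \(\|\{\lambda_kb_k\}\|_{T^q_2}\le\|b\|_{T^r_\infty}\|\lambda\|_{T^p_p}\), so \(\mathfrak M_b\lesssim\|b\|_{T^r_\infty(Z)}\).

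\emph{Necessity.} Suppose \(\mathfrak M_b<\infty\). By monotone convergence it suffices to bound \(\|b^E\|_{T^r_\infty}\) uniformly over finite truncations \(b^E\).

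\begin{itemize}
\item Choose \(\theta>\max\{1/q,1/2\}\) so that every exponent below exceeds one.
\item Use the discrete duality
\[
\bigl(T^{r\theta}_{\infty}(Z)\bigr)^*\supseteq T^{(r\theta)'}_{1}(Z)
\]
under the pairing \(\sum_kc_ke_k\delta(a_k)\), in its norming form
\[
\|c\|_{T^{r\theta}_\infty}\asymp\sup\Bigl\{\sum_k|c_k|e_k\delta(a_k):\ e\ge0,\ \|e\|_{T^{(r\theta)'}_1}\le1\Bigr\}.
\]
This is the \(\ell^\infty\)/\(\ell^1\) endpoint of \cite[Lemmas~2.14--2.15]{DuanWangWang2021}.
\item Factor \(T^{(r\theta)'}_1=T^{(q\theta)'}_{(2\theta)'}\cdot T^{p\theta}_{(p\theta)'\!*}\), where the inner exponents are conjugate to \(\infty\). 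Concretely, write \(e_k=x_kv_k\) with \(x\in T^{(q\theta)'}_{(2\theta)'}\) and \(v\in T^{p\theta}_{m}\), where
\[
\frac1{(2\theta)'}+\frac1m=1 .
\]
Then \(m=2\theta\), and this requires \(p\theta\)-integrability with inner exponent \(2\theta\).
\item Set \(\eta_k=v_k^\theta\). Since \(p\le2\), we have \(\|v\|_{T^{p\theta}_{2\theta}}\le\|v\|_{T^{p\theta}_{p\theta}}\asymp\|\eta\|_{T^p_p}^{1/\theta}\), so one may equally factor through \(T^{p\theta}_{2\theta}\) and dominate by the \(T^p_p\) norm. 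The outer relation \(1/(r\theta)'=1/(q\theta)'+1/(p\theta)\) holds exactly as in Case C.
\item Run the chain of Lemma~\ref{lem:caseC-multiplier}: Tonelli with shadow length \(\asymp\delta(a_k)\), then H\"older on cones with the pair \(((2\theta)',2\theta)\), then H\"older on \(\T\) with \(((q\theta)',q\theta)\). This bounds \(\sum_ke_k|b_k^E|^{1/\theta}\delta(a_k)\) by
\[
\mathfrak M_b^{1/\theta}\,\|x\|\,\|\eta\|_{T^p_p}^{1/\theta}.
\]
\item Take the infimum over factorizations and then the supremum over \(e\). This gives \(\||b^E|^{1/\theta}\|_{T^{r\theta}_\infty}\lesssim\mathfrak M_b^{1/\theta}\), hence \(\|b^E\|_{T^r_\infty}\lesssim\mathfrak M_b\) after raising to the power \(\theta\).
\end{itemize}

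\emph{Main obstacle.} The hard step is the factorization with an \(\ell^\infty\)-type inner exponent on the dual side: the target space \(T^{(r\theta)'}_1\) must factor as a product whose second factor is controlled by \(T^{p\theta}_{p\theta}\). If the cited factorization theorem does not cover inner exponent \(1\), I would use a direct alternative instead.

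For that alternative, pick for each \(\xi\) a lattice point \(k(\xi)\in\Gamma(\xi)\) nearly attaining \(\sup_{a_k\in\Gamma(\xi)}|b_k|\). The standard stopping-time construction for \(T^r_\infty\) then selects a disjoint family of ``maximal'' points \(a_k\) whose shadows \(I_k\) carry the \(L^r\) mass of the maximal function. Test with \(\lambda_k=\mu_k\) supported on these points, where \(\mu\) is chosen so that
\[
\|\lambda\|_{T^p_p}^p\asymp\sum_k\mu_k^p|I_k|,
\]
and choose \(\mu_k\asymp|b_k|^{r/p}\), the extremal choice for the \(L^{r}\)–\(L^p\)–\(L^q\) H\"older identity. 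This recovers \(\sum_k|b_k|^r|I_k|\lesssim\mathfrak M_b^r\). The remaining check is that this sum dominates \(\|b\|^r_{T^r_\infty}\), which holds up to an aperture change by Lemma~\ref{lem:background-tools}\textup{(vi)}.
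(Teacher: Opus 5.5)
Your sufficiency half is exactly the paper's argument. The necessity half has a genuine gap at the factorization step.

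In the chain you borrow from Lemma~\ref{lem:caseC-multiplier}, the hypothesis $\mathfrak M_b<\infty$ is applied to the input $\eta=v^\theta$. The factorization must therefore control $\|\eta\|_{T^p_p}^{1/\theta}=\|v\|_{T^{p\theta}_{p\theta}}$. Factoring $e\in T^{(r\theta)'}_1$ through $T^{(q\theta)'}_{(2\theta)'}\cdot T^{p\theta}_{2\theta}$ controls only $\|v\|_{T^{p\theta}_{2\theta}}$. The inequality $\|v\|_{T^{p\theta}_{2\theta}}\le\|v\|_{T^{p\theta}_{p\theta}}$ you cite runs in the wrong direction. For $p<2$, $\eta$ may lie in $T^p_2$ but not in $T^p_p$, and there $\mathfrak M_b$ says nothing.

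This is not a question of whether some factorization theorem covers inner exponent $1$. By H\"older in each cone and then on $\T$, every product $xv$ with $x\in T^{(q\theta)'}_{(2\theta)'}$ and $v\in T^{p\theta}_{p\theta}$ lies in $T^{(r\theta)'}_\iota$, where $1/\iota=1/(2\theta)'+1/(p\theta)=1+(2-p)/(2p\theta)$. For $p<2$ this space is strictly smaller than $T^{(r\theta)'}_1$, so a general $e$ in your test class has no admissible factorization at all. Your argument closes only when $p=2$.

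The missing idea is to test against that smaller quasi-Banach space. The paper uses the duality $\bigl(T^{(r\theta)'}_\iota(Z)\bigr)^*=T^{r\theta}_\infty(Z)$ with $\iota=2p\theta/(2p\theta+2-p)\le1$. It combines this with the factorization $T^{(r\theta)'}_\iota=T^{(q\theta)'}_{(2\theta)'}\cdot T^{p\theta}_{p\theta}$ from \cite[Lemmas~2.14--2.15]{DuanWangWang2021}. Knowing that $T^{r\theta}_\infty$ is normed by the smaller $T_\iota$-ball is strictly stronger than the $T_1$ norming you invoke, so it cannot be bypassed.

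Your fallback also fails as written. A single family of lattice points with disjoint, or boundedly overlapping, shadows cannot carry $\|b\|^r_{T^r_\infty(Z)}$. Put $b_n=2^{n/r}$ at points $a_n$ on one radius with $\delta(a_n)\asymp2^{-n}$, $0\le n\le N$, and $b_k=0$ elsewhere. Then $\|b\|^r_{T^r_\infty(Z)}\asymp N$. But all the shadows contain a common boundary point, so every such subfamily has $\sum|b_k|^r|I_k|\lesssim1$.

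A stopping-time test has to use all levels at once. Let $B(\xi)=\sup_{a_k\in\Gamma(\xi)}|b_k|$ and $\Omega_j=\{B>2^j\}$.
\begin{enumerate}[label=\textup{(\arabic*)}]
\item By Vitali, choose a pairwise disjoint subfamily $S_j$ of the shadows of $\{a_k:|b_k|>2^j\}$ whose fivefold enlargements cover $\Omega_j$.
\item Set $\lambda=\sum_j2^{jr/p}\chi_{S_j}$.
\item A geometric sum gives $\|\lambda\|^p_{T^p_p}\lesssim\sum_j2^{jr}|\Omega_j|\asymp\int_\T B^r\dxi$.
\item On $S_j$ one has $|\lambda_kb_k|\ge2^{jr/q}$. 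Since $\ell^2$ dominates $\ell^\infty$, after an aperture change this gives $\|\{\lambda_kb_k\}\|^q_{T^q_2}\gtrsim\int_\T B^r\dxi$.
\item Hence $\int_\T B^r\dxi\lesssim\mathfrak M_b^q\bigl(\int_\T B^r\dxi\bigr)^{q/p}$. For the finite truncations $b^E$, using $1-q/p=q/r$, this yields $\|b^E\|_{T^r_\infty(Z)}\lesssim\mathfrak M_b$.
\end{enumerate}
Repaired this way, the route is an elementary substitute for the paper's duality--factorization argument. The version you wrote, with one disjoint family and $\mu_k\asymp|b_k|^{r/p}$, does not prove the lemma.
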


\begin{proof}
If \(b\in T^r_\infty(Z)\), then for every \(\xi\in\T\), since \(p\le2\),
\[
\begin{aligned}
        \left(\sum_{a_k\in\Gamma(\xi)}|\lambda_kb_k|^2\right)^{1/2}
        &\le
        \left(\sup_{a_k\in\Gamma(\xi)}|b_k|\right)
        \left(\sum_{a_k\in\Gamma(\xi)}|\lambda_k|^2\right)^{1/2}       \\
        &\le
        \left(\sup_{a_k\in\Gamma(\xi)}|b_k|\right)
        \left(\sum_{a_k\in\Gamma(\xi)}|\lambda_k|^p\right)^{1/p}.
\end{aligned}
\]
Lemma~\ref{lem:background-tools}\textup{(i)} on \(\T\), with
\(1/q=1/p+1/r\), gives
\[
        \|\{\lambda_kb_k\}\|_{T^q_2(Z)}
        \lesssim
        \|b\|_{T^r_\infty(Z)}\|\lambda\|_{T^p_p(Z)}.
\]

Conversely, denote the supremum in \textup{(3.42)} by \(\mathfrak M_b\), and
suppose that it is finite.  We give the endpoint power argument used in the
necessity proof of \cite[Proposition~2.19]{DuanWangWang2021}.  Choose
\[
        \theta>\max\left\{\frac1q,\frac12\right\},
        \qquad
        \iota=
        \left(\frac1{(2\theta)'}+\frac1{p\theta}\right)^{-1}
        =
        \frac{2p\theta}{2p\theta+2-p}.
\]
Then \(0<\iota\le1\), with equality precisely when \(p=2\), and
\[
        \frac1{(r\theta)'}
        =
        \frac1{(q\theta)'}+\frac1{p\theta},
        \qquad
        \frac1\iota
        =
        \frac1{(2\theta)'}+\frac1{p\theta}.
\]
The discrete tent-space duality and factorization theorems
\cite[Lemmas~2.14--2.15]{DuanWangWang2021} therefore give
\[
        \bigl(T^{(r\theta)'}_\iota(Z)\bigr)^*
        =
        T^{r\theta}_\infty(Z),
        \qquad
        T^{(r\theta)'}_\iota(Z)
        =
        T^{(q\theta)'}_{(2\theta)'}(Z)
        \cdot T^{p\theta}_{p\theta}(Z),
        \tag{3.43}
\]
under the pairing
\(\sum_k x_k\overline{y_k}\delta(a_k)\), with equivalence of the dual and
factorization norms.

Let \(E\subset\N\) be finite and let \(b^E\) be the corresponding coordinate
truncation.  Its multiplier norm is at most \(\mathfrak M_b\).  By solidity, it
suffices to test against a nonnegative finitely supported
\(e\in T^{(r\theta)'}_\iota(Z)\).  Factor
\[
        e_k=x_kv_k,
        \qquad
        x\in T^{(q\theta)'}_{(2\theta)'}(Z),
        \quad
        v\in T^{p\theta}_{p\theta}(Z).
\]
We may take the factors nonnegative.  Set \(\eta_k=v_k^\theta\), so that
\[
        \|\eta\|_{T^p_p(Z)}^{1/\theta}
        =
        \|v\|_{T^{p\theta}_{p\theta}(Z)}.
\]
Using the boundary-shadow estimate from
Lemma~\ref{lem:background-tools}\textup{(vi)} and H\"older's inequality from
part \textup{(i)}, first with \(((2\theta)',2\theta)\) and then with
\(((q\theta)',q\theta)\), we obtain
\[
\begin{aligned}
        \sum_k e_k|b_k^E|^{1/\theta}\delta(a_k)
        &\asymp
        \int_\T\sum_{a_k\in\Gamma(\xi)}
        x_k\bigl(\eta_k|b_k^E|\bigr)^{1/\theta}\dxi                 \\
        &\le
        \|x\|_{T^{(q\theta)'}_{(2\theta)'}(Z)}
        \|\{\eta_kb_k^E\}\|_{T^q_2(Z)}^{1/\theta}                  \\
        &\le
        \mathfrak M_b^{1/\theta}
        \|x\|_{T^{(q\theta)'}_{(2\theta)'}(Z)}
        \|v\|_{T^{p\theta}_{p\theta}(Z)} .
\end{aligned}
\]
Taking the infimum over all factorizations and using the first identity in
\textup{(3.43)} yields
\[
        \bigl\|\{|b_k^E|^{1/\theta}\}\bigr\|
        _{T^{r\theta}_\infty(Z)}
        \lesssim \mathfrak M_b^{1/\theta}.
\]
Hence \(\|b^E\|_{T^r_\infty(Z)}\lesssim\mathfrak M_b\), uniformly in
\(E\).  Monotone convergence from Lemma~\ref{lem:background-tools}\textup{(ii)}
as \(E\uparrow\N\) proves
\(b\in T^r_\infty(Z)\) and the reverse estimate in \textup{(3.42)}.
\end{proof}

\begin{lemma}\label{lem:caseD-outer-norm}
Let \(\nu\in\calD\), \(0<q<p\le2\), and let \(u\) be measurable on \(\D\).
For \(h\in A^p_\nu\), define
\[
        A_u h(\xi)=
        \left(\int_{\Gamma(\xi)}|h(z)|^2|u(z)|^2\dA(z)\right)^{1/2}
\]
and
\[
        A_{u,R}^{\rm out}h(\xi)=
        \left(\int_{\Gamma(\xi)\setminus R\D}
        |h(z)|^2|u(z)|^2\dA(z)\right)^{1/2}.
\]
Assume that \(A_u:A^p_\nu\to L^q(\T)\) is bounded and satisfies the following
sequential condition:
\(\|A_u h_j\|_{L^q}\to0\) whenever \(\{h_j\}\) is bounded in \(A^p_\nu\) and
\(h_j\to0\) uniformly on compact subsets.  Then
\[
        \lim_{R\to1^-}
        \|A_{u,R}^{\rm out}\|_{A^p_\nu\to L^q(\T)}=0.
        \tag{3.44}
\]
\end{lemma}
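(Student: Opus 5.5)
We will argue by contradiction, combining a gliding-hump (tail) estimate with the sequential condition. Note first that \(R\mapsto\|A_{u,R}^{\rm out}\|\) is nonincreasing, since \(A_{u,R}^{\rm out}h\) decreases pointwise as \(R\) grows. So if \((3.44)\) fails, there are \(\varepsilon>0\), radii \(R_j\uparrow1\), and functions \(g_j\) in the unit ball of \(A^p_\nu\) with
\[
\|A_{u,R_j}^{\rm out}g_j\|_{L^q(\T)}\ge\varepsilon .
\]
By Lemma~\ref{lem:background-tools}\textup{(iv)}, after passing to a subsequence we may assume \(g_j\to g\) locally uniformly, where \(g\in A^p_\nu\) by Fatou's lemma and \(\|g\|_{A^p_\nu}\lesssim1\). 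Then the differences \(h_j=g_j-g\) are bounded in \(A^p_\nu\) and tend to zero locally uniformly. The sequential hypothesis therefore gives \(\|A_u h_j\|_{L^q}\to0\), and since \(A_{u,R_j}^{\rm out}h_j\le A_uh_j\) pointwise, also \(\|A_{u,R_j}^{\rm out}h_j\|_{L^q}\to0\).

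Next we show that the fixed limit \(g\) has vanishing outer contribution. Boundedness of \(A_u\) gives \(A_ug\in L^q(\T)\), so \(A_ug(\xi)<\infty\) for almost every \(\xi\). For each such \(\xi\), monotone (or dominated) convergence applied to the inner integral shows
\[
A_{u,R}^{\rm out}g(\xi)^2=\int_{\Gamma(\xi)\setminus R\D}|g|^2|u|^2\dA\longrightarrow0
\qquad (R\to1^-).
\]
Moreover \(A_{u,R}^{\rm out}g\le A_ug\in L^q\). Dominated convergence on \(\T\), as in Lemma~\ref{lem:background-tools}\textup{(ii)}, then gives \(\|A_{u,R_j}^{\rm out}g\|_{L^q}\to0\).

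To conclude, pointwise subadditivity yields
\[
A_{u,R_j}^{\rm out}g_j\le A_{u,R_j}^{\rm out}h_j+A_{u,R_j}^{\rm out}g .
\]
Applying Minkowski's inequality when \(q\ge1\), or the \(q\)-power inequality when \(q<1\) (Lemma~\ref{lem:background-tools}\textup{(i)}), we get \(\|A_{u,R_j}^{\rm out}g_j\|_{L^q}\to0\). This contradicts the lower bound \(\varepsilon\) and proves \((3.44)\).

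The only delicate point is the reduction to a locally uniformly convergent subsequence together with membership \(g\in A^p_\nu\). This needs normality of bounded sets and Fatou's lemma, both of which are available from Lemma~\ref{lem:background-tools}\textup{(iv)} for general radial weights. No analyticity or structural assumption on \(u\) is used.
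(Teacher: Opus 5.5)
Your proof is correct and follows the paper's argument essentially verbatim: you argue by contradiction, extract a locally uniformly convergent subsequence whose limit lies in $A^p_\nu$ (normality and Fatou), apply the sequential condition to the differences, use dominated convergence for the fixed limit, and finish with pointwise subadditivity of the outer area map. Your remark that $R\mapsto\|A^{\rm out}_{u,R}\|$ is nonincreasing justifies choosing the sequence $R_j\uparrow1$, a step the paper leaves implicit.
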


\begin{proof}
Suppose not.  Then there are \(R_j\to1^-\), functions \(h_j\in A^p_\nu\) with
\(\|h_j\|_{A^p_\nu}\le1\), and \(\varepsilon_0>0\) such that
\[
        \|A_{u,R_j}^{\rm out}h_j\|_{L^q}\ge\varepsilon_0.
        \tag{3.45}
\]
By Lemma~\ref{lem:background-tools}\textup{(iii), (iv)}, a subsequence, still
called \(h_j\), converges locally uniformly to some analytic function \(h\).
Fatou's lemma from part \textup{(ii)} gives \(h\in A^p_\nu\), so \(h_j-h\) is
bounded in \(A^p_\nu\) and tends to zero locally uniformly; hence the
sequential condition gives
\[
        \|A_u(h_j-h)\|_{L^q}\to0.
\]
For the fixed function \(h\), \(A_{u,R_j}^{\rm out}h\to0\) pointwise and
\(A_{u,R_j}^{\rm out}h\le A_uh\in L^q(\T)\).  Dominated convergence from
Lemma~\ref{lem:background-tools}\textup{(ii)}, applied to the \(q\)-th powers,
therefore gives
\[
        \|A_{u,R_j}^{\rm out}h\|_{L^q}\to0.
\]
Since
\[
        A_{u,R_j}^{\rm out}h_j
        \le A_u(h_j-h)+A_{u,R_j}^{\rm out}h,
\]
the \(L^q\) quasi-triangle inequality from
Lemma~\ref{lem:background-tools}\textup{(i)} now contradicts \((3.45)\), proving
\((3.44)\).
\end{proof}

\begin{lemma}\label{lem:caseD-area-criterion}
Let \(\nu\in\calD\), \(0<q<p\le2\), put
\(r=pq/(p-q)\), and let
\[
        u(z)=G(z)\delta(z)^\tau,
        \qquad G\in H(\D),\quad \tau\in\mathbb R.
\]
Then \(A_u:A^p_\nu\to L^q(\T)\) is bounded if and only if
\(B_{\nu,u}\in L^r(\T)\).  Moreover, a bounded \(A_u\) satisfies the
sequential condition
\[
        \|A_uh_j\|_{L^q}\longrightarrow0
\]
for every bounded sequence \(\{h_j\}\subset A^p_\nu\) converging to zero
uniformly on compact subsets if and only if
\[
        \lim_{R\to1^-}\|B_{\nu,u,R}\|_{L^r(\T)}=0.
        \tag{3.46}
\]
\end{lemma}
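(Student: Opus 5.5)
Sufficiency uses a pointwise factorization, necessity uses the discrete machinery of Lemmas~\ref{lem:caseD-continuous-discrete}--\ref{lem:caseD-multiplier}, and the compactness statement runs both arguments on outer truncations.

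\emph{Sufficiency.} Assume \(B_{\nu,u}\in L^r(\T)\). For \(z\in\Gamma(\xi)\) we have \(|u(z)|\le B_{\nu,u}(\xi)\,\widehat\nu(z)^{1/p}\delta(z)^{-1}\). Hence
\[
A_uh(\xi)\le B_{\nu,u}(\xi)\,H_h(\xi),
\]
with \(H_h\) as in Lemma~\ref{lem:caseD-basic-tent-estimate}. Applying H\"older on \(\T\) with \(1/q=1/p+1/r\) and then (3.30) gives
\[
\|A_uh\|_{L^q}\lesssim\|B_{\nu,u}\|_{L^r}\|h\|_{A^p_\nu}.
\]

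\emph{Necessity.} Assume \(A_u\) is bounded and build \(b_k\) from (3.31). Lemma~\ref{lem:caseD-random-testing} gives (3.35) for finitely supported \(\lambda\). By density this extends to all of \(T^p_p(Z)\), so the multiplier norm of \(b\) is at most \(\lesssim\|A_u\|\). Lemma~\ref{lem:caseD-multiplier} then yields \(\|b\|_{T^r_\infty(Z)}\lesssim\|A_u\|\), and (3.32) converts this to \(\|B_{\nu,u}\|_{L^r}\lesssim\|A_u\|\).

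\emph{Sequential condition, sufficiency.} Assume (3.46) and let \(h_j\) be bounded in \(A^p_\nu\) with \(h_j\to0\) locally uniformly. Split \(A_uh_j\le A^{\mathrm{in},R}_uh_j+A^{\rm out}_{u,R}h_j\).
\begin{enumerate}[label=\textup{(\alph*)},leftmargin=2em]
\item The inner term tends to zero exactly as in (3.24a), since \(u\) is bounded on \(R\overline\D\).
\item For the outer term, repeat the factorization with the cone replaced by \(\Gamma(\xi)\setminus R\D\). This gives \(A^{\rm out}_{u,R}h\le B_{\nu,u,R}\cdot H_h^{(R)}\), where \(H_h^{(R)}\) is the truncated version of \(H_h\).
\item The uniform-in-\(R\) form of Lemma~\ref{lem:caseD-basic-tent-estimate} then yields \(\|A^{\rm out}_{u,R}h_j\|_{L^q}\lesssim\|B_{\nu,u,R}\|_{L^r}\). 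This is small for \(R\) near \(1\), uniformly in \(j\).
\end{enumerate}
The quasi-triangle inequality from Lemma~\ref{lem:background-tools}(i) completes this direction.

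\emph{Sequential condition, necessity.} This is the step I expect to be the main obstacle. Assume the sequential condition. Lemma~\ref{lem:caseD-outer-norm} gives \(\|A^{\rm out}_{u,R_*}\|\to0\) as \(R_*\to1^-\). Lemma~\ref{lem:caseD-random-testing}, in its truncated form (3.36), bounds the multiplier norm of \(b^{[R]}\) from \(T^p_p\) to \(T^q_2\) by \(\|A^{\rm out}_{u,R_*(R)}\|\), uniformly in \(R\), with \(R_*(R)\to1\).

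Lemma~\ref{lem:caseD-multiplier} is a statement about an arbitrary sequence with constants independent of the sequence, so it applies to \(b^{[R]}\). This gives \(\|b^{[R]}\|_{T^r_\infty}\lesssim\|A^{\rm out}_{u,R_*(R)}\|\). Finally (3.34), whose constants are independent of \(R\), gives \(\|B_{\nu,u,R}\|_{L^r}\to0\), which is (3.46).

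The care needed here is bookkeeping. The aperture enlargements and the shift from \(R\) to \(R_*\) must stay uniform. Finiteness of \(\|b^{[R]}\|\) is not an issue, because \(b^{[R]}\le b\) and \(b\) already lies in \(T^r_\infty(Z)\) by the boundedness part.
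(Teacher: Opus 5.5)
Your proposal is correct and follows the paper's proof essentially step for step. It uses the pointwise factorization $A_uh\le B_{\nu,u}H_h$ with Lemma~\ref{lem:caseD-basic-tent-estimate} for sufficiency, randomized testing plus Lemmas~\ref{lem:caseD-multiplier} and~\ref{lem:caseD-continuous-discrete} for necessity, and the inner/outer split together with Lemma~\ref{lem:caseD-outer-norm} and the truncated estimate (3.36) for the sequential condition; the only cosmetic difference is that the paper first checks that (3.46) alone forces $B_{\nu,u}\in L^r(\T)$, whereas you rely on the boundedness assumed in the statement, which is equally valid.
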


\begin{proof}
Assume first \(B_{\nu,u}\in L^r(\T)\).  For \(z\in\Gamma(\xi)\),
\[
        |u(z)|\le B_{\nu,u}(\xi)\frac{\widehat\nu(z)^{1/p}}{\delta(z)}.
\]
Therefore
\[
        A_u h(\xi)
        \le B_{\nu,u}(\xi)H_h(\xi),
\]
where \(H_h\) is the function in Lemma~\ref{lem:caseD-basic-tent-estimate}.
Using Lemma~\ref{lem:background-tools}\textup{(i)} on \(\T\) and (3.30),
\[
        \|A_u h\|_{L^q}
        \lesssim
        \|B_{\nu,u}\|_{L^r}\|h\|_{A^p_\nu}.
\]
Assume now that the tail condition \((3.46)\) holds.  We first show that it
implies \(B_{\nu,u}\in L^r(\T)\).  Fix \(R_0<1\) so close to one that
\(\|B_{\nu,u,R_0}\|_{L^r}<\infty\).  On the compact disc \(R_0\D\), the function
\[
        |u(z)|\frac{\delta(z)}{\widehat\nu(z)^{1/p}}
\]
is bounded; hence the corresponding inner maximal function belongs to
\(L^r(\T)\).  Since
\[
        B_{\nu,u}\le B^{\rm in}_{\nu,u,R_0}+B_{\nu,u,R_0},
\]
we get \(B_{\nu,u}\in L^r(\T)\), and the boundedness part just proved applies.
Applying the preceding pointwise estimate with
\(B_{\nu,u}\) and \(A_u\) replaced by \(B_{\nu,u,R}\) and
\(A_{u,R}^{\rm out}\) gives
\[
        \|A_{u,R}^{\rm out}h\|_{L^q}
        \lesssim
        \|B_{\nu,u,R}\|_{L^r}\|h\|_{A^p_\nu}.
        \tag{3.47}
\]
For a bounded sequence \(h_j\to0\) locally uniformly, the inner part obeys
\[
        \|A_u^{\mathrm{in},R}h_j\|_{L^q}
        \le
        \sup_{|z|\le R}|h_j(z)|
        \left\|
        \left(\int_{\Gamma(\xi)\cap R\D}|u(z)|^2\dA(z)\right)^{1/2}
        \right\|_{L^q(\T)}
        \longrightarrow0,
        \tag{3.47a}
\]
where \(A_u^{\mathrm{in},R}\) denotes the area map restricted to \(R\D\), and
the second factor is finite by local boundedness of \(u\).  Combining this
inner limit with \((3.47)\), then letting first \(j\to\infty\) and
subsequently \(R\to1^-\), proves the sequential condition.

Conversely, suppose \(A_u\) is bounded.
Lemma~\ref{lem:caseD-random-testing} gives the multiplier estimate for
finitely supported sequences.  Since finitely supported sequences are dense
in \(T^p_p(Z)\cong\ell^p(\delta(a_k))\) and \(T^q_2(Z)\) is complete, the
estimate extends uniquely to all of \(T^p_p(Z)\).
Lemma~\ref{lem:caseD-multiplier} then gives \(b\in T^r_\infty(Z)\), and
Lemma~\ref{lem:caseD-continuous-discrete} yields
\(B_{\nu,u}\in L^r(\T)\), proving necessity.

Finally, assume that \(A_u\) is bounded and satisfies the sequential
condition.  In the outer randomized testing lemma,
\(R_*=R_*(R)\to1^-\) as \(R\to1^-\).  Hence
Lemma~\ref{lem:caseD-outer-norm} gives \(\|A_{u,R_*}^{\rm out}\|\to0\).
For each \(R\), the same density--completeness argument extends \((3.36)\) to
all of \(T^p_p(Z)\), with the same bound uniformly in \(R\).  Hence
Lemmas~\ref{lem:caseD-multiplier} and
\ref{lem:caseD-continuous-discrete} give
\[
        \|B_{\nu,u,R}\|_{L^r}
        \lesssim
        \|A_{u,R_*}^{\rm out}\|_{A^p_\nu\to L^q}
        \longrightarrow0.
\]
This proves \((3.46)\).
\end{proof}

\begin{theorem}\label{thm:caseD-boundedness}
Let \(0<q<p\le2\), \(\omega\in\calD\), \(\varphi\in H(\D)\),
\((\alpha,\beta)\in\calP\), and \(\alpha,\beta>0\).  Assume \((H)\).  Then
\(V^\varphi_{\alpha,\beta}:A^p_\omega\to H^q\) is bounded if and only if
condition \textup{(D)} in Theorem~\ref{thm:boundedness-main} is satisfied.
\end{theorem}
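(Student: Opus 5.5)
The argument follows the same reduction scheme as Theorems~\ref{thm:caseB-boundedness} and~\ref{thm:caseC-bounded-compact}. First, Proposition~\ref{prop:g-function-reduction} (which uses \((H)\) and Proposition~\ref{prop:model-reduction}) shows that boundedness of \(V^\varphi_{\alpha,\beta}:A^p_\omega\to H^q\) is equivalent to boundedness of the area map \(A_{\alpha,\varphi}=A_{u_\varphi}:A^p_\nu\to L^q(\T)\). Here \(\nu=\nu_{\alpha,\beta,p}\in\calD\) by \((H)\), and \(u_\varphi=G_\varphi\delta^{\alpha-1}\) by \((2.11)\). This symbol has exactly the form \(u=G\delta^\tau\) with \(G=G_\varphi\in H(\D)\) and \(\tau=\alpha-1\in\mathbb R\), which is the form required in Lemma~\ref{lem:caseD-area-criterion}.

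In the range \(0<q<p\le2\), we then apply Lemma~\ref{lem:caseD-area-criterion} with \(r=pq/(p-q)\). It states that \(A_{u_\varphi}\) is bounded from \(A^p_\nu\) to \(L^q(\T)\) if and only if
\[
B_{\nu,u_\varphi}(\xi)=\sup_{z\in\Gamma(\xi)}|u_\varphi(z)|\frac{\delta(z)}{\widehat\nu(z)^{1/p}}\in L^{r}(\T).
\]

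The last step translates this maximal function into the original weight using the dictionary of Proposition~\ref{prop:dictionary}, specifically its fourth line. By \((2.10)\) we have \(\widehat\nu\asymp\widehat\omega\,\delta^{p(\alpha-\beta)}\), and therefore
\[
|u_\varphi|\frac{\delta}{\widehat\nu^{1/p}}\asymp |G_\varphi|\,\delta^{\alpha-1+1-(\alpha-\beta)}\widehat\omega^{-1/p}=|G_\varphi|\frac{\delta^{\beta}}{\widehat\omega^{1/p}}.
\]
The constants here are independent of \(z\), so the two suprema over \(\Gamma(\xi)\) are comparable pointwise in \(\xi\). Membership of \(B_{\nu,u_\varphi}\) in \(L^{r}(\T)\) is therefore exactly condition~(D) of Theorem~\ref{thm:boundedness-main}.

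I do not expect a genuine obstacle, since the difficult steps (randomized atomic testing and the \(T^p_p\to T^q_2\) multiplier characterization) are already contained in Lemma~\ref{lem:caseD-area-criterion}. The only point needing care is that the hypotheses of that lemma are met, namely \(\nu\in\calD\) and the tail comparison \((2.10)\). Both are supplied by \((H)\): when \(\alpha\ge\beta\) through Corollary~\ref{cor:automatic-shift}, and when \(\alpha<\beta\) through Proposition~\ref{prop:negative-obstruction}. Since \(\alpha-1\) may be negative, \(u_\varphi\) need not be analytic; this is already accommodated because the lemma allows any real \(\tau\).
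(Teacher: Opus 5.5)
Your proposal is correct and matches the paper's proof: reduce via Proposition~\ref{prop:g-function-reduction} to boundedness of $A_{u_\varphi}:A^p_\nu\to L^q(\T)$, then apply Lemma~\ref{lem:caseD-area-criterion}. The paper likewise converts $B_{\nu,u_\varphi}\in L^{pq/(p-q)}(\T)$ into condition (D) by substituting $u_\varphi=G_\varphi\delta^{\alpha-1}$ and $\widehat\nu^{1/p}\asymp\widehat\omega^{1/p}\delta^{\alpha-\beta}$, with exactly your exponent bookkeeping.
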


\begin{proof}
By Proposition~\ref{prop:g-function-reduction} and
Lemma~\ref{lem:caseD-area-criterion}, the corresponding area condition is
\[
        B_{\nu,u_\varphi}(\xi)=
        \sup_{z\in\Gamma(\xi)}
        |u_\varphi(z)|\frac{\delta(z)}{\widehat\nu(z)^{1/p}}
        \in L^{pq/(p-q)}(\T).
\]
Since \(u_\varphi=G_\varphi\delta^{\alpha-1}\) and
\(\widehat\nu^{1/p}\asymp\widehat\omega^{1/p}\delta^{\alpha-\beta}\), this is
condition \textup{(D)}:
\[
        \xi\mapsto
        \sup_{z\in\Gamma(\xi)}
        |G_\varphi(z)|\frac{\delta(z)^\beta}{\widehat\omega(z)^{1/p}}
        \in L^{pq/(p-q)}(\T).
\]
\end{proof}

\begin{proposition}\label{prop:caseD-compactness}
Under the assumptions of Theorem~\ref{thm:caseD-boundedness},
\(V^\varphi_{\alpha,\beta}:A^p_\omega\to H^q\) is compact if and only if
condition \((D_0)\) in Theorem~\ref{thm:compactness-main} holds.
\end{proposition}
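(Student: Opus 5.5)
The plan is to chain the reductions already in place, in the same way as the boundedness result, Theorem~\ref{thm:caseD-boundedness}. First, Proposition~\ref{prop:g-function-reduction} says that compactness of \(V^\varphi_{\alpha,\beta}:A^p_\omega\to H^q\) is equivalent to two things: boundedness of \(A_{u_\varphi}:A^p_\nu\to L^q(\T)\), and the sequential condition for \(A_{u_\varphi}\). Here \(u_\varphi=G_\varphi\delta^{\alpha-1}\) and \(\nu=\nu_{\alpha,\beta,p}\). Since \(u_\varphi\) has the form \(G\delta^\tau\) with \(G=G_\varphi\in H(\D)\) and \(\tau=\alpha-1\), Lemma~\ref{lem:caseD-area-criterion} applies. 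It identifies this pair of conditions with the tail condition \((3.46)\), namely \(\lim_{R\to1^-}\|B_{\nu,u_\varphi,R}\|_{L^r(\T)}=0\) with \(r=pq/(p-q)\). The lemma also shows that \((3.46)\) already forces \(B_{\nu,u_\varphi}\in L^r\), so boundedness need not be assumed separately.

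Next I would translate \((3.46)\) into \((D_0)\). By \((H)\), or by Corollary~\ref{cor:automatic-shift} in the present setting, \(\widehat\nu^{1/p}\asymp\widehat\omega^{1/p}\delta^{\alpha-\beta}\), with constants independent of \(z\). This gives the pointwise comparison
\[
|u_\varphi(z)|\frac{\delta(z)}{\widehat\nu(z)^{1/p}}\asymp |G_\varphi(z)|\frac{\delta(z)^\beta}{\widehat\omega(z)^{1/p}},
\]
as in the last line of Proposition~\ref{prop:dictionary}. Taking the supremum over \(\Gamma(\xi)\setminus R\D\) yields \(B_{\nu,u_\varphi,R}(\xi)\asymp\sup_{z\in\Gamma(\xi)\setminus R\D}|G_\varphi(z)|\delta(z)^\beta\widehat\omega(z)^{-1/p}\), uniformly in \(R\) and \(\xi\).

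Raising to the power \(r\) and integrating turns \(\|B_{\nu,u_\varphi,R}\|_{L^r}^r\) into exactly the integral in \((D_0)\). This uses the identity \(\bigl(\sup_E F\bigr)^r=\sup_E F^r\), and the exponents match: \(r\beta=pq\beta/(p-q)\) and \(r/p=q/(p-q)\). So \((3.46)\) is equivalent to \((D_0)\).

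The only delicate point I expect is the direction from compactness to \((D_0)\). It needs the outer randomized testing estimate \((3.36)\) with constants uniform in \(R\), and the fact that \(R_*(R)\to1^-\); both are already built into Lemmas~\ref{lem:caseD-random-testing} and \ref{lem:caseD-outer-norm} and are used inside Lemma~\ref{lem:caseD-area-criterion}. The present proof therefore reduces to bookkeeping, and I would just check that the comparison constants in the dictionary do not depend on \(R\).
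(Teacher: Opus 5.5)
Your proposal is correct and follows the paper's proof: Proposition~\ref{prop:g-function-reduction} and Lemma~\ref{lem:caseD-area-criterion} reduce compactness to the tail condition \((3.46)\) for \(u_\varphi=G_\varphi\delta^{\alpha-1}\), and the \(R\)-independent comparison \(\widehat\nu\asymp\widehat\omega\delta^{p(\alpha-\beta)}\) turns that condition into \((D_0)\). One small correction: take this comparison from \((H)\) itself, because Theorem~\ref{thm:caseD-boundedness} does not assume \(\alpha\ge\beta\), so Corollary~\ref{cor:automatic-shift} does not apply in general.
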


\begin{proof}
By Proposition~\ref{prop:g-function-reduction} and
Lemma~\ref{lem:caseD-area-criterion}, compactness is equivalent to
\[
        \lim_{R\to1^-}
        \int_{\T}
        \sup_{z\in\Gamma(\xi)\setminus R\D}
        |u_\varphi(z)|^{\frac{pq}{p-q}}
        \frac{\delta(z)^{\frac{pq}{p-q}}}{\widehat\nu(z)^{\frac{q}{p-q}}}
        \dxi=0.
\]
Substituting \(u_\varphi=G_\varphi\delta^{\alpha-1}\) and using
\(\widehat\nu\asymp\widehat\omega\delta^{p(\alpha-\beta)}\), this becomes
\[
        \lim_{R\to1^-}
        \int_{\T}
        \sup_{z\in\Gamma(\xi)\setminus R\D}
        |G_\varphi(z)|^{\frac{pq}{p-q}}
        \frac{\delta(z)^{\frac{pq\beta}{p-q}}}{\widehat\omega(z)^{\frac{q}{p-q}}}
        \dxi=0,
\]
which is precisely condition \((D_0)\) in
Theorem~\ref{thm:compactness-main}.
\end{proof}

\subsection{Proofs of the main theorems}

\begin{proof}[Proof of Theorem~\ref{thm:boundedness-main}]
In the range \(\alpha\ge\beta\), Corollary~\ref{cor:automatic-shift} gives \((H)\).
Theorem~\ref{thm:caseA-boundedness} proves Case A,
Theorem~\ref{thm:caseB-boundedness} proves Case B,
Theorem~\ref{thm:caseC-bounded-compact} proves Case C, and
Theorem~\ref{thm:caseD-boundedness} proves Case D.  These four alternatives cover
all \(0<p,q<\infty\).
\end{proof}

\begin{proof}[Proof of Theorem~\ref{thm:compactness-main}]
The hypothesis \((H)\) is automatic when \(\alpha\ge\beta\).  The four compactness cases follow
from Proposition~\ref{prop:caseA-compactness},
Proposition~\ref{prop:caseB-compactness},
Theorem~\ref{thm:caseC-bounded-compact}, and
Proposition~\ref{prop:caseD-compactness}.
\end{proof}

\begin{proof}[Proof of Theorem~\ref{thm:conditional-extension}]
By Proposition~\ref{prop:negative-obstruction}, the condition
\(p(\beta-\alpha)<d_-(\omega)\) is equivalent to \((H)\).
The case theorems in Section~\ref{sec:four-cases} use \((H)\), but not the
inequality \(\alpha\ge\beta\).  The two preceding assembly arguments therefore
apply unchanged.
\end{proof}

\section{Consistency with the power-weight and classical theories}

\begin{proposition}\label{prop:standard-weight-check}
Let \(0<p,q<\infty\), \(\gamma>-1\), \(\varphi\in H(\D)\),
\((\alpha,\beta)\in\calP\), and \(\alpha,\beta>0\).  Assume
\(\gamma+p(\alpha-\beta)>-1\), and put
\(\omega(z)=\delta(z)^\gamma\).  Then the four boundedness criteria for
\(V^\varphi_{\alpha,\beta}:A^p_\omega\to H^q\) coincide with the
standard-weight fractional Bergman-to-Hardy criteria
in~\cite[Theorem~3]{FangGuoHouZhuBergmanHardy}.  The compactness criteria
specialize, respectively, to the boundary limit in Case A, the vanishing
Carleson condition in Case B, the same tent condition as for boundedness in
Case C, and the outer maximal-tail condition in Case D.
\end{proposition}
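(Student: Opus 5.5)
The plan is to specialize the main theorems to \(\omega=\delta^\gamma\), compute \(\widehat\omega\), and check exponent by exponent against \cite[Theorem~3]{FangGuoHouZhuBergmanHardy}. First, \(\omega=\delta^\gamma\) with \(\gamma>-1\) belongs to \(\calD\), and \(\widehat\omega(z)\asymp\delta(z)^{\gamma+1}\) by direct integration. Next, the admissibility of the source weight must be matched with (H). If \(\alpha\ge\beta\), (H) holds by Corollary~\ref{cor:automatic-shift}. If \(\alpha<\beta\), the definition of \(d_-(\omega)\) gives \(d_-(\delta^\gamma)=\gamma+1\): the ratio \(\widehat\omega(s)/\widehat\omega(r)\asymp((1-s)/(1-r))^{\gamma+1}\) admits every \(b\le\gamma+1\) and no larger one. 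So \(p(\beta-\alpha)<d_-(\omega)\) is exactly \(\gamma+p(\alpha-\beta)>-1\). Hence, under the stated hypothesis, Theorems~\ref{thm:boundedness-main}, \ref{thm:compactness-main} and \ref{thm:conditional-extension} apply in all parameter ranges.

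Then I will substitute \(\widehat\omega\asymp\delta^{\gamma+1}\) into the four conditions.

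\textbf{Case A.} Condition (A) becomes \(\sup|G_\varphi|\delta^{\beta+1/q-(\gamma+2)/p}<\infty\).

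\textbf{Case B.} The density in (B) becomes \(|G_\varphi|^{2p/(p-2)}\delta^{(p+2+2p(\beta-1)-2(\gamma+1))/(p-2)}\,dA\).

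\textbf{Case C.} The density in (C) becomes \(|G_\varphi|^{2p/(p-2)}\delta^{(2+2p(\beta-1)-2\gamma)/(p-2)}\) on cones, with \(L^{pq/(p-q)}\) integrability.

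\textbf{Case D.} Condition (D) becomes \(\sup_{\Gamma(\xi)}|G_\varphi|\delta^{\beta-(\gamma+1)/p}\in L^{pq/(p-q)}\).

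Each of these is to be compared with the corresponding expression in \cite[Theorem~3]{FangGuoHouZhuBergmanHardy}, written there with the same symbol \(G_\varphi=\IRL_{-\beta}\varphi\). Since both use the same modified Riemann--Liouville model, any difference of realization (Hadamard or Flett form of the symbol) is absorbed by the bounded multipliers of Lemma~\ref{lem:taylor-multiplier-Hq} and Proposition~\ref{prop:model-reduction}.

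The compactness statements follow the same way from Theorem~\ref{thm:compactness-main}, because the pointwise comparisons \(\widehat\omega\asymp\delta^{\gamma+1}\) preserve boundary limits, vanishing Carleson properties, and truncated tails; this is the same remark as at the end of Proposition~\ref{prop:dictionary}. Case C inherits the equivalence of boundedness and compactness.

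The main obstacle is the bookkeeping against the cited theorem, which may normalize its conditions differently. It may write the Carleson density in terms of \(\varphi\)'s fractional derivative with a different power of \(\delta\), or phrase Case C with a different cone exponent. I would first reduce both sides to the form ``\(|G_\varphi|\) times a power of \(\delta\)'' and check that the powers agree. Where the cited result uses another fractional derivative of \(\varphi\), I would invoke a Littlewood--Paley-type equivalence: the pointwise and tent-type quantities are stable under coefficient multipliers of Taylor form. Lemma~\ref{lem:D-bergman-taylor-multipliers} is the model for this.
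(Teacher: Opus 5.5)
Your specialization step is correct and matches the paper. You get $\widehat\omega\asymp\delta^{\gamma+1}$ and $d_-(\delta^\gamma)=\gamma+1$, hence (H) $\Longleftrightarrow\gamma+p(\alpha-\beta)>-1$, and your four substituted conditions agree with the paper's, including the Case~B exponent $(2p\beta-2\gamma-p)/(p-2)$ and the Case~C exponent $(2-2\gamma+2p(\beta-1))/(p-2)$. The gap is in the comparison with \cite[Theorem~3]{FangGuoHouZhuBergmanHardy}. That theorem is not uniformly of the form ``$|G_\varphi|$ times a power of $\delta$''. It contains zero-operator alternatives, in which boundedness holds only for $V^\varphi_{\alpha,\beta}\equiv0$:
\begin{itemize}
\item[] Case~A: $\beta<(\gamma+2)/p-1/q$;
\item[] Cases~B and~C: $\beta\le(\gamma+1)/p$;
\item[] Case~D: $\beta<(\gamma+1)/p$.
\end{itemize}
In those ranges there is no weighted condition on the other side, so ``check that the powers agree'' cannot establish coincidence. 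You must show that the substituted $\calD$-condition itself forces $G_\varphi\equiv0$. Your fallback via Taylor-multiplier stability is also unsupported, since Lemma~\ref{lem:D-bergman-taylor-multipliers} concerns $A^p_\rho$-norms, not growth, Carleson, tent, or maximal conditions on a symbol. It is not needed anyway, because the paper compares directly with the same $G_\varphi$.

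The missing arguments are these.
\begin{itemize}
\item[] \textbf{Case A.} Condition (A) gives $|G_\varphi|\lesssim\delta^{\varepsilon}$ with $\varepsilon=(\gamma+2)/p-1/q-\beta>0$. The nondecreasing $M_\infty(r,G_\varphi)$ then tends to $0$, so $G_\varphi\equiv0$.
\item[] \textbf{Case B.} Here the exponent is $\le-1$. A Carleson measure has finite total mass (take $I=\T$). If $G_\varphi\not\equiv0$, the integral means of $|G_\varphi|^{2p/(p-2)}$ are nondecreasing and eventually bounded below, so $\mu_{\varphi,B}(\D)=\infty$. The logarithmic endpoint is included.
\item[] \textbf{Case C.} Here the exponent $\lambda_C$ is $\le-2$. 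Dominated convergence shows that the $L^{t}$-norms, $t=pq/(p-q)$, of the outer tails of the cone integral over a wider cone $\Gamma^*$ tend to $0$. The sub-mean-value inequality on $D(\rho\xi,c)\subset\Gamma^*(\xi)\setminus R(\rho)\D$, using $-2-\lambda_C\ge0$, then gives $M_t(\rho,G_\varphi)\to0$.
\item[] \textbf{Case D.} Since $\rho\xi\in\Gamma(\xi)$, you get $M_t(\rho,G_\varphi)\le(1-\rho^2)^{(\gamma+1)/p-\beta}\|B\|_{L^t}$. At the endpoint $\beta=(\gamma+1)/p$ there is no decay, which explains the non-strict inequality there.
\end{itemize}
Finally you need $G_\varphi\equiv0\iff V^\varphi_{\alpha,\beta}\equiv0$. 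One direction uses $\Ihat_{-\beta}\varphi=p_{-\beta}G_\varphi$ when $\beta\notin\N$, and the finite-kernel clause ($G_\varphi=z^\beta\,\Ihat_{-\beta}\varphi$) when $\beta\in\N$. The converse tests $f=z^k$ in the coefficient formulas (2.6b) or (2.6e). Choose $k$ large so that $N=k+j_0$, with $j_0$ the first nonzero Taylor index of $G_\varphi$, lies beyond all exceptional indices and all gamma factors are nonzero. Compactness in the degenerate ranges is then automatic.
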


\begin{proof}
Here
\[
\widehat\omega(z)\asymp\delta(z)^{\gamma+1},\qquad
d_-(\omega)=\gamma+1,\qquad
\nu_{\alpha,\beta,p}=\delta^{\gamma+p(\alpha-\beta)}.
\]
Thus Corollary~\ref{cor:automatic-shift} and
Proposition~\ref{prop:negative-obstruction} give
\[
(H)\Longleftrightarrow p(\beta-\alpha)<\gamma+1
\Longleftrightarrow\gamma+p(\alpha-\beta)>-1.
\]
The conditions in Cases A and D become, respectively,
\[
\sup_{z\in\D}|G_\varphi(z)|
\delta(z)^{\beta+1/q-(\gamma+2)/p}<\infty,\qquad
\sup_{z\in\Gamma(\xi)}|G_\varphi(z)|
\delta(z)^{\beta-(\gamma+1)/p}\in L^{pq/(p-q)}(\T).
\]
The powers of \(\delta\) in Cases B and C are, respectively,
\[
\frac{p+2+2p(\beta-1)-2(\gamma+1)}{p-2}
=\frac{2p\beta-2\gamma-p}{p-2},\qquad
\frac{4+2p(\beta-1)-2(\gamma+1)}{p-2}
=\frac{2-2\gamma+2p(\beta-1)}{p-2};
\]
the latter is the standard-weight tent exponent.
To recover the zero-operator alternatives in
\cite[Theorem~3]{FangGuoHouZhuBergmanHardy}, we verify the four thresholds
directly.

In Case A, suppose that
\(\beta<(\gamma+2)/p-1/q\), and put
\(\varepsilon=(\gamma+2)/p-1/q-\beta>0\).
Condition \textup{(A)} gives
\[
|G_\varphi(z)|\lesssim\delta(z)^\varepsilon,\qquad
M_\infty(r,G_\varphi)
=\max_{|z|=r}|G_\varphi(z)|
\lesssim(1-r^2)^\varepsilon\longrightarrow0.
\]
Since \(M_\infty(r,G_\varphi)\) is nondecreasing by the maximum-modulus
principle, \(G_\varphi\equiv0\).  Thus Case A admits
\(G_\varphi\not\equiv0\) exactly when
\(\beta\ge(\gamma+2)/p-1/q\).

In Case B, put \(s=2p/(p-2)\) and
\(\lambda_B=(2p\beta-2\gamma-p)/(p-2)\).
If \(\beta\le(\gamma+1)/p\), then \(\lambda_B\le-1\).  A Carleson
measure has finite total mass, since one may take \(I=\T\) in its
definition.  If \(G_\varphi\not\equiv0\), then the integral means
\[
        M_s(r,G_\varphi)^s
        =\int_\T|G_\varphi(r\xi)|^s\dxi
\]
are nondecreasing and, for some \(r_0<1\), are bounded below by a positive
constant on \([r_0,1)\).  Hence
\[
\begin{aligned}
        \mu_{\varphi,B}(\D)
        &\asymp
        \int_0^1 M_s(r,G_\varphi)^s
        (1-r^2)^{\lambda_B}r\,dr  \\
        &\ge
        c\int_{r_0}^1(1-r^2)^{\lambda_B}r\,dr
        =\infty,
\end{aligned}
\]
a contradiction.  Therefore \(G_\varphi\equiv0\) whenever
\(\beta\le(\gamma+1)/p\), including the logarithmically divergent
endpoint \(\lambda_B=-1\).

For Case C, set \(s=2p/(p-2)\), \(t=pq/(p-q)\), and
\(\lambda_C=(2-2\gamma+2p(\beta-1))/(p-2)\).
If \(\beta\le(\gamma+1)/p\), then \(\lambda_C\le-2\).  Let
\(\Gamma^*(\xi)\) denote a cone with a fixed larger aperture than
\(\Gamma(\xi)\), and define
\[
        \Psi_R^*(\xi)=
        \left(
        \int_{\Gamma^*(\xi)\setminus R\D}
        |G_\varphi(z)|^s\delta(z)^{\lambda_C}\dA(z)
        \right)^{1/s}.
\]
The Case C condition and the aperture equivalence in
Lemma~\ref{lem:background-tools}\textup{(vi)} give
\(\Psi_0^*\in L^t(\T)\).  Moreover,
\(\Psi_R^*(\xi)\to0\) for almost every \(\xi\) as \(R\to1^-\), and
\(0\le\Psi_R^*\le\Psi_0^*\).  Dominated convergence applied to
\((\Psi_R^*)^t\), which is valid also for \(0<t<1\), yields
\[
        \|\Psi_R^*\|_{L^t(\T)}\longrightarrow0.
        \tag{4.1}
\]
Fix a sufficiently small pseudo-hyperbolic radius \(c>0\).  If
\(z=\rho\xi\) and \(\rho>c\), then \(D(z,c)\) is contained in
\(\Gamma^*(\xi)\setminus R(\rho)\D\), where
\(R(\rho)=(\rho-c)/(1-c\rho)\to1^-\).  Throughout this disc,
\[
        \delta(w)\asymp\delta(z),
        \qquad
        \int_{D(z,c)}\dA(w)\asymp\delta(z)^2.
\]
The sub-mean inequality for \(|G_\varphi|^s\) therefore yields
\[
\begin{aligned}
        |G_\varphi(\rho\xi)|^s
        &\lesssim
        \delta(\rho\xi)^{-2}
        \int_{D(\rho\xi,c)}|G_\varphi(w)|^s\dA(w) \\
        &\lesssim
        \delta(\rho\xi)^{-2-\lambda_C}
        \int_{D(\rho\xi,c)}
        |G_\varphi(w)|^s\delta(w)^{\lambda_C}\dA(w) \\
        &\le C\Psi_{R(\rho)}^*(\xi)^s,
\end{aligned}
\]
because \(-2-\lambda_C\ge0\) and \(0<\delta(\rho\xi)\le1\).
Together with \((4.1)\), this gives
\[
        M_t(\rho,G_\varphi)
        \lesssim
        \|\Psi_{R(\rho)}^*\|_{L^t(\T)}
        \longrightarrow0.
\]
Since \(|G_\varphi|^t\) is subharmonic, its integral means are
nondecreasing.  Hence \(G_\varphi\equiv0\).  Thus Case C admits
\(G_\varphi\not\equiv0\) exactly when \(\beta>(\gamma+1)/p\).

Finally, in Case D let \(t=pq/(p-q)\) and
\[
B(\xi)=\sup_{z\in\Gamma(\xi)}
|G_\varphi(z)|\delta(z)^{\beta-(\gamma+1)/p}.
\]
If \(\beta<(\gamma+1)/p\), put
\(\varepsilon=(\gamma+1)/p-\beta>0\).  Since
\(\rho\xi\in\Gamma(\xi)\) and \(B\in L^t(\T)\) by \textup{(D)},
\[
|G_\varphi(\rho\xi)|\le B(\xi)(1-\rho^2)^\varepsilon,\qquad
M_t(\rho,G_\varphi)
\le(1-\rho^2)^\varepsilon\|B\|_{L^t(\T)}\longrightarrow0.
\]
Monotonicity of the integral means again implies
\(G_\varphi\equiv0\).  At the endpoint
\(\beta=(\gamma+1)/p\) no decay factor occurs, which accounts for the
non-strict inequality in the standard theorem.

It remains to prove that \(G_\varphi\equiv0\) if and only if
\(V^\varphi_{\alpha,\beta}\equiv0\).  Write
\[
\varphi(z)=\sum_{j\ge0}b_jz^j,\qquad
G_\varphi(z)=\sum_{j\ge0}g_jz^j,\qquad
g_j=\frac{\Gamma(j+1)}{\Gamma(j+1-\beta)}b_j.
\]
If \(\beta\notin\N\), then on \(\D\setminus\{0\}\)
\(\Ihat_{-\beta}\varphi=p_{-\beta}G_\varphi\).
If \(\beta=m\in\N\), the finite-kernel clause in the definition of
\(\Ihat_{-\beta}\) gives
\(G_\varphi(z)=z^m\Ihat_{-m}\varphi(z)=z^m\varphi^{(m)}(z)\).
Thus \(G_\varphi\equiv0\) implies
\(\Ihat_{-\beta}\varphi\equiv0\), and the defining formula for
\(V^\varphi_{\alpha,\beta}\) gives
\(V^\varphi_{\alpha,\beta}\equiv0\).

Conversely, suppose that \(G_\varphi\not\equiv0\), and let \(j_0\) be the
smallest index such that \(g_{j_0}\ne0\).  Choose \(k\) sufficiently
large that
\(d_k=\Gamma(k+1)/\Gamma(k+1+\beta-\alpha)\ne0\) and \(N=k+j_0\)
lies beyond all exceptional indices in the coefficient comparison
\((2.6b)\)--\((2.6f)\).  For \(f(z)=z^k\), its coefficient there is
\(c_N(f)=d_kg_{j_0}\ne0\).
If \(\alpha\notin\N\), formula \((2.6b)\) shows that the \(N\)-th
Taylor coefficient of \(V^\varphi_{\alpha,\beta}f\) is
\(\Gamma(N+1-\alpha)c_N(f)/\Gamma(N+1)\ne0\),
after increasing \(k\) if necessary.  If \(\alpha=m\in\N\), admissibility
forces \(\beta\in\N\), and formula \((2.6e)\), with \(N\ge m\), gives
the same conclusion.  Hence
\(G_\varphi\equiv0\Longleftrightarrow
V^\varphi_{\alpha,\beta}\equiv0\).
This proves the zero-operator alternatives.  The compactness criteria follow
from the same substitutions and are automatic in the zero-operator regimes.
\end{proof}

\begin{proposition}\label{prop:classical-volterra-check}
Let \(0<p,q<\infty\), \(\omega\in\calD\), and
\(\varphi\in H(\D)\).  If \(\alpha=\beta=1\), then the boundedness and
compactness criteria above reduce to the weighted theorem for the classical
Volterra operator established in~\cite[Theorems~1.1--1.2]{DuanWangWang2021}:
\[
        J_\varphi f(z)=\int_0^z f(\lambda)\varphi'(\lambda)\,d\lambda.
\]
\end{proposition}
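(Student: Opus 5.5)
The plan is to verify directly that the operator and the symbol collapse to the classical objects when \(\alpha=\beta=1\). After that, the four criteria of Theorems~\ref{thm:boundedness-main} and~\ref{thm:compactness-main} should differ from those of \cite[Theorems~1.1--1.2]{DuanWangWang2021} only by a harmless interior modification. First, \((1,1)\in\N\times\Z\subset\calP\), and \(\alpha\ge\beta\), so Theorems~\ref{thm:boundedness-main} and~\ref{thm:compactness-main} apply without any shifted-weight hypothesis.

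Next I compute \(V^\varphi_{1,1}\) from the definition of \(\Ihat_t\). Write \(f=\sum a_nz^n\) and \(\varphi=\sum b_nz^n\), and regard both as \(p_0\,g\) with \(c=0\).
\begin{itemize}
\item \(\Ihat_0f=f\).
\item For \(\Ihat_{-1}\varphi\) we have \(c+t=-1\in\Z_-\), so the finite-kernel clause applies. It gives \(\sum_{n\ge1}\frac{\Gamma(n+1)}{\Gamma(n)}b_nz^{n-1}=\varphi'\).
\item Applying \(\Ihat_1\) to the analytic product \(f\varphi'=\sum d_nz^n\) gives \(\sum_n\frac{\Gamma(n+1)}{\Gamma(n+2)}d_nz^{n+1}=\int_0^z f\varphi'\).
\end{itemize}
Hence \(V^\varphi_{1,1}=J_\varphi\) exactly, with no polynomial correction. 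The symbol is \(G_\varphi=\IRL_{-1}\varphi=\sum_n\frac{\Gamma(n+1)}{\Gamma(n)}b_nz^n=z\varphi'(z)\), using \(1/\Gamma(0)=0\) for the term \(n=0\).

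With \(\beta=1\), the exponents become the classical ones.
\begin{itemize}
\item In (A) and (A\(_0\)) the weight is \(\widehat\omega^{-1/p}\delta^{1+1/q-1/p}\).
\item In (B) the power of \(\delta\) is \((p+2)/(p-2)\), with denominator \(\widehat\omega^{2/(p-2)}\).
\item In (C) the power is \(4/(p-2)\).
\item In (D) and (D\(_0\)) the factor is \(\delta/\widehat\omega^{1/p}\).
\end{itemize}
These are the Duan--Wang--Wang conditions for \(g'=\varphi'\), with their \(g\) equal to \(\varphi\), except that \(|z\varphi'|\) appears in place of \(|\varphi'|\).

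It remains to show that replacing \(|\varphi'|\) by \(|z\varphi'|\) changes none of the conditions. Fix \(r_0=1/2\). On \(\D\setminus r_0\D\) we have \(|z\varphi'|\asymp|\varphi'|\). On \(\overline{r_0\D}\) both quantities are bounded, and \(\delta\) and \(\widehat\omega\) are bounded above and below, since \(\widehat\omega(r)\ge\widehat\omega(r_0)>0\).

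The four cases then go as follows.
\begin{itemize}
\item \textbf{Case A.} The supremum and the boundary limit are insensitive to the interior disc.
\item \textbf{Case B.} The two measures differ by a finite measure supported in \(\overline{r_0\D}\). Such a measure is Carleson, and it is trivially vanishing Carleson, because \(S(I)\cap r_0\D=\varnothing\) once \(|I|<1/2\).
\item \textbf{Case C.} The cone integrals differ by at most a bounded function of \(\xi\). Bounded functions lie in \(L^{pq/(p-q)}(\T)\), so the two integrability conditions are equivalent.
\item \textbf{Case D.} The nontangential suprema satisfy \(\sup_{\Gamma(\xi)}\le C+\sup_{\Gamma(\xi)\setminus r_0\D}\), and conversely. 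For the tails in (D\(_0\)), taking \(r>r_0\) makes the two integrands comparable.
\end{itemize}

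The step I expect to need the most care is matching the exact normalizations in \cite{DuanWangWang2021}. That paper may use a different cone aperture, Carleson boxes or tents in place of \(S(I)\), \(1-|z|\) in place of \(\delta\), or \(\omega\)-regularized quantities in place of \(\widehat\omega\). All of these are absorbed by Lemma~\ref{lem:background-tools}\textup{(vi)} and by \(1-|z|\asymp\delta(z)\). The cross-check that the paper's exponents agree with theirs is the \(\beta=1\) case of Remark~\ref{rem:exponent-bookkeeping}.
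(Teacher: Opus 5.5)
Your proposal is correct and follows essentially the same route as the paper: identify $V^\varphi_{1,1}=J_\varphi$ and $G_\varphi=\IRL_{-1}\varphi=z\varphi'$, then observe that the factor $z$ is comparable to $1$ outside a fixed inner disc. On that inner disc only bounded or finite-mass contributions enter each of the four conditions. Your explicit computation of $\Ihat_0 f=f$, $\Ihat_{-1}\varphi=\varphi'$ (via the finite-kernel clause), and $\Ihat_1(f\varphi')=\int_0^z f\varphi'$ usefully fills in the identification the paper only asserts. In Cases B and C, say the two quantities are ``comparable off $r_0\D$ plus a finite (respectively bounded) inner contribution'' rather than that they ``differ by'' such a term.
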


\begin{proof}
When \(\alpha=\beta=1\), \(\nu_{\alpha,\beta,p}=\omega\), and the original
realization coincides with the classical Volterra operator \(J_\varphi\).  In
the modified coefficient model, \(\IRL_{-1}\varphi(z)=z\varphi'(z)\).
The factor \(z\) is bounded by one on \(\D\) and bounded away from zero on
every outer annulus \(\{|z|>r\}\).  It therefore does not affect the Carleson,
tent-space, maximal, or boundary-vanishing conditions.  On a fixed inner
disc, local boundedness, the positive lower bounds for \(\widehat\omega\) and
\(\delta\), and finite area make each density integrable; hence the inner
contribution cannot affect a boundary supremum, vanishing condition, or outer
tail.  The four conditions therefore reduce to
\begin{align*}
&\sup_{z\in\D}|\varphi'(z)|\widehat\omega(z)^{-1/p}
  \delta(z)^{1+1/q-1/p}<\infty,\\
&|\varphi'(z)|^{\frac{2p}{p-2}}
  \frac{\delta(z)^{\frac{p+2}{p-2}}}
       {\widehat\omega(z)^{\frac{2}{p-2}}}\dA(z)
  \quad\hbox{is a Carleson measure},\\
&\left(\int_{\Gamma(\xi)}|\varphi'(z)|^{\frac{2p}{p-2}}
  \left(\frac{\delta(z)^2}{\widehat\omega(z)}\right)^{\frac{2}{p-2}}
  \dA(z)\right)^{\frac{p-2}{2p}}
  \in L^{pq/(p-q)}(\T),\\
&\sup_{z\in\Gamma(\xi)}|\varphi'(z)|
  \frac{\delta(z)}{\widehat\omega(z)^{1/p}}
  \in L^{pq/(p-q)}(\T).
\end{align*}
In their respective parameter ranges, these agree with the boundedness
conditions in~\cite[Theorem~1.1]{DuanWangWang2021}.  The compactness
conditions likewise agree with~\cite[Theorem~1.2]{DuanWangWang2021}: the
boundary limit in Case A, the vanishing Carleson condition in Case B, the same
tent condition as for boundedness in Case C, and the outer maximal-tail
condition in Case D.
\end{proof}

\section*{Declarations}

\noindent\textbf{Funding.}
X. Zhu is supported by National Natural Science Foundation (NNSF Tianyuan) of China (Grant No. 12526614).\par
\smallskip
\noindent\textbf{Declaration of competing interest.}
The authors declare that they have no known competing financial interests or
personal relationships that could have appeared to influence the work
reported in this paper.\par
\smallskip
\noindent\textbf{Data availability.}
No data were used for the research described in this article.\par
\smallskip
\noindent\textbf{AI Statement.}
The authors used artificial-intelligence tools for language editing,
\LaTeX\ formatting, and limited assistance with local mathematical
reasoning.  The proof strategy and mathematical development are the
authors' own; all mathematical content was independently written and checked by the authors, who take full responsibility for it.

\end{document}